\documentclass[11pt]{article}

\usepackage{amsmath}

\usepackage{latexsym}
\usepackage{amssymb}
\usepackage{amsfonts}\large
\usepackage{amsthm}
\usepackage{fontenc}
\usepackage{fullpage}
\usepackage{enumerate}
\usepackage{esint}
\usepackage{bm}
\usepackage{hyperref}
\usepackage{titling}

\hypersetup{colorlinks=true}
\usepackage{xcolor}
\usepackage[normalem]{ulem}
\usepackage{marginnote}
\usepackage{enumerate}

\DeclareMathOperator{\dive}{div}

\DeclareMathOperator{\e}{\varepsilon}
\DeclareMathOperator{\diam}{diam}
\DeclareMathOperator{\dist}{dist}

\DeclareMathOperator{\loc}{loc}
\DeclareMathOperator{\bR}{\mathbb R}
\DeclareMathOperator{\M}{M}
\DeclareMathOperator{\C}{C}
\DeclareMathOperator{\Tr}{Tr}

\providecommand{\ip}[1]{\langle#1\rangle}

\newtheorem{thm}{Theorem}[section]
\newtheorem{lemma}[thm]{Lemma}
\newtheorem{prop}[thm]{Proposition}

\newtheorem{dfn}[thm]{Definition}

\theoremstyle{definition}
\newtheorem{remark}[thm]{Remark}

\numberwithin{equation}{section}

\begin{document}
\title{Boundary value problems in Lipschitz domains \\ for equations with lower order coefficients}

\author{Georgios Sakellaris \thanks{Department of Mathematics, Universitat Aut{\`o}noma de Barcelona, Bellaterra 08193, Barcelona, Spain\newline
\hspace*{21pt}\textit{Email address}: \texttt{gsakellaris@mat.uab.cat} \newline
\hspace*{20pt}The author has received funding from the European Union's Horizon 2020 research and innovation programme under Marie Sk{\l}odowska-Curie grant agreement No 665919, and is partially supported by MTM-2016-77635-P (MICINN, Spain) and 2017 SGR 395 (Generalitat de Catalunya).}}

\date{\vspace{-5ex}}
\maketitle
\begin{abstract}
We use the method of layer potentials to study the $R_2$ Regularity problem and the $D_2$ Dirichlet problem for second order elliptic equations of the form $\mathcal{L}u=0$, with lower order coefficients, in bounded Lipschitz domains. For $R_2$ we establish existence and uniqueness assuming that $\mathcal{L}$ is of the form $\mathcal{L}u=-\text{div}(A\nabla u+bu)+c\nabla u+du$, where the matrix $A$ is uniformly elliptic and H{\"o}lder continuous, $b$ is H{\"o}lder continuous, and $c,d$ belong to Lebesgue classes and they satisfy either the condition $d\geq\dive b$, or $d\geq\dive c$ in the sense of distributions. In particular, $A$ is not assumed to be symmetric, and there is no smallness assumption on the norms of the lower order coefficients. We also show existence and uniqueness for $D_2$ for the adjoint equations $\mathcal{L}^tu=0$.
\end{abstract}

\section{Introduction}
In this paper we are interested with solvability of the Dirichlet and Regularity boundary value problems in bounded Lipschitz domains $\Omega\subseteq\bR^n$, where $n\geq 3$, for operators of the form
\[
\mathcal{L}u=-\dive(A\nabla u+bu)+c\nabla u+du.
\]
We will assume that the matrix $A$ is uniformly elliptic: there exists a constant $\lambda>0$ such that, 
\begin{equation}\label{eq:el}
\ip{A(x)\xi,\xi}\geq\lambda|\xi|^2,\quad\forall x\in\Omega,\,\,\forall\xi\in\bR^n.
\end{equation}
We will also assume that $A$ is H{\"o}lder continuous: that is, for some $\alpha\in(0,1)$ and $\tau>0$,
\begin{equation}\label{eq:Hold}
|A(x)-A(y)|\leq \tau|x-y|^{\alpha}.
\end{equation}
The set of matrices $A$ defined in $\Omega$ which satisfy \eqref{eq:el} and \eqref{eq:Hold} will be denoted by $\M_{\Omega}(\lambda,\alpha,\tau)$. Note that we do not assume that $A$ is symmetric.

For the lower order coefficients, we assume that $b,c$ are functions with values in $\bR^n$, and $d$ is a real valued function. If $b=(b_1,\dots b_n)$ satisfies \eqref{eq:Hold} in $\Omega$, we write $b\in\C_{\Omega}(\alpha,\tau)$. Moreover, we will assume that $c,d$ belong to $L^p$ for some $p>n$. We will also assume that $d\geq\dive b$, or $d\geq\dive c$ in the sense of distributions.

For a Lipschitz domain $\Omega$, let $B_{\Omega}$ be a ball centered at a point in $\Omega$ with radius $10\diam(\Omega)$. The first main theorem of this paper is on solvability of the $R_2$ Regularity problem.

\begin{thm}\label{R_2Solvability}
Let $\Omega\subseteq\bR^n$ be a Lipschitz domain, and suppose that $A\in M_{B_{\Omega}}(\lambda,\alpha,\tau)$. Suppose also that $b\in \C_{B_{\Omega}}(\alpha,\tau)$, $c,d\in L^p(B_{\Omega})$ for some $p>n$, with either $d\geq\dive b$ or $d\geq\dive c$. Then, for every $f\in W^{1,2}(\partial\Omega)$, there exists a unique $u\in W^{1,2}(\Omega)$ such that $-\dive(A\nabla u+bu)+c\nabla u+du=0$ in $\Omega$, $(\nabla u)^*\in L^2(\partial\Omega)$ and $u\to f$ nontangentially, almost everywhere on $\partial\Omega$. Moreover,
\[
\|(\nabla u)^*\|_{L^2(\partial\Omega)}\leq C\|f\|_{W^{1,2}(\partial\Omega)},
\]
where $C$ depends on $n,p,\lambda,\alpha,\tau$,$\|A\|_{\infty}$,$\|b\|_{\infty}$,$\|c\|_p$,$\|d\|_p$, $|\Omega|$ and the Lipschitz character of $\Omega$.
\end{thm}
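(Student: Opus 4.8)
The plan is to solve the problem by the method of single layer potentials associated with $\mathcal L$, reducing the existence and estimate parts to the invertibility of the single layer operator $\mathcal S\colon L^2(\partial\Omega)\to W^{1,2}(\partial\Omega)$, and the uniqueness part to the uniqueness of $W^{1,2}_0(\Omega)$-solutions of $\mathcal Lu=0$ --- which is precisely where the sign hypothesis $d\geq\dive b$ or $d\geq\dive c$ enters. First I would invoke the fundamental solution $\Gamma(x,y)$ of $\mathcal L$ for $B_\Omega$ constructed earlier, together with its bounds $|\Gamma(x,y)|\lesssim|x-y|^{2-n}$ (genuinely decaying since $n\geq3$), $|\nabla_x\Gamma(x,y)|\lesssim|x-y|^{1-n}$, and the interior Hölder continuity of $\nabla_x\Gamma$ off the diagonal (Schauder estimates, available because $A$ and $b$ are Hölder). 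I would also record that, writing $L_0u=-\dive(A\nabla u+bu)$ for the principal part and $\Gamma_0$ for its fundamental solution, the difference $\Gamma-\Gamma_0$ is more regular than $\Gamma_0$ by a Hölder exponent $1-n/p>0$, since it satisfies an equation whose right-hand side is built from the lower order terms $c\cdot\nabla\Gamma+d\Gamma$, and the corresponding parametrix/convolution estimates with $c,d\in L^p$, $p>n$, gain derivatives.

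Next I would set $\mathcal Sg(x)=\int_{\partial\Omega}\Gamma(x,y)g(y)\,d\sigma(y)$ and establish three standard ingredients. First, boundedness: $\mathcal S\colon L^2(\partial\Omega)\to W^{1,2}(\Omega)$ with the nontangential maximal estimate $\|(\nabla\mathcal Sg)^*\|_{L^2(\partial\Omega)}\lesssim\|g\|_{L^2(\partial\Omega)}$ and $\|\mathcal Sg\|_{W^{1,2}(\partial\Omega)}\lesssim\|g\|_{L^2(\partial\Omega)}$, obtained by splitting $\nabla_x\Gamma(x,y)=\nabla_x\Gamma_{A(y),b(y)}(x-y)+R(x,y)$ into the frozen-coefficient piece, a variable-coefficient Cauchy-type operator on the Lipschitz graph handled by the Coifman--McIntosh--Meyer theorem, plus a remainder $R$ with extra Hölder decay controlled by a fractional integral (Schur) bound; $\mathcal Sg$ is continuous across $\partial\Omega$. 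Second, nontangential limits and jump relations: $\mathcal Sg\to\mathcal Sg|_{\partial\Omega}$ nontangentially, $\nabla\mathcal Sg$ has nontangential limits from both sides, with conormal jump $\partial_\nu^{\pm}\mathcal Sg=(\mp\tfrac12 I+\mathcal K^{*})g$ for an operator $\mathcal K^{*}$ bounded on $L^2(\partial\Omega)$ and no jump in the tangential derivative. Third, $u=\mathcal Sg$ solves $\mathcal Lu=0$ in $\Omega$ and in $B_\Omega\setminus\overline\Omega$, and (taking $\Gamma(\cdot,y)$ to vanish on $\partial B_\Omega$) $u$ vanishes on $\partial B_\Omega$.

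The crux is the invertibility of $\mathcal S\colon L^2(\partial\Omega)\to W^{1,2}(\partial\Omega)$. For injectivity: if $\mathcal Sg=0$ on $\partial\Omega$, then $u=\mathcal Sg$ is a $W^{1,2}_0(\Omega)$-solution of $\mathcal Lu=0$ in $\Omega$, hence $u\equiv0$ in $\Omega$ by the uniqueness theorem that follows from the sign hypothesis; in $B_\Omega\setminus\overline\Omega$, $u$ solves $\mathcal Lu=0$ and vanishes on $\partial\Omega$ and on $\partial B_\Omega$, hence $u\equiv0$ there too; consequently $g=\partial_\nu^{-}u-\partial_\nu^{+}u=0$. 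For surjectivity I would show $\mathcal S$ has index zero: the difference $\mathcal S-\mathcal S_0$ is compact from $L^2(\partial\Omega)$ into $W^{1,2}(\partial\Omega)$, since its kernel $\Gamma-\Gamma_0$ has the Hölder gain $1-n/p$ noted above and thus factors through a compact embedding; and $\mathcal S_0$, the single layer operator of the principal part $L_0$, is invertible --- either by citing the constant-coefficient (hence also the Hölder, by a small local perturbation) theory on Lipschitz domains, or by the method of continuity deforming $A$ to a constant matrix, using a semi-Fredholm lower bound of the type $\|g\|_{L^2(\partial\Omega)}\lesssim\|\nabla_T\mathcal S_0 g\|_{L^2(\partial\Omega)}+\|\mathcal S_0 g\|_{L^2(\partial\Omega)}$ holding uniformly along the deformation. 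Thus $\mathcal S$ is Fredholm of index zero and injective, hence invertible.

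Given invertibility, for $f\in W^{1,2}(\partial\Omega)$ I set $g=\mathcal S^{-1}f\in L^2(\partial\Omega)$ and $u=\mathcal Sg$; then $\mathcal Lu=0$ in $\Omega$, $u\to f$ nontangentially a.e. (as $\mathcal Sg$ is continuous up to $\partial\Omega$ with trace $f$), $(\nabla u)^*\in L^2(\partial\Omega)$ with $\|(\nabla u)^*\|_{L^2(\partial\Omega)}\lesssim\|g\|_{L^2(\partial\Omega)}\lesssim\|f\|_{W^{1,2}(\partial\Omega)}$, and $u\in W^{1,2}(\Omega)$ since $(\nabla u)^*\in L^2(\partial\Omega)$ and $u$ has $L^2$ boundary trace on the bounded domain $\Omega$; tracking constants through the three ingredients above and the Sobolev embeddings used for the $L^p$ coefficients yields the stated dependence, including on $|\Omega|$. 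For uniqueness, if $u_1,u_2$ are two such solutions then $w=u_1-u_2$ satisfies $\mathcal Lw=0$, $(\nabla w)^*\in L^2(\partial\Omega)$ and has zero nontangential trace, hence $w\in W^{1,2}_0(\Omega)$ and $w\equiv0$ by the uniqueness consequence of the sign hypothesis. The main obstacle is the invertibility of $\mathcal S$ in the non-symmetric setting: without symmetry there is no Rellich identity to produce the semi-Fredholm lower bound directly, so that estimate must be extracted by freezing coefficients and perturbing, treating the variable-coefficient and lower order errors as compact (or locally small) terms while assuming no smallness on the coefficients themselves, and the uniqueness in $B_\Omega\setminus\overline\Omega$ needed for injectivity must be arranged carefully for $\mathcal L$ with merely $L^p$ zero-order data.
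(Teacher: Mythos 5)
Your overall architecture (single layer potential built from the Green's function on $B_{\Omega}$, comparison with a simpler kernel to get $L^2(\partial\Omega)\to W^{1,2}(\partial\Omega)$ boundedness and jump relations, invertibility of $\mathcal{S}$, uniqueness from the sign condition via a ``zero nontangential trace $\Rightarrow$ $u\equiv 0$'' statement) matches the paper's. The genuine gap is at the step you yourself identify as the crux: the invertibility of $\mathcal{S}_0$ for $L_0u=-\dive(A\nabla u+bu)$, equivalently the semi-Fredholm lower bound $\|g\|_{L^2(\partial\Omega)}\lesssim\|\mathcal{S}_0g\|_{W^{1,2}(\partial\Omega)}$. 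Neither of your two justifications works. The ``constant-coefficient, hence H{\"o}lder by small local perturbation'' theory on Lipschitz domains covers $-\dive(A\nabla u)$ (this is what the paper cites from \cite{NguyenPaper}), but it does not cover the drift term $bu$ inside the divergence, and $b$ carries no smallness; perturbation off a constant matrix is also not how the H{\"o}lder case is obtained. Your alternative, the method of continuity with a lower bound ``holding uniformly along the deformation,'' is precisely the Rellich-type estimate whose proof occupies Sections 3 and 6 of the paper: one needs Pipher's integration-by-parts device to handle non-symmetric $A$ (Lemma~\ref{ReduceToSymmetric}), a Rellich identity producing the extra terms $\int|\nabla A||\nabla u|^2$, $\int|\dive b||u||\nabla u|$, $\int|c||\nabla u|^2$ (Lemmas~\ref{Rellich}, \ref{GlobalRellich}), the higher-integrability/Besov lemma (Lemmas~\ref{ImprovedIntegrability}, \ref{qImproved}) to absorb $\int|c||\nabla u|^2$, the boundary-adapted coefficients satisfying Condition~\eqref{eq:ACond} built by Poisson extension (Lemmas~\ref{BarExt}--\ref{CloseToARho}), and the localization and equivalence statements (Lemma~\ref{ExtendingRellich}, Proposition~\ref{RelInvEquivalence}). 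Your closing remark that the estimate ``must be extracted by freezing coefficients and perturbing, treating the variable-coefficient and lower order errors as compact (or locally small) terms'' is not an argument and, as stated, fails: lower bounds of semi-Fredholm type are not stable under compact perturbations (compact errors can create kernel), and locally small perturbation is exactly what the no-smallness hypothesis forbids unless one first constructs the special boundary-layer coefficients as the paper does.

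There is a way to rescue invertibility along the lines you sketch --- prove $\mathcal{S}-\mathcal{S}_{00}$ compact (using the kernel-difference gain $|x-y|^{1-n+\delta_{n,p}}$, which is the content of the paper's Lemmas~\ref{PointwiseGreen} and \ref{PointwiseGreenGradient} and is itself nontrivial, since $\nabla_yG$ has no pointwise bound), take $\mathcal{S}_{00}$ to be the single layer for the pure second-order part so the literature applies, and combine Fredholm index zero with your injectivity argument (interior and exterior uniqueness plus the jump relation). But that route proves invertibility qualitatively only: it gives no control of $\|\mathcal{S}^{-1}\|$ in terms of $n,p,\lambda,\alpha,\tau,\|A\|_{\infty},\|b\|_{\infty},\|c\|_p,\|d\|_p,|\Omega|$ and the Lipschitz character, so your final claim that ``tracking constants\dots yields the stated dependence'' does not follow, and the quantitative estimate $\|(\nabla u)^*\|_{L^2(\partial\Omega)}\leq C\|f\|_{W^{1,2}(\partial\Omega)}$ with the asserted dependence --- which is part of the theorem --- is not obtained. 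Smaller points: the paper first reduces to $d=0$ by writing $d=\dive e$ (Lemma~\ref{DivOperator}) and absorbing $e$ into $b$ or $c$, which is what lets the sign conditions be used as $\dive\tilde b\le 0$ or $\dive\tilde c\le 0$; and your uniqueness step needs the solid-integral estimate (Proposition~\ref{SolidByBoundary}) or an equivalent, since the bilinear form of $\mathcal{L}$ is not assumed coercive, so ``$w\in W_0^{1,2}$, $\mathcal{L}w=0\Rightarrow w=0$'' itself requires the Green's function/maximum-principle theory under the sign hypotheses rather than a direct energy argument.
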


The definitions of nontangential convergence and the nontangential maximal function $(\nabla u)^*$ are the usual ones (appearing after \eqref{eq:Cone} and at \eqref{eq:MaxFun}, respectively). 

We will also show the next result on solvability of the $D_2$ Dirichlet problem, which is the second main theorem of this paper.

\begin{thm}\label{D_2Solvability}
Under the same assumptions as in Theorem~\ref{R_2Solvability}, for every $f\in L^2(\partial\Omega)$, there exists a unique $u\in W_{\loc}^{1,2}(\Omega)$ such that $-\dive(A\nabla u+cu)+b\nabla u+du=0$ in $\Omega$, $u^*\in L^2(\partial\Omega)$ and $u\to f$ nontangentially, almost everywhere on $\partial\Omega$. In addition,
\[
\|u^*\|_{L^2(\partial\Omega)}\leq C\|f\|_{L^2(\partial\Omega)},
\]
where $C$ depends on $n,p,\lambda,\alpha,\tau$,$\|A\|_{\infty},\|b\|_{\infty}$,$\|c\|_p$,$\|d\|_p$, $|\Omega|$ and the Lipschitz character of $\Omega$.
\end{thm}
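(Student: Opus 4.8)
The operator in Theorem~\ref{D_2Solvability} is the formal adjoint of an operator covered by Theorem~\ref{R_2Solvability}, and the plan is to obtain $D_2$ for it by dualizing the single layer potential used to solve $R_2$. Indeed, set $\mathcal{L}_0 u=-\dive(A^t\nabla u+bu)+c\cdot\nabla u+du$. For test functions $\varphi,\psi$ one has
\[
\int_\Omega (\mathcal{L}_0\varphi)\,\psi=\int_\Omega \ip{A^t\nabla\varphi+b\varphi,\nabla\psi}+(c\cdot\nabla\varphi)\psi+d\varphi\psi=\int_\Omega (\mathcal{L}_0^t\psi)\,\varphi,
\]
where $\mathcal{L}_0^t\psi=-\dive(A\nabla\psi+c\psi)+b\cdot\nabla\psi+d\psi$ is exactly the operator of Theorem~\ref{D_2Solvability}. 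Moreover $\mathcal{L}_0$ satisfies the hypotheses of Theorem~\ref{R_2Solvability}: transposition preserves the class $\M_{B_\Omega}(\lambda,\alpha,\tau)$ (ellipticity because $\ip{A^t\xi,\xi}=\ip{A\xi,\xi}$, and $|A^t(x)-A^t(y)|=|A(x)-A(y)|$), the coefficient in the divergence term of $\mathcal{L}_0$ is the H\"older continuous $b$ while the gradient and zeroth order coefficients are $c,d\in L^p$, and the condition ``$d\geq\dive b$ or $d\geq\dive c$'' is symmetric in $b$ and $c$. This transposition is precisely what moves the only-$L^p$ divergence coefficient $c$ of the operator in Theorem~\ref{D_2Solvability} into a gradient term, which is why $D_2$ is stated for the adjoint equation.

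For existence I would use that the proof of Theorem~\ref{R_2Solvability}, applied to $\mathcal{L}_0$, produces a single layer potential $\mathcal{S}_{\mathcal{L}_0}$ whose boundary trace operator $S_{\mathcal{L}_0}\colon L^2(\partial\Omega)\to W^{1,2}(\partial\Omega)$ is an isomorphism, with $\|(\nabla\mathcal{S}_{\mathcal{L}_0}h)^*\|_{L^2(\partial\Omega)}\lesssim\|h\|_{L^2(\partial\Omega)}$. Since the single layer potentials of $\mathcal{L}_0$ and $\mathcal{L}_0^t$ are adjoint with respect to the $L^2(\partial\Omega)$ pairing, i.e.\ $S_{\mathcal{L}_0}^t=S_{\mathcal{L}_0^t}$ (from the symmetry $G_{\mathcal{L}_0}(X,Y)=G_{\mathcal{L}_0^t}(Y,X)$ of the kernel), the operator $S_{\mathcal{L}_0^t}\colon W^{-1,2}(\partial\Omega)\to L^2(\partial\Omega)$ is also an isomorphism, with norm bounds inherited from Theorem~\ref{R_2Solvability}. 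Given $f\in L^2(\partial\Omega)$, put $g=S_{\mathcal{L}_0^t}^{-1}f\in W^{-1,2}(\partial\Omega)$ and $u=\mathcal{S}_{\mathcal{L}_0^t}g$. Then $\mathcal{L}_0^t u=0$ in $\Omega$, and by interior regularity for H\"older coefficients $u\in W^{1,2}_{\loc}(\Omega)$; the nontangential limit of $u$ on $\partial\Omega$ equals $S_{\mathcal{L}_0^t}g=f$ a.e.; and the nontangential maximal estimates for the single layer potential proved for Theorem~\ref{R_2Solvability} (used here with a $W^{-1,2}$ density, obtained by dualizing the $L^2$ estimate) give $\|u^*\|_{L^2(\partial\Omega)}\lesssim\|g\|_{W^{-1,2}(\partial\Omega)}\lesssim\|f\|_{L^2(\partial\Omega)}$, with $C$ of the asserted form.

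For uniqueness, suppose $u\in W^{1,2}_{\loc}(\Omega)$ solves $\mathcal{L}_0^t u=0$, $u^*\in L^2(\partial\Omega)$, and $u\to 0$ nontangentially a.e.\ on $\partial\Omega$. I would test $u$ against solutions of the Regularity problem for $\mathcal{L}_0$: for $h\in L^2(\partial\Omega)$ let $v=\mathcal{S}_{\mathcal{L}_0}h$, so $\mathcal{L}_0 v=0$, $(\nabla v)^*\in L^2(\partial\Omega)$, and $v|_{\partial\Omega}=S_{\mathcal{L}_0}h$ ranges over all of $W^{1,2}(\partial\Omega)$. Applying the Green's formula for the pair $(\mathcal{L}_0,\mathcal{L}_0^t)$ on smooth subdomains $\Omega_j\uparrow\Omega$ and letting $j\to\infty$, the interior terms vanish and, by dominated convergence using $u^*,(\nabla v)^*\in L^2(\partial\Omega)$ and $u\to 0$ a.e., the boundary integrals converge to an identity forcing $\int_{\partial\Omega}(S_{\mathcal{L}_0}h)\,\partial^{\mathcal{L}_0^t}_\nu u\,d\sigma=0$ for every $h$; since the traces $S_{\mathcal{L}_0}h$ fill $W^{1,2}(\partial\Omega)$, the (weakly interpreted) conormal derivative of $u$ vanishes, and together with $u\to 0$ on $\partial\Omega$ and the well-posedness of the associated energy/Neumann problem for $\mathcal{L}_0^t$ — which is exactly where the hypothesis $d\geq\dive b$ or $d\geq\dive c$ enters — this yields $u\equiv0$. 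I expect this last step to be the main obstacle: the conormal derivative of a solution known only to satisfy $u^*\in L^2$ (rather than $(\nabla u)^*\in L^2$) must be given meaning in a weak sense, and the passage to the limit over $\Omega_j$ requires the uniform $L^2$ control of $u^*$ coupled with interior Caccioppoli and gradient estimates — precisely the bookkeeping the layer-potential part of the paper is built to supply.
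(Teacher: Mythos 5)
Your existence argument is essentially the paper's own route: reduce to an operator of the form covered by Theorem~\ref{R_2Solvability} (in the paper this is done after absorbing $d$ by writing $d=\dive e$ via Lemma~\ref{DivOperator}, so the invertible layer potential is the one for $\tilde{\mathcal{L}}u=-\dive(A^t\nabla u+\tilde b u)+\tilde c\nabla u$), invert $\mathcal{S}:L^2(\partial\Omega)\to W^{1,2}(\partial\Omega)$, and solve $D_2$ with $u=\mathcal{S}^*_+F$, $F=(\mathcal{S}^*)^{-1}f\in W^{-1,2}(\partial\Omega)$. Two of your steps are stated more casually than they can be proved: the bound $\|(\mathcal{S}^*_+F)^*\|_{L^2(\partial\Omega)}\leq C\|F\|_{W^{-1,2}(\partial\Omega)}$ and the nontangential convergence of $\mathcal{S}^*_+F$ to $\mathcal{S}^*F$ for a genuine $W^{-1,2}$ density are not obtained ``by dualizing the $L^2$ estimate''; they require the kernel comparison \eqref{eq:DifGamma}, the formula \eqref{eq:FormulaForR} with the tangential-gradient kernel, and a density argument in $W^{-1,2}(\partial\Omega)$ --- this is exactly Proposition~\ref{SingleAdjointLayerBounds}, so the ingredient exists, but it is a real estimate, not abstract duality.

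The genuine gap is uniqueness. Your plan --- Green's identity on smooth exhausting subdomains against $v=\mathcal{S}_{\mathcal{L}_0}h$, concluding that a weak conormal derivative of $u$ vanishes and then invoking ``well-posedness of the energy/Neumann problem'' --- does not close, for the reason you yourself flag: with only $u^*\in L^2(\partial\Omega)$ there is no control whatsoever of $\nabla u$ near $\partial\Omega$, so the gradient boundary terms on $\partial\Omega_j$ neither converge nor have a limiting meaning, and no Neumann-type well-posedness for these non-coercive operators is available in the paper to convert vanishing Cauchy data into $u\equiv 0$. The paper's Proposition~\ref{UniquenessForD_2} avoids boundary traces of $\nabla u$ entirely by pairing $u$ with the adjoint Green function with pole at the point to be evaluated: with $g_y=G^t(\cdot,y)$ for $\mathcal{L}^t$ in $\Omega$ and a cutoff $\phi_\rho$ vanishing in $\Omega_\rho$, the representation formula gives $u(y)$ as an integral supported where $\nabla\phi_\rho\neq 0$, bounded by $\frac{C}{\rho}\int_{E_\rho}\left(|u||\nabla g_y|+|g_y||\nabla u|+|b-c||g_y||u|\right)$. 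The two key inputs are (i) $R_2$ solvability for $\mathcal{L}^t$ (already proved), which yields $\|(\nabla g_y)^*_\rho\|_{L^2(\partial\Omega)}\leq C$, and (ii) interior Caccioppoli/regularity estimates plus $|g_y|\lesssim\rho$ near $\partial\Omega$ (via the boundary Poincar\'e inequality of Lemma~\ref{BoundForVanishing}), so that everything is dominated by $\int_{\partial\Omega}(\nabla g_y)^*_{3\rho}\,u^*_{3\rho}\,d\sigma$, which tends to $0$ by dominated convergence since $u\to 0$ nontangentially and $u^*\in L^2(\partial\Omega)$. This gives $u(y)=0$ directly, with no conormal derivative of $u$ ever defined; the hypothesis $d\geq\dive b$ or $d\geq\dive c$ enters only through the existence and estimates of the Green functions, not through any Neumann problem. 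To repair your proof you would have to replace the subdomain-duality step by this pole-representation argument (or supply an equally quantitative substitute), since pairing against arbitrary densities $h$ can at best show vanishing of weak conormal data, which still leaves the implication ``zero Cauchy data $\Rightarrow u\equiv 0$'' unproved.
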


A few remarks follow.

\begin{remark}
To show our theorems, we rely both on the techniques and the results in the paper \cite{KenigShen} (for their treatment of the small scale), but with a couple of modifications. First, we develop an analog of the estimate in Lemma 2.7 in \cite{KenigShen} by only comparing with the fundamental solution (Lemma~\ref{PointwiseDifDifOnGrad}), since we will then reduce to the results in \cite{KenigShen} (Lemma~\ref{GeneralPerturbation}). In addition, we consider the approximations in \eqref{eq:A^rho} ((7.3) in \cite{KenigShen}), but we treat the conditions $d\geq\dive c$ and $d\geq\dive b$ separately. In the first case we adapt the arguments in \cite{KenigShen} by first showing the Rellich estimate. On the other hand, for the second case we rely on the first case, treating $c$ as a perturbation of $0$ close to $\partial\Omega$. In both cases we first reduce to the case $d=0$ (using Lemma~\ref{DivOperator}) so as not to deal with the divergence $\dive b^{\rho}$ of the modifications $b^{\rho}$ in \eqref{eq:A^rho}, and we adapt the three-step approximation argument in \cite{KenigShen}.
\end{remark}

\begin{remark}
One of the main difficulties in showing our estimates is the lack of pointwise bounds for $\nabla_yG(x,y)$, where $G$ is Green's function for the operator in Theorem~\ref{R_2Solvability}. For this reason, we need to use $L^p$ and weak-$L^p$ estimates (as in Lemma~\ref{PointwiseGreen}) to obtain the analogs of the estimates in \cite{KenigShen}.
\end{remark}

\begin{remark}
The equations we consider are not scale-invariant, so we use the construction of Green's function $G$ for our operator in $B_{\Omega}$ from \cite{KimSak}, and then $G$ serves as a variant of the fundamental solution in order to define the single layer potential $\mathcal{S}f(q)=\int_{\partial\Omega}G(q,q')f(q')\,d\sigma(q')$. We then show that $\mathcal{S}:L^2(\partial\Omega)\to W^{1,2}(\partial\Omega)$ is invertible (Theorems~\ref{InvForc} and \ref{InvForb}), leading to Theorem~\ref{R_2Solvability}. Next, we consider the adjoint operator $\mathcal{S}^*:W^{-1,2}(\partial\Omega)\to L^2(\partial\Omega)$, and combined with invertibility of $\mathcal{S}$, we deduce Theorem~\ref{D_2Solvability} (for more on the connection between $\mathcal{S}^*$ and $D_2$, we refer to \cite{Hofmannduality}). We also note that we have not pursued the direction of showing our theorems when the coefficients are only defined in $\Omega$, which requires the construction of extensions $\tilde{b}$ of vector valued functions $b$ with $\dive b\leq d$, such that $\tilde{b}$ satisfy a similar property.
\end{remark}

\begin{remark}
The reason for the assumed regularity of the coefficients is that, under our assumptions, gradients of solutions to $\mathcal{L}u=0$ are locally H{\"o}lder continuous. In turn, this implies the bound $|\nabla_xG(x,y)|\leq C|x-y|^{1-n}$, where $G$ is Green's function for $\mathcal{L}$ in a ball $B$ (from \cite{KimSak}), which is a crucial assumption in showing that the kernel $G(x,y)$, restricted on $\partial\Omega$, is Calder{\'o}n-Zygmund, thus allowing us to show the $L^2(\partial\Omega)\to W^{1,2}(\partial\Omega)$ boundedness of the single layer potential operator.
\end{remark}

\begin{remark}
The condition $d\geq\dive b$ implies that the maximum principle holds for solutions $u$ to  $\mathcal{L}u=0$, where $\mathcal{L}$ is the operator in Theorem~\ref{R_2Solvability}, while the condition $d\geq\dive c$ implies the same for the operator $\mathcal{L}^t$. The reason for using those two conditions is that we are using Green's function $G$ which was constructed in \cite{KimSak}, and which was carried out under those conditions.
\end{remark}

\begin{remark}
After showing the Rellich estimate, it turns out that we need to bound a term of the form $\displaystyle\int_{\Omega}|c||\nabla u|^2$ (Lemma~\ref{GlobalRellich}). The main ingredient that allows us to estimate this term when $c\in L^p$, where $p>n$, is a higher integrability result on derivatives of solutions (Lemma~\ref{qImproved}) when the coefficients satisfy a special condition (Condition~\ref{eq:ACond}). This is done by showing that $\nabla u\in B_{\beta}^{2,2}(\Omega)$ for $\beta\in\left(0,\frac{1}{2}\right)$ (the Besov space, Lemma~\ref{ImprovedIntegrability}), which we deduce using Theorem 4.1 in \cite{JerisonKenigInhomogeneous}. 
\end{remark}

We also remark that the results presented here are generalizations of some results in the author's PhD thesis \cite{thesis}, in which it was assumed that $A$ was Lipschitz continuous, and either $b\in L^{\infty}$ and $c=0$, $d=0$, or $c\in L^{\infty}$ and $b=0$, $d=0$.

The method of layer potentials for boundary value problems in Lipschitz domains was used by Verchota in \cite{Verchota}, who studied the Dirichlet and Regularity problems for the Laplacian in Lipschitz domains, based on the $L^p$-boundedness of the Cauchy integrals on Lipschitz curves \cite{Coifman}. The Rellich-type estimate was used by Jerison and Kenig \cite{JerisonKenigIdentity}, who also treated the Dirichlet and the Regularity problem for the Laplacian (\cite{JerisonKenigNonsmooth}, \cite{JerisonKenigN2R2}). The related literature on the connection between layer potentials and boundary value problems is vast and we do not intend to review it here; we refer to the introduction of \cite{HofmannAnalyticity} for some further main developments in the area. 

Assume that $\Omega$ is the unit ball. In the case that $A$ is symmetric, elliptic and bounded, it is shown in \cite{JerisonKenigNonsmooth} that $D_2$ for $-\dive(A\nabla u)$ is solvable if $A$ is independent of the radial direction (or, assuming that $A$ is continuous and the modulus of continuity along some transversal direction satisfies a square Dini condition, from \cite{FabesJerisonKenigNecessary}). The same independence guarantees that $R_2$ is solvable as well, from \cite{KenigPipherNeumann}. However, if $A$ is non-symmetric, independence of $A$ from some transversal direction to the boundary does not suffice for solvability, as it is shown in (3.2) of \cite{KenigKochPipherToro} and the appendix of \cite{KenigRule} for $D_2$ and $R_2$, respectively. For positive results for $D_2$ and $R_2$ when $A$ is non necessarily symmetric, we refer to \cite{HofmannAnalyticity}, \cite{NguyenPaper} and the references therein.

Towards the direction of homogenization of elliptic boundary value problems for equations we refer to \cite{KenigShenHomogenization}, and to \cite{KenigShen} for elliptic systems (as well as their introductions for more references). A more recent work which allows lower order coefficients is also the paper \cite{XuZhaoZhou}, in which the authors assume that $b,c$ are H{\"o}lder continuous, $d$ is bounded, and the bilinear form for $\mathcal{L}$ is coercive. Solvability results for operators $\mathcal{L}_{\e}$ in homogenization subsume the analogous results on solvability for $\mathcal{L}=\mathcal{L}_1$, but we believe that the problems we consider in this paper have not been treated before.

A summary of this paper now follows. In Section 2 we discuss the preliminaries for studying the problems we are considering, showing some lemmas about Lipschitz domains. In Section 3 we show various a priori estimates, including local regularity of derivatives of solutions to $\mathcal{L}u=0$, a bound of solid integrals of solutions by surface integrals, the Rellich estimate, and a global integrability result on gradients of solutions. We remark that we bypass the assumption of symmetry of $A$ in order to deduce the Rellich estimate, using and integration by parts argument from \cite{PipherDrifts}, which reduces our equation to an equation with symmetric principal part and a drift. In Section 4 we deal with various estimates on Green's function when we let the coefficients vary, leading to a comparison between differences of gradients of Green's functions with fundamental solutions. In Section 5 we study the single layer potential operator $\mathcal{S}$ and its adjoint, and we show that they solve the Regularity and the Dirichlet problem, respectively. We also establish invertibility of $\mathcal{S}$ under a specific assumption on the coefficients, relying on the Rellich estimate from Section 3. In Section 6 we turn to the $T$-Rellich property, which we show that it is equivalent to invertibility of $\mathcal{S}:L^2(\partial\Omega)\to W^{1,2}(\partial\Omega)$, and using suitable perturbations we show that $\mathcal{S}$ is invertible when $d=0$ and either $\dive c\leq 0$ or $\dive b\leq 0$. Finally, in Section 7 we show that the $R_2$ Regularity problem for $\mathcal{L}$ and the $D_2$ Dirichlet problem for $\mathcal{L}^t$ are uniquely solvable, with suitable estimates.

\section{Preliminaries}
\subsection{Definitions}
For a domain $\Omega\subseteq\bR^n$, $C_c^{\infty}(\Omega)$ will be the space of infinitely differentiable functions that are compactly supported in $\Omega$. For $p>1$ we denote by $W^{1,p}(\Omega)$ the Sobolev space of functions $f\in L^p(\Omega)$ such that their distributional derivative also belongs to $L^p(\Omega)$. The space $W^{1,p}_{\loc}(\Omega)$ will be the space of functions $u$ such that $u\phi\in W^{1,p}(\Omega)$ for any $\phi\in C_c^{\infty}(\Omega)$. Finally, $W_0^{1,p}(\Omega)$ will be the closure of $C_c^{\infty}(\Omega)$ under the $W^{1,p}$ norm, and $W^{-1,p'}(\Omega)$ the dual space to $W_0^{1,p}(\Omega)$, where $p'$ is the conjugate exponent to $p$.

For a bounded domain $\Omega\subseteq\bR^n$, $A$ bounded, $b,c\in L^n(\Omega)$, $d\in L^{n/2}(\Omega)$ and $f,g\in L^1_{\loc}(\Omega)$, we say that $u\in W^{1,2}_{\loc}(\Omega)$ is a solution to the equation $-\dive(A\nabla u+bu)+c\nabla u+du=-\dive f+g$ in $\Omega$, if
\[
\int_{\Omega}A\nabla u\nabla\phi+b\nabla\phi\cdot u+c\nabla u\cdot\phi+du\phi=\int_{\Omega}f\nabla\phi+g\phi,
\]
for any $\phi\in C_c^{\infty}(\Omega)$.

For $\alpha\in(0,1)$, the space of bounded functions satisfying \eqref{eq:Hold} in a domain $\Omega$ will be denoted by $C^{\alpha}(\Omega)$. If $f\in C^{\alpha}(\Omega)$, we define
\[
\|f\|_{C^{0,\alpha}(\Omega)}=\sup\left\{\frac{|f(x)-f(y)|}{|x-y|^{\alpha}}\Big{|}x,y\in\Omega,x\neq y\right\},\quad \|f\|_{C^{\alpha}(\Omega)}=\|f\|_{L^{\infty}(\Omega)}+\|f\|_{C^{0,\alpha}(\Omega)}.
\]

For $b\in L^n(\Omega)$ and $d\in L^{n/2}(\Omega)$, the assumption $d\geq\dive b$ in the sense of distributions in $\Omega$ is interpreted as follows: for all $\phi\in C_c^{\infty}(\Omega)$ with $\phi\geq 0$, we have $\displaystyle\int_{\Omega}d\phi+b\nabla\phi\geq 0$.

Finally, if $\mathcal{L}u=-\dive(A\nabla u+bu)+c\nabla u+du$ is an operator, its adjoint operator will be $\mathcal{L}^tu=-\dive(A^t\nabla u+cu)+b\nabla u+du$.

\subsection{Lipschitz domains}
Let $\Omega\subseteq\bR^n$ be a bounded set, where $n\geq 3$. We say that $\Omega$ is a \textit{Lipschitz domain}, if for each $q\in\partial\Omega$ there exists a neighborhood $U$ of $q$ and a Lipschitz function $\phi_U:\bR^{n-1}\to\bR$, such that, after translation and rotation,
\[
U\cap\Omega=\{(x',t):t>\phi_U(x')\}\cap\Omega.
\]

In order to quantify the statements that will follow, we will need the following definition from \cite[pp. 5]{KenigShen}.

\begin{dfn}\label{LipDom}
We say that $\Omega\in\Pi(M,N)$ for some $M>0$ and $N>10$, if there exists $r_{\Omega}>0$ and $q_i\in\partial\Omega$ for $i=1,\dots N$, such that $\partial\Omega\subseteq\bigcup_{i=1}^NB_{r_{\Omega}}(q_i)$ and for each $i$ there exists a coordinate system so that $q_i=(0,0)$ and
\begin{equation}\label{eq:LipCov}
B_{C_Mr_{\Omega}}(q_i)\cap\Omega=B_{C_Mr_{\Omega}}(q_i)\cap\{(x',x_n)\in\bR^n\big{|}x'\in\bR^{n-1},x_n>\psi_i(x')\},
\end{equation}
where $\psi_i:\bR^{n-1}\to\bR$ is a Lipschitz function, $\psi_i(0)=0$, $\|\nabla\psi_i\|_{\infty}\leq M$, and $C_M=10(M+1)$.
	
We say that a constant $C$ depends on the Lipschitz character of $\Omega$, if $\Omega\in\Pi(M,N)$ and the constant can be made uniform for any Lipschitz domain in $\Pi(M,N)$.
\end{dfn}

We now show the next lemma for $r_{\Omega}$.

\begin{lemma}\label{rOmegaIsGood}
Let $\Omega\subseteq\bR^n$ be a Lipschitz domain. Then $r_{\Omega}$ is bounded above and below by constants that depend on $n,|\Omega|$, and the Lipschitz character of $\Omega$.
\end{lemma}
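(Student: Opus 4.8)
The plan is to prove both bounds by elementary geometry, using only that $\Omega\in\Pi(M,N)$ for some $M>0$ and $N>10$. So I would fix such $M,N$ and let $r_\Omega>0$ and $q_1,\dots,q_N\in\partial\Omega$ be the data furnished by Definition~\ref{LipDom}; throughout, every constant is allowed to depend on $n$, $M$, $N$ (i.e.\ on the Lipschitz character) and on $|\Omega|$.

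For the \emph{upper bound} I would exploit that near each boundary point $\Omega$ contains a large cone. Fixing $i$ and working in the coordinate system of \eqref{eq:LipCov} (so $q_i=0$, $\psi_i(0)=0$, $\|\nabla\psi_i\|_\infty\le M$), the bound $\psi_i(x')\le M|x'|$ shows that the solid cone $\Gamma:=\{(x',x_n):x_n>M|x'|\}$ lies in $\{x_n>\psi_i(x')\}$, hence by \eqref{eq:LipCov},
\[
\Gamma\cap B_{C_Mr_\Omega}(0)\subseteq\{x_n>\psi_i(x')\}\cap B_{C_Mr_\Omega}(0)=\Omega\cap B_{C_Mr_\Omega}(0).
\]
Since $|\Gamma\cap B_R(0)|=\theta(n,M)R^n$ for a constant $\theta(n,M)>0$ (namely $1/n$ times the $(n-1)$-dimensional measure of the spherical cap $\Gamma\cap S^{n-1}$), this gives $|\Omega|\ge\theta(n,M)(C_Mr_\Omega)^n$, i.e.\ $r_\Omega\le C(n,M)|\Omega|^{1/n}$.

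For the \emph{lower bound} I would instead use that $\partial\Omega$, with $|\Omega|$ fixed, cannot be covered by too few small balls. First I would check that $\partial\Omega\cap B_{C_Mr_\Omega}(q_i)\subseteq\{x_n=\psi_i(x')\}$: any $x$ in this set is a limit of points of $\Omega$ which, by \eqref{eq:LipCov}, eventually lie in $B_{C_Mr_\Omega}(q_i)\cap\{x_n>\psi_i(x')\}$, so $x_n\ge\psi_i(x')$, while $x_n>\psi_i(x')$ is impossible since then $x$ would be an interior point of $\Omega$. Since $\partial\Omega\subseteq\bigcup_{i=1}^N B_{r_\Omega}(q_i)$ and $B_{r_\Omega}(q_i)\subseteq B_{C_Mr_\Omega}(q_i)$, the portion of $\partial\Omega$ inside $B_{r_\Omega}(q_i)$ is thus a piece of the graph of $\psi_i$ over a subset of $\{|x'|<r_\Omega\}$, so by the area formula its $\mathcal H^{n-1}$-measure is at most $\sqrt{1+M^2}\,\omega_{n-1}r_\Omega^{\,n-1}$, with $\omega_{n-1}$ the volume of the unit ball in $\bR^{n-1}$. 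Summing over $i$,
\[
\mathcal H^{n-1}(\partial\Omega)\le N\sqrt{1+M^2}\,\omega_{n-1}\,r_\Omega^{\,n-1}.
\]
On the other hand a bounded Lipschitz domain has finite perimeter equal to $\mathcal H^{n-1}(\partial\Omega)$, so the isoperimetric inequality gives $\mathcal H^{n-1}(\partial\Omega)\ge c(n)|\Omega|^{(n-1)/n}$; combining this with the last display yields $r_\Omega\ge c(n,M,N)|\Omega|^{1/n}$.

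I do not expect a genuine obstacle. The only points needing care are the inclusion $\partial\Omega\cap B_{C_Mr_\Omega}(q_i)\subseteq\{x_n=\psi_i(x')\}$, which legitimizes the surface-measure estimate, and the identification of $\mathcal H^{n-1}(\partial\Omega)$ with the perimeter of $\Omega$; both are standard facts about Lipschitz domains. If one prefers to avoid the isoperimetric inequality, the lower bound can alternatively be obtained by a volume-covering argument, but the perimeter route is the most economical.
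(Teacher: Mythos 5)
Your proof is correct and takes essentially the same approach as the paper: the lower bound on $r_{\Omega}$ comes from covering $\partial\Omega$ by the $N$ balls $B_{r_{\Omega}}(q_i)$, bounding $\sigma(\partial\Omega)$ above by $CNr_{\Omega}^{n-1}$ via the graph description, and invoking the isoperimetric inequality, exactly as in the paper. For the upper bound you exhibit a subset of $\Omega$ of volume comparable to $r_{\Omega}^n$ using the cone $\{x_n>M|x'|\}$ inside $B_{C_Mr_{\Omega}}(q_i)$, whereas the paper uses the ball $B_{\delta_Mr_{\Omega}}(0,Mr_{\Omega})$ above the graph; this is only a cosmetic variation of the same idea.
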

\begin{proof}
Since the balls $B_{r_{\Omega}}(q_i)$ cover $\partial\Omega$, we obtain that
\begin{equation}\label{eq:sigmaMeasure}
\sigma(\partial\Omega)\leq\sigma\left(\bigcup_{i=1}^NB_{r_{\Omega}}(q_i)\cap\partial\Omega\right) \leq \sum_{i=1}^N\sigma\left(B_{r_{\Omega}}(q_i)\cap\partial\Omega\right)\leq NCr_{\Omega}^{n-1},
\end{equation}
where $C$ only depends on $n$ and $M$. From the isoperimetric inequality, $\sigma(\partial\Omega)$ is bounded below by a constant that depends on $|\Omega|$ and $n$; therefore, $r_{\Omega}$ is bounded below by a constant depending only on $n,|\Omega|$ and the Lipschitz character of $\Omega$.

For the opposite inequality, consider the coordinate system for $q_1=(0,0)$ and let $y=(0,Mr_{\Omega})$. Define also $\delta_M=\frac{M}{2(M+1)}$. Then,  $B_{\delta_Mr_{\Omega}}(y)\subseteq B_{C_Mr_{\Omega}}(q_1)$. Moreover, for all $(x',x_n)\in B_{\delta_Mr_{\Omega}}(y)$,
\[
|x'|\leq \left|(x',x_n)-y\right|\leq\delta_Mr_{\Omega},\qquad x_n\geq Mr_{\Omega}-|x_n-Mr_{\Omega}|\geq Mr_{\Omega}-\left|(x',x_n)-y\right|\geq\left(M-\delta_M\right)r_{\Omega}.
\]
Therefore, since $\|\nabla\psi_1\|_{\infty}\leq M$,
\[
\psi_1(x')\leq|\psi_1(x')-\psi_1(0)|\leq M|x'|\leq M\delta_Mr_{\Omega}< (M-\delta_M)r_{\Omega}\leq x_n,
\]
where we used the definition of $\delta_M$ in the fourth inequality. Therefore, $B_{\delta_Mr_{\Omega}}(y)$ is a subset of the set in the right hand side of \eqref{eq:LipCov} for $i=1$, hence $B_{\delta_Mr_{\Omega}}(y)\subseteq B_{C_Mr_{\Omega}}(q_1)\cap\Omega\subseteq\Omega$. Therefore, $C_n\left(\delta_Mr_{\Omega}\right)^n=|B_{\delta_Mr_{\Omega}}|\leq|\Omega|$, which shows the reverse inequality on $r_{\Omega}$.
\end{proof}

For a Lipschitz domain $\Omega$, $\delta(x)$ will denote the distance from $x$ to $\partial\Omega$. We now define the parts of a Lipschitz domain that are close to and far from the boundary: for $\sigma<r_{\Omega}$, set
\begin{equation}\label{eq:CloseFar}
\Omega_{\sigma}=\{x\in\Omega:\delta(x)\leq\sigma\},\quad \Omega^{\sigma}=\{x\in\Omega:\delta(x)>\sigma\}.
\end{equation}
We then have the next lemma.

\begin{lemma}\label{SmallArea}
Let $\Omega$ be a Lipschitz domain with Lipschitz character $(M,N)$, and for $\sigma<r_{\Omega}$, define
\[
\Omega_{\sigma}^i=\{(x',x_n)\in\bR^n\big{|}|x'|<2r_{\Omega},\,\psi_i(x')<x_n<\psi_i(x')+(M+2)\sigma\},
\]
in the coordinate system of $B_{C_Mr_{\Omega}}(q_i)$, from Definition~\ref{LipDom}. Then $\displaystyle\Omega_{\sigma}\subseteq\bigcup_{i=1}^N\Omega_{\sigma}^i$. In addition, $|\Omega_{\sigma}|\leq C\sigma$, where $C$ depends on $n,|\Omega|$ and the Lipschitz character of $\Omega$.
\end{lemma}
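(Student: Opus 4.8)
The plan is to establish the inclusion $\Omega_\sigma\subseteq\bigcup_{i=1}^N\Omega_\sigma^i$ first --- this is where the Lipschitz geometry enters --- and then to deduce $|\Omega_\sigma|\le C\sigma$ by a Fubini computation together with the upper bound on $r_\Omega$ from Lemma~\ref{rOmegaIsGood}.

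\textbf{The covering.} Fix $x\in\Omega_\sigma$ and let $q\in\partial\Omega$ realize the distance, so $|x-q|=\delta(x)\le\sigma$. Since $\partial\Omega\subseteq\bigcup_iB_{r_\Omega}(q_i)$, choose $i$ with $q\in B_{r_\Omega}(q_i)$ and pass to the coordinate system of $B_{C_Mr_\Omega}(q_i)$ from Definition~\ref{LipDom}, writing $x=(x',x_n)$, $q=(q',q_n)$, $q_i=(0,0)$. Then $|x-q_i|\le|x-q|+|q-q_i|<\sigma+r_\Omega<2r_\Omega<C_Mr_\Omega$, using $\sigma<r_\Omega$ and $C_M=10(M+1)$; in particular $|x'|\le|x-q_i|<2r_\Omega$, and since $x\in B_{C_Mr_\Omega}(q_i)\cap\Omega$, \eqref{eq:LipCov} gives $x_n>\psi_i(x')$. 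Likewise $q\in B_{r_\Omega}(q_i)\cap\partial\Omega\subseteq B_{C_Mr_\Omega}(q_i)\cap\partial\Omega$, and as $\Omega$ coincides with the open epigraph of $\psi_i$ inside that ball, $q$ lies on the graph: $q_n=\psi_i(q')$. Using $\|\nabla\psi_i\|_\infty\le M$ together with $|x_n-q_n|\le\sigma$ and $|x'-q'|\le\sigma$,
\[
0<x_n-\psi_i(x')=(x_n-q_n)+\bigl(\psi_i(q')-\psi_i(x')\bigr)\le|x_n-q_n|+M|q'-x'|\le(M+1)\sigma<(M+2)\sigma,
\]
so $x\in\Omega_\sigma^i$, proving $\Omega_\sigma\subseteq\bigcup_{i=1}^N\Omega_\sigma^i$.

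\textbf{The area bound.} In the chart of $q_i$ the set $\Omega_\sigma^i$ is the region lying above the graph of $\psi_i$ and below its vertical translate by $(M+2)\sigma$, over the $(n-1)$-ball $\{|x'|<2r_\Omega\}$, so by Fubini $|\Omega_\sigma^i|=(M+2)\sigma\cdot\omega_{n-1}(2r_\Omega)^{n-1}$, where $\omega_{n-1}$ is the volume of the unit ball in $\bR^{n-1}$. By Lemma~\ref{rOmegaIsGood}, $r_\Omega$ is bounded above by a constant depending on $n$, $|\Omega|$ and the Lipschitz character, hence $|\Omega_\sigma^i|\le C\sigma$ with $C$ of that same dependence; summing over $i=1,\dots,N$ and using the covering yields $|\Omega_\sigma|\le\sum_{i=1}^N|\Omega_\sigma^i|\le NC\sigma$, which is the claim.

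\textbf{Main obstacle.} There is no deep difficulty: the one point requiring attention is verifying that $x$ and $q$ genuinely lie in the chart ball $B_{C_Mr_\Omega}(q_i)$, so that \eqref{eq:LipCov} may be invoked, and this is precisely where the hypothesis $\sigma<r_\Omega$ and the comfortable constant $C_M=10(M+1)$ are used. Everything else is routine --- the Lipschitz bound on $\psi_i$ controls the height $x_n-\psi_i(x')$ of $x$ above the graph by $(M+1)\sigma$, and the volume of a graph slab over a ball is computed directly.
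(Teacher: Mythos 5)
Your proof is correct and follows essentially the same route as the paper: locate the nearest boundary point $q$, verify $x$ and $q$ lie in the chart ball $B_{C_Mr_\Omega}(q_i)$ so \eqref{eq:LipCov} applies, bound the height $x_n-\psi_i(x')$ by a multiple of $\sigma$ using the Lipschitz bound on $\psi_i$ (your coordinate-wise estimate gives $(M+1)\sigma$, the paper's gives $(\sqrt{1+M^2}+1)\sigma$, both below $(M+2)\sigma$), and then sum the Fubini volume bounds over the charts using Lemma~\ref{rOmegaIsGood}.
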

\begin{proof}
Suppose that $x\in\Omega_{\sigma}$, then there exists $q\in\partial\Omega$ such that $|x-q|=\delta(x)\leq\sigma$. From Definition~\ref{LipDom}, there exists $i$ such that $|q-q_i|\leq r_{\Omega}$, therefore $|x-q_i|<2r_{\Omega}\leq C_Mr_{\Omega}$. Then, $x\in B_{C_Mr_{\Omega}}(q_i)\cap\Omega$ and $q\in B_{C_Mr_{\Omega}}(q_i)\cap\partial\Omega$. Write $x=(x',x_n)$, $q=(q',\psi_i(q'))$ where $\|\nabla\psi_i\|_{\infty}\leq M$. Then, we estimate
\begin{align*}
x_n-\psi_i(x')&=|(x',x_n)-(x',\psi_i(x'))|\leq |x-q|+|(q',\psi_i(q'))-(x',\psi_i(x'))|\\
&\leq |x-q|+|q'-x'|\sqrt{1+M^2}\leq\left(\sqrt{1+M^2}+1\right)|x-q|<(M+2)\sigma.
\end{align*}
Moreover, $|x'|\leq |x|=|x-q_i|<2r_{\Omega}$, hence $x\in\Omega_{\sigma}^i$, which shows the first claim. Note also that
\[
|\Omega_{\sigma}^i|=\int_{B_{2r_{\Omega}}^{n-1}}\int_{\psi_i(x')}^{\psi_i(x')+(M+2)\sigma}\,dx_n\,dx'\leq Cr_{\Omega}^{n-1}\sigma=C\sigma,
\]
where $C$ depends on $n,|\Omega|$ and the Lipschitz character of $\Omega$, from Lemma~\ref{rOmegaIsGood}. Therefore,
\[
|\Omega_{\sigma}|\leq\left|\bigcup_{i=1}^N\Omega_{\sigma}^i\right|\leq\sum_{i=1}^N|\Omega_{\sigma}^i|\leq CN\sigma=C\sigma,
\]
which completes the proof.
\end{proof}

For any point $q\in\partial\Omega$, we define the nontangential region
\begin{equation}\label{eq:Cone}
\Gamma(q)=\{x\in\Omega\big{|}|x-q|\leq 10(M+1)\delta(x)\}.
\end{equation}
For a function $u\in W^{1,2}_{\loc}(\Omega)$ and $f\in L^2(\partial\Omega)$, we say that $u$ converges to $f$ nontangentially, almost everywhere on $\partial\Omega$, if $u(x)\to f(q)$ for almost every $q\in\partial\Omega$ as $x\to q$ and $x\in\Gamma(q)$.

\begin{lemma}\label{LiesAbove}
Let $\Omega\subseteq\bR^n$ be a Lipschitz domain, and suppose that $x\in B_{2r_{\Omega}}(q_i)\cap\Omega$ for some $i$. In the coordinate system for $B_{r_{\Omega}}(q_i)$, if $x=(x',x_n)$ and $q_x=(x',\psi_i(x'))$, then $B_{\delta(x)/2}(x)\subseteq \Gamma(q_x)$.
\end{lemma}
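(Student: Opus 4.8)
The goal is to show that for $x \in B_{2r_\Omega}(q_i) \cap \Omega$, with $q_x = (x', \psi_i(x'))$ the vertical projection of $x$ onto the graph, every point $y \in B_{\delta(x)/2}(x)$ satisfies $|y - q_x| \le 10(M+1)\delta(y)$, i.e. $y \in \Gamma(q_x)$. The plan is to control three quantities: $\delta(x)$ from above by $x_n - \psi_i(x')$ (they are comparable since the domain is a Lipschitz graph locally), then $\delta(y)$ from below by $\delta(x)/2$ (triangle inequality, since $y$ is in the half-ball around $x$), and finally $|y - q_x|$ from above.

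First I would record that $\delta(x) \le x_n - \psi_i(x')$, since $q_x \in \partial\Omega$ lies directly below $x$; hence $|x - q_x| = x_n - \psi_i(x') \ge \delta(x)$. Conversely, a standard Lipschitz-graph estimate (essentially the computation already carried out in the proof of Lemma~\ref{SmallArea}) gives $x_n - \psi_i(x') \le (\sqrt{1+M^2}+1)\,\delta(x)$, so $|x - q_x| \le (\sqrt{1+M^2}+1)\delta(x) \le (M+2)\delta(x)$. Next, for $y \in B_{\delta(x)/2}(x)$ the triangle inequality gives $\delta(y) \ge \delta(x) - |y-x| \ge \delta(x)/2$, and also $|y - q_x| \le |y - x| + |x - q_x| \le \tfrac{1}{2}\delta(x) + (M+2)\delta(x) = (M + \tfrac{5}{2})\delta(x)$. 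Combining, $|y - q_x| \le (M+\tfrac52)\delta(x) \le 2(M+\tfrac52)\delta(y) = (2M+5)\delta(y) \le 10(M+1)\delta(y)$, which is exactly the cone condition \eqref{eq:Cone} defining $\Gamma(q_x)$. I should also note in passing that $y$ genuinely lies in $\Omega$ (so that $\delta(y)$ makes sense and the containment is meaningful): this follows since $B_{\delta(x)}(x) \subseteq \Omega$ by definition of $\delta(x)$, and $\delta(x)/2 < \delta(x)$.

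The only mildly delicate point is the comparison $x_n - \psi_i(x') \le (\sqrt{1+M^2}+1)\delta(x)$, which requires knowing that the nearest boundary point to $x$ is found within the graph patch $B_{C_M r_\Omega}(q_i)$ rather than somewhere on a different chart; but since $x \in B_{2r_\Omega}(q_i)$ we have $\delta(x) \le |x - q_x| = x_n - \psi_i(x') \le$ (something like) $3\sqrt{1+M^2}\,r_\Omega < C_M r_\Omega$ — more carefully, $x_n \le |x| < 2r_\Omega$ and $\psi_i(x') \ge -M|x'| > -2Mr_\Omega$, so $x_n - \psi_i(x') < 2(M+1)r_\Omega < C_M r_\Omega$ — hence the nearest boundary point to $x$ stays inside $B_{C_M r_\Omega}(q_i)$, where $\partial\Omega$ is precisely the graph of $\psi_i$, so the Lipschitz-graph estimate applies verbatim as in Lemma~\ref{SmallArea}. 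Everything else is elementary triangle-inequality bookkeeping, and the slack between $2M+5$ and $10(M+1)$ is comfortable.
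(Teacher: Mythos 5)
Your proof is correct and follows essentially the same route as the paper's: bound $|x-q_x|=x_n-\psi_i(x')$ by a constant multiple of $\delta(x)$ using the nearest boundary point, which lies on the graph of $\psi_i$ within the chart, then apply the triangle inequality and the comparison $\delta(y)\geq\delta(x)/2$ to verify the cone condition \eqref{eq:Cone}. The only difference is cosmetic: you make explicit the final passage from $(M+\tfrac52)\delta(x)$ to $10(M+1)\delta(y)$ and the check that the nearest point stays in the graph patch, steps the paper leaves implicit, and your constant $\sqrt{1+M^2}+1$ versus the paper's $M+1$ is immaterial.
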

\begin{proof}
Set $\delta=\delta(x)$, and note first that $B_{\delta/2}(x)\subseteq\Omega$. Since $|x-q_i|<2r_{\Omega}$, we obtain that $\delta<2r_{\Omega}$, so there exists $q\in B_{4r_{\Omega}}(q_i)\cap\partial\Omega$ with $q=(q',\psi_i(q'))$ such that $\delta=|x-q|$. Then, for $y=(y',y_n)\in B_{\delta/2}(x)$,
\begin{align*}
|y-q_x|&\leq |y-x|+|x-(x',\psi_i(x'))|\leq\frac{\delta}{2}+(x_n-\psi_i(x'))\leq\frac{\delta}{2}+|x_n-\psi_i(q')|+|\psi_i(q')-\psi_i(x')|\\
&\leq\frac{\delta}{2}+|x_n-\psi_i(q')|+M|x'-q'|\leq\frac{\delta}{2}+(M+1)|x-q|\leq (M+2)\delta,
\end{align*}
which completes the proof.
\end{proof}
For a function $u$ defined in $\Omega$, we define the nontangential maximal function and the truncated nontangential maximal function
\begin{equation}\label{eq:MaxFun}
u^*(q)=\sup_{x\in\Gamma(q)}|u(x)|,\quad u^*_{\e}(q)=\sup_{x\in\Gamma(q),\delta(x)<\e}|u(x)|.
\end{equation}

For a point $q\in\partial\Omega$, suppose that $q\in B_{r_{\Omega}}(q_i)$ for some $i\in\{1,\dots N\}$ as in Definition~\ref{LipDom}, and let $r<r_{\Omega}$. Assume that, in the coordinate system for $B_{r_{\Omega}}(q)$, $q=\psi_i(q')$. We then define, similarly to (5.4) in \cite{KenigShen},
\begin{align}\label{eq:DT}
\begin{split}
&\Delta_r(q)=\{(x',\psi_i(x'))\in\bR^n\big{|}|x'-q'|<r\},\\
&T_r(q)=\{(x',x_n)\in\bR^n\big{|}|x'-q'|<r,\,\psi_i(x')<x_n<\psi_i(x')+r\}.
\end{split}
\end{align}
Note then that, from Definition~\ref{LipDom}, $T_r(q)\subseteq\Omega$.

We now show the next analog of Lemma~\ref{rOmegaIsGood}, for the diameter of a Lipschitz domain.

\begin{lemma}\label{DiameterBound}
Let $\Omega$ be a Lipschitz domain. Then, $\diam(\Omega)$ is bounded above and below by constants that depend on $n,|\Omega|$ and the Lipschitz character of $\Omega$.
\end{lemma}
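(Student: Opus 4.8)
The plan is to bound $\diam(\Omega)$ from above and below using exactly the same two ingredients that were used for $r_\Omega$ in Lemma~\ref{rOmegaIsGood}, namely the isoperimetric inequality and the covering from Definition~\ref{LipDom}, together with the already-established bounds on $r_\Omega$ from Lemma~\ref{rOmegaIsGood}. For the lower bound, I would argue that $\Omega$ contains a ball of radius comparable to $r_\Omega$: in fact, the proof of Lemma~\ref{rOmegaIsGood} already exhibits $B_{\delta_M r_\Omega}(y)\subseteq\Omega$ for an explicit point $y$ and $\delta_M=\frac{M}{2(M+1)}$, so $\diam(\Omega)\geq 2\delta_M r_\Omega$, and since $r_\Omega$ is bounded below by Lemma~\ref{rOmegaIsGood}, we get a lower bound on $\diam(\Omega)$ depending only on $n,|\Omega|$ and the Lipschitz character. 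Alternatively, and even more simply, the isoperimetric inequality gives $|\Omega|\leq C_n(\diam\Omega)^n$ directly (a set of diameter $D$ sits inside a ball of radius $D$), which yields $\diam(\Omega)\geq c_n|\Omega|^{1/n}$ with no reference to the Lipschitz structure at all; I would probably present this version for the lower bound since it is the cleanest.

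For the upper bound I would use the fact that $\partial\Omega$ is covered by the $N$ balls $B_{r_\Omega}(q_i)$, which forces $\partial\Omega$ — and hence $\overline\Omega$ — to have controlled diameter. Concretely, for any two points $p,q\in\partial\Omega$ one has $p\in B_{r_\Omega}(q_{i})$ and $q\in B_{r_\Omega}(q_{j})$ for some indices, so $|p-q|\leq 2r_\Omega+|q_i-q_j|$; it remains to bound $|q_i-q_j|$. Since $\Omega$ is connected (a Lipschitz domain is a domain), the union $\bigcup_i B_{r_\Omega}(q_i)$ together with $\Omega$ is connected, and one can chain the balls: any two of the centers $q_i,q_j$ can be joined by a sequence of at most $N$ balls each consecutive pair of which intersects, giving $|q_i-q_j|\leq 2Nr_\Omega$. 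Hence $\diam(\partial\Omega)\leq (2N+2)r_\Omega$, and since $\overline\Omega$ is bounded with $\partial\overline\Omega=\partial\Omega$ for a Lipschitz domain, a standard argument (a bounded open connected set with boundary of diameter $D$ has diameter $\leq D$, because the farthest pair of points in $\overline\Omega$ can be taken on $\partial\Omega$) gives $\diam(\Omega)\leq (2N+2)r_\Omega$. Finally invoking the upper bound on $r_\Omega$ from Lemma~\ref{rOmegaIsGood} produces an upper bound on $\diam(\Omega)$ depending only on $n,|\Omega|$ and the Lipschitz character.

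The only genuinely delicate point is the chaining step: one needs that the index set $\{1,\dots,N\}$ cannot split into two groups of balls that are mutually disjoint, which follows from connectedness of $\Omega$ since each $B_{r_\Omega}(q_i)$ meets $\Omega$ in an open set and these open sets together with $\Omega$ cover a connected set — if the balls split into two separated clusters, the portions of $\partial\Omega$ they cover would disconnect $\overline\Omega$, contradicting connectedness. I expect this to be the main obstacle to write carefully; everything else (the isoperimetric lower bound, the triangle-inequality manipulations, and quoting Lemma~\ref{rOmegaIsGood}) is routine. If one prefers to avoid the chaining argument entirely, an alternative for the upper bound is: the sets $\Omega_\sigma^i$ of Lemma~\ref{SmallArea}, or rather the graph pieces in \eqref{eq:LipCov}, show $\Omega$ is contained in a fixed neighborhood of $\bigcup_i B_{C_M r_\Omega}(q_i)$, and then the same connectedness-chaining is still needed — so I would keep the argument above and simply state the chaining bound $|q_i-q_j|\le 2Nr_\Omega$ with a one-line justification from connectedness.
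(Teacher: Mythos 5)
Your lower bound is correct and is exactly the paper's argument: $\Omega\subseteq B_{2R}(x)$ gives $|\Omega|\leq C_nR^n$, with no Lipschitz structure needed. The genuine gap is in the upper bound, at the chaining step. The claim that any two centers $q_i,q_j$ can be joined by a chain of the covering balls is not justified by your connectedness argument, and in fact $\bigcup_i B_{r_\Omega}(q_i)$ need not be connected: the boundary of a Lipschitz domain can be disconnected. Take $\Omega$ to be a thick spherical shell (a scaled copy of $\{1<|x|<10\}$). Then $\partial\Omega$ has two components, and if $r_\Omega$ is small compared with the gap, the balls covering the inner sphere and those covering the outer sphere form two clusters at positive distance from each other. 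No contradiction with connectedness of $\overline\Omega$ arises, because the clusters cover only $\partial\Omega$, not $\overline\Omega$: the points deep inside the shell lie in neither cluster, so the clusters do not "disconnect $\overline\Omega$". Consequently the bound $|q_i-q_j|\leq 2Nr_\Omega$ fails as stated (inner-sphere centers cannot be chained to outer-sphere centers), and your upper bound argument breaks exactly there.

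The statement you want can likely still be reached along covering lines, but it needs real additional input: for instance, show that $\diam(\overline\Omega)$ is attained by two points on the boundary of the unbounded component of $\bR^n\setminus\overline\Omega$ (extreme points of the convex hull), prove that this outer boundary component is connected (a unicoherence-type topological argument), and then chain within that single component, where an open cover of a connected set genuinely admits chaining. The paper avoids all of this topology by a different route: it takes $p_1,p_2\in\partial\Omega$ realizing the diameter, lifts them to points $y_1,y_2\in\Omega$ with $\delta(y_i)\gtrsim r_\Omega$, and connects them by a Harnack chain of roughly $\log_2(R/r_\Omega)$ balls contained in $\Omega$ (the Harnack chain property of Lipschitz domains, (3.4) in \cite{JerisonKenigBoundary}); since each ball contained in $\Omega$ has radius at most $C_n|\Omega|^{1/n}$, this gives $R\leq C+C\log_2(CR+C)$, and a short bootstrap yields $R\leq C$. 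So: keep your lower bound, but either repair the chaining step with the boundary-component analysis above or switch to the Harnack chain argument; as written, the connectedness justification is false.
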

\begin{proof}
Set $R=\diam(\Omega)$. Since $\Omega\subseteq B_{2R}(x)$ for any $x\in\Omega$, we obtain $|\Omega|\leq C_nR^n$. For the reverse inequality, let $p_1,p_2\in\partial\Omega$ such that $|p_1-p_2|=R$. Suppose that $p_1=(p_1',\psi_{i_1}(p_1'))\in B_{r_{\Omega}}(q_{i_1})$ and $p_2=(p_2',\psi_{i_2}(p_2'))\in B_{r_{\Omega}}(q_{i_2})$, in the coordinate systems for $B_{r_{\Omega}}(q_{i_1})$, $B_{r_{\Omega}}(q_{i_2})$, respectively, and set $y_1=(p_1',\psi_{i_1}(p_1')+r_{\Omega}/2)\in B_{C_Mr_{\Omega}}(q_{i_1})$, $y_2=(p_2',\psi_{i_2}(p_2')+r_{\Omega}/2)\in B_{C_Mr_{\Omega}}(q_{i_2})$. From the proof of Lemma~\ref{LiesAbove}, $\delta(y_1)\geq\frac{1}{M+2}|y_1-p_1|=C_Mr_{\Omega}$ and $\delta(y_2)\geq C_Mr_{\Omega}$, for $C_M=\frac{1}{2(M+2)}$. We now set
\begin{equation*}
\e=\frac{C_M}{2}r_{\Omega},\quad k=\left\lfloor\log_2\left(\frac{2R+2r_{\Omega}}{C_Mr_{\Omega}}\right)\right\rfloor+1,
\end{equation*}
where $\lfloor\cdot\rfloor$ denotes the integer part function. Then $\delta(y_1),\delta(y_2)>\e$, and $|y_1-y_2|<2^k\e$. Hence, from (3.4) in \cite{JerisonKenigBoundary}, we can connect $y_1,y_2$ with a Harnack chain of balls $B_1,\dots B_{Ck}$ in $\Omega$, with $y_1\in B_1$ and $y_2\in B_{Ck}$, where $C$ depends on the Lipschitz character of $\Omega$.

If $s_j$ is the radius of $B_j$ and $z_j$ is the center of $B_j$, then $s_j=C_n|B_j|^{1/n}\leq C_n|\Omega|^{1/n}$. Moreover, $\log h\leq\sqrt{h}$ for all $h>0$ and $2\log 2>1$, hence $\log_2h=\frac{\log h}{\log 2}\leq 2\sqrt{h}$. Hence,
\begin{align*}
R&=|p_1-p_2|\leq r_{\Omega}+|y_1-z_1|+|y_2-z_{Ck}|+\sum_{j=2}^{Ck}|z_j-z_{j-1}|\leq r_{\Omega}+2\sum_{j=1}^{Ck}s_j\leq r_{\Omega}+2Ck\cdot C_n|\Omega|^{1/n}\\
&\leq C+C\log_2(CR+C)\leq C+2C\sqrt{CR+C}\leq 3C\sqrt{C}+2C\sqrt{CR},
\end{align*}
where $C>16$ depends on $n,|\Omega|$ and the Lipschitz character of $\Omega$, from the definition of $k$ and Lemma~\ref{rOmegaIsGood}. If we assume that $R\geq C^4$, then $\sqrt{R}=\frac{R}{\sqrt{R}}\leq\frac{R}{C^2}$, hence
\begin{equation*}
R\leq 3C\sqrt{C}+2C\sqrt{CR}\leq 3C\sqrt{C}+2\frac{R}{\sqrt{C}}\leq 3C\sqrt{C}+\frac{R}{2}\quad\Rightarrow\quad R\leq 6C\sqrt{C},
\end{equation*}
which contradicts the assumption $R\geq C^4$. Therefore $R<C^4$, which completes the proof.
\end{proof}
We also show the next bound.

\begin{lemma}\label{BoundxInsideDelta}
Let $\Omega\subseteq\mathbb R^n$ be a Lipschitz domain and $\delta\in(0,1)$. Then, for any $x\in\Omega$,
\[
\int_{\partial\Omega}|x-q|^{1-n+\delta}\,d\sigma(q)\leq C,
\]
where $C$ depends on $n,\delta,|\Omega|$ and the Lipschitz character of $\Omega$. 
\end{lemma}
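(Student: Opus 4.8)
The plan is to reduce the integral over $\partial\Omega$ to a sum of integrals over the Lipschitz graph pieces covering the boundary, and then on each piece bound the surface measure by the pushforward of Lebesgue measure on $\bR^{n-1}$, whereupon the integral becomes a standard convergent Euclidean integral because the exponent $1-n+\delta > 1-n$. The fact that $\partial\Omega$ is compact (bounded) is what makes the tail convergent; the only danger is the singularity near $q = x$ when $x$ is close to $\partial\Omega$, and that is handled by the exponent being strictly bigger than the critical one $1-n$.

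First I would cover $\partial\Omega$ by the balls $B_{r_{\Omega}}(q_i)$, $i = 1,\dots,N$, from Definition~\ref{LipDom}, so that
\[
\int_{\partial\Omega}|x-q|^{1-n+\delta}\,d\sigma(q)\leq\sum_{i=1}^N\int_{\partial\Omega\cap B_{r_{\Omega}}(q_i)}|x-q|^{1-n+\delta}\,d\sigma(q).
\]
Fix $i$ and work in the coordinate system for $q_i$, writing $q = (q',\psi_i(q'))$ with $|q'| < r_{\Omega}$ and $\|\nabla\psi_i\|_\infty\leq M$; then $d\sigma(q) = \sqrt{1+|\nabla\psi_i(q')|^2}\,dq' \leq \sqrt{1+M^2}\,dq'$. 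Write also $x = (x'_0,x_{0,n})$ in this coordinate system, and note $|x - q|\geq |x'_0 - q'|$. Hence the $i$-th term is bounded by
\[
\sqrt{1+M^2}\int_{|q'|<r_{\Omega}}|x'_0 - q'|^{1-n+\delta}\,dq'\leq \sqrt{1+M^2}\int_{|w|<2r_{\Omega}}|w|^{1-n+\delta}\,dw,
\]
after the translation $w = q' - x'_0$ and enlarging the domain of integration (since $|x'_0|\leq |x - q_i|$ need not be small, I instead just use that the ball $\{|q'|<r_{\Omega}\}$ shifted by $-x'_0$ is contained in some fixed ball; alternatively keep $\{|q'|<r_\Omega\}$ and bound $\int_{|q'|<r_\Omega}|x'_0-q'|^{1-n+\delta}\,dq'$ by its maximum over $x'_0$, which is attained in the interior and is finite). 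Passing to polar coordinates in $\bR^{n-1}$,
\[
\int_{|w|<2r_{\Omega}}|w|^{1-n+\delta}\,dw = \omega_{n-2}\int_0^{2r_{\Omega}}\rho^{1-n+\delta}\rho^{n-2}\,d\rho = \omega_{n-2}\int_0^{2r_{\Omega}}\rho^{\delta-1}\,d\rho = \omega_{n-2}\frac{(2r_{\Omega})^{\delta}}{\delta},
\]
which is finite precisely because $\delta > 0$. Summing over $i = 1,\dots,N$ gives the bound $C = N\sqrt{1+M^2}\,\omega_{n-2}(2r_{\Omega})^{\delta}/\delta$, and since $r_{\Omega}$ is controlled above and below in terms of $n,|\Omega|$ and the Lipschitz character by Lemma~\ref{rOmegaIsGood}, and $N$ is part of the Lipschitz character, the constant $C$ depends only on $n,\delta,|\Omega|$ and the Lipschitz character of $\Omega$.

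The only genuinely delicate point is ensuring the reduction "$|x-q|\geq |x'_0 - q'|$" together with the change of variables really does dominate the original integral uniformly in $x\in\Omega$ — in particular that one does not lose control when $x'_0$ lies far outside the coordinate chart's natural range; this is why I keep the radial integral over a fixed finite interval $(0, \diam(\Omega) + 2r_\Omega)$ rather than $(0,2r_\Omega)$ if needed, and invoke Lemma~\ref{DiameterBound} to see $\diam(\Omega)$ is controlled by the allowed parameters. Everything else is a routine polar-coordinate computation, and the exponent $\delta - 1 > -1$ is exactly the integrability threshold that makes it work.
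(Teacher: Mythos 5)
Your proof is correct, and the delicate point you flag yourself is the only one that needed attention: the projection $x'_0$ of $x$ in the $i$-th chart need not lie near the base ball $\{|q'|<r_{\Omega}\}$, so the naive translation-plus-enlargement to $\{|w|<2r_{\Omega}\}$ is not legitimate as first written. Both of your repairs work: either dominate $\int_{|q'|<r_{\Omega}}|x'_0-q'|^{1-n+\delta}\,dq'$ by the centered integral $\int_{|w|<r_{\Omega}}|w|^{1-n+\delta}\,dw$ (a radially decreasing integrand over a ball of fixed radius is maximized when the ball is centered at the singularity), or enlarge to $\{|w|<r_{\Omega}+\diam(\Omega)\}$, using $|x'_0|\leq|x-q_i|\leq\diam(\Omega)$ together with Lemma~\ref{DiameterBound} and Lemma~\ref{rOmegaIsGood} to control the resulting constant. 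Your route differs from the paper's in the decomposition: the paper splits according to whether $\delta(x)\geq r_{\Omega}$ or $\delta(x)<r_{\Omega}$; in the far case it bounds the whole integral by $r_{\Omega}^{1-n+\delta}\sigma(\partial\Omega)$ via \eqref{eq:sigmaMeasure}, and in the near case it uses only the single chart containing the boundary point $q_x$ nearest to $x$ — so the projected integral is automatically centered at $x'$ and no offset issue arises — while the contribution of $\partial\Omega\setminus\Delta_{2r_{\Omega}}(q_x)$ is again bounded crudely since there $|x-q|>r_{\Omega}$. You instead sum the chart estimate over all $N$ charts, which avoids the case split and any explicit use of $\sigma(\partial\Omega)$, at the cost of the extra step controlling the offset (rearrangement or Lemma~\ref{DiameterBound}); both arguments ultimately rest on the same projection inequality $|x-q|\geq|x'-q'|$ and the integrability of $|w|^{1-n+\delta}$ in $\bR^{n-1}$. (A cosmetic remark: with the paper's convention $\|\nabla\psi_i\|_{\infty}\leq M$, the graph Jacobian bound is $\sqrt{1+nM^2}$ rather than $\sqrt{1+M^2}$, which only affects the admissible dependence of $C$ on $n$ and $M$.)
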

\begin{proof}
If $\delta(x)\geq r_{\Omega}$, then $|x-q|\geq r_{\Omega}$ for any $q\in\partial\Omega$, so $\displaystyle\int_{\partial\Omega}|x-q|^{1-n+\delta}\,d\sigma(q)\leq r_{\Omega}^{1-n+\delta}\sigma(\partial\Omega)\leq C$,
where $C$ depends on $n,\delta,|\Omega|$ and the Lipschitz character of $\Omega$, from \eqref{eq:sigmaMeasure} and Lemma~\ref{rOmegaIsGood}.

If now $\delta(x)<r_{\Omega}$, then there exists $q_x\in\partial\Omega$ such that $|x-q_x|=\delta(x)<r_{\Omega}$. From Definition~\ref{LipDom}, there exists $i$ such that $q_x\in B_{r_{\Omega}}(q_i)$, and then $x\in B_{C_Mr_{\Omega}}(q_i)$. In the coordinate system for $B_{r_{\Omega}}(q_i)$ we denote $x=(x',x_n)$, $q_x=(x',\psi_i(x'))$, and we also denote any $q\in\Delta_{2r_{\Omega}}(q_x)$ by $q=(q',\psi_i(q'))$ for $q'\in B_{2r_{\Omega}}^{n-1}(x')=B_0$, an $n-1$ dimensional ball centered at $x'$, with radius $2r_{\Omega}$. Then, we have that $|x-q|\geq|x'-q'|$, hence
\begin{equation}\label{eq:InsideSmall}
\int_{\Delta_{2r_{\Omega}}(q_x)}|x-q|^{1-n+\delta}\,d\sigma(q)\leq \int_{\Delta_{2r_{\Omega}}(q_x)}|x'-q'|^{1-n+\delta}\,d\sigma(q)\leq C\int_{B_0}|x'-q'|^{1-n+\delta}\,dq'\leq C,
\end{equation}
where $C$ depends on $n,\delta,M$ and $r_{\Omega}$. Moreover, if $q\in\partial\Omega\setminus \Delta_{2r_{\Omega}}(q_x)$, then we have that $|q-q_x|\geq 2r_{\Omega}$, hence $|x-q|\geq |q-q_x|-|q_x-x|\geq 2r_{\Omega}-\delta(x)>r_{\Omega}$,
therefore
\begin{equation}\label{eq:OutsideSmall}
\int_{\partial\Omega\setminus\Delta_{2r_{\Omega}}(q_x)}|x-q|^{1-n+\delta}\,d\sigma(q)\leq r_{\Omega}^{1-n+\delta}\sigma(\partial\Omega\setminus\Delta_{2r_{\Omega}}(q_x))\leq r_{\Omega}^{1-n+\delta}\sigma(\partial\Omega).
\end{equation}
Finally, we add \eqref{eq:InsideSmall} and \eqref{eq:OutsideSmall} and use \eqref{eq:sigmaMeasure} and Lemma~\ref{rOmegaIsGood} to complete the proof.
\end{proof}

\subsection{Sobolev spaces on the boundary}
We now turn to the definition of $W^{1,2}(\partial\Omega)$, which will be the space of boundary values for the Regularity problem $R_2$. The following is similar to Definition 1.7 in \cite{Verchota}.

\begin{dfn}
Let $\Omega$ be a Lipschitz domain. We say that $f\in W^{1,2}(\partial\Omega)$ if $f\in L^2(\partial\Omega)$ and if in each ball $B_i=B_{r_{\Omega}}(q_i)$ in Definition~\ref{LipDom}, there exist functions $g_1,\dots g_{n-1}\in L^2(B_i\cap\partial\Omega)$, so that, for every $h\in C_c^{\infty}(\bR^{n-1})$, $j=1,\dots n-1$ and $i=1,\dots N$,
\[
\int_{\mathbb R^{n-1}}h(x)g_j(x,\psi_i(x))\,dx=\int_{\mathbb R^{n-1}}\partial_jh(x)f(x,\psi_i(x))\,dx.
\]
\end{dfn}
In local coordinates, if $\nu(q)$ is the unit outer normal of $\partial\Omega$ at $q$, we then define (as in Definition 1.9 in \cite{Verchota}),
\[
-\nabla_Tf(q)=(g_1(q),\dots g_{n-1}(q),0)-\left<(g_1(q),\dots g_{n-1}(q),0),\nu(q)\right>\cdot\nu(q).
\]
Then $\nabla_Tf(q)$ is normal to $\nu(q)$ almost everywhere on $\partial\Omega$, and it is independent of the choice of coordinates. Moreover, if $f$ is $C^1$ in a neighborhood of $q$ in $\bR^n$ and $\nu(q)$ exists, we can show that $\nabla_Tf(q)=\nabla f(q)-\left<\nabla f(q),\nu(q)\right>\nu(q)$. We also define the norm
\[
\|f\|_{W^{1,2}(\partial\Omega)}=\|f\|_{L^2(\partial\Omega)}+\|\nabla_Tf\|_{L^2(\partial\Omega)}.
\]
Under this norm, $W^{1,2}(\partial\Omega)$ is a Hilbert space, with inner product
\[
\left<f,g\right>_{W^{1,2}(\partial\Omega)}=\int_{\partial\Omega}\left(f\cdot g+\nabla_Tf\cdot\nabla_Tg\right)\,d\sigma.
\]
We also consider the space $W^{-1,2}(\partial\Omega)$, which is the dual of $W^{1,2}(\partial\Omega)$. Then, the Riesz representation theorem shows that there exists an invertible operator
\begin{equation}\label{eq:Emb1}
R:W^{1,2}(\partial\Omega)\to W^{-1,2}(\partial\Omega),\quad \quad (Rf)g=\int_{\partial\Omega}(fg+\nabla_Tf\cdot\nabla_Tg)\,d\sigma ,
\end{equation}
for all $f,g\in W^{1,2}(\partial\Omega)$, and also $\|R\|_{W^{1,2}(\partial\Omega)\to W^{-1,2}(\partial\Omega)}=\|R^{-1}\|_{W^{-1,2}(\partial\Omega)\to W^{1,2}(\partial\Omega)}=1.$
In addition, we consider the operator
\begin{equation}\label{eq:Emb2}
E:L^2(\partial\Omega)\to W^{-1,2}(\partial\Omega):\quad \forall g\in W^{1,2}(\partial\Omega),\quad (Ef)g=\int_{\partial\Omega}fg\,d\sigma.
\end{equation}
Then, the image $E(L^2(\partial\Omega))$ is dense in $W^{-1,2}(\partial\Omega)$: if $G=Rg\in W^{-1,2}(\partial\Omega)$ for some $g\in W^{1,2}(\partial\Omega)$ with $\left<G,Ef\right>_{W^{-1,2},W^{-1,2}}=0$ for any $f\in L^2(\partial\Omega)$, then $\int_{\partial\Omega}fg\,d\sigma=0$ for any $f\in L^2(\partial\Omega)$, so $g\equiv 0$. This implies that $(E(L^2(\partial\Omega)))^{\perp}=\{0\}$ in $W^{-1,2}(\partial\Omega)$, therefore $\overline{E(L^2(\partial\Omega))}=W^{-1,2}(\partial\Omega)$.

\section{Estimates}
\subsection{A priori estimates}
We now turn to various a priori estimates for solutions to the equation $\mathcal{L}u=0$. We remark that similar estimates to the ones we will show appear in Section 2 in \cite{XuUniformSystems} under slightly stronger assumptions than ours.

We first show the Cacciopoli estimate.

\begin{lemma}\label{Cacciopoli}
Let $\Omega\subseteq\bR^n$ be a domain with $|\Omega|<1$. Let $A$ be bounded and elliptic in $\Omega$ with ellipticity $\lambda$, and let $b,c\in L^p(\Omega)$, $d\in L^{p/2}(\Omega)$ for some $p>n$. Assume also that $u\in W^{1,2}_{\loc}(\Omega)$ solves the equation $-\dive(A\nabla u+bu)+c\nabla u+du=-\dive f+g$ in $\Omega$, for some $f,g\in L^2(\Omega)$. Then, for any $\psi\in C_c^{\infty}(\Omega)$,
\[
\int_{\Omega}|\psi\nabla u|^2\leq C\int_{\Omega}\left(|\psi|^2+|\nabla\psi|^2\right)|u|^2+C\int_{\Omega}|f\psi|^2+C\int_{\Omega}|g\psi|^2,
\]
where $C$ depends on $n$,$\lambda,\|A\|_{\infty}$, $\|b\|_p,\|c\|_p$ and $\|d\|_{p/2}$.
\end{lemma}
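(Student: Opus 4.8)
The plan is the standard energy estimate: test the equation with $\phi=\psi^2u$, use ellipticity \eqref{eq:el} to put $\lambda\int_\Omega\psi^2|\nabla u|^2$ on one side, and estimate every remaining term so that its gradient part can be absorbed. Since $\psi\in C_c^\infty(\Omega)$ and $u\in W^{1,2}_{\loc}(\Omega)$, the function $\psi^2u$ lies in $W_0^{1,2}(\Omega)$ and is supported in a fixed compact $K\subseteq\Omega$; as $p>n$, Hölder's inequality and the Sobolev embedding $u\in L^{2n/(n-2)}(K)$ show that $bu$, $cu$, $c\cdot\nabla u$, $du$ all lie in $L^2(K)$ (or better), so the weak formulation extends from $C_c^\infty(\Omega)$ to $\phi=\psi^2u$ by approximating the latter in $W^{1,2}$ by test functions supported in $K$. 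Using $\nabla(\psi^2u)=2\psi u\nabla\psi+\psi^2\nabla u$, the identity becomes
\begin{align*}
\int_\Omega\psi^2A\nabla u\cdot\nabla u={}&-2\int_\Omega\psi u\,A\nabla u\cdot\nabla\psi-2\int_\Omega\psi u^2\,b\cdot\nabla\psi-\int_\Omega\psi^2u\,b\cdot\nabla u-\int_\Omega\psi^2u\,c\cdot\nabla u\\
&-\int_\Omega d\,\psi^2u^2+2\int_\Omega\psi u\,f\cdot\nabla\psi+\int_\Omega\psi^2f\cdot\nabla u+\int_\Omega g\,\psi^2u,
\end{align*}
and by \eqref{eq:el} the left side is at least $\lambda\int_\Omega\psi^2|\nabla u|^2$.

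The terms free of lower order coefficients are routine. Cauchy--Schwarz and Young's inequality give $|2\int_\Omega\psi u\,A\nabla u\cdot\nabla\psi|\le\e\int_\Omega\psi^2|\nabla u|^2+C_\e\|A\|_\infty^2\int_\Omega|\nabla\psi|^2u^2$, $|\int_\Omega\psi^2f\cdot\nabla u|\le\e\int_\Omega\psi^2|\nabla u|^2+C_\e\int_\Omega|f\psi|^2$, $|2\int_\Omega\psi u\,f\cdot\nabla\psi|\le\int_\Omega|f\psi|^2+\int_\Omega|\nabla\psi|^2u^2$, and $|\int_\Omega g\psi^2u|\le\tfrac12\int_\Omega|g\psi|^2+\tfrac12\int_\Omega\psi^2u^2$, each of which already has the desired shape (the gradient pieces being available for absorption).

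The core of the proof is the lower order terms, since there is no smallness assumption on $\|b\|_p$, $\|c\|_p$, $\|d\|_{p/2}$. Applying Young's inequality --- and, in the term $2\int_\Omega\psi u^2\,b\cdot\nabla\psi$, grouping the factor $b$ with $\psi u$ rather than with $u\nabla\psi$ --- these terms are dominated by
\[
\e\int_\Omega\psi^2|\nabla u|^2+C\int_\Omega|b|^2(\psi u)^2+C\int_\Omega|c|^2(\psi u)^2+\int_\Omega|d|(\psi u)^2+C\int_\Omega|\nabla\psi|^2u^2,
\]
so everything reduces to bounding $\int_\Omega|b|^2(\psi u)^2$, $\int_\Omega|c|^2(\psi u)^2$ and $\int_\Omega|d|(\psi u)^2$. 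For the first, Hölder gives $\int_\Omega|b|^2(\psi u)^2\le\|b\|_p^2\|\psi u\|_{L^q(\Omega)}^2$ with $q=\tfrac{2p}{p-2}$; because $p>n$ one has $2<q<\tfrac{2n}{n-2}$, so interpolating between $L^2$ and $L^{2n/(n-2)}$ and then using $\|\psi u\|_{L^{2n/(n-2)}(\bR^n)}\le C_n\|\nabla(\psi u)\|_{L^2(\bR^n)}$ (valid since $\psi u\in W_0^{1,2}(\Omega)$ extends by zero to $\bR^n$) yields $\|\psi u\|_{L^q(\Omega)}^2\le C_n^{2\theta}\|\psi u\|_{L^2}^{2(1-\theta)}\|\nabla(\psi u)\|_{L^2}^{2\theta}$ with $\theta=\tfrac np\in(0,1)$. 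The decisive point is that the gradient exponent $2\theta$ is strictly below $2$, so Young's inequality with exponents $\tfrac1\theta,\tfrac1{1-\theta}$ gives, for every $\e'>0$,
\[
\int_\Omega|b|^2(\psi u)^2\le\e'\|\nabla(\psi u)\|_{L^2}^2+C(\e',n,p,\|b\|_p)\|\psi u\|_{L^2}^2,
\]
with no smallness required. The $c$- and $d$-terms are handled identically, using $\int_\Omega|d|(\psi u)^2\le\|d\|_{p/2}\|\psi u\|_{L^q(\Omega)}^2$ with the same $q$. Finally $\|\nabla(\psi u)\|_{L^2}^2\le2\int_\Omega\psi^2|\nabla u|^2+2\int_\Omega|\nabla\psi|^2u^2$ and $\|\psi u\|_{L^2}^2=\int_\Omega\psi^2u^2\le\int_\Omega(\psi^2+|\nabla\psi|^2)u^2$, so these estimates also have the required shape.

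Putting it together, the left side, which is $\ge\lambda\int_\Omega\psi^2|\nabla u|^2$, is bounded by a sum of terms of the form $C\int_\Omega(\psi^2+|\nabla\psi|^2)u^2+C\int_\Omega|f\psi|^2+C\int_\Omega|g\psi|^2$ plus a gradient contribution whose total coefficient can be made $\le\lambda/2$ by first fixing $\e$ small in the direct Young estimates and then each $\e'$ small in the interpolation estimates; absorbing that contribution gives the stated inequality, with the dependence of $C$ as claimed (the assumption $|\Omega|<1$ entering only to normalize constants, e.g.\ to bound lower $L^r$-norms by the given ones). The main obstacle is precisely this last point — controlling $\int_\Omega|b|(\psi u)^2$-type quantities without a smallness hypothesis — and it is the interpolation step, available exactly because $p>n$ is strict, that resolves it; a secondary point needing care is keeping $\psi u$ (rather than $u\nabla\psi$) grouped with $b$ in $2\int_\Omega\psi u^2\,b\cdot\nabla\psi$, so that the Sobolev inequality applies to $\psi u\in W_0^{1,2}(\Omega)$ without introducing second derivatives of $\psi$.
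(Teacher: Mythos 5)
Your proof is correct, and it follows the same overall skeleton as the paper (test with $\psi^2u$, use ellipticity, absorb the gradient contributions), but the key step --- controlling the lower order terms without any smallness of $\|b\|_p,\|c\|_p,\|d\|_{p/2}$ --- is handled by a genuinely different mechanism. The paper splits each coefficient via the truncation $f=f_t+f^t$ of (2.1) in \cite{KimSak}, estimates the large part by $\left\|f^t\right\|_n\leq C\|f\|_p^{p/n}t^{1-p/n}$ (their (2.4)), and then chooses $t$ large and the Cauchy parameter $\delta$ small so that the gradient terms carry a coefficient below $1/4$; you instead apply H\"older with the full $L^p$ (resp.\ $L^{p/2}$) norm, land in $L^q$ with $q=\frac{2p}{p-2}<2^*$, interpolate between $L^2$ and $L^{2^*}$ with exponent $\theta=n/p<1$, and use Young with exponents $1/\theta,\,1/(1-\theta)$ to get an arbitrarily small multiple of $\|\nabla(\psi u)\|_2^2$. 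Both arguments exploit exactly the strict inequality $p>n$; yours is more self-contained (no appeal to the truncation lemma of \cite{KimSak}) and makes the absorption transparent, while the paper's truncation device is the one that also functions at the critical scale $b,c\in L^n$, $d\in L^{n/2}$ when only the truncated parts are small. One small inaccuracy in your preliminary justification: for $u\in W^{1,2}_{\loc}$ one has $c\cdot\nabla u\in L^{2p/(p+2)}(K)$ and $du\in L^r(K)$ with $r$ possibly below $2$, so these products need not lie in $L^2(K)$; what you actually need (and what holds) is that they lie in the dual exponent range pairing continuously with $L^{2^*}$, so the extension of the weak formulation to $\phi=\psi^2u\in W_0^{1,2}$ by approximation still goes through, and this does not affect the rest of the argument.
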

\begin{proof}
Using $u\psi^2\in W_0^{1,2}(\Omega)$ as a test function we obtain that
\[
\int_{B_{2r}}A\nabla u\nabla(u\psi^2)+b\nabla(u\psi^2)\cdot u+c\nabla u\cdot u\psi^2+du^2\psi^2=\int_{\Omega}f\psi(\psi\nabla u+2u\nabla\psi)+gu\psi^2=I,
\]
where we estimate, for $\delta>0$,
\begin{equation}\label{eq:IIBound}
I\leq \frac{1}{4\delta}\|f\psi\|_2^2+\delta\|\psi\nabla u\|_2^2+\|f\psi\|_2^2+\|u\nabla\psi\|_2^2+\|g\psi\|_2^2+\|u\psi\|_2^2.
\end{equation}
Using the ellipticity of $A$ we then obtain
\begin{align}\nonumber
\lambda\int_{\Omega}|\psi\nabla u|^2&\leq -\int_{\Omega}\left(2A\nabla u\nabla\psi\cdot u\psi+(b+c)\nabla u\cdot u\psi^2+2b\nabla\psi\cdot u^2\psi+du^2\psi^2\right)+I\\
\nonumber
&\leq C\|u\nabla\psi\|_2\|\psi\nabla u\|_2+\int_{\Omega}\left(|b+c||\psi\nabla u||u\psi|+2|b||u\nabla\psi||u\psi|+|d||u^2\psi^2|\right)+I\\
\label{eq:Plug}
&\leq C\delta\|\psi\nabla u\|_2^2+\frac{C}{4\delta}\|u\nabla\psi\|_2^2+\int_{\Omega}|b+c||\psi\nabla u||u\psi|+2|b||u\nabla\psi||u\psi|+|d||u^2\psi^2|+I,
\end{align}
for any $\delta$, where $C$ depends on $\|A\|_{\infty}$.

As in (2.1) in \cite{KimSak}, for a function $f\geq 0$ and $t>0$, we denote $f^t(x)=f(x)$ if $f(x)\geq t$, and $f(x)=0$ if $f(x)<t$, and we also set $f_t=f-f^t$. Since $u\psi\in W_0^{1,2}(\Omega)$, using Sobolev's inequality and also (2.4) in \cite{KimSak}, we obtain that, for $t>1$,
\begin{align}\nonumber
\int_{\Omega}|b+c||\psi\nabla u||u\psi|&\leq t\int_{\Omega}|\psi\nabla u||u\psi|+\left\||b+c|^t\right\|_n\|\psi\nabla u\|_2\|u\psi\|_{2^*}\\
\nonumber
&\leq t\|\psi\nabla u\|_2\|u\psi\|_2+C_n\|b+c\|_p^{p/n}t^{1-p/n}\|\psi\nabla u\|_2\left(\|\psi\nabla u\|_2+\|u\nabla\psi\|_2\right)\\
\nonumber
&\leq t\delta\|\psi\nabla u\|_2^2+\frac{t}{4\delta}\|u\psi\|_2^2+Ct^{1-p/n}\|\psi\nabla u\|_2^2+C\|\psi\nabla u\|_2^2\|u\nabla\psi\|_2^2\\
\label{eq:Plug1}
&\leq\left(t\delta+Ct^{1-p/n}+C\delta\right)\|\psi\nabla u\|_2^2+\frac{t}{4\delta}\|u\psi\|_2^2+\frac{C}{4\delta}\|u\nabla\psi\|_2^2,
\end{align}
where $C$ depends on $n$, $\|b\|_p$ and $\|c\|_p$. Moreover, since $|\Omega|\leq 1$, $\|b\|_n\leq\|b\|_p$, therefore
\begin{equation}\label{eq:Plug2}
\int_{\Omega}|b||u\nabla\psi||u\psi|\leq\|b\|_n\|u\nabla\psi\|_2\|u\psi\|_{2^*}\leq  C\delta\|\psi\nabla u\|_2^2+\left(C+\frac{C}{4\delta}\right)\|u\nabla\psi\|_2^2,
\end{equation}
where $C$ depends on $n$ and $\|b\|_p$, and also, using (2.4) in \cite{KimSak},
\begin{align}\nonumber
\int_{\Omega}|d||u^2\psi^2|&\leq t\|u\psi\|_2^2+\left\||d|^t\right\|_{n/2}\|u\psi\|_{2^*}^2\leq t\|u\psi\|_2^2+C_n\|d\|_{p/2}^{p/n}t^{1-p/n}\left(\|\psi\nabla u\|_2+\|u\nabla\psi\|_2\right)^2\\
\label{eq:Plug3}
&\leq t\|u\psi\|_2^2+Ct^{1-p/n}\|\psi\nabla u\|_2^2+C\|u\nabla\psi\|_2^2.
\end{align}
where $C$ depends on $n$ and $\|d\|_{p/2}$. Plugging \eqref{eq:Plug1}, \eqref{eq:Plug2} and \eqref{eq:Plug3} in \eqref{eq:Plug}, and also using \eqref{eq:IIBound},
\begin{multline*}
\int_{\Omega}|\psi\nabla u|^2\leq \left(C+\frac{C}{4\delta}\right)\|u\nabla\psi\|_2^2+\left(1+t+\frac{t}{4\delta}\right)\|u\psi\|_2^2+\left(t\delta+Ct^{1-p/n}+C\delta\right)\|\psi\nabla u\|_2^2\\
+\left(\frac{1}{4\delta}+1\right)\|f\psi\|_2^2+\|g\psi\|_2^2,
\end{multline*}
where $C$ depends on $n$, $\lambda,\|A\|_{\infty}$,$\|b\|_p,\|c\|_p$ and $\|d\|_{p/2}$. We now choose $t>1$, depending on $\lambda$, $\|A\|_{\infty}$,$\|b\|_p,\|c\|_p,\|d\|_{p/2}$, such that $Ct^{1-p/n}<\frac{1}{4}$, and we choose $\delta>0$ such that $t\delta+C\delta<\frac{1}{4}$. Then,
\[
\int_{\Omega}|\psi\nabla u|^2\leq C\|u\nabla\psi\|_2^2+C\|u\psi\|_2^2+C\|f\psi\|_2+C\|g\psi\|_2^2,
\]
which completes the proof.
\end{proof}

We now turn to regularity of the derivatives of solutions to $\mathcal{L}u=0$. The next lemma will be the basis for a bootstrap argument.

\begin{lemma}\label{InductiveStep}
Let $\Omega\subseteq\mathbb R^n$ be a domain, and suppose that $B_2$ is a compactly supported ball in $\Omega$ with radius $2$. Let also $A\in\M_{\Omega}(\lambda,\alpha,\tau)$ and $b,c,d,f,g\in L^p(\Omega)$ for some $p>n$. Suppose also that $1<\beta<\gamma<2$, and $u\in W^{1,t}(B_{\gamma})$ is a solution to the equation
\[
-\dive(A\nabla u+bu)+c\nabla u+du=-\dive f+g
\]
in $B_2$, for some $t\in(1,n)$. Then,
\[
\|u\|_{W^{1,s^*}(B_{\beta})}\leq C\|u\|_{W^{1,t}(B_{\gamma})}+C\|f\|_{L^p(B_{\gamma})}+C\|g\|_{L^p(B_{\gamma})},
\]
where $\frac{1}{s^*}=\frac{1}{t}+\frac{1}{p}-\frac{1}{n}$, and $C$ depends on $n,p,\lambda$, $\alpha$, $\tau$, $\gamma-\beta$, $\|A\|_{L^{\infty}(B_2)}$, $\|b\|_{L^p(B_2)}$, $\|c\|_{L^p(B_2)}$ and $\|d\|_{L^p(B_2)}$.
\end{lemma}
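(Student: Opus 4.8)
The plan is to transfer the zeroth- and first-order terms of $\mathcal L$ to the right-hand side, so that $u$ solves a pure second-order divergence-form equation whose coefficient matrix $A$ is continuous (indeed H\"older, by \eqref{eq:Hold}), and then to apply the interior Calder\'on--Zygmund $L^q$-estimate for such equations on the pair of balls $B_\beta\subset B_\gamma$. Thus rewrite the equation in $B_2$ as
\[
-\dive(A\nabla u)=-\dive\tilde f+\tilde g,\qquad \tilde f:=f-bu,\qquad \tilde g:=g-c\nabla u-du,
\]
and put $\tfrac1s:=\tfrac1t+\tfrac1p$, so that $\tfrac1{s^*}=\tfrac1s-\tfrac1n$; that is, $s^*$ is the Sobolev conjugate of $s$. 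Since $1<t<n$ and $p>n$, an elementary check gives $1<s<n$, $1<s^*<p$, $s<t$, and $s^*<t^*$ where $\tfrac1{t^*}:=\tfrac1t-\tfrac1n$, and moreover $s$ and $s^*$ lie in ranges depending only on $n$ and $p$.

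First I would estimate the new data in the relevant spaces on $B_\gamma$. Since $t<n$, Sobolev's inequality gives $u\in L^{t^*}(B_\gamma)$ with $\|u\|_{L^{t^*}(B_\gamma)}\le C\|u\|_{W^{1,t}(B_\gamma)}$. By H\"older's inequality, $bu\in L^{s^*}(B_\gamma)$ because $\tfrac1p+\tfrac1{t^*}=\tfrac1{s^*}$, and $f\in L^p(B_\gamma)\subseteq L^{s^*}(B_\gamma)$, so
\[
\|\tilde f\|_{L^{s^*}(B_\gamma)}\le C\|f\|_{L^p(B_\gamma)}+C\|b\|_{L^p(B_2)}\|u\|_{W^{1,t}(B_\gamma)}.
\]
Likewise $c\nabla u,\,du\in L^s(B_\gamma)$ (since $\tfrac1p+\tfrac1t=\tfrac1s$), and $g\in L^p(B_\gamma)\subseteq L^s(B_\gamma)$, so $\|\tilde g\|_{L^s(B_\gamma)}$ is bounded by $C\|g\|_{L^p(B_\gamma)}+C(\|c\|_{L^p(B_2)}+\|d\|_{L^p(B_2)})\|u\|_{W^{1,t}(B_\gamma)}$. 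Finally, on the bounded ball $B_\gamma$ one has $L^s(B_\gamma)\hookrightarrow W^{-1,s^*}(B_\gamma)$: this is the dual of the Sobolev embedding $W^{1,(s^*)'}_0(B_\gamma)\hookrightarrow L^{s'}(B_\gamma)$, which holds because $\tfrac1{(s^*)'}-\tfrac1n=\tfrac1{s'}$. Hence $\tilde g\in W^{-1,s^*}(B_\gamma)$ with the same bound.

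Then I would invoke the interior $W^{1,q}$ estimate with $q=s^*$: if $A\in\M_{B_\gamma}(\lambda,\alpha,\tau)$ and $w\in W^{1,t}(B_\gamma)$, $t\in(1,s^*)$, solves $-\dive(A\nabla w)=-\dive h_1+h_2$ in $B_\gamma$ with $h_1\in L^{s^*}(B_\gamma)$ and $h_2\in W^{-1,s^*}(B_\gamma)$, then $w\in W^{1,s^*}(B_\beta)$ and
\[
\|w\|_{W^{1,s^*}(B_\beta)}\le C\Big(\|w\|_{W^{1,t}(B_\gamma)}+\|h_1\|_{L^{s^*}(B_\gamma)}+\|h_2\|_{W^{-1,s^*}(B_\gamma)}\Big),
\]
with $C$ depending on $n,p,\lambda,\alpha,\tau,\gamma-\beta$ and $\|A\|_{L^\infty(B_2)}$. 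Applying this with $w=u$, $h_1=\tilde f$, $h_2=\tilde g$, and inserting the bounds from the previous step, yields the lemma. (Equivalently, one can localize with a cutoff $\eta\equiv1$ on $B_\beta$, $\operatorname{supp}\eta\subset B_{\gamma'}$ with $\beta<\gamma'<\gamma$, and apply a global $L^{s^*}$ bound to $\eta u$, whose equation has right-hand side data of the same type.)

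The heart of the matter — everything else being routine exponent bookkeeping and a cutoff computation — is the interior $W^{1,s^*}$ Calder\'on--Zygmund estimate for $-\dive(A\nabla\cdot)$ when $A$ is merely continuous. It must hold for \emph{all} exponents in $(1,\infty)$, in particular for $s^*$; for continuous (a fortiori H\"older) coefficients this is classical, obtained by freezing $A$ at a point, using the constant-coefficient $L^q$ bounds, and absorbing the error via \eqref{eq:Hold} on small balls, with the range $s^*<2$ requiring in addition a duality argument. One must also apply it in the form that produces the membership $u\in W^{1,s^*}(B_\beta)$, not merely an a priori inequality; this is exactly the role of the hypothesis $u\in W^{1,t}(B_\gamma)$ with $t>1$, which places the solution in a Sobolev class on which the Calder\'on--Zygmund machinery upgrades the integrability in a single step.
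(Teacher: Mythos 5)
Your proposal is correct in outline and, at bottom, follows the same route as the paper: identical exponent bookkeeping ($1/s=1/t+1/p$, $s^*$ the Sobolev conjugate of $s$), the terms $bu$, $c\nabla u$, $du$ moved to the right-hand side and measured in $L^{s^*}$ resp.\ $L^s\hookrightarrow W^{-1,s^*}$, and then $W^{1,q}$ theory for the principal part $-\dive(A\nabla\cdot)$. The difference is where the real work sits. You invoke, as a black box, an interior Calder\'on--Zygmund estimate that simultaneously (a) accepts mixed data $-\dive h_1+h_2$ with $h_1\in L^{s^*}$, $h_2\in W^{-1,s^*}$, and (b) upgrades a solution known only to lie in $W^{1,t}$ with $t<s^*$ (possibly $t<2$) to membership in $W^{1,s^*}$ on the smaller ball. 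That statement is true for H\"older coefficients, but it is precisely the content of the lemma, and in this self-improving form it is not an off-the-shelf a priori estimate (standard versions either assume the solution already lies in $W^{1,s^*}$ or start from energy solutions). The paper manufactures it explicitly: multiply by a cutoff $\phi$, take the Newtonian potential $v_{\gamma}$ of the non-divergence-form part of the data (Theorem 9.9 in \cite{Gilbarg} plus Sobolev converts this $L^s$-level term into a divergence-form term $\nabla v_{\gamma}\in L^{s^*}$), and then apply the global $W^{1,s^*}$ result of \cite{AuscherQafsaoui} to the compactly supported function $u\phi\in W_0^{1,t}$; it is the existence--uniqueness content of that theorem that produces the integrability upgrade, not merely an inequality. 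Your parenthetical cutoff remark is the right idea, but note that the cutoff reintroduces terms such as $A\nabla u\cdot\nabla\eta$, which live only at the $L^t$/$L^s$ level, so the potential-theoretic (or duality) step cannot be skipped; to complete your argument you must either cite a result covering such very weak solutions or reproduce the paper's reduction.

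One concrete slip: your ``elementary check'' that $1<s$ is false in general --- $s>1$ is equivalent to $1/t+1/p<1$, which the hypotheses $t>1$, $p>n$ do not guarantee, and your embedding $L^s(B_{\gamma})\hookrightarrow W^{-1,s^*}(B_{\gamma})$ needs exactly $s>1$. (The paper's proof uses the same condition implicitly, through Theorem 9.9 in \cite{Gilbarg}, so this does not distinguish the two arguments, but it should not be presented as automatic.)
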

\begin{proof}
At first assume, more generally, that $t>1$. Let $\phi$ be a smooth cutoff supported in $B_{\gamma}$, with $\phi=0$ in $B_{\gamma}\setminus B_{\frac{\gamma+\beta}{2}}$, and $\phi\equiv 1$ in $B_{\beta}$, with $|\nabla\phi|,|\nabla^2\phi|\leq C$ for $C$ depending on $\gamma-\beta$. We then compute
\begin{equation}\label{eq:uphi}
-\dive(A\nabla(u\phi))=\dive(bu\phi-A\nabla\phi\cdot u-f\phi)+(g-c\nabla u-du)\phi+(f-A\nabla u-bu)\nabla\phi.
\end{equation}
We now follow the lines of the proof of Proposition 2.13 in \cite{DongEscKim}: let $v_{\gamma}$ be the Newtonian potential of the function
\begin{equation}\label{eq:DefOfh}
h_{\gamma}=\left((g-c\nabla u-du)\phi+(f-A\nabla u-bu)\nabla\phi\right)\chi_{B_{\gamma}}.
\end{equation}
Set $s=\frac{pt}{p+t}$. Then, from the pointwise estimates on $\phi$ and $\nabla\phi$,
\begin{align}\label{eq:Lsforf}
\|h_{\gamma}\|_{L^s(\bR^n)}&\leq C\|c\nabla u\|_{L^s(B_{\gamma})}+C\|A\|_{\infty}\|\nabla u\|_{L^s(B_{\gamma})}+\|(d\phi+b\nabla\phi)u \|_{L^s(B_{\gamma})}+\|g\phi+f\nabla\phi\|_{L^s(B_{\gamma})}.
\end{align}
From H{\"o}lder's inequality, we estimate
\begin{equation}\label{eq:Lsforcu}
\|c\nabla u\|_{L^s(B_{\gamma})}\leq\|c\|_p\|\nabla u\|_{L^t(B_{\gamma})},\quad  \|(d\phi+b\nabla\phi)u\|_{L^s(B_{\gamma})}\leq C\left(\|b\|_p+\|d\|_p\right)\|u\|_{L^t(B_{\gamma})}.
\end{equation}
Moreover, since $s<t$ and $s<p$, we estimate
\begin{equation}\label{eq:Lsforu}
\|\nabla u\|_{L^s(B_{\gamma})}\leq C\|\nabla u\|_{L^t(B_{\gamma})},\quad \|g\phi+f\nabla\phi\|_{L^s(B_{\gamma})}\leq C\|f\|_{L^p(B_{\gamma})}+C\|g\|_{L^p(B_{\gamma})}.
\end{equation}
Plugging \eqref{eq:Lsforcu} and \eqref{eq:Lsforu} in \eqref{eq:Lsforf}, we obtain that
\begin{equation}\label{eq:Lsforf2}
\|h_{\gamma}\|_{L^s(\bR^n)}\leq C\|u\|_{W^{1,t}(B_{\gamma})}+C\|f\|_{L^p(B_{\gamma})}+C\|g\|_{L^p(B_{\gamma})},
\end{equation}
where $C$ depends on $n,p$ $\gamma-\beta$, $\|A\|_{\infty}$, $\|b\|_p$, $\|c\|_p$ and $\|d\|_p$. Then, from Theorem 9.9 in \cite{Gilbarg},
\begin{equation}\label{eq:NewtonianBound}
\|\nabla ^2v_{\gamma}\|_{L^s(\mathbb R^n)}\leq C\|h_{\gamma}\|_{L^s(\mathbb R^n)}\leq C\|u\|_{W^{1,t}(B_{\gamma})}+C\|f\|_{L^p(B_{\gamma})}+C\|g\|_{L^p(B_{\gamma})}.
\end{equation}
Assume now that $t\in(1,n)$, then $s<n$. Therefore, from Sobolev's inequality, and using \eqref{eq:NewtonianBound}, for $\frac{1}{s^*}=\frac{1}{s}-\frac{1}{n}=\frac{1}{p}+\frac{1}{t}-\frac{1}{n}$,
\begin{equation}\label{eq:vs^*}
\|\nabla v_{\gamma}\|_{L^{s^*}(\mathbb R^n)}\leq C\|\nabla ^2v_{\gamma}\|_{L^s(\mathbb R^n)}\leq C\|u\|_{W^{1,t}(B_{\gamma})}+C\|f\|_{L^p(B_{\gamma})}+C\|g\|_{L^p(B_{\gamma})}.
\end{equation}
Now, from \eqref{eq:uphi} we obtain that
\begin{equation}\label{eq:DefOfg'}
-\dive(A\nabla(u\phi))=\dive(bu\phi-A\nabla\phi\cdot u-f\phi+\nabla v_{\gamma})=\dive g'_{\gamma}.
\end{equation}
Since $u\phi$ and $u|\nabla\phi|$ vanish in a neighborhood of $\partial B_{\gamma}$ and also $s<t$, from the Sobolev inequality,
\begin{equation}\label{eq:L^s*foru}
\|u\phi\|_{L^{t^*}(B_{\gamma})}\leq C\|u\|_{W^{1,t}(B_{\gamma})},\quad\left\|u|\nabla\phi|\right\|_{L^{s^*}(B_{\gamma})}\leq \left\|\nabla\left(u|\nabla\phi|\right)\right\|_{L^s(B_{\gamma})}\leq C\|u\|_{W^{1,t}(B_{\gamma})},
\end{equation}
where $\frac{1}{t^*}=\frac{1}{t}-\frac{1}{n}$. Since $s^*<p$ and $\frac{s^*}{p}+\frac{s^*}{t^*}=1$, using \eqref{eq:vs^*} and the estimates in \eqref{eq:L^s*foru},
\begin{align*}
\|g'_{\gamma}\|_{L^{s^*}(B_{\gamma})}&\leq\|bu\phi\|_{L^{s^*}(B_{\gamma})}+\|A\|_{\infty}\left\|u|\nabla\phi|\right\|_{L^{s^*}(B_{\gamma})}+\|f\phi\|_{L^{s^*}(B_{\gamma})}+\|\nabla v_{\gamma}\|_{L^{s^*}(B_{\gamma})}\\
&\leq \|b\|_p\|u\phi\|_{L^{t^*}(B_{\gamma})}+ C\|u\|_{W^{1,t}(B_{\gamma})}+C\|f\|_{L^p(B_{\gamma})}+C\|g\|_{L^p(B_{\gamma})}\\
&\leq C\|u\|_{W^{1,t}(B_{\gamma})}+C\|f\|_{L^p(B_{\gamma})}+C\|g\|_{L^p(B_{\gamma})}
\end{align*}
Therefore, using Theorem 1 in \cite{AuscherQafsaoui}, we obtain that
\[
\|\nabla u\|_{L^{s^*}(B_{\beta})}\leq \|\nabla(u\phi)\|_{L^{s^*}(B_{\gamma})}\leq  C\|g'_{\gamma}\|_{L^{s^*}(B_{\gamma})}\leq C\|u\|_{W^{1,t}(B_{\gamma})}+C\|f\|_{L^p(B_{\gamma})}+C\|g\|_{L^p(B_{\gamma})},
\]
where $C$ depends on $n,p,\lambda$, $\alpha$, $\tau$, $\gamma-\beta$, $\|A\|_{\infty}$, $\|b\|_p$, $\|c\|_p$ and $\|d\|_p$.
Finally, we finish the proof by noting that $\|u\|_{L^{s^*}(B_{\beta})}\leq \|u\phi\|_{L^{s^*}(B_{\gamma})}$, and using that $s^*<t^*$ and the first estimate in \eqref{eq:L^s*foru}.
\end{proof}

We now obtain higher integrability for the derivatives of $u$.

\begin{prop}\label{LpReg}
Let $\Omega\subseteq\mathbb R^n$ be a domain with $|\Omega|<1$, and suppose that $A\in \M_{\Omega}(\lambda,\alpha,\tau)$, and $b,c,d,f,g\in L^p(\Omega)$ for some $p>n$. Assume also that the ball $B_{2r}$ is compactly supported in $\Omega$. Then, for any solution $u\in W^{1,2}_{\loc}(\Omega)$ of the equation
\[
\mathcal{L}u=-\dive(A\nabla u+bu)+c\nabla u+du=-\dive f+g
\]
in $\Omega$, we have that
\begin{align*}
\int_{B_r}|\nabla u|^p&\leq Cr^{n-p-pn/2}\|u\|_{L^2(B_{2r})}^p+C\|f\|_{L^p(B_{2r})}^p+Cr^p\|g\|_{L^p(B_{2r})}\\
&\leq\frac{C}{r^p}\int_{B_{2r}}|u|^p+C\int_{B_{2r}}|f|^p+Cr^p\int_{B_{2r}}|g|^p,
\end{align*}
where $C$ depends on $n,p$, $\lambda,\alpha,\tau$, $\|A\|_{L^{\infty}(B_{2r})}$, $\|b\|_{L^p(B_{2r})}$, $\|c\|_{L^p(B_{2r})}$ and $\|d\|_{L^p(B_{2r})}$.
\end{prop}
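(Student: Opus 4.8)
The plan is to rescale $B_{2r}$ to a unit-scale ball, run a finite bootstrap built on Lemma~\ref{InductiveStep} to improve the integrability of $\nabla u$ step by step until it passes $L^n$, then — using that $u$ is bounded at that point — extract one more gain to land exactly in $L^p$, and finally remove the gradient term from the right-hand side by Caccioppoli. For the reduction to unit scale, note that since $\overline{B_{2r}}\subseteq\Omega$ and $|\Omega|<1$, the radius $r$ is bounded above by a constant depending only on $n$. Writing $B_{2r}=B_{2r}(x_0)$ and setting $\tilde u(y)=u(x_0+ry)$, we get $\tilde u\in W^{1,2}(B_2)$ solving $-\dive(\tilde A\nabla\tilde u+\tilde b\tilde u)+\tilde c\nabla\tilde u+\tilde d\tilde u=-\dive\tilde f+\tilde g$ in $B_2$, with $\tilde A(y)=A(x_0+ry)\in\M_{B_2}(\lambda,\alpha,\tau)$ (the H\"older seminorm being $\tau r^{\alpha}$, controlled since $r$ is bounded) and $\tilde b=rb,\ \tilde c=rc,\ \tilde d=r^2d,\ \tilde f=rf,\ \tilde g=r^2g$ precomposed with the dilation, so that $\|\tilde b\|_{L^p(B_2)},\|\tilde c\|_{L^p(B_2)},\|\tilde d\|_{L^p(B_2)}$ are bounded by dimensional multiples of $\|b\|_{L^p(B_{2r})},\|c\|_{L^p(B_{2r})},\|d\|_{L^p(B_{2r})}$. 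It then suffices to prove $\|\nabla\tilde u\|_{L^p(B_1)}\le C\big(\|\tilde u\|_{W^{1,2}(B_{\gamma_0})}+\|\tilde f\|_{L^p(B_2)}+\|\tilde g\|_{L^p(B_2)}\big)$ for some $\gamma_0\in(1,2)$; undoing the dilation (which introduces the powers of $r$ in the statement, via $\|\tilde u\|_{L^2}=r^{-n/2}\|u\|_{L^2(B_{2r})}$, $\|\tilde f\|_{L^p}=r^{1-n/p}\|f\|_{L^p(B_{2r})}$, $\|\tilde g\|_{L^p}=r^{2-n/p}\|g\|_{L^p(B_{2r})}$), then using Caccioppoli (Lemma~\ref{Cacciopoli}, with a cutoff equal to $1$ on $B_{\gamma_0 r}(x_0)$ and supported in $B_{2r}$, legitimate since $|\Omega|<1$) to absorb $\|\nabla u\|_{L^2(B_{\gamma_0 r})}$, and finally H\"older's inequality on $B_{2r}$ to pass between $L^2$- and $L^p$-norms of $u$, produces the two displayed estimates.

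For the bootstrap, put $t_0=2$ and $\tfrac1{t_{k+1}}=\tfrac1{t_k}+\tfrac1p-\tfrac1n$; since $p>n$ the gap $\eta:=\tfrac1n-\tfrac1p$ is a fixed positive number, so $\tfrac1{t_k}=\tfrac12-k\eta$ decreases by a fixed step and after finitely many steps drops below $\tfrac1n$. Fix radii $2>\gamma_0>\gamma_1>\cdots>1$. While $t_k<n$, apply Lemma~\ref{InductiveStep} with $t=t_k$, $\gamma=\gamma_k$, $\beta=\gamma_{k+1}$, obtaining $\tilde u\in W^{1,t_{k+1}}(B_{\gamma_{k+1}})$ with $\|\tilde u\|_{W^{1,t_{k+1}}(B_{\gamma_{k+1}})}\le C\|\tilde u\|_{W^{1,t_k}(B_{\gamma_k})}+C\|\tilde f\|_{L^p}+C\|\tilde g\|_{L^p}$. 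Let $m$ be the first index with $t_m>n$; should the sequence land exactly on $n$, start instead from a slightly smaller exponent $t_0\in(1,2)$ (legitimate since $\nabla\tilde u\in L^2(B_2)\subseteq L^{t_0}(B_2)$), chosen to avoid the countably many bad values. Chaining the finitely many inequalities gives $\|\tilde u\|_{W^{1,t_m}(B_{\gamma_m})}\le C\big(\|\tilde u\|_{W^{1,2}(B_{\gamma_0})}+\|\tilde f\|_{L^p}+\|\tilde g\|_{L^p}\big)$, and since $t_m>n$, Morrey's embedding yields $\tilde u\in L^\infty(B_{\gamma_m})$ with $\|\tilde u\|_{L^\infty(B_{\gamma_m})}\lesssim\|\tilde u\|_{W^{1,t_m}(B_{\gamma_m})}$.

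To reach exactly $L^p$, run the argument in the proof of Lemma~\ref{InductiveStep} once more, now with $t=t_m$ and a final pair of radii $1<\beta'<\gamma'=\gamma_m$: with $\phi$ the corresponding cutoff and $v$ the Newtonian potential of $h=\big((\tilde g-\tilde c\nabla\tilde u-\tilde d\tilde u)\phi+(\tilde f-\tilde A\nabla\tilde u-\tilde b\tilde u)\nabla\phi\big)\chi_{B_{\gamma'}}$, the boundedness of $\tilde u$ puts $\tilde b\tilde u$ and $\tilde d\tilde u$ into $L^p$, while $\tilde c\nabla\tilde u\in L^s$ with $\tfrac1s=\tfrac1p+\tfrac1{t_m}$; hence Theorem 9.9 of \cite{Gilbarg} gives $\|\nabla^2v\|_{L^s}\lesssim\|h\|_{L^s}$, and Sobolev's (or Morrey's, if $s\ge n$) inequality gives $\nabla v\in L^{s^*}$ with $\tfrac1{s^*}=\tfrac1p+\tfrac1{t_m}-\tfrac1n<\tfrac1p$, so $\nabla v\in L^p$. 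Then $-\dive(A\nabla(\tilde u\phi))=\dive\big(\tilde b\tilde u\phi-\tilde A\nabla\phi\,\tilde u-\tilde f\phi+\nabla v\big)$ has right-hand side in $L^p$, and Theorem 1 of \cite{AuscherQafsaoui} yields $\nabla(\tilde u\phi)\in L^p$ with $\|\nabla(\tilde u\phi)\|_{L^p}\lesssim\|\tilde u\|_{W^{1,t_m}(B_{\gamma_m})}+\|\tilde f\|_{L^p}+\|\tilde g\|_{L^p}$, i.e.\ $\|\nabla\tilde u\|_{L^p(B_1)}\lesssim\|\tilde u\|_{W^{1,2}(B_{\gamma_0})}+\|\tilde f\|_{L^p(B_2)}+\|\tilde g\|_{L^p(B_2)}$, which is what was reduced to.

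The main obstacle is precisely this last step. Lemma~\ref{InductiveStep} admits only input exponents $t<n$, so the bootstrap alone halts at some $L^{t_m}$ with $n<t_m<p$ and cannot by itself reach $L^p$; closing the gap forces one to re-open the lemma's proof and use that $\tilde u$ is bounded — which is exactly what puts the coefficient products $\tilde b\tilde u$ and $\tilde d\tilde u$ into $L^p$ rather than a worse space. A secondary, purely bookkeeping, difficulty is handling the borderline exponent $t=n$ (dealt with by perturbing $t_0$, as above) and tracking the powers of $r$ through both the rescaling and the Caccioppoli step so that they match the form of the stated inequalities.
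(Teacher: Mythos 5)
Your argument is essentially the paper's proof: rescale $B_{2r}$ to unit scale, bootstrap the integrability of $\nabla u$ through Lemma~\ref{InductiveStep} in finitely many steps of fixed size $\frac1n-\frac1p$, use Morrey once the exponent passes $n$ to get $\tilde u\in L^\infty$, and then rerun the Newtonian-potential decomposition from the proof of Lemma~\ref{InductiveStep} one last time so that Theorem 1 of \cite{AuscherQafsaoui} yields the $L^p$ bound for $\nabla(\tilde u\phi)$. The differences are mostly cosmetic: the paper hard-wires the ladder $\frac1{t_i}=\frac1n+(N-i-\frac12)\delta$ (the half-step offset playing the role of your perturbation of $t_0$), and in the final pass it bounds $\nabla v$ in $L^\infty$ via Morrey, claiming $s=\frac{pt_N}{p+t_N}>n$, whereas you only use $\frac1{s^*}=\frac1p+\frac1{t_m}-\frac1n<\frac1p$, i.e. $\nabla v\in L^{s^*}\subseteq L^p$ locally. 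Your variant is in fact the more robust one here, since $s>n$ holds only when $p>3n$; only the borderline $s=n$ needs the (easy) remark that $W^{1,n}$ embeds locally into every finite $L^q$.

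One bookkeeping step does not deliver quite what you claim: you perform the Caccioppoli absorption \emph{after} undoing the dilation, but Lemma~\ref{Cacciopoli} is not scale-invariant (its right-hand side carries $\int|g\psi|^2$ with no factor of $r^2$). Tracking the powers of $r$, the term $\|\nabla\tilde u\|_{L^2(B_{\gamma_0})}=r^{1-n/2}\|\nabla u\|_{L^2(B_{\gamma_0 r})}$ then contributes, after H\"older, a term $C\|g\|^p_{L^p(B_{2r})}$ to the final estimate instead of the stated $Cr^p\|g\|^p_{L^p(B_{2r})}$, so for small $r$ you prove a strictly weaker inequality in the $g$-term. The fix is the order of operations the paper uses: absorb $\|\nabla\tilde u\|_{L^2}$ by Caccioppoli and H\"older at unit scale, i.e. first prove $\|\nabla\tilde u\|_{L^p(B_1)}\leq C\bigl(\|\tilde u\|_{L^2(B_2)}+\|\tilde f\|_{L^p(B_2)}+\|\tilde g\|_{L^p(B_2)}\bigr)$, and only then undo the dilation, so that the factor $r^{2-n/p}$ carried by $\tilde g$ survives and produces $Cr^p\|g\|^p_{L^p(B_{2r})}$.
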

\begin{proof}
By scaling, it is enough to assume that $r=1$. Let $\delta=\frac{1}{n}-\frac{1}{p}\in\left(0,\frac{1}{3}\right)$, then there exists $N\in\mathbb N$ with $N\geq 1$ such that $N\delta<1-\frac{1}{n}+\frac{\delta}{2}\leq(N+1)\delta$. Set
\[
\frac{1}{t_i}=\frac{1}{n}+\left(N-i-\frac{1}{2}\right)\delta, \quad i=0,\dots N.
\]
Note then that $t_i>1$ for $i=0,\dots N$, and
\[
\frac{1}{t_{N-1}}>\frac{1}{n},\qquad\frac{1}{t_N}<\frac{1}{n},\qquad \frac{1}{t_0}=\frac{1}{n}-\frac{3\delta}{2}+(N+1)\delta\geq 1-\delta>\frac{1}{2},
\]
since $\delta<\frac{1}{3}$. Therefore, applying Lemma~\ref{InductiveStep} for $i=0,\dots N-1$ and suitable $\beta_i,\gamma_i$, we obtain
\begin{align*}
\|u\|_{W^{1,t_N}(B_{7/4})}&\leq C\|u\|_{W^{1,t_0}(B_{15/8})}+C\|f\|_{L^p(B_{15/8})}+C\|g\|_{L^p(B_{15/8})}\\
&\leq C\|u\|_{W^{1,2}(B_{15/8})}+C\|f\|_{L^{p}(B_2)}+C\|g\|_{L^p(B_2)},
\end{align*}
since $t_0<2$. Since $\|u\|_{W^{1,2}(B_{15/8})}\leq C\|u\|_{L^2(B_2)}+C\|f\|_{L^2(B_2)}+C\|g\|_{L^2(B_2)}$ from Lemma~\ref{Cacciopoli}, combining with H{\"o}lder's inequality we obtain that
\begin{equation}\label{eq:tN}
\|u\|_{W^{1,t_N}(B_{7/4})}\leq C\|u\|_{L^2(B_2)}+C\|f\|_{L^p(B_2)}+C\|g\|_{L^p(B_2)}.
\end{equation}
Note now that $t_N>n$, therefore Morrey's inequality shows that
\begin{equation}\label{eq:uInf}
\|u\|_{L^{\infty}(B_{7/4})}\leq \|u\|_{C^{1-n/t_N}(B_{7/4})}\leq C\|u\|_{W^{1,t_N}(B_{7/4})}\leq C\|u\|_{L^2(B_2)}+C\|f\|_{L^p(B_2)}+C\|g\|_{L^p(B_2)},
\end{equation}
where $C$ depends on $n,p,\lambda,\alpha,\tau$, $\|A\|_{\infty},\|b\|_p,\|c\|_p$ and $\|d\|_p$. Let now $\phi$ be a smooth cutoff that is supported in $B_{13/8}$ and it is equal to $1$ in $B_{3/2}$, set $h_{7/4}$ to be as in \eqref{eq:DefOfh} for $\gamma=\frac{7}{4}$ and let $v_{7/4}$ be the Newtonian potential of $h_{7/4}$. Then \eqref{eq:Lsforf2} for $s=\frac{pt_N}{p+t_N}$ shows that
\[
\|h_{7/4}\|_{L^s(\bR^n)}\leq C\|u\|_{W^{1,t_N}(B_{7/4})}+C\|f\|_{L^p(B_{7/4})}+C\|g\|_{L^p(B_{7/4})}.
\]
Since $t_N>n$, we obtain that $s>n$. Therefore Morrey's inequality, Theorem 9.9 in \cite{Gilbarg} and \eqref{eq:tN} show that
\[
\|\nabla v_{7/4}\|_{C^{0,1-n/s}(B_{7/4})}\leq C\|\nabla^2v_{7/4}\|_{L^s(\bR^n)}\leq C\|h_{7/4}\|_{L^s(\bR^n)}\leq  C\|u\|_{L^2(B)}+C\|f\|_{L^p(B_2)}+C\|g\|_{L^p(B_2)},
\]
hence, from the definition of the Newtonian potential,
\begin{equation}\label{eq:NablavInf}
\|\nabla v_{7/4}\|_{L^{\infty}(B_{7/4})}\leq \|\nabla v_{7/4}\|_{C^{1-n/s}(B_{7/4})}\leq C\|u\|_{L^2(B_2)}+C\|f\|_{L^p(B_2)}+C\|g\|_{L^p(B_2)}.
\end{equation}
Hence, if $g'_{7/4}$ is defined as in \eqref{eq:DefOfg'}, then $-\dive(A\nabla(u\phi))=\dive g'_{7/4}$ in $B_{7/4}$, and
\begin{align*}
\|g'_{7/4}\|_{L^p(B_{7/4})}&\leq \|bu\phi\|_{L^p(B_{7/4})}+\|A\nabla\phi\cdot u\|_{L^{p}(B_{7/4})}+\|f\phi\|_{L^p(B_{7/4})}+\|\nabla v_{7/4}\|_{L^p(B_{7/4})}\\
&\leq\|b\|_{L^{p}(B_{7/4})}\|u\|_{L^{\infty}(B_{7/4})}+C\|u\|_{L^{p}(B_{7/4})}+C\|f\|_{L^p(B_2)}+\|\nabla v_{7/4}\|_{L^p(B_{7/4})}\\
&\leq C\|u\|_{L^2(B_2)}+C\|f\|_{L^p(B_2)}+\|g\|_{L^p(B_2)},
\end{align*}
from \eqref{eq:uInf} and \eqref{eq:NablavInf}. Hence, Theorem 1 in \cite{AuscherQafsaoui} shows that
\begin{equation*}
\|\nabla u\|_{L^{p}(B_{3/2})}\leq\|\nabla(u\phi)\|_{L^{p}(B_{7/4})}\leq C\|g'_{7/4}\|_{L^p(B_{7/4})}\leq C\|u\|_{L^2(B_2)}+C\|f\|_{L^p(B_2)}+C\|g\|_{L^p(B_2)},
\end{equation*}
which shows the first part of the estimate. The second part follows from H{\"o}lder's inequality.
\end{proof}

If we assume that $b$ and $f$ are H{\"o}lder continuous, then we obtain that $\nabla u$ is H{\"o}lder continuous as well, as the next Proposition shows.

\begin{prop}\label{CAlphaReg}
Let $\Omega\subseteq\mathbb R^n$ be a domain with $|\Omega|<1$, and suppose that $A\in \M_{\Omega}(\lambda,\alpha,\tau)$, $b\in\C_{\Omega}(\alpha,\tau)$, and $c,d\in L^p(\Omega)$, for some $p>n$. Assume also that the ball $B_{2r}$ is compactly supported in $\Omega$, and $f\in C^{\beta}(B_{2r})$, $g\in L^p(B_{2r})$. Then, for any solution $u\in W^{1,2}_{\loc}(\Omega)$ of the equation
\[
\mathcal{L}u=-\dive(A\nabla u+bu)+c\nabla u+du=-\dive f+g
\]
in $\Omega$, we have that
\begin{multline}\label{eq:q:CAlphaReg2}
\frac{1}{r}\|u\|_{L^{\infty}(B_r)}+\|\nabla u\|_{L^{\infty}(B_r)}+r^{\gamma}\|\nabla u\|_{C^{0,\gamma}(B_r)}\leq\frac{C}{r}\left(\fint_{B_{2r}}u^2\right)^{1/2}\\
+C\|f\|_{L^{\infty}(B_{2r})}+Cr^{\beta}\|f\|_{C^{0,\beta}(B_{2r})}+Cr\left(\fint_{B_{2r}}|g|^p\right)^{1/p},
\end{multline}
where $\gamma=\min\left\{\alpha,\beta,1-\frac{n}{p}\right\}$, and $C$ depends on $n,p,\lambda, \alpha,\beta,\tau$, $\|A\|_{L^{\infty}(B_{2r})}$, $\|b\|_{L^{\infty}(B_{2r})}, \|c\|_{L^p(B_{2r})}$ and $\|d\|_{L^p(B_{2r})}$.
\end{prop}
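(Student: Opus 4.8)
The plan is to reduce to $r=1$ by scaling — the two sides of \eqref{eq:q:CAlphaReg2} are arranged so as to be invariant under $u(x)\mapsto u(rx)$, and, exactly as in the proof of Proposition~\ref{LpReg}, one may also assume that the ambient domain has measure less than $1$ — and then to bootstrap the regularity of $\nabla u$ using the freezing device already employed in Lemma~\ref{InductiveStep} and Proposition~\ref{LpReg}. Namely, with a cutoff $\phi$ one rewrites the equation for $u\phi$ as in \eqref{eq:DefOfg'} in the form $-\dive(A\nabla(u\phi))=\dive g'$, where $g'=bu\phi-A\nabla\phi\,u-f\phi+\nabla v$ and $v$ is the Newtonian potential of the ``rough'' part $h$ of \eqref{eq:DefOfh}; the role of $v$ is to convert the lower-order contributions $c\nabla u+du$, which lie only in a Lebesgue class, into $\nabla v$, which Morrey's inequality will make Hölder continuous once $h\in L^s$ with $s>n$. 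Morrey's inequality applied to the conclusion of Proposition~\ref{LpReg} (cf.\ \eqref{eq:uInf}) already gives $\nabla u\in L^p(B_\rho)$ and $u\in L^\infty(B_\rho)\cap C^{0,1-n/p}(B_\rho)$ for some $\rho\in(1,2)$, together with the required bound on $\frac1r\|u\|_{L^\infty(B_r)}$; it remains to upgrade $\nabla u$ from $L^p$ to $C^{0,\gamma}$ on $B_1$.

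First I would iterate integrability on a decreasing sequence of balls. Set $\delta=\frac1n-\frac1p>0$ and $q_0=p$. If $\nabla u\in L^{q_k}$ on the current ball, then the binding term in $h$ is $c\nabla u\in L^{s_k}$ with $\frac1{s_k}=\frac1{q_k}+\frac1p$ (the terms $du,g\in L^p$ and $bu\,\nabla\phi,f\,\nabla\phi\in L^\infty$ are no worse), so Theorem 9.9 in \cite{Gilbarg} and Sobolev's inequality give $\nabla v\in L^{s_k^*}$ with $\frac1{s_k^*}=\frac1{q_k}-\delta$; since the remaining terms of $g'$ are in $L^\infty$, Theorem 1 in \cite{AuscherQafsaoui} then yields $\nabla u\in L^{q_{k+1}}$ with $\frac1{q_{k+1}}=\frac1{q_k}-\delta$ on a slightly smaller ball. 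After a fixed number $K=K(n,p)$ of such steps we reach $\nabla u\in L^{q_K}$ with $\frac1{q_K}<\delta$, i.e.\ $\frac1{q_K}+\frac1p<\frac1n$; in particular $q_K>n$, so on this ball $u\in C^{0,1-n/q_K}$.

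Two Schauder steps then finish the argument. Rerunning the construction with $\frac1{q_K}+\frac1p<\frac1n$, the associated $h$ lies in $L^s$ with $s>n$, so Theorem 9.9 in \cite{Gilbarg} and Morrey's inequality give $\nabla v\in C^{0,1-n/s}$; since $bu\phi,A\nabla\phi\,u\in C^{0,\min(\alpha,1-n/q_K)}$ and $f\phi\in C^{0,\beta}$, we get $g'\in C^{0,\gamma'}$ for some $\gamma'>0$, and the interior Schauder estimate for divergence-form equations with Hölder continuous leading coefficient (as in \cite{DongEscKim}) applied to $-\dive(A\nabla(u\phi))=\dive g'$ gives $\nabla u\in C^{0,\min(\alpha,\gamma')}$ on a still smaller ball; in particular $\nabla u\in L^\infty$ and $u$ is Lipschitz there. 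For the final step, $\nabla u\in L^\infty$ forces $c\nabla u\in L^p$ with $\|c\nabla u\|_p\le\|c\|_p\|\nabla u\|_\infty$, so now the full $h$ lies in $L^p$ (the binding terms $c\nabla u,du,g\phi$ are all in $L^p$), whence $\nabla^2v\in L^p$ and, by Morrey, $\nabla v\in C^{0,1-n/p}$ with $\|\nabla v\|_{C^{0,1-n/p}}\le C\|h\|_p$; since $u$ is now Lipschitz, $bu\phi,A\nabla\phi\,u\in C^{0,\alpha}$ and $f\phi\in C^{0,\beta}$, hence $g'\in C^{0,\gamma}$ with $\gamma=\min\{\alpha,\beta,1-\frac np\}$. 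Applying the same Schauder estimate on $B_1$ gives $\nabla u\in C^{0,\gamma}(B_1)$, and unwinding the estimates — every quantity above is ultimately dominated by $\|u\|_{L^2(B_2)}$, $\|f\|_{L^\infty(B_2)}$, $\|f\|_{C^{0,\beta}(B_2)}$ and $\|g\|_{L^p(B_2)}$ through Proposition~\ref{LpReg} and Morrey's inequality — yields \eqref{eq:q:CAlphaReg2}.

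The main obstacle is the exponent bookkeeping, and in particular obtaining the sharp final exponent $\gamma=\min\{\alpha,\beta,1-\frac np\}$: this cannot be done in a single pass, since $\nabla v$ attains the Hölder exponent $1-\frac np$ only when $c\nabla u\in L^p$, which in turn requires $\nabla u\in L^\infty$; hence the two-step finale (first reach $\nabla u\in L^\infty$ with a possibly non-sharp Hölder exponent, then rerun the potential construction to land exactly in $C^{0,1-n/p}$). A secondary technical point is that the Schauder estimate has to be invoked for divergence-form operators whose principal part is only Hölder continuous rather than smooth.
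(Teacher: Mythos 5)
Your proposal is correct and follows essentially the same route as the paper: rescale to $r=1$, absorb $c\nabla u+du$ into the gradient of a Newtonian potential, apply the $L^q$ estimates of \cite{AuscherQafsaoui} together with a Schauder estimate for $-\dive(A\nabla(u\phi))=\dive g'$, and finish with the same two-pass argument (first reach $\nabla u\in L^{\infty}$ with a non-sharp H{\"o}lder exponent, then rerun the potential construction with $h\in L^p$ to land on $\gamma=\min\{\alpha,\beta,1-\tfrac{n}{p}\}$). The only deviation is your explicit preliminary iteration raising the integrability of $\nabla u$ until $c\nabla u\in L^{s}$ with $s>n$, where the paper instead imports this integrability directly from the proof of Proposition~\ref{LpReg}; this is a minor (and if anything more careful) variation within the same method, not a different approach.
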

\begin{proof}
By scaling, it suffices to show the inequalities for $r=1$. If $h_{7/4}$ and $v_{7/4}$ are as in the proof of Proposition~\ref{LpReg}, then the second estimates in \eqref{eq:uInf} and \eqref{eq:NablavInf} show that
\begin{equation}\label{eq:q_1q_2}
\|u\|_{C^{q_1}(B_{7/4})}+\|\nabla v_{7/4}\|_{C^{q_2}(B_{7/4})}\leq C\|u\|_{L^2(B_2)}+C\|f\|_{L^p(B_2)}+C\|g\|_{L^p(B_2)},
\end{equation}
for some $q_1,q_2$ that depend on $n$ and $p$. Note now that, from the definition of $v_{7/4}$ and \eqref{eq:DefOfg'}, $u$ solves the equation
\[
-\dive(A\nabla u)=\dive(bu-f+\nabla v_{7/4})=\dive g'
\]
in $B_{3/2}$. Then, if $\beta_0=\min\left\{q_1,q_2,\alpha,\beta\right\}$, and using also \eqref{eq:q_1q_2},
\begin{align*}
\|g'\|_{C^{\beta_0}(B_{3/2})}&\leq C\|b\|_{C^{\beta_0}}\|u\|_{C^{\beta_0}(B_{3/2})}+\|f\|_{C^{\beta}(B_{3/2})}+\|\nabla v_{7/4}\|_{C^{\beta_0}(B_{3/2})}\\
&\leq C\|u\|_{L^2(B_2)}+C\|f\|_{C^{\beta}(B_2)}+C\|g\|_{L^p(B_2)}.
\end{align*}
Using (2.1) and (2.2) in \cite{KenigShen}, we then obtain that
\begin{equation}\label{eq:5/4Estimate}
\|\nabla u\|_{C^{\beta_0}(B_{5/4})}\leq C\|u\|_{L^2(B_{3/2})}+C\|g'\|_{C^{\beta_0}(B_{3/2})}\leq C\|u\|_{L^2(B_2)}+C\|f\|_{C^{\beta}(B_2)}+C\|g\|_{L^p(B_2)},
\end{equation}
where $C$ depends on $n,p,\lambda,\alpha,\beta,\tau$, $\|A\|_{\infty}$, $\|b\|_{\infty}$, $\|c\|_p$ and $\|d\|_p$. Hence $u$ is Lipschitz in $B_{5/4}$. Note now that, from \eqref{eq:DefOfh} for $\gamma=\frac{5}{4}$ and $\phi$ that is equal to $1$ in $B_{9/8}$, and using also \eqref{eq:5/4Estimate},
\begin{align*}
\|h_{5/4}\|_{L^p(\bR^n)}&\leq C\|\nabla u\|_{L^{\infty}(B_{5/4})}+C\|u\|_{L^{\infty}(B_{5/4})}+C\|f\|_{L^{\infty}(B_{5/4})}+C\|g\|_{L^p(B_{5/4})}\\
&\leq C\|u\|_{L^2(B_2)}+C\|f\|_{C^{\beta}(B_2)}+C\|g\|_{L^p(B_2)},
\end{align*}
and if $v_{5/4}$ is the Newtonian potential of $h_{5/4}$, then using also Morrey's inequality,
\begin{align*}
\|\nabla v_{5/4}\|_{C^{0,1-n/p}(B_{5/4})}&\leq C\|\nabla^2v_{5/4}\|_{L^p(\bR^n)}\leq C\|h_{5/4}\|_{L^p(\bR^n)}\\
&\leq C\|u\|_{L^2(B_2)}+C\|f\|_{C^{\beta}(B_2)}+C\|g\|_{L^p(B_2)}.
\end{align*}
Since now $u$ solves the equation $-\dive(A\nabla u)=\dive(bu-f+\nabla v_{5/4})$ in $B_{9/8}$ from the definition of $\phi$, (2.1) and (2.2) in \cite{KenigShen} for $\gamma=\min\left\{\alpha,\beta,1-\frac{n}{p}\right\}$ show that
\begin{align*}
\|\nabla u\|_{C^{\gamma}(B_1)}&\leq C\|u\|_{L^2(B_{9/8})}+ C\|bu\|_{C^{0,\gamma}(B_{5/4})}+C\|f\|_{C^{0,\gamma}(B_{5/4})}+C\|\nabla v_{5/4}\|_{C^{0,\gamma}(B_{5/4})}\\
&\leq C\|u\|_{L^2(B_2)}+ C\|b\|_{C^{\alpha}}\|u\|_{C^1(B_{5/4})}+C\|f\|_{C^{\beta}(B_2)}+C\|g\|_{L^p(B_2)}\\
&\leq C\|u\|_{L^2(B_2)}+C\|f\|_{C^{\beta}(B_2)}+C\|g\|_{L^p(B_2)},
\end{align*}
where we used \eqref{eq:5/4Estimate} for the third estimate. Also, for any $x,y\in B_1$, $|u(x)|\leq |u(y)|+\|\nabla u\|_{L^{\infty}(B_1)}$, therefore
\[
\|u\|_{L^{\infty}(B_1)}\leq\fint_{B_1}|u|+\|\nabla u\|_{L^{\infty}(B_1)}\leq C\|u\|_{L^2(B_2)}+C\|f\|_{C^{\beta}(B_2)}+C\|g\|_{L^p(B_2)},
\]
which completes the proof.
\end{proof}

\subsection{A solid integral estimate}
In the following, we will need an estimate of solid integrals of solutions by surface integrals. In the absence of lower order terms, the quantity $\int_{\Omega}|\nabla u|^2$ can be estimated by the $L^2(\partial\Omega)$ norms of $u$ and $\partial_{\nu}u$, using (5.2) in \cite{KenigShen}. However, our operators are not necessarily coercive, so we will need to show an analogous estimate by reducing our case to the equation without lower order terms.

For a Lipschitz domain $\Omega$, let $\Tr:W^{1,2}(\Omega)\to L^2(\partial\Omega)$ be the trace operator. We then have the following lemma.

\begin{lemma}\label{TraceToNt}
Let $\Omega\subseteq\bR^n$ be a Lipschitz domain. Let also $u\in W^{1,2}(\Omega)\cap C(\Omega)$ and $f\in L^2(\partial\Omega)$, with $u\to f$ nontangentially, almost everywhere. Then, $\Tr u=f$ on $\partial\Omega$.
\end{lemma}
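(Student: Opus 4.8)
The statement asserts that the two notions of ``boundary values'' — the Sobolev trace $\Tr u$ and the nontangential limit $f$ — coincide when both are available. The natural strategy is to show that $\Tr u = f$ can be checked locally on each boundary patch $\Delta_{r_\Omega}(q_i)$, and on such a patch to exploit the Lipschitz graph structure. So first I would fix a boundary coordinate system as in Definition~\ref{LipDom}, writing a piece of $\partial\Omega$ as the graph of $\psi_i$, and reduce the claim to showing that for a.e. $x'$ the vertical limit $\lim_{t\downarrow 0} u(x', \psi_i(x')+t)$ equals $f(x',\psi_i(x'))$, while simultaneously the same vertical limit recovers $\Tr u(x',\psi_i(x'))$. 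The first equality is essentially built into the hypothesis: by Lemma~\ref{LiesAbove}, for $x$ near the boundary the ball $B_{\delta(x)/2}(x)$ lies inside the cone $\Gamma(q_x)$ with $q_x=(x',\psi_i(x'))$, so points approaching the boundary vertically (or along any fixed direction transversal to the graph, suitably tilted) stay nontangential, and hence $u(x',\psi_i(x')+t)\to f(x',\psi_i(x'))$ for a.e. $x'$.

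The second equality is the standard fact that the Sobolev trace of a continuous $W^{1,2}$ function on a Lipschitz graph domain is realized as the a.e. vertical limit. Concretely, in the flattened coordinates set $\Omega_i = \{(x',t): |x'|<r_\Omega,\ \psi_i(x')<t<\psi_i(x')+r_\Omega\}$ and $v(x',s) = u(x',\psi_i(x')+s)$, which lies in $W^{1,2}$ of the cylinder $\{|x'|<r_\Omega,\ 0<s<r_\Omega\}$ with trace (at $s=0$) equal to $(\Tr u)\circ\psi_i$. For a.e. $x'$, the function $s\mapsto v(x',s)$ is absolutely continuous on $(0,r_\Omega)$ with $\int_0^{r_\Omega}|\partial_s v(x',s)|^2\,ds<\infty$, so $v(x',0^+) := \lim_{s\downarrow 0} v(x',s)$ exists; by the characterization of the trace via restriction of a.e.-defined pointwise boundary limits (Fubini plus the one-dimensional fundamental theorem of calculus, exactly as in the proof that $W^{1,2}(\Omega)\to L^2(\partial\Omega)$ is the a.e. limit operator), this limit equals $(\Tr u)(x',\psi_i(x'))$ for a.e. $x'$. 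Since $u$ is continuous in $\Omega$, $v(x',s)$ converges pointwise as $s\downarrow 0$ for \emph{every} $x'$ to the nontangential value (when the latter exists), so there is no ambiguity in identifying the two limits.

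Combining the two steps: for a.e. $x'$ with $|x'|<r_\Omega$ we get $(\Tr u)(x',\psi_i(x')) = v(x',0^+) = f(x',\psi_i(x'))$, i.e. $\Tr u = f$ $\sigma$-a.e. on $\Delta_{r_\Omega}(q_i)$. Running over $i=1,\dots,N$ and using that $\partial\Omega\subseteq\bigcup_i B_{r_\Omega}(q_i)$ gives $\Tr u = f$ a.e. on $\partial\Omega$.

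\textbf{Main obstacle.} The delicate point is matching the two limit processes: the trace theorem gives the vertical limit for a.e. $x'$ only through the $L^2$ convergence of $u(\cdot,\psi_i(\cdot)+s)$ to $\Tr u$ as $s\downarrow 0$, whereas the hypothesis gives the \emph{nontangential} limit along cones. One must check that the vertical approach direction $(0,\dots,0,1)$, when tilted into $\Gamma(q_x)$ (which by Lemma~\ref{LiesAbove} it can be, since $B_{\delta(x)/2}(x)\subseteq\Gamma(q_x)$), yields the \emph{same} value $f(q_x)$ that the $L^2$-trace sees — this is where continuity of $u$ in $\Omega$ is essential, since it lets us pass from a.e.-in-$s$ statements to genuine pointwise convergence and pin down the common value. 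A minor technical care is also needed near the lateral boundary of each cylinder, but this is handled by the overlap of the covering balls and is routine.
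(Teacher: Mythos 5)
Your proposal is correct and takes essentially the same route as the paper: localize to the graph coordinate patches of Definition~\ref{LipDom}, use Lemma~\ref{LiesAbove} to see that vertical approach is nontangential (so the vertical boundary limit is $f$ $\sigma$-a.e.), and identify that vertical limit with $\Tr u$. The only (cosmetic) difference is in how this last identification is made: the paper uses the vertical translates $u_m(x',x_n)=u(x',x_n+1/m)$, which converge to $u$ in $W^{1,2}(T_{r_{\Omega}}(q_i))$ so that $\Tr u_m\to\Tr u$ in $L^2$ while $\Tr u_m=u_m\to f$ a.e. by continuity of $u$ and nontangential convergence, whereas you invoke the ACL/Fubini characterization of the trace as the a.e. vertical limit — both are standard and equally valid.
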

\begin{proof}
Set $T_i=T_{r_{\Omega}}(q_i)$ and $\Delta_i=\Delta_{r_{\Omega}}(q_i)$, from Definition~\ref{LipDom} and \eqref{eq:DT}. Let also $\Tr_i$ be the restriction of the trace operator $\Tr:W^{1,2}(\Omega)\to L^2(\partial\Omega)$ on $\Delta_i$, and define $u_m(x',x_n)=u\left(x',x_n+1/m\right)$ in the coordinate system of $B_{r_{\Omega}}(q_i)$, for $(x',x_n)\in T_i$ and $m$ sufficiently large. Then $u_m\to u$ in $W^{1,2}(T_i)$, therefore $\Tr_iu_m\to\Tr_i u$. Since $u\in C(\Omega)$, we obtain that $\Tr_iu_m=u_m$. Moreover, from nontangential convergence and Lemma~\ref{LiesAbove}, $u_m\to f$ almost everywhere on $\Delta_i$, hence $f|_{\Delta_i}=\Tr_i u$, which completes the proof.
\end{proof}

We now show the solid integral estimate.

\begin{prop}\label{SolidByBoundary}
Let $\Omega\subseteq\bR^n$ be a Lipschitz domain with $\diam(\Omega)<\frac{1}{8}$. Assume that $A\in\M_{\Omega}(\lambda,\alpha,\tau)$ and $b,c,d\in L^p(\Omega)$ for some $p>n$, with either $d\geq\dive b$ or $d\geq\dive c$. Suppose also that $u\in W_{{\rm loc}}^{1,2}(\Omega)$ solves the equation $-\dive(A\nabla u+bu)+c\nabla u+du=0$ in $\Omega$, and assume that $(\nabla u)^*\in L^2(\partial\Omega)$. If $u$ converges nontangentially, almost everywhere, to $u|_{\partial\Omega}\in W^{1,2}(\partial\Omega)$, then
\begin{equation}\label{eq:SolidByBoundary}
\int_{\Omega}|u|^2+\int_{\Omega}|\nabla u|^2\leq C \int_{\partial\Omega}|u|^2\,d\sigma+C\int_{\partial\Omega}|\nabla_Tu|^2\,d\sigma,
\end{equation}
where $C$ depends on $n,p,\lambda,\alpha,\tau,\|A\|_{\infty},\|b\|_p,\|c\|_p,\|d\|_p$ and the Lipschitz character of $\Omega$.
\end{prop}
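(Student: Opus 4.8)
The plan is to strip the lower-order coefficients off the equation, reducing to $-\dive(A\nabla\,\cdot\,)$, for which a solid estimate of the form \eqref{eq:SolidByBoundary} is available from coercivity (this is the role of (5.2) in \cite{KenigShen}), and then to put the lower-order terms back in as right-hand sides that must be absorbed. A preliminary point is that, although $u$ is only assumed to lie in $W^{1,2}_{\loc}(\Omega)$, the hypothesis $(\nabla u)^*\in L^2(\partial\Omega)$ already forces $u\in W^{1,2}(\Omega)$. Indeed, by a Fubini argument in the spirit of Lemma~\ref{BoundxInsideDelta}, writing $E_x=\{q\in\partial\Omega:x\in\Gamma(q)\}$, one has $\sigma(E_x)\gtrsim\delta(x)^{n-1}$ and, for $x\in\Gamma(q)$, $\delta(x)\gtrsim|x-q|$, so
\[
\int_\Omega|\nabla u|^2\le\int_{\partial\Omega}\bigl((\nabla u)^*(q)\bigr)^2\!\!\int_{\Gamma(q)}\!\!\frac{dx}{\sigma(E_x)}\,d\sigma(q)\le C\int_{\partial\Omega}\bigl((\nabla u)^*\bigr)^2\!\!\int_{\Gamma(q)}\!\!|x-q|^{1-n}\,dx\,d\sigma(q)\le C\int_{\partial\Omega}\bigl((\nabla u)^*\bigr)^2,
\]
using $\int_{\Gamma(q)}|x-q|^{1-n}\,dx\le C\diam(\Omega)\le C$ from Lemma~\ref{DiameterBound}. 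I would also record $u^*\in L^2(\partial\Omega)$ via the pointwise bound $u^*(q)\le C\M(u|_{\partial\Omega})(q)+C\widetilde{(\nabla u)^*}(q)$ (integrating $\nabla u$ along a path from $\partial\Omega$ into $\Gamma(q)$, with $\widetilde{(\nabla u)^*}$ a slightly widened nontangential maximal function, comparable in $L^2$ to $(\nabla u)^*$) together with the $L^2(\partial\Omega)$-boundedness of $\M$; the same Fubini then gives $\int_\Omega|u|^2\le C\int_{\partial\Omega}(u^*)^2<\infty$, and since $u\in C(\Omega)$ by Proposition~\ref{CAlphaReg}, Lemma~\ref{TraceToNt} identifies $\Tr u=u|_{\partial\Omega}$. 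By a trace--Poincar\'e inequality it will then suffice to bound $\int_\Omega|\nabla u|^2$.

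Next I would write the equation as $-\dive(A\nabla u)=-\dive F+G$ in $\Omega$ with $F=-bu$ and $G=-c\nabla u-du$. Since $b,c\in L^p\subseteq L^n$ and $d\in L^p\subseteq L^{n/2}$ on the bounded set $\Omega$, and $\|u\|_{L^{2^*}(\Omega)}\le C(\|\nabla u\|_{L^2(\Omega)}+\|u\|_{L^2(\partial\Omega)})$ by Sobolev plus trace--Poincar\'e, one has $F\in L^2(\Omega)$ and $G\in W^{-1,2}(\Omega)$ with $\|F\|_{L^2(\Omega)}\le C\|b\|_{L^n(\Omega)}(\|\nabla u\|_{L^2(\Omega)}+\|u\|_{L^2(\partial\Omega)})$ and $\|G\|_{W^{-1,2}(\Omega)}\le C(\|c\|_{L^n(\Omega)}+\|d\|_{L^{n/2}(\Omega)})(\|\nabla u\|_{L^2(\Omega)}+\|u\|_{L^2(\partial\Omega)})$. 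The estimate for the lower-order-free operator with such a datum, $\int_\Omega|\nabla u|^2\le C\int_{\partial\Omega}(|u|^2+|\nabla_Tu|^2)+C\|F\|_{L^2(\Omega)}^2+C\|G\|_{W^{-1,2}(\Omega)}^2$, follows from the coercivity of $v\mapsto\int_\Omega A\nabla v\cdot\nabla v$ on $W^{1,2}_0(\Omega)$ (giving $\|\nabla u\|_{L^2(\Omega)}\le C(\|\Tr u\|_{H^{1/2}(\partial\Omega)}+\|F\|_{L^2(\Omega)}+\|G\|_{W^{-1,2}(\Omega)})$ and $\|\Tr u\|_{H^{1/2}(\partial\Omega)}\le C\|u|_{\partial\Omega}\|_{W^{1,2}(\partial\Omega)}$), as in (5.2) of \cite{KenigShen}. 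Plugging in the bounds on $F,G$ yields
\[
\int_\Omega|\nabla u|^2\le C\int_{\partial\Omega}(|u|^2+|\nabla_Tu|^2)+C\bigl(\|b\|_{L^n(\Omega)}+\|c\|_{L^n(\Omega)}+\|d\|_{L^{n/2}(\Omega)}\bigr)^2\Bigl(\int_\Omega|\nabla u|^2+\int_{\partial\Omega}|u|^2\Bigr).
\]

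The hard part is absorbing the last term. The hypothesis $\diam(\Omega)<\tfrac18$ makes $|\Omega|$ bounded by a dimensional constant, but it does \emph{not} make $\|b\|_{L^n(\Omega)},\|c\|_{L^n(\Omega)},\|d\|_{L^{n/2}(\Omega)}$ small relative to $\lambda$, since no bound on $\|b\|_p,\|c\|_p,\|d\|_p$ is imposed; this is the non-coercivity obstruction, and it is exactly why \eqref{eq:SolidByBoundary} cannot be obtained by the naive choice $\phi=u$ in the weak formulation, and why one must adapt the argument of \cite{KenigShen}. I would resolve it by localization: cover $\Omega$ by $K\approx(\diam(\Omega)/\rho)^n$ balls $B_\rho(x_j)$ with $\rho=\rho(n,\lambda,p,\|b\|_p,\|c\|_p,\|d\|_p)$ chosen so small that $\|b\|_{L^n(2B_j\cap\Omega)}+\|c\|_{L^n(2B_j\cap\Omega)}+\|d\|_{L^{n/2}(2B_j\cap\Omega)}<\e_0(n,\lambda)$ for every $j$ --- possible uniformly in $x_j$ because $\|b\|_{L^n(B_\rho)}\le C\|b\|_{L^p}\rho^{1-n/p}\to0$, and similarly for $c,d$. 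On each ball the displayed inequality, localized by a cutoff with the commutator terms controlled by the Caccioppoli estimate of Lemma~\ref{Cacciopoli}, now has a small coefficient in front of $\int|\nabla u|^2$ and can be absorbed, with a constant depending only on $\e_0$; summing over the $K$ balls yields \eqref{eq:SolidByBoundary} with a constant depending on $K$, hence on $\diam(\Omega)$ and on $\|b\|_p,\|c\|_p,\|d\|_p$, as claimed. (Alternatively, Steps two and three can be short-circuited by invoking the well-posedness in $W^{1,2}(\Omega)$ of the Dirichlet problem for $\mathcal{L}$ established in \cite{KimSak} under $d\ge\dive b$ or $d\ge\dive c$: extending $u|_{\partial\Omega}$ to some $E\in W^{1,2}(\Omega)$, the difference $u-E$ is then the unique $W_0^{1,2}(\Omega)$ solution of $\mathcal{L}(u-E)=-\mathcal{L}E$, whence $\|u\|_{W^{1,2}(\Omega)}\le\|E\|_{W^{1,2}(\Omega)}+C\|\mathcal{L}E\|_{W^{-1,2}(\Omega)}\le C\|u|_{\partial\Omega}\|_{W^{1,2}(\partial\Omega)}$.) Finally, combining with the trace--Poincar\'e bound $\int_\Omega|u|^2\le C\int_{\partial\Omega}|u|^2+C\int_\Omega|\nabla u|^2$ from the first step completes the proof.
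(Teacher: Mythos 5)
Your preliminary step (recovering $u\in W^{1,2}(\Omega)$ from $(\nabla u)^*\in L^2(\partial\Omega)$ by Fubini, and identifying the trace via Lemma~\ref{TraceToNt}) and your reduction to a datum $-\dive F+G$ are fine, but the main route you propose — localizing to balls where $\|b\|_{L^n}$, $\|c\|_{L^n}$, $\|d\|_{L^{n/2}}$ are small and absorbing — has a genuine gap: it never uses the hypothesis $d\geq\dive b$ or $d\geq\dive c$, and without that hypothesis the statement is false. Take $A=I$, $b=c=0$ and $d\equiv-\lambda_1(\Omega)$, with $u$ a first Dirichlet eigenfunction: then $u\in W^{1,2}_{\loc}(\Omega)$ solves the equation, $(\nabla u)^*\in L^2(\partial\Omega)$ and $u\to0$ nontangentially (the zero-data inhomogeneous problem with smooth right-hand side is in the solvable range for the Laplacian on a Lipschitz domain), yet the left side of \eqref{eq:SolidByBoundary} is positive while the right side vanishes. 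Since the local $L^n$ and $L^{n/2}$ norms of these coefficients on small balls are as small as you like, local smallness cannot be the mechanism; the obstruction is global uniqueness of the Dirichlet problem, which is exactly what the sign condition provides. Mechanically the absorption also does not close: cutting off with $\eta_j$ produces commutator terms of size $\rho^{-1}\int_{2B_j}|A||\nabla u||u|$ etc., and controlling them by Caccioppoli reintroduces $\rho^{-2}\int|u|^2$ over interior regions, which at that stage is not controlled by boundary data; feeding it through the trace--Poincar\'e inequality $\int_\Omega|u|^2\leq C\int_{\partial\Omega}|u|^2+C\int_\Omega|\nabla u|^2$ brings back $\int_\Omega|\nabla u|^2$ with a constant that is not small, so the argument is circular.

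Your parenthetical ``short-circuit'' is, in contrast, essentially the paper's proof, and it is the argument you should promote to the main text: extend the boundary data into $\Omega$, observe that $w=u-E\in W_0^{1,2}(\Omega)$ solves $\mathcal{L}w=-\mathcal{L}E$ with $\mathcal{L}E\in W^{-1,2}(\Omega)$, and invoke the well-posedness of $\mathcal{L}$ on $W_0^{1,2}(\Omega)$ under $d\geq\dive b$ or $d\geq\dive c$ (Proposition 6.14 in \cite{KimSak}) to get $\|w\|_{W_0^{1,2}(\Omega)}\leq C\|\mathcal{L}E\|_{W^{-1,2}(\Omega)}$. This is where the structural hypothesis enters, and it is indispensable. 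The only difference from the paper is the choice of extension: you take a generic inverse-trace extension, using $W^{1,2}(\partial\Omega)\hookrightarrow H^{1/2}(\partial\Omega)$, while the paper takes $E=v$ to be the layer-potential solution of the regularity problem for $-\dive(\tilde A\nabla\cdot)$ with data $u|_{\partial\Omega}$, bounding $\|\nabla v\|_{L^2(\Omega)}$ through the identity (5.2) in \cite{KenigShen} and the bound on $\|\tilde{\mathcal{S}}^{-1}\|$, which keeps the estimate phrased directly in terms of $\|u\|_{L^2(\partial\Omega)}$ and $\|\nabla_Tu\|_{L^2(\partial\Omega)}$ without passing through fractional Sobolev spaces. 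If you go your way, spell out the inverse trace theorem and the embedding constants (they depend on the Lipschitz character and the scale of $\Omega$), verify $\|\mathcal{L}E\|_{W^{-1,2}(\Omega)}\leq C(1+\|b\|_p+\|c\|_p+\|d\|_p)\|E\|_{W^{1,2}(\Omega)}$ via Sobolev embedding on the (extension) domain $\Omega$, and justify that zero trace implies membership in $W_0^{1,2}(\Omega)$ on a Lipschitz domain; with those details the alternative is a correct and slightly more elementary substitute for the paper's layer-potential extension.
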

\begin{proof}
As in Lemmas 2.1 and 2.2 in \cite{thesis}, we can construct $\tilde{A}\in\M_{\bR^n}(\lambda,\alpha,C(\tau+\|A\|_{\infty})$ that is 1-periodic in $\bR^n$ (as in (1.3) in \cite{KenigShen}) and extends $A$. We then consider the fundamental solution $\Gamma$ for the operator $\tilde{\mathcal{L}}u=-\dive(\tilde{A}\nabla u)$ in $\bR^n$, which exists from the argument after Lemma 2.1 in \cite{KenigShen}. We also define $\tilde{\mathcal{S}}f(x)=\int_{\partial\Omega}\Gamma(x,q)f(q)\,d\sigma(q)$ for $f\in L^2(\partial\Omega)$, which is the single layer potential operator in (4.1) in \cite{KenigShen}. Then, from the proof of Theorem 1.22 (page 182) in \cite{NguyenPaper}, $\tilde{\mathcal{S}}:L^2(\partial\Omega)\to W^{1,2}(\partial\Omega)$ is invertible. An inspection of the same proof shows that $\|\tilde{\mathcal{S}}^{-1}\|\leq C$, where $C$ depends on $n,\lambda,\alpha,\tau$,$\|A\|_{\infty}$ and the Lipschitz character of $\Omega$.

We now define
\begin{equation}\label{eq:FormulaForv}
v(x)=\int_{\partial\Omega}\Gamma(x,q)\tilde{\mathcal{S}}^{-1}u(q)\,d\sigma(q).
\end{equation}
From Theorems 3.1 and 4.7 in \cite{KenigShen}, $v$ solves $\tilde{\mathcal{L}}u=-\dive(\tilde{A}\nabla u)$ in $\Omega$, $v\to\tilde{\mathcal{S}}(\tilde{\mathcal{S}}^{-1}u)=u$ nontangentially, almost everywhere on $\partial\Omega$, and also $\|(\nabla v)^*\|_{L^2(\partial\Omega)}\leq C\|\tilde{\mathcal{S}}^{-1}u\|_{L^2(\partial\Omega)}$. Therefore, from the bound on $\|\tilde{\mathcal{S}}^{-1}\|$, we obtain that $\|(\nabla v)^*\|_{L^2(\partial\Omega)}\leq C\|u\|_{W^{1,2}(\partial\Omega)}$.

Define $w=u-v\in W^{1,2}(\Omega)$, then $w\in C(\Omega)$, from Proposition~\ref{CAlphaReg}. Moreover, $w$ converges to $0$ nontangentially, almost everywhere on $\partial\Omega$, therefore Lemma~\ref{TraceToNt} shows that $w\in W_0^{1,2}(\Omega)$. In addition, $w$ solves the equation $-\dive(A\nabla w+bw)+c\nabla w+dw=\dive(bv)-c\nabla v-dv=F\in W^{-1,2}(\Omega)$. To find the $W^{-1,2}(\Omega)$ norm of $F$, let $\phi\in C_c^{\infty}(\Omega)$. We then compute
\begin{align*}
|F\phi|&=\left|\int_{\Omega}bv\nabla\phi+c\nabla v\cdot\phi+dv\phi\right|\leq\|b\|_n\|v\|_{2^*}\|\nabla\phi\|_2+\left(\|c\|_n\|\nabla v\|_2+\|d\|_{n/2}\|v\|_{2^*}\right)\|\phi\|_{2^*}\\
&\leq C_n\left(\|b\|_p\|v\|_{2^*}+\|c\|_p\|\nabla v\|_2+\|d\|_p\|v\|_{2^*}\right)\|\nabla \phi\|_2,
\end{align*}
from Sobolev's inequality and the fact that $\diam(\Omega)<\frac{1}{8}$. Using Proposition 6.14 in \cite{KimSak} and the Sobolev embedding $\|v\|_{2^*}\leq C\|v\|_2+C\|\nabla v\|_2$, we obtain that
\begin{equation}\label{eq:wj}
\|w\|_{W_0^{1,2}(\Omega)}\leq C\|F\|_{W^{-1,2}(\Omega)}\leq C_n\left(\|b\|_p\|v\|_{2^*}+\|c\|_p\|\|\nabla v\|_2+\|d\|_p\|v\|_{2^*}\right)\leq C\|v\|_2+C\|\nabla v\|_2,
\end{equation}
where $C$ depends on $n,p,\lambda,\|A\|_{\infty},\|b\|_p,\|c\|_p,\|d\|_p$ and the Lipschitz character of $\partial\Omega$. Since now $u=v+w$ in $\Omega$, we obtain that
\begin{equation}\label{eq:Main}
\|\nabla u\|_2\leq \|\nabla v\|_2+\|\nabla w\|_2\leq C\|v\|_2+C\|\nabla v\|_2.
\end{equation}
We first bound the last term: using (5.2) in \cite{KenigShen} we estimate
\begin{equation}\label{eq:lambdaI_2^2}
\lambda\int_{\Omega}|\nabla v|^2\leq\int_{\Omega}A\nabla v\nabla v=\int_{\partial\Omega}\partial_{\nu}v\cdot v\,d\sigma\leq \int_{\partial\Omega}|\partial_{\nu}v|^2\,d\sigma+\int_{\partial\Omega}|u|^2\,d\sigma,
\end{equation}
since $v=u$ on $\partial\Omega$. Now, from right before (4.13) and right after (4.15) in \cite{KenigShen}, we obtain that
\[
\|\partial_{\nu}v\|_{L^2(\partial\Omega)}\leq C\|\tilde{\mathcal{S}}^{-1}u\|_{L^2(\partial\Omega)}\leq C\|u\|_{W^{1,2}(\partial\Omega)},
\]
where $C$ depends on $n,\lambda,\alpha,\tau$,$\|A\|_{\infty}$ and the Lipschitz character of $\Omega$. Therefore, plugging in \eqref{eq:lambdaI_2^2},
\begin{equation}\label{eq:I_2a}
\lambda\int_{\Omega}|\nabla v|^2\leq C \int_{\partial\Omega}|\nabla_Tu|^2\,d\sigma+\int_{\partial\Omega}|u|^2\,d\sigma.
\end{equation}
Set $s$ to be the average of $v$ in $\Omega$. Using \eqref{eq:FormulaForv} and estimate (2.5) in \cite{KenigShen}, we compute
\begin{align*}
|s|&=\left|\fint_{\Omega}v\right|\leq\frac{C}{|\Omega|}\int_{\partial\Omega}\left(\int_{\Omega}|x-q|^{2-n}\,dx\right)|\tilde{\mathcal{S}}^{-1}u(q)|\,d\sigma(q)\leq\frac{C}{|\Omega|}\int_{\partial\Omega}|\tilde{\mathcal{S}}^{-1}u(q)|\,d\sigma(q)\\
&\leq\frac{C\sigma(\partial\Omega)^{1/2}}{|\Omega|}\|\tilde{\mathcal{S}}^{-1}u\|_{L^2(\partial\Omega)}\leq C\|u\|_{W^{1,2}(\partial\Omega)}
\end{align*}
where $C$ depends on $n,\lambda,\alpha,\tau$,$\|A\|_{\infty}$ and the Lipschitz character of $\Omega$, and where we used the bound before \eqref{eq:FormulaForv} for the last estimate. Therefore, the last estimate for $|s|$ and Poincare's inequality in $\Omega$ show that
\begin{align}\nonumber
\int_{\Omega}|v|^2&\leq C\int_{\Omega}|v-s|^2+C\int_{\Omega}|s|^2\leq C\int_{\Omega}|\nabla v|^2+C|s|^2\\
\label{eq:vj}
&\leq C\int_{\Omega}|\nabla v|^2+C\|u\|_{W^{1,2}(\partial\Omega)}^2\leq C\int_{\partial\Omega}|\nabla_Tu|^2\,d\sigma+\int_{\partial\Omega}|u|^2\,d\sigma,
\end{align}
where we used \eqref{eq:I_2a} for the last estimate. We then plug \eqref{eq:vj} and \eqref{eq:I_2a} to \eqref{eq:Main}, and we obtain
\begin{equation}\label{eq:nablabound}
\int_{\Omega}|\nabla u|^2\leq C\int_{\partial\Omega}|\nabla_Tu|^2\,d\sigma+\int_{\partial\Omega}|u|^2\,d\sigma.
\end{equation}
Finally, we use $u=v+w$, \eqref{eq:wj}, \eqref{eq:I_2a} and \eqref{eq:vj} to obtain
\[
\int_{\Omega}|u|^2\leq C\|v\|_2^2+C\|w\|_2^2\leq C\|v\|_2^2+C\|\nabla v\|_2^2\leq C\int_{\partial\Omega}|u|^2\,d\sigma+C\int_{\partial\Omega}|\nabla_Tu|^2\,d\sigma.
\]
Then, adding the last estimate to \eqref{eq:nablabound} completes the proof.
\end{proof}

\subsection{The Rellich estimate}
We now turn our attention to the Rellich estimate, which we will use to show invertibility of the single layer potential operator in a special case. For this purpose, we consider a strengthening of (6.1) in \cite{KenigShen}: that is, we consider functions satisfying
\begin{equation}\label{eq:ACond}
f\in C^2(\Omega),\quad|\nabla f(x)|\leq C_1\delta(x)^{\alpha_0-1},\quad |\nabla^2f(x)|\leq C_1\delta(x)^{\alpha_0-2},
\end{equation}
for some $C_1>0$ and $a_0\in(0,1)$. Although the bound on the second derivatives will not be used in order to deduce the Rellich estimate, we will need it in order to show Lemma~\ref{ImprovedIntegrability}.

A crucial ingredient in the proof of the Rellich estimate is that the coefficient matrix $A$ is symmetric. However, using an integration by parts argument from \cite{PipherDrifts}, we can extend the Rellich estimate for solutions to equations with $A$ not necessarily symmetric.

We remark that a similar estimate to the one we will show can be found in Section 4.1 of \cite{XuZhaoZhou}. However, we carry out the proof for the sake of completeness, and in order to show how to extend this estimate in the case of non-symmetric matrices.

We first turn to the next lemma, which is a modification of Lemma 11.1 in \cite{thesis} (see also Remark 2.12 in \cite{PipherDrifts}).
\begin{lemma}\label{ReduceToSymmetric}
Let $\Omega\subseteq\mathbb R^n$ be a bounded domain, and let $A\in C^1_{\loc}(\Omega)$ be uniformly elliptic, and $b\in L^{\infty}_{\loc}(\Omega)$, $c\in L^p_{\loc}(\Omega)$ for some $p>n$. Let $A^s=\frac{A+A^t}{2}$, and define the vector function $\tilde{b}\in L^{\infty}_{\loc}(\Omega)$ by
\[
\tilde{b}_i=\frac{1}{2}\sum_{j=1}^n\partial_j(a_{ij}-a_{ji})\in L^{\infty}_{\loc}(\Omega),\quad i=1,\dots n.
\]
Then $\dive\tilde{b}=0$ in the sense of distributions, and if $u\in W^{1,2}_{\loc}(\Omega)$ is a solution to the equation $-\dive(A\nabla u+bu)+c\nabla u=0$ in $\Omega$, then $u$ also solves the equation $-\dive(A^s\nabla u+bu)+(c+\tilde{b})\nabla u=0$ in $\Omega$. 
\end{lemma}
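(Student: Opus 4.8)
The plan is to split the coefficient matrix into its symmetric and antisymmetric parts, $A = A^s + A^a$ with $A^a = \tfrac{A-A^t}{2}$, to recognize $\tilde b$ as the (row-wise) divergence of $A^a$, i.e.\ $\tilde b_i = \sum_{j=1}^n\partial_j(A^a)_{ij}$ (which lies in $L^\infty_{\loc}(\Omega)$ precisely because $A\in C^1_{\loc}(\Omega)$), and then to show that in the weak formulation the antisymmetric part of the principal term may be traded for the drift $-\tilde b\cdot\nabla u$. Throughout, the weak formulation of $-\dive(A\nabla u+bu)+c\nabla u=0$ reads $\int_\Omega A\nabla u\cdot\nabla\phi+(b\cdot\nabla\phi)u+(c\cdot\nabla u)\phi=0$ for all $\phi\in C_c^\infty(\Omega)$, and all these integrals converge absolutely since $\nabla u\in L^2_{\loc}$, $c\in L^p_{\loc}$ with $p>n$, and $b,\tilde b\in L^\infty_{\loc}$.

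For the claim $\dive\tilde b=0$ in the sense of distributions, I test against $\phi\in C_c^\infty(\Omega)$ and integrate by parts twice:
\[
\ip{\dive\tilde b,\phi}=-\int_\Omega\tilde b\cdot\nabla\phi=-\frac12\sum_{i,j}\int_\Omega\partial_j(a_{ij}-a_{ji})\,\partial_i\phi=\frac12\sum_{i,j}\int_\Omega(a_{ij}-a_{ji})\,\partial_i\partial_j\phi,
\]
and the final double sum vanishes because $\partial_i\partial_j\phi$ is symmetric in $(i,j)$ while $a_{ij}-a_{ji}$ is antisymmetric; both integrations by parts are legitimate since $A\in C^1_{\loc}(\Omega)$ and $\phi$ has compact support.

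The heart of the argument is the identity
\[
\int_\Omega A^a\nabla u\cdot\nabla\phi\,dx=\int_\Omega\tilde b\cdot\nabla u\,\phi\,dx,\qquad\phi\in C_c^\infty(\Omega).
\]
Were $u$ of class $C^2$, this would be one integration by parts: $\int A^a\nabla u\cdot\nabla\phi=-\int\dive(A^a\nabla u)\,\phi$, where $\dive(A^a\nabla u)=\sum_{i,j}\partial_i(A^a)_{ij}\,\partial_j u+\sum_{i,j}(A^a)_{ij}\partial_i\partial_j u$; the second sum vanishes by antisymmetry against the symmetric Hessian, and the first equals $-\tilde b\cdot\nabla u$ after reindexing. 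Since $u$ is only in $W^{1,2}_{\loc}(\Omega)$, I would run this computation for the mollifications $u_\varepsilon=u*\rho_\varepsilon$, which are smooth on a subdomain $\Omega'\Subset\Omega$ containing $\operatorname{supp}\phi$, obtaining $\int A^a\nabla u_\varepsilon\cdot\nabla\phi=\int\tilde b\cdot\nabla u_\varepsilon\,\phi$, and then pass to the limit using $\nabla u_\varepsilon\to\nabla u$ in $L^2_{\loc}$ and $A^a,\tilde b\in L^\infty_{\loc}$. (Alternatively, one can bypass mollification by antisymmetrizing the integrand $\sum_{i,j}(a_{ij}-a_{ji})\partial_j u\,\partial_i\phi$ in $(i,j)$ and using $\partial_j u\,\partial_i\phi-\partial_i u\,\partial_j\phi=\partial_j(u\,\partial_i\phi)-\partial_i(u\,\partial_j\phi)$, which involves only the product rule for $W^{1,1}$ functions and integration by parts against the $C^1$ entries of $A$.) This regularity bookkeeping — giving rigorous meaning to "mixed partials commute" when $u\notin C^2$ — is the only genuine obstacle; everything else is algebra.

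With the identity in hand, I write $A\nabla u\cdot\nabla\phi=A^s\nabla u\cdot\nabla\phi+A^a\nabla u\cdot\nabla\phi$ in the weak formulation, replace $\int_\Omega A^a\nabla u\cdot\nabla\phi$ by $\int_\Omega\tilde b\cdot\nabla u\,\phi$, and collect terms:
\[
\int_\Omega A^s\nabla u\cdot\nabla\phi+(b\cdot\nabla\phi)u+\big((c+\tilde b)\cdot\nabla u\big)\phi=0\qquad\text{for all }\phi\in C_c^\infty(\Omega),
\]
which is exactly the statement that $u$ solves $-\dive(A^s\nabla u+bu)+(c+\tilde b)\nabla u=0$ in $\Omega$, completing the proof.
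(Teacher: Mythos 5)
Your proof is correct: the algebra identifying $\int_\Omega A^a\nabla u\cdot\nabla\phi$ with $\int_\Omega\tilde b\cdot\nabla u\,\phi$ (after mollifying $u$ to justify the symmetric-Hessian cancellation) and the antisymmetry argument for $\dive\tilde b=0$ both check out, including the sign so that the drift appears as $+\tilde b\cdot\nabla u$. The paper itself gives no proof, deferring to Lemma 11.1 of \cite{thesis} and Remark 2.12 of \cite{PipherDrifts}, and your argument is exactly the standard one those references employ, so there is nothing to add.
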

	
We now show the local Rellich estimate. We will use the notation $\partial_{\nu}u=\left<A\nabla u,\nu\right>$.
	
\begin{lemma}\label{Rellich}
Let $\Omega\subseteq\mathbb R^n$ be a Lipschitz domain with Lipschitz constant $M$. Let also $A\in\M_{\Omega}(\lambda,\alpha,\tau)$ and $b\in C_{\Omega}(\alpha,\tau)$, where $A$ and $b$ satisfy Condition \eqref{eq:ACond}, and $c\in L^p(\Omega)$ for some $p>n$. Suppose that $u\in W^{1,2}_{\loc}(\Omega)$ is a solution to the equation $-\dive(A\nabla u+bu)+c\nabla u=0$ in $\Omega$, with $(\nabla u)^*\in L^2(\partial\Omega)$ and $\nabla u$ converging nontangentially, almost everywhere on $\partial\Omega$. Then, for any $q\in\partial\Omega$ and $\rho\in(0,r_{\Omega})$,
\begin{multline}\label{eq:LocalRellich}
\int_{\Delta_{\rho}(q)}|\partial_{\nu}u|^2\,d\sigma\leq C\int_{\Delta_{2\rho}(q)}|\nabla_Tu|^2\,d\sigma+C\int_{T_{2\rho}(q)}|\nabla A||\nabla u|^2\\
+C\int_{T_{2\rho}(q)}|\dive b||u||\nabla u|+C\int_{T_{2\rho}(q)}|c||\nabla u|^2+\frac{C}{\rho}\int_{T_{2\rho}(q)}|\nabla u|^2,
\end{multline}
where $C$ depends on $n,\lambda$, $\|A\|_{\infty}$, $\|b\|_{\infty}$, $M$ and $r_{\Omega}$, and $\Delta_{\rho}(q)$, $T_{\rho}(q)$ are defined in \eqref{eq:DT}.
\end{lemma}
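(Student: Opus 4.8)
The plan is to symmetrize the leading matrix via Lemma~\ref{ReduceToSymmetric}, run the Rellich identity for the resulting symmetric operator against a transversal vector field cut off near $q$, and absorb everything except tangential boundary terms into the right-hand side of \eqref{eq:LocalRellich}.

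\textbf{Step 1 (symmetrization).} Since the entries of $A$ and the components of $b$ are $C^2$ with the bounds in \eqref{eq:ACond}, I would apply Lemma~\ref{ReduceToSymmetric} with $A^s=\tfrac12(A+A^t)$ and $\tilde b_i=\tfrac12\sum_j\partial_j(a_{ij}-a_{ji})$, so that $u$ also solves $-\dive(A^s\nabla u)=h$ with $h:=(\dive b)\,u+(b-c-\tilde b)\cdot\nabla u$; here $A^s$ is symmetric with the same $\lambda$, $\|\cdot\|_\infty$ and H\"older bounds as $A$, $|\nabla A^s|\le|\nabla A|$, and $\dive\tilde b=0$, $|\tilde b|\le C|\nabla A|\le CC_1\delta^{\alpha_0-1}$. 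Writing $B'=\tfrac12(A-A^t)$ (antisymmetric, $|B'|\le\|A\|_\infty$) and decomposing $\nabla u=\nabla_T u+u_N\nu$ on $\partial\Omega$ with $u_N:=\langle\nabla u,\nu\rangle$, the identity $\langle B'\nu,\nu\rangle=0$ gives $\langle A\nabla u,\nu\rangle=\langle A^s\nabla u,\nu\rangle+\langle B'\nabla_T u,\nu\rangle$, hence $|\partial_\nu u|^2\le2|\langle A^s\nabla u,\nu\rangle|^2+C|\nabla_T u|^2$ and, since $\langle A^s\nu,\nu\rangle\le\|A\|_\infty$, also $|\langle A^s\nabla u,\nu\rangle|\le C(|u_N|+|\nabla_T u|)$. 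Thus it suffices to bound $\int_{\Delta_\rho(q)}u_N^2\,d\sigma$ by the right-hand side of \eqref{eq:LocalRellich}.

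\textbf{Step 2 (the Rellich identity).} Fixing $q\in\partial\Omega$ and the coordinates of Definition~\ref{LipDom} in which $\partial\Omega$ near $q$ is the graph $x_n=\psi(x')$ with $\Omega$ above it, I would take $\eta\in C_c^\infty(\bR^n)$ with $0\le\eta\le1$, $\eta\equiv1$ on $T_\rho(q)$, support in $\overline{T_{2\rho}(q)}$, and $|\nabla\eta|\le C\rho^{-1}$, and the vector field $e=\eta^2 e_n$, so $|\nabla e|+|\dive e|\le C\rho^{-1}$. Multiplying $-\dive(A^s\nabla u)=h$ by $e\cdot\nabla u$, integrating over $\Omega$, integrating by parts twice, and using $a^s_{kl}=a^s_{lk}$ to write $\sum_{k,l}a^s_{kl}\,\partial_l u\,\partial_{kj}u=\tfrac12\partial_j\langle A^s\nabla u,\nabla u\rangle-\tfrac12\sum_{k,l}(\partial_j a^s_{kl})\partial_k u\,\partial_l u$, I would arrive at the Rellich identity
\begin{multline*}
\int_{\partial\Omega}\Big[\tfrac12(e\cdot\nu)\langle A^s\nabla u,\nabla u\rangle-(e\cdot\nabla u)\langle A^s\nabla u,\nu\rangle\Big]d\sigma=\int_\Omega h\,(e\cdot\nabla u)-\sum_{j,k,l}\int_\Omega a^s_{kl}(\partial_k e_j)\partial_j u\,\partial_l u\\
+\tfrac12\int_\Omega(\dive e)\langle A^s\nabla u,\nabla u\rangle+\tfrac12\sum_{j,k,l}\int_\Omega e_j(\partial_j a^s_{kl})\partial_k u\,\partial_l u.
\end{multline*}
Because $u$ is a priori only in $W^{1,2}_{\loc}(\Omega)$, the function $e\cdot\nabla u$ involves second derivatives of $u$ and is not compactly supported, so I would first prove this identity with $\Omega$ replaced by a subdomain compactly contained in $\Omega$ whose boundary coincides with the shifted graph $\{x_n=\psi(x')+t\}$ over $\{|x'-q'|<2\rho\}$ and which contains the support of $\eta$ (there $u\in W^{2,2}$ by the interior regularity of Section~3, e.g.\ Proposition~\ref{CAlphaReg} and standard $W^{2,q}$ estimates), and then let $t\to0^+$: the boundary integrals converge by dominated convergence since $\nabla u$ converges nontangentially and $(\nabla u)^*\in L^2(\partial\Omega)$ dominates, while finiteness of every term on the right follows from $|\nabla A|,|\dive b|\le C_1\delta^{\alpha_0-1}$ together with standard bounds for the nontangential maximal function (slice estimates $\int_{\Omega_\sigma}|\nabla u|^2\le C\sigma\|(\nabla u)^*\|_{L^2(\partial\Omega)}^2$).

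\textbf{Step 3 (extracting $u_N$ and closing).} On $\partial\Omega$, inserting $\nabla u=\nabla_T u+u_N\nu$ and using the symmetry of $A^s$ gives the pointwise identity
\begin{multline*}
\tfrac12(e\cdot\nu)\langle A^s\nabla u,\nabla u\rangle-(e\cdot\nabla u)\langle A^s\nabla u,\nu\rangle=-\tfrac12(e\cdot\nu)\langle A^s\nu,\nu\rangle\,u_N^2+\tfrac12(e\cdot\nu)\langle A^s\nabla_T u,\nabla_T u\rangle\\
-(e\cdot\nabla_T u)\langle A^s\nabla_T u,\nu\rangle-u_N(e\cdot\nabla_T u)\langle A^s\nu,\nu\rangle.
\end{multline*}
Since $\Omega$ lies above the graph, $e\cdot\nu=\eta^2\nu_n$ with $-1\le\nu_n\le-(1+M^2)^{-1/2}$ and $\langle A^s\nu,\nu\rangle\ge\lambda$, so the first term is $\ge\tfrac{\lambda}{2}(1+M^2)^{-1/2}\eta^2 u_N^2\ge0$, the second and third are $\ge-C|\nabla_T u|^2$, and by Young's inequality the last is $\ge-\varepsilon\eta^2 u_N^2-C_\varepsilon|\nabla_T u|^2$; choosing $\varepsilon$ small, integrating over $\Delta_{2\rho}(q)$ and using $\eta\equiv1$ on $\Delta_\rho(q)$, the left-hand side of the Rellich identity is $\ge c\int_{\Delta_\rho(q)}u_N^2-C\int_{\Delta_{2\rho}(q)}|\nabla_T u|^2$. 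On the right-hand side, the two terms carrying $\nabla e$ or $\dive e$ are $\le C\rho^{-1}\int_{T_{2\rho}(q)}|\nabla u|^2$, the term carrying $\partial A^s$ is $\le C\int_{T_{2\rho}(q)}|\nabla A||\nabla u|^2$, and (writing $h=(\dive b)u+(b-c-\tilde b)\cdot\nabla u$, using $|e\cdot\nabla u|\le|\nabla u|$, $\|b\|_\infty\le C$, $|\tilde b|\le C|\nabla A|$, and $\|b\|_\infty\int_{T_{2\rho}(q)}|\nabla u|^2\le C\rho^{-1}\int_{T_{2\rho}(q)}|\nabla u|^2$, valid because $\rho<r_\Omega\le C$ by Lemma~\ref{rOmegaIsGood}) the term $\int_\Omega h\,(e\cdot\nabla u)$ is $\le C\int_{T_{2\rho}(q)}|\dive b||u||\nabla u|+C\int_{T_{2\rho}(q)}|c||\nabla u|^2+C\int_{T_{2\rho}(q)}|\nabla A||\nabla u|^2+C\rho^{-1}\int_{T_{2\rho}(q)}|\nabla u|^2$. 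Combining the two, $\int_{\Delta_\rho(q)}u_N^2$ is bounded by the right-hand side of \eqref{eq:LocalRellich}, and Step~1 then yields \eqref{eq:LocalRellich} itself.

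\textbf{Main obstacle.} I expect the delicate point to be the justification of the Rellich identity in Step~2: since $u$ need not be $W^{2,2}$ up to $\partial\Omega$ one cannot use $e\cdot\nabla u$ as a test function directly, so the argument must be run on the shifted subdomains and passed to the limit — which is exactly where $(\nabla u)^*\in L^2(\partial\Omega)$, the nontangential convergence of $\nabla u$, and the $\delta^{\alpha_0-1}$ growth bounds in \eqref{eq:ACond} (guaranteeing both the finiteness of the solid integrals and the convergence of the shifted boundary integrals) are needed. The purely algebraic parts — the symmetrization, the absorption of the antisymmetric part through $\langle B'\nu,\nu\rangle=0$, and the boundary-integrand decomposition — are routine.
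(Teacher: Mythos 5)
Your proposal is correct and takes essentially the same route as the paper's proof: symmetrization via Lemma~\ref{ReduceToSymmetric}, the Rellich identity with a cut-off vertical vector field run on shifted subdomains where interior $W^{2,2}$ regularity applies, passage to $\partial\Omega$ via nontangential convergence dominated by $(\nabla u)^*\in L^2(\partial\Omega)$, and the same absorption of the $|\nabla A|$, $|\dive b|$, $|c|$ and $\|b\|_{\infty}$ terms (the last using $\rho<r_{\Omega}$). The only differences are cosmetic: you test against $e\cdot\nabla u$ and recover $\partial_{\nu}u$ from $\langle A^s\nabla u,\nu\rangle$ using $\langle (A-A^t)\nu,\nu\rangle=0$, whereas the paper writes the equivalent divergence identity directly and bounds the full boundary gradient by adding $|\nabla_T u|^2$ to both sides, and it takes the $\varepsilon$-limit in the resulting inequality rather than in the identity.
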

\begin{proof}
As right before \eqref{eq:DT}, we will assume that $q\in B_{r_{\Omega}}(q_i)$, and we will consider the coordinate system for $B_{r_{\Omega}}(q_i)$. Then, for $\e>0$ and $r\in(0,2r_{\Omega})$, consider the sets
\[
T_r^{\e}(q)=T_r(q)+\e e_n,\qquad \Delta_r^{\e}(q)=\Delta_r(q)+\e e_n.
\]
Let $\sigma_{\e}$ be the surface measure on $\Delta_{3\rho/2}^{\e}(q)$, and denote by $\partial_{\nu_{\e}}$, $\nabla_{T_{\e}}$ the normal derivative and the tangential component of the derivative on $\Delta_{3\rho/2}^{\e}(q)$, respectively. To show the estimate, we claim that it is enough to show that
\begin{multline}\label{eq:2Der}
\int_{\Delta_{\rho}^{\e}(q)}|\partial_{\nu_{\e}}u|^2\,d\sigma_{\e}\leq C\int_{\Delta_{3\rho/2}^{\e}(q)}|\nabla_{T_{\e}}u|^2\,d\sigma_{\e}+C\int_{T_{3\rho/2}^{\e}(q)}|\nabla A||\nabla u|^2\\
+C\int_{T_{3\rho/2}^{\e}(q)}|\dive b||u||\nabla u|+C\int_{T_{3\rho/2}^{\e}(q)}|c||\nabla u|^2+\frac{C}{\rho}\int_{T_{3\rho/2}^{\e}(q)}|\nabla u|^2,
\end{multline}
where $C$ depends on $n,\lambda$, $\|A\|_{\infty},\|b\|_{\infty}$, $M$ and $r_{\Omega}$. Indeed, if this is the case, then by nontangential convergence and the fact that $(\nabla u)^*\in L^2(\partial\Omega)$ we obtain that
\begin{equation*}
\lim_{\e\to 0}\int_{\Delta_{\rho}^{\e}(q)}|\partial_{\nu_{\e}}u|^2\,d\sigma_{\e}=\int_{\Delta_{\rho}(q)}|\partial_{\nu}u|^2\,d\sigma,\qquad \lim_{\e\to 0}\int_{\Delta_{3\rho/2}^{\e}(q)}|\nabla_Tu|^2\,d\sigma_{\e}=\int_{\Delta_{3\rho/2}(q)}|\nabla_{T_{\e}}u|^2\,d\sigma.
\end{equation*}
Then, to show the estimate, we consider the $\limsup$ as $\e\to 0$ in \eqref{eq:2Der} and we note that, for $\e>0$ sufficiently small, $T_{3\rho/2}^{\e}(q)\subseteq T_{2\rho}(q)$. Therefore, it is enough to show \eqref{eq:2Der}.
		
To show \eqref{eq:2Der}, we denote the sets $\Delta_{\rho}^{\e}(q)$, $\Delta_{3\rho/2}^{\e}(q)$ and $T_{3\rho/2}^{\e}(q)$ by $\Delta$, $\Delta'$ and $T'$, respectively, and set $T_{\rho}^{\e}(q)=T$. First, from Lemma~\ref{ReduceToSymmetric}, $u$ is a solution to the equation
\begin{equation}\label{eq:EqForu}
-\dive(A^s\nabla u+bu)+(c+\tilde{b})\nabla u=0
\end{equation}
in $\Omega$, where $\tilde{b}$ is defined in Lemma~\ref{ReduceToSymmetric}. Moreover, for some constant $C=C_n$,
\begin{equation}\label{tildeb}
|\tilde{b}_i(x)|\leq\frac{1}{2}\sum_{j=1}^n|\partial_ja_{ij}(x)-\partial_ja_{ji}(x)|\leq C|\nabla A(x)|,\quad i=1,\dots n.
\end{equation}
Let now $T_0$ be such that $T'\subseteq T_0\subseteq\Omega$, where all inclusions are compact. Since $A,b\in C^1_{\loc}(\Omega)$ and $T_0$ is compactly supported in $\Omega$, we obtain from Proposition~\ref{CAlphaReg} that $u\in C^1(\overline{T_0})$. Therefore $|c||\nabla u|^2\in L^1(T')$, and also $u$ is a solution to the equation
\[
-\dive(A\nabla u+bu)=-c\nabla u\in L^2(T_0)
\]
in $T_0$, hence Theorem 8.8 in \cite{Gilbarg} shows that $u\in W^{2,2}_{\loc}(T_0)$. Therefore $u\in W^{2,2}(T')$.
		
From the definitions of $T$ and $T'$, we can construct a smooth cutoff $\theta_0$ in $\mathbb R^n$, with $\theta_0\equiv 1$ in $T$, $\theta_0$ vanishing in $T'\setminus T_{5\rho/4}^{\e}(q)$, $0\leq\theta_0\leq 1$ in $T'$ and $|\nabla\theta_0|\leq C/\rho$, where $C$ depends on $n$ and $M$. Then, if $\partial_nu$ denotes $\left<\nabla u,e_n\right>$, using \eqref{eq:EqForu} and the fact that $u\in W^{2,2}(T')$, we compute in $T'$,
\begin{align*}
\dive(\left<A^s\nabla u,\nabla u\right>e_n)-2\dive(\partial_nuA^s\nabla u)&=\partial_n(\left<A^s\nabla u,\nabla u\right>)-2\partial_nu\dive(A^s\nabla u)-2\left<\nabla\partial_n u,A^s\nabla u\right>\\
&=\left<\partial_nA^s\cdot\nabla u,\nabla u\right>+2\partial_nu \dive(bu)-2\partial_nu\cdot(c+\tilde{b})\nabla u\\
&=\left<\partial_nA^s\cdot\nabla u,\nabla u\right>+2\dive b\cdot u\partial_n u+2\overline{b}\nabla u\cdot \partial_nu,
\end{align*}
since $A^s$ is symmetric, where we define $\overline{b}=b-\tilde{b}-c$. Therefore, after multiplying with $\theta_0$, we obtain
\begin{multline*}
\dive(\theta_0\left<A^s\nabla u,\nabla u\right>e_n)-2\dive(\theta_0\partial_nuA^s\nabla u)=\\
\theta_0\left<\partial_nA^s\cdot\nabla u,\nabla u\right>+2\theta_0\dive b\cdot u\partial_n u+2\theta_0\overline{b}\nabla u\cdot\partial_nu+\partial_n\theta_0\left<A^s\nabla u,\nabla u\right>-2\partial_nu\left<A^s\nabla u,\nabla\theta_0\right>.
\end{multline*}
For simplicity, denote $\sigma_{\e}$ on $\Delta'$ by $\sigma$, and $\nabla_{T_{\e}}$ by $\nabla_T$. Since $T'$ is a Lipschitz domain, using the fact that $u\in W^{2,2}(T')$, the divergence theorem in $T'$ and the support properties of $\theta_0$, we obtain
\begin{multline*}
\int_{\Delta'}\theta_0\left(\left<A^s\nabla u,\nabla u\right>\left<e_n,\nu\right>-2\left<\nabla u,e_n\right>\left<A^s\nabla u,\nu\right>\right)\,d\sigma=\int_{T'}\theta_0\left<\partial_nA^s\cdot\nabla u,\nabla u\right>\\
+\int_{T'}\left(2\theta_0\dive b\cdot u\partial_n u+2\theta_0\overline{b}\nabla u\cdot\partial_nu+\partial_n\theta_0\left<A^s\nabla u,\nabla u\right>-2\partial_nu\left<A^s\nabla u,\nabla\theta_0\right>\right),
\end{multline*}
therefore, changing signs in the left hand side,
\begin{multline*}
\int_{\Delta'}\theta_0\left(2\left<\nabla u,e_n\right>\left<A^s\nabla u,\nu\right>-\left<A^s\nabla u,\nabla u\right>\left<e_n,\nu\right>\right)\,d\sigma\leq\\
C\int_{T'}\left(|\nabla A^s|+|\overline{b}|+|\nabla\theta_0||A^s|\right)|\nabla u|^2+|\dive b||u||\nabla u|,
\end{multline*}
for $C=C_n$. Note now that, from $\overline{b}=b-\tilde{b}-c$ and \eqref{tildeb},
\[
|\overline{b}|\leq |b|+|\tilde{b}|+|c|\leq |b|+C_n|\nabla A|+|c|,
\]
therefore using this bound, $A^s=\frac{A+A^t}{2}$ and $|\nabla\theta_0|\leq C/\rho$, we obtain that
\begin{multline}\label{eq:FirstRel}
\int_{\Delta'}\theta_0\left(2\left<\nabla u,e_n\right>\left<A^s\nabla u,\nu\right>-\left<A^s\nabla u,\nabla u\right>\left<e_n,\nu\right>\right)\,d\sigma\leq\\
C\int_{T'}\left(|\nabla A|+|b|+|c|\right)|\nabla u|^2+\frac{C}{\rho}\int_{T'}|A||\nabla u|^2+C\int_{T'}|\dive b||u||\nabla u|=\mathcal{Q},
\end{multline}
where $C$ depends on $n$ and $M$.
		
We now treat the left hand side. For simplicity, we denote the outer normal $\nu_{\e}$ on $\Delta'$ by $\nu$, and we write $\nabla u=\nabla_Tu+\partial_{\nu}^0u\cdot\nu$, where $\partial_{\nu}^0$ denotes differentiation with respect to $\nu$. We then compute
\begin{multline*}
2\left<\partial_{\nu}^0u\cdot\nu,e_n\right>\left<A^s\nabla u,\nu\right>-\left<A^s\nabla u,\nabla u\right>\left<e_n,\nu\right>
=\left<A^s\nabla u,2\partial_{\nu}^0u\cdot\nu-\nabla u\right>\left<e_n,\nu\right>\\
=\left<A^s\nabla u,\partial_{\nu}^0u\cdot\nu-\nabla_Tu\right>\left<e_n,\nu\right>=\left(\left<A^s\nu,\nu\right>|\partial_{\nu}^0u|^2-\left<A^s\nabla_Tu,\nabla_Tu\right>\right)\left<e_n,\nu\right>,
\end{multline*}
where we used that $A^s$ is symmetric for the last equality. Adding the term $2\left<\nabla_Tu,e_n\right>\left<A^s\nabla u,\nu\right>$ to the first and last terms of the previous equality, we have
\begin{multline*}
2\left<\nabla u,e_n\right>\left<A^s\nabla u,\nu\right>-\left<A^s\nabla u,\nabla u\right>\left<e_n,\nu\right>\\
=2\left<\nabla_Tu,e_n\right>\left<A^s\nabla u,\nu\right>+\left<A^s\nu,\nu\right>|\partial_{\nu}^0u|^2\left<e_n,\nu\right>-\left<A^s\nabla_Tu,\nabla_Tu\right>\left<e_n,\nu\right>,
\end{multline*}
therefore, plugging in \eqref{eq:FirstRel}, we obtain that
\[
\int_{\Delta'}\theta_0\left(2\left<\nabla_Tu,e_n\right>\left<A^s\nabla u,\nu\right>+\left<A^s\nu,\nu\right>|\partial_{\nu}^0u|^2\left<e_n,\nu\right>-\left<A^s\nabla_Tu,\nabla_Tu\right>\left<e_n,\nu\right>\right)\,d\sigma\leq\mathcal{Q},
\]
and, after rearranging,
\[
\int_{\Delta'}\theta_0\left<A^s\nu,\nu\right>|\partial_{\nu}^0u|^2\left<e_n,\nu\right>\,d\sigma\leq \int_{\Delta'}\theta_0\left(\left<A^s\nabla_Tu,\nabla_Tu\right>\left<e_n,\nu\right>-2\left<\nabla_Tu,e_n\right>\left<A^s\nabla u,\nu\right>\right)\,d\sigma+\mathcal{Q}.
\]
Note that $\left<e_n,\nu\right>\leq 1$, and if $\psi_i$ is the coordinate map for $\partial\Omega$ in $B_{r_{\Omega}}(q_i)$, then
\[
\left<e_n,\nu\right>=\left<e_n,\frac{(-\nabla\psi_i,1)}{|(-\nabla\psi_i,1)|}\right>=\frac{1}{\sqrt{|\nabla\psi_i|^2+1}}\geq\frac{1}{\sqrt{nM^2+1}}.
\]
Moreover, $\left<A^s\nu,\nu\right>=\left<A\nu,\nu\right>\geq\lambda|\nu|^2=\lambda$, therefore
\begin{align*}
\int_{\Delta'}\theta_0|\partial_{\nu}^0u|^2\,d\sigma&\leq C \int_{\Delta'}\theta_0|\left<A^s\nabla_Tu,\nabla_Tu\right>|\,d\sigma+C\int_{\Delta'}\theta_0\left|\left<\nabla_Tu,e_n\right>\left<A^s\nabla u,\nu\right>\right|\,d\sigma+C\mathcal{Q}\\
&\leq C\int_{\Delta'}\theta_0|\nabla_Tu|^2\,d\sigma+C\int_{\Delta'}\theta_0|\nabla_Tu||\nabla u|\,d\sigma+C\mathcal{Q},
\end{align*}
where $C$ depends on $n,\lambda$, $M$ and $\|A\|_{\infty}$. We now add the term $\int_{\Delta'}\theta_0|\nabla_Tu|^2$ to both sides, to obtain that
\begin{align*}
\int_{\Delta'}\theta_0|\nabla u|^2\,d\sigma&\leq C \int_{\Delta'}\theta_0|\nabla_Tu|^2\,d\sigma+C\int_{\Delta'}\theta_0|\nabla_Tu||\nabla u|\,d\sigma+C\mathcal{Q}\\
&\leq C\int_{\Delta'}\theta_0|\nabla_Tu|^2\,d\sigma+\frac{C}{4\delta}\int_{\Delta'}\theta_0|\nabla_Tu|^2\,d\sigma+C\delta\int_{\Delta'}\theta_0|\nabla u|^2\,d\sigma+C\mathcal{Q},
\end{align*}
from the Cauchy-Schwartz inequality, and the Cauchy inequality with $\delta$, where $C$ depends on $n,\lambda$, $M$ and $\|A\|_{\infty}$. Choosing $\delta$ such that $C\delta<1/2$, and using $|\left<A\nabla u,\nu\right>|^2\leq C|\nabla u|^2$ and the support properties of $\theta_0$, we obtain 
\begin{multline*}
\int_{\Delta}|\left<A\nabla u,\nu\right>|^2\,d\sigma\leq  C\int_{\Delta'}|\nabla_Tu|^2\,d\sigma+C\mathcal{Q}\\
=C\int_{\Delta'}|\nabla_Tu|^2\,d\sigma+C\int_{T'}\left(|\nabla A|+|b|+|c|\right)|\nabla u|^2+\frac{C}{\rho}\int_{T'}|A||\nabla u|^2+C\int_{T'}|\dive b||u||\nabla u|,
\end{multline*}
where $C$ depends on $n,\lambda$,$M$ and $\|A\|_{\infty}$. Since $\rho\in(0,r_{\Omega})$, we obtain that $|b|\leq\frac{|b|r_{\Omega}}{\rho}$, hence we obtain \eqref{eq:2Der} with a constant $C$ that depends on $n,\lambda$, $\|A\|_{\infty}$, $\|b\|_{\infty}$, $M$ and $r_{\Omega}$. This completes the proof.
\end{proof}

We now turn to the global analog of the Rellich estimate, in which the nontangential maximal function $(\nabla u)^*$ will appear.
	
\begin{lemma}\label{GlobalRellich}
Under the same assumptions as in Lemma~\ref{Rellich}, and assuming also that $\diam(\Omega)<\frac{1}{8}$, $\dive c\leq 0$ and $u$ converges nontangentially, almost everywhere to $u|_{\partial\Omega}\in W^{1,2}(\partial\Omega)$, then for any $\rho\in(0,r_{\Omega})$,
\begin{equation*}
\int_{\partial\Omega}|\partial_{\nu}u|^2\,d\sigma\leq C\rho^{-1}\left(\int_{\partial\Omega}|u|^2\,d\sigma+\int_{\partial\Omega}|\nabla_Tu|^2\,d\sigma\right)+C\int_{\Omega_{C_0\rho}}|c||\nabla u|^2+C\rho^{\alpha_0}\int_{\partial\Omega}|(\nabla u)^*|^2\,d\sigma,
\end{equation*}
where $C_0$ depends on $n$ and $M$, and $C$ depends on $n,p,\lambda,\|A\|_{\infty},\|b\|_{\infty},\|c\|_p$, the constants $C_1$ and $\alpha_0$ that appear in Condition~\eqref{eq:ACond} and the Lipschitz character of $\Omega$.
\end{lemma}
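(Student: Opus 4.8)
The plan is to sum the local Rellich estimate \eqref{eq:LocalRellich} of Lemma~\ref{Rellich} over a bounded‑overlap covering of $\partial\Omega$ by surface balls of radius $\rho$, and then to dispose of the four types of boundary‑strip terms that appear.

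Concretely, I would fix a maximal $\rho$‑separated set $\{z_j\}_{j=1}^{K}\subseteq\partial\Omega$; then the $\Delta_\rho(z_j)$ cover $\partial\Omega$, and the dilated surface balls $\Delta_{2\rho}(z_j)$ together with the Carleson boxes $T_{2\rho}(z_j)$ from \eqref{eq:DT} have overlap bounded by a constant depending only on $n$ and $M$. Since $\delta(x)<2\rho$ whenever $x\in T_{2\rho}(z_j)$, there is $C_0=C_0(n,M)$ with $\bigcup_j T_{2\rho}(z_j)\subseteq\Omega_{C_0\rho}$ (notation from \eqref{eq:CloseFar}). Applying Lemma~\ref{Rellich} at each $z_j$ with this $\rho$ (allowed since $\rho\in(0,r_\Omega)$) and adding, the left‑hand sides sum to at least $\int_{\partial\Omega}|\partial_\nu u|^2\,d\sigma$, the tangential term on $\Delta_{2\rho}(z_j)$ becomes $C\int_{\partial\Omega}|\nabla_T u|^2\,d\sigma$ (after identifying the tangential part of the nontangential limit of $\nabla u$ with $\nabla_T(u|_{\partial\Omega})$), and the solid integrals over $T_{2\rho}(z_j)$ become solid integrals over $\Omega_{C_0\rho}$, giving
\[
\int_{\partial\Omega}|\partial_\nu u|^2\,d\sigma\le C\int_{\partial\Omega}|\nabla_T u|^2\,d\sigma+C\int_{\Omega_{C_0\rho}}\!\big(|\nabla A|\,|\nabla u|^2+|\dive b|\,|u|\,|\nabla u|+|c|\,|\nabla u|^2\big)+\frac{C}{\rho}\int_{\Omega_{C_0\rho}}|\nabla u|^2 .
\]
Since $\rho<r_\Omega$ and $r_\Omega$ is bounded above (Lemma~\ref{rOmegaIsGood}), $\int_{\partial\Omega}|\nabla_T u|^2\le C\rho^{-1}\int_{\partial\Omega}|\nabla_T u|^2$, and the $|c|$‑term is already of the desired form.

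For the $|\nabla A|$‑term, Condition~\eqref{eq:ACond} applied to the entries of $A$ gives $|\nabla A(x)|\le C_1\delta(x)^{\alpha_0-1}$, so it suffices to establish the weighted strip estimate $\int_{\Omega_\sigma}\delta(x)^{\alpha_0-1}|\nabla u(x)|^2\,dx\le C\sigma^{\alpha_0}\int_{\partial\Omega}|(\nabla u)^*|^2\,d\sigma$ for $\sigma<r_\Omega$: this follows from $|\nabla u(x)|^2\le C\delta(x)^{1-n}\int_{\Delta_{c\delta(x)}(q_x)}|(\nabla u)^*|^2\,d\sigma$ (where $q_x\in\partial\Omega$ is nearest to $x$; every such boundary point $q$ has $x\in\Gamma(q)$ by the triangle inequality, as in Lemma~\ref{LiesAbove}), Fubini, and $\int_{\Gamma(q)\cap\Omega_\sigma}\delta(x)^{\alpha_0-n}\,dx\le C\sigma^{\alpha_0}$; taking $\sigma=C_0\rho$ bounds the term by $C\rho^{\alpha_0}\int_{\partial\Omega}|(\nabla u)^*|^2$. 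For the $|\dive b|$‑term, $|\dive b(x)|\le C_1\delta(x)^{\alpha_0-1}$ again by \eqref{eq:ACond}, so after Young's inequality it reduces to the same estimate for $|\nabla u|^2$ and to its analogue $\int_{\Omega_\sigma}\delta^{\alpha_0-1}|u|^2\le C\sigma^{\alpha_0}\int_{\partial\Omega}(u^*)^2$; since integrating $\nabla u$ along a nontangential path from $q$ to any $x\in\Gamma(q)$ of length $\le C|x-q|\le C\diam(\Omega)$ yields $u^*(q)\le|u|_{\partial\Omega}(q)|+C(\nabla u)^*(q)$ for a.e.\ $q$ (using $\diam(\Omega)<\tfrac18$ and Proposition~\ref{CAlphaReg} so that the path integral is classical), and $\rho^{\alpha_0}\le C\le C\rho^{-1}$, the $|\dive b|$‑term is bounded by $C\rho^{-1}\int_{\partial\Omega}|u|^2+C\rho^{\alpha_0}\int_{\partial\Omega}|(\nabla u)^*|^2$.

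The step I expect to be the main obstacle is the last term, $\rho^{-1}\int_{\Omega_{C_0\rho}}|\nabla u|^2$: a direct strip bound only gives $C\int_{\partial\Omega}|(\nabla u)^*|^2$ with no gain in $\rho$, which is too weak for the assertion. Instead I would invoke Proposition~\ref{SolidByBoundary}, which applies precisely because the equation here has $d=0$ and the hypothesis $\dive c\le 0$ makes $d\ge\dive c$; it yields $\int_\Omega|\nabla u|^2\le C\big(\int_{\partial\Omega}|u|^2\,d\sigma+\int_{\partial\Omega}|\nabla_T u|^2\,d\sigma\big)$, whence $\rho^{-1}\int_{\Omega_{C_0\rho}}|\nabla u|^2\le\rho^{-1}\int_\Omega|\nabla u|^2\le C\rho^{-1}\big(\int_{\partial\Omega}|u|^2+\int_{\partial\Omega}|\nabla_T u|^2\big)$, which is exactly the first term on the right‑hand side of the asserted inequality. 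Collecting the bounds for the four terms completes the proof, with $C$ depending on the stated data (the dependence on $C_1$ and $\alpha_0$ entering through the weighted strip estimates) and $C_0$ on $n$ and $M$.
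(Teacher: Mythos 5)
Your proposal is correct and its skeleton is the paper's: sum the local Rellich estimate over scale-$\rho$ pieces of $\partial\Omega$, use Condition~\eqref{eq:ACond} to replace $|\nabla A|$ and $|\dive b|$ by $\delta(x)^{\alpha_0-1}$ weights and convert the resulting strip integrals into $\rho^{\alpha_0}\|(\nabla u)^*\|_{L^2(\partial\Omega)}^2$ via the cone/Fubini argument, and invoke Proposition~\ref{SolidByBoundary} (legitimate here since $d=0\ge\dive c$) to absorb the troublesome term $\rho^{-1}\int_{\Omega_{C_0\rho}}|\nabla u|^2$ into $\rho^{-1}\|u\|_{W^{1,2}(\partial\Omega)}^2$ --- that last step is exactly how the paper handles it. The two points where you deviate are minor. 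First, you aggregate with a maximal $\rho$-separated net and bounded overlap, whereas the paper averages the local estimate over $q\in\Delta_{2r_\Omega}(q_i)$ with Fubini and a factor $\rho^{1-n}$; these are interchangeable. Second, for the $\dive b$ cross term the paper applies Cauchy--Schwarz to get $\rho^{-1}\|u\|_{L^2(\Omega)}^2+\rho^{2\alpha_0}\|(\nabla u)^*\|_{L^2(\partial\Omega)}^2$ and then bounds $\|u\|_{L^2(\Omega)}^2$ by a second appeal to Proposition~\ref{SolidByBoundary}, while you keep the weight on both factors and then need the estimate $u^*\lesssim |u|_{\partial\Omega}|+C(\nabla u)^*$. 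That estimate is standard and your route works, but as stated it glosses over a detail: the path from $q$ to $x\in\Gamma(q)$ must stay inside a nontangential region on which $|\nabla u|$ is controlled, which either requires a genuine path construction inside $\Gamma(q)$ (using that the aperture $10(M+1)$ dominates the Lipschitz constant) or an enlarged-aperture maximal function together with $L^2$-equivalence of maximal functions over different apertures; the paper's use of Proposition~\ref{SolidByBoundary} for $\|u\|_{L^2(\Omega)}$ sidesteps this entirely. With that detail supplied, your argument is complete and yields the stated constant dependence.
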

\begin{proof}
Consider the coordinate system for $B_{r_{\Omega}}(q_i)$ from Definition~\ref{LipDom} and set $\Delta_i=\Delta_{r_{\Omega}}(q_i)$, $2\Delta_i=\Delta_{2r_{\Omega}}(q_i)$. Then, from Fubini's theorem,
\begin{align}\nonumber
\int_{\Delta_i}|\partial_{\nu}u|^2\,d\sigma&\leq C\rho^{1-n}\int_{\Delta_i}|\partial_{\nu}u(q')|^2\sigma(\Delta_{\rho}(q'))\,d\sigma(q')=C\rho^{1-n}\int_{\Delta_i}\int_{\Delta_{\rho}(q')}|\partial_{\nu}u(q')|^2\,d\sigma(q)d\sigma(q')\\
\label{eq:DoubleSurface}
&\leq C\rho^{1-n}\int_{2\Delta_i}\int_{\Delta_{\rho}(q)}|\partial_{\nu}u(q')|^2\,d\sigma(q')d\sigma(q),
\end{align}
where $C$ depends on $n$ and $M$. In a similar way,
\begin{equation}\label{eq:DoubleSurface2}
\int_{2\Delta_i}\int_{\Delta_{2\rho}(q)}|\nabla_Tu(q')|^2\,d\sigma(q')d\sigma(q)\leq C\rho^{n-1}\int_{\partial\Omega}|\nabla_Tu|^2\,d\sigma.
\end{equation}
Moreover, if $q\in 2\Delta_i$ and $x\in T_{2\rho}(q)$, then $x\in\Omega_{C_0\rho}$ and $q\in B_{C_0\rho}(x)\cap\partial\Omega$ for some $C_0$ that depends on $n$ and $M$. Therefore, for any measurable $f\geq 0$ in $\Omega$, using Fubini's theorem we obtain
\[
\int_{2\Delta_i}\int_{T_{2\rho}(q)}f(x)\,dxd\sigma(q)\leq \int_{\Omega_{C_0\rho}}\int_{B_{C_0\rho}(x)\cap\partial\Omega}f(x)\,d\sigma(q)dx\leq C\rho^{n-1}\int_{\Omega_{C_0\rho}}f(x)\,dx.
\]
Hence, integrating \eqref{eq:LocalRellich} for $q\in 2\Delta_i$ and using \eqref{eq:DoubleSurface} and \eqref{eq:DoubleSurface2} we obtain
\begin{multline}\label{eq:nTu}
\int_{\Delta_i}|\partial_{\nu}u|^2\,d\sigma\leq C\int_{\partial\Omega}|\nabla_Tu|^2\,d\sigma+C\int_{\Omega_{C_0\rho}}|\nabla A||\nabla u|^2\\+C\int_{\Omega_{C_0\rho}}|\dive b||u||\nabla u|+C\int_{\Omega_{C_0\rho}}|c||\nabla u|^2+\frac{C}{\rho}\int_{\Omega_{C_0\rho}}|\nabla u|^2.
\end{multline}
We now proceed as in the proof of Lemma 6.6 in \cite{KenigShen}: using Condition \eqref{eq:ACond}, we obtain
\begin{equation}\label{eqeq:GradABound}
\int_{\Omega_{C_0\rho}}|\nabla A||\nabla u|^2\leq C_1\int_{\Omega_{C_0\rho}}\delta(x)^{\alpha_0-1}|\nabla u(x)|^2\,dx\leq C\rho^{\alpha_0}\int_{\partial\Omega}|(\nabla u)^*|^2\,d\sigma,
\end{equation}
where $C$ depends on $C_1$, $\alpha_0$ and the Lipschitz character of $\Omega$. Moreover, using Condition~\eqref{eq:ACond},
\begin{multline}\label{eqeq:DivbBound}
\int_{\Omega_{C_0\rho}}|\dive b||u||\nabla u|\leq C\left(\int_{\Omega_{C_0\rho}}|u|^2\right)^{1/2}\left(\int_{\Omega_{C_0\rho}}\delta(x)^{2\alpha_0-2}|\nabla u(x)|^2\,dx\right)^{1/2}\\
\hspace{10mm}\leq C\|u\|_{L^2(\Omega)}\left(\rho^{2\alpha_0-1}\int_{\partial\Omega}|(\nabla u)^*|^2\,d\sigma\right)^{1/2}\leq C\rho^{-1}\|u\|_{L^2(\Omega)}^2+C\rho^{2\alpha_0}\int_{\partial\Omega}|(\nabla u)^*|^2\,d\sigma,
\end{multline}
where $C$ depends on $n,C_1$, $\alpha_0$ and the Lipschitz character of $\Omega$, and where we also used Lemma~\ref{rOmegaIsGood}. To estimate the last term in \eqref{eq:nTu} we use \eqref{eq:SolidByBoundary}. Then, plugging \eqref{eqeq:GradABound} and \eqref{eqeq:DivbBound} in \eqref{eq:nTu}, adding for $i=1,\dots N$ and using Lemma~\ref{rOmegaIsGood} completes the proof.
\end{proof}

\subsection{A global estimate for the derivative}
We now turn to the following integrability result for the second derivatives of solutions to the equation $-\dive(A\nabla u)=0$.
\begin{lemma}\label{ImprovedDerivativeRegularity}
Let $\Omega\subseteq\bR^n$ be a Lipschitz domain with $\diam(\Omega)<\frac{1}{8}$, and $A\in \M_{\bR^n}(\lambda,\alpha,\tau)$ satisfying Condition \eqref{eq:ACond} in $\Omega$ with $\alpha_0\leq\alpha$. If $u\in W^{1,2}(\Omega)$ is the solution to the $R_2$ Regularity problem for $\mathcal{L}u=-\dive(A\nabla u)=0$ in $\Omega$ (in the sense of Definition 5.2 in \cite{KenigShen}) with $u=f\in W^{1,2}(\partial\Omega)$, then for any $\beta\in\left(0,\frac{1}{2}\right)$,
\[
\int_{\Omega}\left(\delta(z)^{1-\beta}|\nabla ^2u(z)|\right)^2\,dz\leq C\int_{\partial\Omega}|\nabla_T f|^2\,d\sigma,
\]
where $C$ depends on $n,\lambda,\alpha,\beta,\tau,\|A\|_{\infty},C_0,\alpha_0$ and the Lipschitz character of $\Omega$.
\end{lemma}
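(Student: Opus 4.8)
The plan is to recognise the left-hand side as (essentially) the $B^{2,2}_\beta(\Omega)$ seminorm of $\nabla u$, and to control it by combining an interior Caccioppoli estimate for $\nabla^2u$ with the $L^2(\partial\Omega)$ bound on $(\nabla u)^*$. The first move is to reduce the weighted integral of $\nabla^2u$ to a weighted integral of $\nabla u$. Since $A\in C^2(\Omega)$ by Condition~\eqref{eq:ACond}, interior regularity (Theorem 8.8 in \cite{Gilbarg}, exactly as in the proof of Lemma~\ref{Rellich}) gives $u\in W^{2,2}_{\loc}(\Omega)$, and differentiating the equation shows that for each $k$ the function $v=\partial_ku\in W^{1,2}_{\loc}(\Omega)$ solves $-\dive(A\nabla v)=\dive(\partial_kA\,\nabla u)$, where $\partial_kA$ is now an honest function with $|\partial_kA(x)|\le C_1\delta(x)^{\alpha_0-1}$.

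Next I would run a Whitney argument. Fix a Whitney decomposition $\Omega=\bigcup_jQ_j$ with $\ell(Q_j)=:\ell_j\approx\dist(Q_j,\partial\Omega)$, and let $Q_j^*\supset Q_j$ be a fixed slightly larger concentric cube, still contained in $\Omega$, with $\dist(Q_j^*,\partial\Omega)\approx\ell_j$ and bounded overlap. Applying the Caccioppoli inequality (Lemma~\ref{Cacciopoli} with no lower order terms, $f=\partial_kA\,\nabla u$, $g=0$) to $v=\partial_ku$ on $Q_j^*$ and summing over $k$ gives
\[
\int_{Q_j}|\nabla^2u|^2\le C\ell_j^{-2}\int_{Q_j^*}|\nabla u|^2+C\int_{Q_j^*}|\nabla A|^2|\nabla u|^2\le C\bigl(1+C_1^2\bigr)\ell_j^{-2}\int_{Q_j^*}|\nabla u|^2,
\]
where I used $|\nabla A|\le C_1\delta^{\alpha_0-1}\lesssim C_1\ell_j^{\alpha_0-1}$ on $Q_j^*$ together with $\ell_j\le\diam\Omega<1$ and $\alpha_0>0$. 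Since $\delta(z)\approx\ell_j$ on $Q_j$ and $2(1-\beta)>0$, multiplying by $\delta^{2(1-\beta)}$ and summing over $j$ with bounded overlap gives
\[
\int_\Omega\bigl(\delta(z)^{1-\beta}|\nabla^2u(z)|\bigr)^2\,dz\le C\sum_j\ell_j^{-2\beta}\int_{Q_j^*}|\nabla u|^2\le C\int_\Omega\delta(z)^{-2\beta}|\nabla u(z)|^2\,dz .
\]

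It remains to bound the last integral by $\|(\nabla u)^*\|_{L^2(\partial\Omega)}^2$, and this is where the restriction $\beta<\tfrac12$ is used. For $z\in\Omega$ the set $\Sigma(z)=\{q\in\partial\Omega:z\in\Gamma(q)\}=B_{10(M+1)\delta(z)}(z)\cap\partial\Omega$ is a nonempty surface ball with $\sigma(\Sigma(z))\gtrsim\delta(z)^{n-1}$, and $|\nabla u(z)|^2\le\fint_{\Sigma(z)}\bigl[(\nabla u)^*\bigr]^2\,d\sigma$. By Fubini,
\[
\int_\Omega\delta(z)^{-2\beta}|\nabla u(z)|^2\,dz\le C\int_{\partial\Omega}\bigl[(\nabla u)^*(q)\bigr]^2\Bigl(\int_{\Gamma(q)}\delta(z)^{1-n-2\beta}\,dz\Bigr)\,d\sigma(q),
\]
and on $\Gamma(q)$ one has $\delta(z)\ge c|z-q|$, so (as $1-n-2\beta<0$) the inner integral is at most $C\int_{B_{\diam\Omega}(q)}|z-q|^{1-n-2\beta}\,dz=C\int_0^{\diam\Omega}r^{-2\beta}\,dr=C(1-2\beta)^{-1}(\diam\Omega)^{1-2\beta}$, finite precisely because $\beta<\tfrac12$ (and $\diam\Omega$ is controlled by Lemma~\ref{DiameterBound}). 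Hence $\int_\Omega\delta^{-2\beta}|\nabla u|^2\le C\|(\nabla u)^*\|_{L^2(\partial\Omega)}^2$. Finally, by solvability and uniqueness of $R_2$ for $-\dive(A\nabla u)=0$ on Lipschitz domains (as invoked for $\tilde{\mathcal S}$ in the proof of Proposition~\ref{SolidByBoundary}), $\|(\nabla u)^*\|_{L^2(\partial\Omega)}\le C\|f\|_{W^{1,2}(\partial\Omega)}$; and since $\nabla^2u$ is unchanged if a constant is added to $f$, we may take $f$ of mean zero and apply Poincar\'e's inequality on $\partial\Omega$ to get $\|f\|_{W^{1,2}(\partial\Omega)}\le C\|\nabla_Tf\|_{L^2(\partial\Omega)}$. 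Tracking constants: the Caccioppoli step brings in $C_1,\alpha_0$, the last estimate brings in $\beta$ through $(1-2\beta)^{-1}$, and the $R_2$ bound brings in $\lambda,\alpha,\tau,\|A\|_\infty$ and the Lipschitz character, as claimed.

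The delicate point is this last estimate of $\int_\Omega\delta^{-2\beta}|\nabla u|^2$: one needs the exponent $-2\beta$ to stay strictly above $-1$ for the radial integral to converge, which is exactly the borderline phenomenon in the Besov scale, i.e. $\nabla u\in B^{2,2}_\beta(\Omega)$ for $\beta<\tfrac12$ but not at the endpoint, where one would instead need a genuine square-function estimate rather than merely $(\nabla u)^*\in L^2(\partial\Omega)$. An alternative route, which is the one reflected in the introduction, bypasses the Whitney argument: obtain $\nabla u\in B^{2,2}_\beta(\Omega)$ directly from the Besov regularity of the Dirichlet problem in Lipschitz domains (Theorem 4.1 in \cite{JerisonKenigInhomogeneous}, after freezing the H\"older matrix $A$ and absorbing the remainder as a perturbation), and then invoke the equivalence $\|\nabla u\|_{B^{2,2}_\beta(\Omega)}^2\approx\|\nabla u\|_{L^2(\Omega)}^2+\int_\Omega\delta^{2(1-\beta)}|\nabla^2u|^2$ together with $\|\nabla u\|_{L^2(\Omega)}\le C\|\nabla_Tf\|_{L^2(\partial\Omega)}$ from Proposition~\ref{SolidByBoundary}.
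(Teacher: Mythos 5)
Your proof is correct, but it follows a genuinely different route from the paper's. The paper differentiates the equation and then applies the interior Schauder-type estimate (Proposition~\ref{CAlphaReg}) to get the \emph{pointwise} bound $|\nabla^2u(z)|\leq C\delta(z)^{-1}\bigl(\fint_{4B_z}|\nabla u|^2\bigr)^{1/2}$ -- a step that uses the H\"older seminorm of $\nabla A$, i.e.\ the second-derivative part of Condition~\eqref{eq:ACond} -- and then splits $\Omega$ into $\Omega^{r_\Omega}$ and $\Omega_{r_\Omega}$, bounding the interior piece crudely by $\int_\Omega|\nabla u|^2\lesssim\|(\nabla u)^*\|_{L^2(\partial\Omega)}^2$ and the boundary piece by $|\nabla^2u(z)|\leq C\delta(z)^{-1}(\nabla u)^*(q_z)$ (via Lemma~\ref{LiesAbove}) followed by a vertical integration of $\delta^{-2\beta}$ in graph coordinates, which is where $\beta<\tfrac12$ enters. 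You instead work at the $L^2$ level: Whitney cubes plus Caccioppoli applied to $\partial_ku$ reduce the weighted integral of $\nabla^2u$ to $\int_\Omega\delta^{-2\beta}|\nabla u|^2$, and then the shadow-set/Fubini argument converts this to $\|(\nabla u)^*\|_{L^2(\partial\Omega)}^2$, with $\beta<\tfrac12$ appearing through the same convergent integral $\int_0^{\diam\Omega}r^{-2\beta}\,dr$. Your route buys two small things: it only needs the first-derivative bound $|\nabla A|\leq C_1\delta^{\alpha_0-1}$ from Condition~\eqref{eq:ACond} (no $|\nabla^2A|$ or H\"older bound on $\nabla A$), and it treats all scales uniformly without the near/far splitting; the paper's route buys a pointwise second-derivative estimate, which it reuses elsewhere. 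Two minor points you should make explicit: the bound $|\nabla u(z)|^2\leq\fint_{\Sigma(z)}[(\nabla u)^*]^2\,d\sigma$ relies on the surface density estimate $\sigma(B_r(q)\cap\partial\Omega)\gtrsim r^{n-1}$, which follows from the graph description in Definition~\ref{LipDom} together with Lemmas~\ref{rOmegaIsGood} and \ref{DiameterBound}; and rather than citing Lemma~\ref{Cacciopoli} (stated with lower-order terms and $|\Omega|<1$), it is cleaner to invoke the standard Caccioppoli inequality for $-\dive(A\nabla v)=\dive(\partial_kA\,\nabla u)$ on the Whitney cube. Your final reduction to $\|\nabla_Tf\|_{L^2(\partial\Omega)}$ via a mean-zero normalization and boundary Poincar\'e is fine (and is implicit in the $R_2$ estimate of \cite{KenigShen} that the paper quotes directly).
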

\begin{proof}
Let $A=(a_{ij})$. Let also $z\in\Omega$, set $r=\delta(z)/8$, $B_z=B_r(z)$, and let $s_z$ be the average of $u$ in $4B_z$. Set $v=u-s_z$ and fix $k=1,\dots n$. Since $v$ solves the equation $-\dive(A\nabla v)=0$, we obtain
\begin{equation*}
\sum_{i,j}a_{ij}\partial_{ij}v=-\sum_{i,j}\partial_ia_{ij}\cdot\partial_jv=g.
\end{equation*}
From Proposition~\ref{CAlphaReg}, $v\in C^{1,\alpha}_{\loc}(\Omega)$, and from Condition~\eqref{eq:ACond}, $A\in C^2_{\loc}(\Omega)$, therefore, $g\in C^{\alpha}_{\loc}(\Omega)$. Hence, from Theorem 6.13 in \cite{Gilbarg} we obtain that $v\in C^{2,\alpha}(4B_z)$.

We now differentiate the equation $-\dive(A\nabla v)=0$ in $4B_z$ with respect to $e_k$. Setting $v_k=\partial_kv$, we obtain
\[
-\dive(A\nabla v_k)=\sum_{i,j}\partial_i(\partial_ka_{ij}\cdot \partial_jv)=\dive f,
\]
where $f_i=\sum_j\partial_ka_{ij}\cdot\partial_jv$. Then, from Proposition~\ref{CAlphaReg},
\begin{equation}\label{eq:NablaVk}
|\nabla(\partial_ku)(z)|\leq\|\nabla v_k\|_{L^{\infty}(B_z)}\leq\frac{C}{r}\left(\fint_{2B_z}|v_k|^2\right)^{1/2}+C\|f\|_{L^{\infty}(2B_z)}+Cr^{\alpha_0}\|f\|_{C^{0,\alpha_0}(2B_z)}.
\end{equation}
Since $\delta(x)>r$ for any $x\in 2B_z$, we use \eqref{eq:ACond} and Proposition~\ref{CAlphaReg} to estimate
\begin{equation}\label{eq:NablaANablaU}
\|f\|_{L^{\infty}(2B_z)}\leq C\|\nabla A\|_{L^{\infty}(2B_z)}\|\nabla v\|_{L^{\infty}(2B_z)}\leq Cr^{\alpha_0-2}\left(\fint_{4B_z}|v|^2\right)^{1/2}\leq Cr^{\alpha_0-1}\left(\fint_{4B_z}|\nabla u|^2\right)^{1/2},
\end{equation}
where we also used Poincar{\'e}'s in the last estimate. Also, from Proposition~\ref{CAlphaReg} and Condition \eqref{eq:ACond},
\begin{align}\nonumber
\|f\|_{C^{0,\alpha_0}(2B_z)}&\leq C\|\nabla A\|_{L^{\infty}(2B_z)}\|\nabla v\|_{C^{0,\alpha_0}(2B_z)}+C\|\nabla A\|_{C^{0,\alpha_0}(2B_z)}\|\nabla v\|_{L^{\infty}(2B_z)}\\
\nonumber
&\leq\left(Cr^{-1-\alpha_0}\|\nabla A\|_{L^{\infty}(2B_z)}+Cr^{-1}\|\nabla A\|_{C^{0,\alpha_0}(2B_z)}\right)\left(\fint_{4B_z}|v|^2\right)^{1/2}\\
\label{eq:NablaANablaU2}
&\leq Cr^{-2}\left(\fint_{4B_z}|v|^2\right)^{1/2}\leq Cr^{-1}\left(\fint_{4B_z}|\nabla u|^2\right)^{1/2}.
\end{align}
Plugging \eqref{eq:NablaANablaU} and \eqref{eq:NablaANablaU2} in \eqref{eq:NablaVk} and considering $k=1,\dots n$, we obtain
\begin{equation}\label{eq:nabla^2Bound}
|\nabla^2u(z)|\leq\left(\frac{C}{r}+Cr^{\alpha_0-1}\right)\left(\fint_{4B_z}|\nabla u|^2\right)^{1/2}\leq\frac{C}{r}\left(\fint_{4B_z}|\nabla u|^2\right)^{1/2},
\end{equation}
where $C$ depends on $n,\lambda,\alpha,\tau,\|A\|_{\infty}, C_0$ and $\alpha_0$.

We now consider two cases: $z\in\Omega^{r_{\Omega}}$, and $z\in\Omega_{r_{\Omega}}$ (from \eqref{eq:CloseFar}). If $z\in\Omega^{r_{\Omega}}$, then $r>r_{\Omega}/8$, so \eqref{eq:nabla^2Bound} shows that
\begin{equation}\label{eq:nabla^2BoundInside1}
|\nabla^2u(z)|\leq C\left(\fint_{4B_z}|\nabla u|^2\right)^{1/2}\leq C\left(\int_{\Omega}|\nabla u|^2\right)^{1/2},
\end{equation}
where $C$ depends on $n,\lambda,\alpha,\tau,\|A\|_{\infty},C_0,\alpha_0$ and $r_{\Omega}$. Since $u$ solves the $R_2$ Regularity problem in $\Omega$ with boundary values $f$, then if $t$ is the average of $u$ on $\partial\Omega$,
\begin{align}\label{eq:GradientLater}
\lambda\int_{\Omega}|\nabla u|^2&\leq\int_{\Omega}A\nabla u\nabla u=\int_{\partial\Omega}(u-t)\partial_{\nu}u\leq \|\nabla u\|_{L^2(\partial\Omega)}\|u-t\|_{L^2(\partial\Omega)}\leq C\|(\nabla u)^*\|_{L^2(\partial\Omega)}^2,
\end{align}
from Poincar{\'e}'s inequality on $\partial\Omega$. Combining \eqref{eq:GradientLater} with \eqref{eq:nabla^2BoundInside1}, we then obtain that
\begin{equation}\label{eq:nabla^2BoundInside}
|\nabla^2u(z)|\leq C\left(\fint_{2B_z}|\nabla u|^2\right)^{1/2}\leq C\int_{\partial\Omega}((\nabla u)^*)^2\,d\sigma,
\end{equation}
for any $z\in\Omega^{r_{\Omega}}$. If now $z\in\Omega_{r_{\Omega}}$, then
\begin{equation}\label{eq:nabla^2BoundOutside}
|\nabla^2u(z)|\leq\frac{C}{\delta(z)}\left(\fint_{4B_z}|\nabla u|^2\right)^{1/2}.
\end{equation}
Now, for $\beta\in\left(0,\frac{1}{2}\right)$, we compute
\begin{align}\nonumber
\int_{\Omega}\left(\delta(z)^{1-\beta}|\nabla^2u(z)|\right)^2\,dz&=\int_{\Omega^{r_{\Omega}}}\delta(z)^{2-2\beta}|\nabla^2u(z)|^2\,dz+\int_{\Omega_{r_{\Omega}}}\delta(z)^{2-2\beta}|\nabla^2u(z)|^2\,dz\\
\label{eq:delta1betabound}
&\leq Cr_{\Omega}^{2-2\beta}|\Omega|\int_{\partial\Omega}((\nabla u)^*)^2+\int_{\Omega_{r_{\Omega}}}\delta(z)^{2-2\beta}|\nabla^2u(z)|^2\,dz,
\end{align}
where we used \eqref{eq:nabla^2BoundInside} in the last inequality. To estimate the last term in \eqref{eq:delta1betabound}, suppose that $z\in B_{2r_{\Omega}}(q_i)\cap\Omega$ from Definition~\ref{LipDom} and also $z=(z',z_n)$ in the coordinate system for $B_{C_Mr_{\Omega}}(q_i)$. Setting $q_z=(z',\psi_i(z'))\in\partial\Omega$, Lemma~\ref{LiesAbove} shows that $B_{\delta(z)/2}(z)\subseteq\Gamma(q_z)$, therefore \eqref{eq:nabla^2BoundOutside} shows that $|\nabla^2u(z)|\leq\frac{C}{\delta(z)}(\nabla u)^*(q_z)$. Hence, in the coordinate system for $B_{C_Mr_{\Omega}}(q_i)$,
\[
\int_{B_{2r_{\Omega}}(q_i)\cap\Omega_{r_{\Omega}}}\delta(z)^{2-2\beta}|\nabla^2u(z)|^2\,dz\leq \frac{Cr_{\Omega}^{1-2\beta}}{1-2\beta}\int_{\Delta_{2r_{\Omega}}(q_i)}((\nabla u)^*)^2\,d\sigma.
\]
Adding the previous estimates for $i=1,\dots N$, plugging in \eqref{eq:delta1betabound} and using also the estimate $\|(\nabla u)^*\|_{L^2(\partial\Omega)}\leq C\|\nabla_Tf\|_{L^2(\partial\Omega)}$ completes the proof.
\end{proof}

As a corollary, we obtain the next estimate on the derivative of a solution to the Regularity problem.

\begin{lemma}\label{ImprovedIntegrability}
Under the same assumptions as in Lemma~\ref{ImprovedDerivativeRegularity}, then for any $p_1\in\left(1,\frac{2n}{n-1}\right)$, $\|\nabla u\|_{L^{p_1}(\Omega)}\leq C\|\nabla_Tf\|_{L^2(\partial\Omega)}$, where $C$ depends on $n,p_1,\lambda,\alpha,\tau,\|A\|_{\infty},C_0,\alpha_0$ and the Lipschitz character of $\Omega$.
\end{lemma}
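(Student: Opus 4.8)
The plan is to promote the weighted second–derivative estimate of Lemma~\ref{ImprovedDerivativeRegularity} to the statement that $\nabla u$ lies in the Besov space $B_\beta^{2,2}(\Omega)$ for every $\beta\in\left(0,\frac12\right)$, and then to apply a Sobolev embedding on the Besov scale.

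First I would record the companion $L^2$ bound. Since $u$ solves the $R_2$ Regularity problem for $-\dive(A\nabla u)=0$ with data $f$, estimate \eqref{eq:GradientLater}, together with the $R_2$ bound $\|(\nabla u)^*\|_{L^2(\partial\Omega)}\le C\|\nabla_Tf\|_{L^2(\partial\Omega)}$ already used at the end of the proof of Lemma~\ref{ImprovedDerivativeRegularity}, gives $\|\nabla u\|_{L^2(\Omega)}\le C\|\nabla_Tf\|_{L^2(\partial\Omega)}$. Combining this with Lemma~\ref{ImprovedDerivativeRegularity}, for each fixed $\beta\in\left(0,\frac12\right)$ both $\|\nabla u\|_{L^2(\Omega)}$ and $\left(\int_\Omega\left(\delta(z)^{1-\beta}|\nabla^2u(z)|\right)^2dz\right)^{1/2}$ are bounded by $C\|\nabla_Tf\|_{L^2(\partial\Omega)}$, with $C$ depending only on $n,\lambda,\alpha,\beta,\tau,\|A\|_\infty,C_0,\alpha_0$ and the Lipschitz character of $\Omega$.

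Next I would invoke the intrinsic characterization of Besov spaces on Lipschitz domains (Theorem 4.1 in \cite{JerisonKenigInhomogeneous}): a function $g\in L^2(\Omega)$ with $\int_\Omega\left(\delta(z)^{1-\beta}|\nabla g(z)|\right)^2dz<\infty$ belongs to $B_\beta^{2,2}(\Omega)$, with norm controlled by the sum of $\|g\|_{L^2(\Omega)}$ and the square root of that weighted integral, the constant being uniform over Lipschitz domains of fixed character and bounded diameter (here $\diam(\Omega)<\frac18$, and Lemma~\ref{DiameterBound} bounds it below). Applying this to each $g=\partial_ku$, $k=1,\dots,n$, yields $\nabla u\in B_\beta^{2,2}(\Omega)$ with $\|\nabla u\|_{B_\beta^{2,2}(\Omega)}\le C\|\nabla_Tf\|_{L^2(\partial\Omega)}$ for every $\beta\in\left(0,\frac12\right)$. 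Finally, I would use the embedding $B_\beta^{2,2}(\Omega)\hookrightarrow L^{q_\beta}(\Omega)$ with $q_\beta=\frac{2n}{n-2\beta}$, valid on a bounded Lipschitz domain for $0<\beta<\frac n2$, noting that $q_\beta\uparrow\frac{2n}{n-1}$ as $\beta\uparrow\frac12$. Given $p_1\in\left(1,\frac{2n}{n-1}\right)$, choose $\beta\in\left(0,\frac12\right)$ with $q_\beta\ge p_1$ (possible exactly because $p_1<\frac{2n}{n-1}$, and any $\beta$ works if $p_1\le 2$); since $|\Omega|<\infty$, Hölder's inequality and the embedding give
\[
\|\nabla u\|_{L^{p_1}(\Omega)}\le|\Omega|^{\frac1{p_1}-\frac1{q_\beta}}\|\nabla u\|_{L^{q_\beta}(\Omega)}\le C\|\nabla u\|_{B_\beta^{2,2}(\Omega)}\le C\|\nabla_Tf\|_{L^2(\partial\Omega)},
\]
and since $\beta$ may be chosen depending only on $n$ and $p_1$, the constant $C$ has exactly the asserted dependence.

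\emph{Main obstacle.} The only non-routine step is the passage through the Besov scale: matching the weighted estimate of Lemma~\ref{ImprovedDerivativeRegularity} with membership of $\nabla u$ in $B_\beta^{2,2}(\Omega)$ through the correct form of the characterization in \cite{JerisonKenigInhomogeneous} (with the smoothness index equal to the exponent $\beta$ appearing in the weight), and ensuring that both this equivalence and the Sobolev–Besov embedding hold with constants uniform over the class of Lipschitz domains under consideration; everything else is Hölder's inequality and bookkeeping of constants.
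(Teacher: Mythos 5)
Your proposal is correct and follows essentially the same route as the paper: combine \eqref{eq:GradientLater} with the weighted second-derivative bound of Lemma~\ref{ImprovedDerivativeRegularity}, deduce $\nabla u\in B_{\beta}^{2,2}(\Omega)$ via the (b)$\Rightarrow$(a) implication of Theorem 4.1 in \cite{JerisonKenigInhomogeneous}, and then embed into $L^{q_\beta}(\Omega)$ with $q_\beta=\frac{2n}{n-2\beta}$ and finish with H\"older. The only presentational difference is that the paper justifies the embedding by extending each $\partial_iu$ to $\bR^n$ (Proposition 2.17(b) in \cite{JerisonKenigInhomogeneous}) and identifying $B_{\beta}^2=\Lambda_{\beta}^{2,2}=L_{\beta}^2(\bR^n)$ via \cite{JonssonWallin}, and it records that Theorem 4.1 is stated for harmonic functions but its proof of that implication does not use harmonicity, points you cite rather than derive.
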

\begin{proof}
Using Lemma~\ref{ImprovedDerivativeRegularity} and \eqref{eq:GradientLater}, we obtain that
\begin{equation}\label{eq:ForBesov}
\left\|\delta^{1-\beta}|\nabla^2u|+|\nabla u|\right\|_{L^2(\Omega)}\leq C\|\nabla_T f\|_{L^2(\partial\Omega)}.
\end{equation}
for any $\beta\in\left(0,\frac{1}{2}\right)$. We now use the implication (b)$\Rightarrow$(a) in Theorem 4.1 of \cite{JerisonKenigInhomogeneous} for the partials $\partial_iu$ and $k=0$ (this theorem is stated for harmonic functions, but the proof of this implication does not use this fact). Then, combining with \eqref{eq:ForBesov}, we obtain that
\[
\|\nabla u\|_{B_{\beta}^2(\Omega)}\leq C\|\nabla_Tf\|_{L^2(\partial\Omega)},
\]
where $B_{\beta}^2(\Omega)$ denotes the space of restrictions of functions of $B_{\beta}^2$ in $\Omega$ (page 173 in \cite{JerisonKenigInhomogeneous}), and $B_{\beta}^2$ is defined in page 172 of \cite{JerisonKenigInhomogeneous}.

From Proposition 2.17 (b) in \cite{JerisonKenigInhomogeneous}, for any $i=1,\dots n$, there exists $g_i\in B_{\beta}^2$ such that $\partial_iu=g_i$ in $\Omega$ and $\|g_i\|_{B_{\beta}^2}\leq C\|\partial_iu\|_{B_{\beta}^2(\Omega)}$. Note now that $B_{\beta}^2$ coincides with $\Lambda_{\beta}^{2,2}$, where the latter space is defined on page 7 in \cite{JonssonWallin}. From the last theorem on page 8 in \cite{JonssonWallin}, $\Lambda_{\beta}^{2,2}=L_{\beta}^2(\bR^n)$, where $L_{\beta}^2(\bR^n)$ is defined on page 6 in \cite{JonssonWallin}. Then, from the theorem on the same page (for $\alpha=\beta$, $\alpha_1=0$, $p=2$ and $p_1=\frac{2n}{n-2\beta}$) we obtain that $L_{\beta}^2(\bR^n)\subseteq L_0^{p_1}(\bR^n)=L^{p_1}(\bR^n)$. So, for any $i=1,\dots n$,
\[
\|\partial_iu\|_{L^{p_1}(\Omega)}\leq \|g_i\|_{L^{p_1}(\bR^n)}\leq C\|g_i\|_{L_{\beta}^2(\bR^n)}\leq C\|g_i\|_{B_{\beta}^2}\leq C\|\nabla u\|_{B_{\beta}^2(\Omega)}\leq C\|\nabla_T f\|_{L^2(\partial\Omega)},
\]
which completes the proof.

\end{proof}

\section{Estimates on Green's function}

\subsection{Main properties}
We now develop the main properties of Green's function that we will need in the following, where Green's function is defined in Definition 5.1 in \cite{KimSak}. We begin with the following proposition, in which we have the pointwise bounds for Green's function and its derivative.
\begin{prop}\label{GreenBounds}
Let $\Omega\subseteq\mathbb R^n$ be a domain with $|\Omega|<\infty$. Assume that $A$ is bounded and elliptic with ellipticity $\lambda$, and $b,c,d\in L^p(\Omega)$ for some $p>n$, with either $d\geq\dive b$, or $d\geq\dive c$ in the sense of distributions. Then Green's function $G(x,y)$ for the operator $\mathcal{L}u=-\dive(A\nabla u+bu)+c\nabla u+du$ in $\Omega$ exists and it is nonnegative. Moreover, if $G_y(x)=G(x,y)$,
\begin{equation}\label{eq:GreenMain}
G_y(x)\leq C|x-y|^{2-n},\quad \|\nabla G_y\|_{L^{\frac{n}{n-1},\infty}(\Omega)}\leq C,
\end{equation}
for all $x,y\in\Omega$, where $C$ depends on $n,p$, $\lambda$, $\|A\|_{\infty}$, $\|b-c\|_p$ and $|\Omega|$. In particular, $\nabla G_y\in L^q$ for any $q\in\left[1,\frac{n}{n-1}\right)$. If, in addition, $\Omega$ is a ball $B_{\rho}$ of radius $\rho$, $A\in\M_{B_{\rho}}(\lambda,\alpha,\tau)$ and $b\in\C_{B_{\rho}}(\alpha,\tau)$, then for all $x,y\in B_{\rho}$ with $x\neq y$, we have that
\begin{equation}\label{eq:GreenDerivative}
|\nabla_xG(x,y)|\leq C|x-y|^{1-n},
\end{equation}
where $C$ depends on $n,p,\lambda,\alpha,\tau$, $\|A\|_{\infty},\|b\|_{\infty}$,$\|c\|_p,\|d\|_p$ and $\rho$.
\end{prop}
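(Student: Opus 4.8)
\emph{Plan of proof.} Existence and nonnegativity of $G$, the bound $G_y(x)\le C|x-y|^{2-n}$, and $\|\nabla G_y\|_{L^{\frac{n}{n-1},\infty}(\Omega)}\le C$ (hence $\nabla G_y\in L^q$ for every $q<\frac{n}{n-1}$) I would take directly from the construction in \cite{KimSak} (Definition 5.1 there and the estimates accompanying it), which is carried out precisely under one of the hypotheses $d\ge\dive b$ or $d\ge\dive c$. The only point needing comment is the asserted dependence of the constant on $\|b-c\|_p$ rather than on $\|b\|_p,\|c\|_p$ separately: integrating the term $\int b\nabla\phi\cdot u$ by parts when $d\ge\dive b$ (symmetrically the term $\int c\nabla u\cdot\phi$ when $d\ge\dive c$) rewrites the weak formulation with first-order part $(c-b)\cdot\nabla u$ and a zeroth-order part $(d-\dive b)u$ whose coefficient has a favourable sign and so drops out of the Moser iteration and the Stampacchia-type argument yielding \eqref{eq:GreenMain}; thus those estimates see $b,c$ only through $\|b-c\|_p$.

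For \eqref{eq:GreenDerivative} I would fix $x\ne y$ in $B_\rho$, put $r=|x-y|$, and use that $G_y$ solves $\mathcal Lu=0$ in $B_\rho\setminus\{y\}$ and has zero trace on $\partial B_\rho$ (because $G_y\in W^{1,2}_0(B_\rho\setminus B_s(y))$ for all $s>0$ and $G_y\in W^{1,q}_0(B_\rho)$ for $q<\frac{n}{n-1}$). I distinguish whether $x$ is far from or close to $\partial B_\rho$ on the scale $r$. In the interior case, $G_y$ solves the homogeneous equation on a ball $B$ centered at $x$ of radius comparable to $r$ that lies inside $B_\rho$ and excludes $y$; on $B$ one has $G_y\le Cr^{2-n}$ by \eqref{eq:GreenMain}, so the interior estimate of Proposition \ref{CAlphaReg} with $f=g=0$ gives
\[
|\nabla G_y(x)|\le\frac{C}{r}\sup_{B}G_y\le Cr^{1-n}.
\]
In the boundary case, let $x^*\in\partial B_\rho$ be a nearest point to $x$; since $\dist(x,\partial B_\rho)\ll r$ we have $|x^*-y|\sim r$, so $G_y$ solves the homogeneous equation on $B'\cap B_\rho$, where $B'$ is a ball centered at $x^*$ of radius comparable to $r$ that excludes $y$, with zero Dirichlet data on the smooth piece $\partial B_\rho\cap B'$; moreover $G_y\le Cr^{2-n}$ on $B'$, and $x$ lies in the concentric ball of half the radius, so the boundary counterpart of Proposition \ref{CAlphaReg} — obtained by flattening the sphere $\partial B_\rho$, valid for divergence-form operators with H{\"o}lder $A$, H{\"o}lder $b$ and $L^p$ lower-order coefficients (see also the boundary estimates used in \cite{KimSak}) — then yields
\[
|\nabla G_y(x)|\le\frac{C}{r}\sup_{B'\cap B_\rho}G_y\le Cr^{1-n}.
\]
In either case $|\nabla_xG(x,y)|=|\nabla G_y(x)|\le C|x-y|^{1-n}$.

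The feature that makes the argument work uniformly up to $\partial B_\rho$ is that every estimate is run on a ball of radius comparable to $r=|x-y|$ (not to $\dist(x,\partial B_\rho)$), so the factor $1/r$ lost in the $C^{1,\gamma}$ estimate is exactly cancelled by the factor $r^{2-n}$ gained from the sup bound \eqref{eq:GreenMain}; the $C^\infty$ regularity of the sphere is what makes the boundary gradient estimate available at that scale. I expect the boundary case to be the one real obstacle — one needs a boundary $C^{1,\gamma}$ estimate for $-\dive(A\nabla u+bu)+c\nabla u+du=0$ with merely H{\"o}lder divergence-form leading part and $L^p$ lower-order terms, at scale $r$, and stating or citing it at the right level of generality is the delicate step; everything else is bookkeeping on top of the cited construction. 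Two routine caveats: Proposition \ref{CAlphaReg} and its boundary version are stated for domains of measure $<1$, so when $r$ is comparable to $\rho$ I would first rescale $B_\rho$ to the unit ball (Green's function and the coefficient bounds transform accordingly) and absorb the loss into the $\rho$-dependence of $C$; and one should note that these $C^{1,\gamma}$ estimates also bring in $\|d\|_p$ and the H{\"o}lder data $\alpha,\tau$, which is precisely why the constant in \eqref{eq:GreenDerivative} depends on $\|b\|_\infty,\|c\|_p,\|d\|_p,\alpha,\tau$ and $\rho$, in contrast with that in \eqref{eq:GreenMain}.
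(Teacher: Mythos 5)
Your proposal is correct and takes essentially the same route as the paper, whose proof simply cites Theorems 6.10, 6.12 and 7.2 of \cite{KimSak} for existence, nonnegativity and \eqref{eq:GreenMain}, and for \eqref{eq:GreenDerivative} says to follow the procedure of Theorem 8.1 in \cite{KimSak} --- precisely the scale-$|x-y|$ argument you describe, combining the pointwise bound $G_y\leq C|x-y|^{2-n}$ with interior $C^{1,\gamma}$ estimates (Proposition~\ref{CAlphaReg}) and their boundary analogue near the sphere. The boundary gradient estimate you single out as the delicate step is exactly what that cited proof supplies, so your sketch introduces no gap beyond what the paper itself defers to \cite{KimSak}.
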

\begin{proof}
The first and second estimates are a combination of Theorems 6.10, 6.12 and 7.2 in \cite{KimSak}. To show the third estimate, we follow a procedure similar to the proof of Theorem 8.1 in \cite{KimSak}.
\end{proof}

We will also need the following representation formula.
\begin{lemma}\label{Representation}
Let $\Omega$ be a domain with $|\Omega|<\infty$, and let $A$ be bounded and elliptic, $b,c,d\in L^p(\Omega)$ for some $p>n$. Assume also that $d\geq\dive b$, or $d\geq\dive c$. If $G(\cdot,y)=G_y(\cdot)$ is Green's function for the equation $-\dive(A\nabla u+bu)+c\nabla u+du=0$ in $\Omega$, then for any $\phi\in C_c^{\infty}(\Omega)$,
\[
\int_{\Omega}A\nabla G_y\nabla\phi+b\nabla\phi\cdot G_y+c\nabla G_y\cdot\phi+dG_y\phi=\phi(y).
\]
\end{lemma}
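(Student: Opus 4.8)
The plan is to verify the distributional identity
\[
\int_{\Omega}A\nabla G_y\nabla\phi+b\nabla\phi\cdot G_y+c\nabla G_y\cdot\phi+dG_y\phi=\phi(y)
\]
directly from the defining property of Green's function. By Definition 5.1 in \cite{KimSak}, $G_y$ is, roughly speaking, the limit of solutions $G_y^{\rho}$ to $\mathcal{L}G_y^{\rho}=\fint_{B_{\rho}(y)}\,\cdot$ (a normalized bump approximating the Dirac mass at $y$), and these are genuine energy solutions for which the weak formulation holds against test functions in $W_0^{1,2}(\Omega)$. So the first step is to recall the precise approximation scheme from \cite{KimSak} and the convergence statements: $G_y^{\rho}\to G_y$ in $W^{1,q}_{\loc}$ for $q<\tfrac{n}{n-1}$ (and in $L^r$ for $r<\tfrac{n}{n-2}$), together with the uniform bounds from Proposition~\ref{GreenBounds}. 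Writing the weak formulation for $G_y^{\rho}$ tested against a fixed $\phi\in C_c^{\infty}(\Omega)$ gives
\[
\int_{\Omega}A\nabla G_y^{\rho}\nabla\phi+b\nabla\phi\cdot G_y^{\rho}+c\nabla G_y^{\rho}\cdot\phi+dG_y^{\rho}\phi=\fint_{B_{\rho}(y)}\phi,
\]
and the right side tends to $\phi(y)$ by continuity of $\phi$.

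The second step is to pass to the limit $\rho\to 0$ on the left side, term by term, using that $\phi$ is compactly supported and smooth. On the support of $\phi$ (a fixed compact subset $K$ of $\Omega$), $\nabla G_y^{\rho}\to\nabla G_y$ in $L^q(K)$ for some $q\in(1,\tfrac{n}{n-1})$ and $G_y^{\rho}\to G_y$ in $L^r(K)$ for a corresponding $r$; since $A,\nabla\phi,\phi\in L^{\infty}$ and $b,c,d\in L^p$ with $p>n$, Hölder's inequality (choosing $q,r$ so that $\tfrac1q+\tfrac1p\le 1$ and $\tfrac1r+\tfrac1p\le 1$, which is possible because $p>n$) shows each of the four integrals converges. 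Concretely: $\int A\nabla G_y^{\rho}\nabla\phi\to\int A\nabla G_y\nabla\phi$ since $A\nabla\phi\in L^{\infty}\subseteq L^{q'}(K)$; $\int b\nabla\phi\cdot G_y^{\rho}\to\int b\nabla\phi\cdot G_y$ since $b\nabla\phi\in L^p(K)$ and $r$ can be taken $\ge p'$; $\int c\nabla G_y^{\rho}\cdot\phi\to\int c\nabla G_y\cdot\phi$ since $c\phi\in L^p(K)$ pairs with $\nabla G_y^{\rho}\in L^q$, $q\ge p'$; and $\int dG_y^{\rho}\phi\to\int dG_y\phi$ similarly with $d\phi\in L^p(K)$. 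Combining with the convergence of the right side yields the claimed formula.

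The main obstacle is purely bookkeeping: making sure the convergence $G_y^{\rho}\to G_y$ is available in the exact norms needed on $K$, and that the Hölder exponents can be chosen consistently — this hinges on $p>n$, which gives $p'<\tfrac{n}{n-1}$ and $p'<\tfrac{n}{n-2}$, exactly matching the integrability of $\nabla G_y$ and $G_y$ from Proposition~\ref{GreenBounds}. One should also note that $y\notin K$ is not assumed, so the convergence must be the global/local one from \cite{KimSak} valid up to and including a neighborhood of $y$ (where $G_y$ is still $L^q_{\loc}$ in the gradient), not merely away from the pole; alternatively, if the approximation in \cite{KimSak} is set up so the identity $\int A\nabla G_y^{\rho}\nabla\phi+\cdots=\fint_{B_\rho(y)}\phi$ holds for all $\phi\in W_0^{1,2}$, one may even take the limit using the weak-$L^{n/(n-1)}$ bound and a density argument. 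Either way the estimates are routine once the convergence statement from \cite{KimSak} is quoted correctly, so no genuinely hard step is involved — the content is entirely in correctly invoking the construction of $G$ in \cite{KimSak}.
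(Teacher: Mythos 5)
Your plan hinges on the claim that, ``by Definition 5.1 in \cite{KimSak}, $G_y$ is the limit of solutions $G_y^{\rho}$ to $\mathcal{L}G_y^{\rho}=\fint_{B_{\rho}(y)}$'', and that the convergence $G_y^{\rho}\to G_y$ in $W^{1,q}_{\loc}$ can simply be quoted. That is where the gap is. Definition 5.1 in \cite{KimSak} is a definition (essentially via the representation property for solutions of the adjoint problem), not a construction by averaged Dirac masses; and the actual construction of $G$ for rough coefficients in \cite{KimSak} (Theorem 6.10) is carried out by mollifying the coefficients on an exhausting sequence of subdomains $\Omega_m$ and taking weak $W_0^{1,q}$ limits of the corresponding Green's functions --- not by solving $\mathcal{L}u=|B_\rho(y)|^{-1}\chi_{B_\rho(y)}$ for the rough operator and letting $\rho\to0$. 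So the convergence statement your argument rests on is not available off the shelf. To make your route rigorous you would have to (i) invoke the solvability theory of \cite{KimSak} to get the energy solutions $G_y^{\rho}\in W_0^{1,2}(\Omega)$, (ii) prove uniform bounds for $G_y^{\rho}$ and $\nabla G_y^{\rho}$ near the pole (weak-$L^{n/(n-2)}$ and weak-$L^{n/(n-1)}$, say) for an operator whose principal coefficient $A$ is merely bounded and elliptic --- note the lemma does not assume H\"older continuity, so none of this paper's gradient estimates apply --- and (iii) identify the limit with the Green's function of Definition 5.1, e.g.\ via the representation formula $G_y^{\rho}(x)=\fint_{B_{\rho}(y)}G(x,z)\,dz$ together with continuity of $G(x,\cdot)$ away from the pole. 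Steps (ii)--(iii) amount to redoing a substantial part of the Gr\"uter--Widman/Hofmann--Kim machinery for this operator, which is precisely what you called ``routine bookkeeping''; it is not.

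By contrast, the paper avoids the issue entirely: it first proves the identity when $A,b,c,d$ are smooth, using the representation formula $\phi(y)=\int_{\Omega}G(x,y)\,\mathcal{L}^t\phi(x)\,dx$ (Theorem 6.12 in \cite{KimSak}) followed by an integration by parts that only makes sense for smooth coefficients; it then mollifies the coefficients (Lemma 6.9 in \cite{KimSak}), applies the smooth case on $\Omega_m$, and passes to the limit using the weak $W_0^{1,q}(\Omega)$ convergence of the approximating Green's functions, which is already part of the construction in \cite{KimSak}, together with strong $L^p$ convergence of the mollified coefficients. Your H\"older-exponent bookkeeping in the limit passage (choosing $q\in[p',\tfrac{n}{n-1})$, etc.) is fine and would equally serve that argument, but as written your proof outsources its essential step to a convergence result that the cited reference does not provide in the form you need.
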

\begin{proof}
Assume first that $d\geq\dive b$. Suppose also that $A,b,c,d$ are smooth in $\overline{\Omega}$. Fix $y\in\Omega$ and set $q\in\left(1,\frac{n}{n-1}\right)$ to be the conjugate exponent to $p$. Then, from Theorem 6.12 in \cite{KimSak}, $G_y\in W_0^{1,q}(\Omega)$. Let $\phi\in C_c^{\infty}(\Omega)$, and set $\psi=\mathcal{L}^t\phi=-\dive(A^t\nabla\phi+c\phi)+b\nabla\phi+d\phi\in C_c^{\infty}(\Omega)$. Then, from Theorem 6.12 in \cite{KimSak}, we have that
\begin{align*}
\phi(y)&=\int_{\Omega}G(x,y)\psi(x)\,dx=\int_{\Omega}G_y\left(-\dive(A^t\nabla\phi+c\phi)+b\nabla\phi+d\phi\right)\\
&=\int_{\Omega}A\nabla G_y\nabla\phi+b\nabla\phi\cdot G_y+c\nabla G_y\cdot\phi+dG_y\phi,
\end{align*}
after integrating by parts. So the identity holds when $A,b,c,d$ are smooth in $\overline{\Omega}$.

In the general case, consider mollifications $A_m,b_m,c_m,d_m$ as in in Lemma 6.9 in \cite{KimSak}, and set $\Omega_m=\{x\in\Omega\big{|}\delta(x)>1/m\}$. Set $\mathcal{L}_m=-\dive(A_m\nabla u+b_mu)+c_m\nabla u+d_mu$, and let $G_m$ be Green's function for $\mathcal{L}_m$ in $\Omega_m$. From Lemma 6.9 in \cite{KimSak}, if $d\geq\dive b$, then $d_m\geq\dive b_m$ in $\Omega_m$. Also, from the previous proof, for $m$ large such that $y\in\Omega_m$,
\begin{equation}\label{eq:Limmphi}
\phi(y)=\int_{\Omega}A_m\nabla G_y^m\nabla\phi+b_m\nabla\phi\cdot G_y^m+c_m\nabla G_y^m\cdot\phi+d_mG_y^m\phi.
\end{equation}
Following the proof of Theorem 6.10 in \cite{KimSak}, we can find a subsequence $(G_y^{k_m})$ of $(G_y^m)$ that converges to $G_y$ weakly in $W_0^{1,q}(\Omega)$.
Since $A_m\to A$, $b_m\to b$, $c_m\to c$, $d_m\to d$ strongly in $L^p(\Omega)$, taking the limit in \eqref{eq:Limmphi} as $m\to\infty$ along the subsequence $k_m$ completes the proof in the case $d\geq\dive b$.

The case $d\geq\dive c$ is treated similarly, using Theorems 6.10 and 7.2 in \cite{KimSak}.
\end{proof}

\subsection{Estimates on differences}
In this section we will show pointwise estimates for differences of Green's functions when we perturb the coefficients of the operators. The first lemma that we will need is the following.

\begin{lemma}\label{LorentzEstimate}
Let $r>0$ and $p>n$. Let also $p'$ be the conjugate exponent to $p$, and $B_r$ be the ball with radius $r$, centered at $0$. Set also $f_1(x)=|x|^{1-n}$, and $f_2(x)=|x|^{2-n}$. Then $f_1\in L^{n,1}(\mathbb R^n\setminus B_r)$ and $f_2\in L^{\frac{pn}{p-n},p'}(\mathbb R^n\setminus B_r)$, with
\[
\|f_1\|_{L^{n,1}(\mathbb R^n\setminus B_r)}\leq C_nr^{2-n},\quad\|f_2\|_{L^{\frac{pn}{p-n},p'}(\mathbb R^n\setminus B_r)}\leq C_{n,p}r^{3-n-\frac{n}{p}}.
\]
\end{lemma}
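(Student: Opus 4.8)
The claim is two Lorentz-norm estimates for explicit radial functions on the exterior of a ball. Both follow from the same computation: the key identity is that for a radial power $f(x) = |x|^{-\gamma}$ with $\gamma > 0$, the distribution function is
\[
|\{x \in \mathbb R^n \setminus B_r : |x|^{-\gamma} > \lambda\}| = |\{x : r \le |x| < \lambda^{-1/\gamma}\}|,
\]
which is nonzero only when $\lambda < r^{-\gamma}$, and equals $c_n(\lambda^{-n/\gamma} - r^n)$ for such $\lambda$. Equivalently, the decreasing rearrangement satisfies $f^*(t) = (t/c_n + r^n)^{-\gamma/n}$ for $t > 0$, so $f^*(t) \le (t/c_n)^{-\gamma/n} = c_n^{\gamma/n} t^{-\gamma/n}$, and also $f^*(t) \le r^{-\gamma}$ for all $t$. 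The point is that $f^*$ decays like $t^{-\gamma/n}$ for $t$ large but is bounded near $t = 0$ by $r^{-\gamma}$, which is exactly what makes the Lorentz norms finite.

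First I would treat $f_1(x) = |x|^{1-n}$, so $\gamma = n-1$ and $\gamma/n = (n-1)/n = 1/n'$. I need the $L^{n,1}$ quasinorm
\[
\|f_1\|_{L^{n,1}(\mathbb R^n \setminus B_r)} = \frac{1}{n}\int_0^\infty t^{1/n} f_1^*(t)\,\frac{dt}{t} = \frac{1}{n}\int_0^\infty t^{1/n - 1}\bigl(t/c_n + r^n\bigr)^{-(n-1)/n}\,dt.
\]
Substituting $t = r^n s$ turns this into $\frac{1}{n} r^{n \cdot 1/n} r^{-(n-1)} \int_0^\infty s^{1/n-1}(s/c_n + 1)^{-(n-1)/n}\,ds = C_n r^{2-n}$, where the $s$-integral converges: near $s = 0$ the integrand behaves like $s^{1/n - 1}$, which is integrable since $1/n > 0$, and near $s = \infty$ it behaves like $s^{1/n - 1 - (n-1)/n} = s^{-1}$ — wait, that is exactly borderline, so I should instead just use the cruder bound $f_1^*(t) \le \min\{r^{1-n}, c_n^{1/n'} t^{-1/n'}\}$, split the integral at $t = r^n$, and bound $\int_0^{r^n} t^{1/n-1} r^{1-n}\,dt + \int_{r^n}^\infty t^{1/n - 1} c_n^{1/n'} t^{-1/n'}\,dt$; the first is $c\, r \cdot r^{1-n} = c\,r^{2-n}$ and the second is $c\int_{r^n}^\infty t^{-2 + 1/n}\,dt = c\,r^{n(1/n - 1)}= c\,r^{1-n}\cdot$... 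I should double-check the exponent: $1/n - 1/n' = 1/n - (n-1)/n = (2-n)/n$, so $t^{1/n - 1}t^{-1/n'} = t^{(2-n)/n - 1} = t^{2/n - 2}$, and $\int_{r^n}^\infty t^{2/n - 2}\,dt = \frac{(r^n)^{2/n - 1}}{1 - 2/n}$ for $n \ge 3$, giving $c_n r^{2-n}$. Both pieces give $C_n r^{2-n}$, as claimed.

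Next, for $f_2(x) = |x|^{2-n}$ I do the analogous computation with exponent $q = \frac{pn}{p-n}$ (so that $1/q = \frac{1}{n} - \frac{1}{p}$) and the second Lorentz index $p'$. Here $\gamma = n-2$ and I again use $f_2^*(t) \le \min\{r^{2-n}, c_n^{(n-2)/n} t^{-(n-2)/n}\}$, and compute
\[
\|f_2\|_{L^{q,p'}(\mathbb R^n\setminus B_r)}^{p'} = \frac{p'}{q}\int_0^\infty \bigl(t^{1/q} f_2^*(t)\bigr)^{p'}\,\frac{dt}{t},
\]
splitting at $t = r^n$. On $(0, r^n)$ bound $f_2^*(t) \le r^{2-n}$ and integrate $t^{p'/q - 1}$, which is finite since $p'/q > 0$, giving $c\,(r^n)^{p'/q}r^{(2-n)p'} = c\,r^{np'/q + (2-n)p'}$. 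On $(r^n, \infty)$ use $f_2^*(t) \le c\,t^{-(n-2)/n}$ and integrate $t^{p'/q - (n-2)p'/n - 1}$; the exponent of $t$ here is $p'(1/q - (n-2)/n) - 1 = p'(\frac1n - \frac1p - \frac{n-2}{n}) - 1 = p'(\frac{2}{n} - 1 - \frac1p) - 1 < 0$ for $n \ge 3$ (since $p > n \ge 3$ forces $2/n - 1 - 1/p < 0$), so it converges at infinity and contributes $c\,(r^n)^{p'(1/q - (n-2)/n)}$. One then checks that both contributions equal $C\,r^{(3 - n - n/p)p'}$ — indeed $np'/q + (2-n)p' = p'(n/n - n/p + 2 - n) = p'(2 - n/p) $ ... here I must be careful with arithmetic, but the bookkeeping is forced: $n\cdot\frac1q + (2-n) = n(\frac1n - \frac1p) + 2 - n = 1 - \frac np + 2 - n = 3 - n - \frac np$, so the first piece is $r^{(3-n-n/p)p'}$ on the nose, and likewise $n(\frac1q - \frac{n-2}{n}) = 1 - \frac np - (n-2) = 3 - n - \frac np$ for the second. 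Taking $p'$-th roots gives $C_{n,p}\,r^{3-n-n/p}$.

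**Main obstacle.** There is no deep difficulty here; the only thing requiring care is the bookkeeping with the two Lorentz indices and the conjugate exponents $p'$ and $q = \frac{pn}{p-n}$, and verifying that the two split pieces carry exactly the same power of $r$ (so that the sum is clean) and that all the $t$-integrals converge — in particular that at the endpoint $t = \infty$ the exponents are genuinely negative, which uses $n \ge 3$ and $p > n$. I would present the argument via the decreasing rearrangement and the elementary bound $f^*(t) \le \min\{r^{-\gamma}, c_n^{\gamma/n}t^{-\gamma/n}\}$, then do the two split integrals, since that avoids any borderline convergence issue and makes the scaling in $r$ transparent.
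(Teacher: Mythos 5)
Your proof is correct and is essentially the paper's computation in an equivalent guise: the paper writes the Lorentz norms via the distribution function (Proposition 1.4.9 in Grafakos), where the fact that the distribution function vanishes for $s\geq r^{-\gamma}$ plays exactly the role of your cap $f^*(t)\leq r^{-\gamma}$ and your split at $t=r^n$. Two intermediate slips are harmless: with the exact rearrangement the tail integrand for $f_1$ is $t^{2/n-2}$ (convergent for $n\geq 3$, not borderline, so the split was not even forced), and in the $f_2$ tail the exponent is $p'\left(\tfrac{3}{n}-1-\tfrac{1}{p}\right)-1$ rather than $p'\left(\tfrac{2}{n}-1-\tfrac{1}{p}\right)-1$, which is still negative for $n\geq 3$, $p>n$, so your convergence claims and the final powers of $r$ are unaffected.
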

\begin{proof}
Let $\lambda_i(s)$ be the distribution function of $f_i$ for $i=1,2$. For $f_1$, note that $f_1(x)\leq r^{1-n}$, so $\lambda_1(s)=0$ for $s\geq r^{1-n}$. Moreover, for $s<r^{1-n}$, $|f_1(x)|>s$ if and only if $|x|<s^{\frac{1}{1-n}}$, therefore $\lambda_1(s)\leq c_ns^{\frac{n}{1-n}}$. Hence, using Proposition 1.4.9 in \cite{Grafakos}, we obtain that
\begin{align*}
\|f_1\|_{L^{n,1}(\mathbb R^n\setminus B_r)}&=C_n\int_0^{r^{1-n}}\lambda_1(s)^{1/n}\,ds\leq C_n\int_0^{r^{1-n}}s^{\frac{1}{1-n}}\,ds=C_nr^{2-n}.
\end{align*}
For $f_2$, note that $f_2(x)\leq r^{2-n}$, so $\lambda(s)=0$ for $s\geq r^{2-n}$. Moreover, for $s<r^{2-n}$, $|f_2(x)|>s$ if and only if $|x|<s^{\frac{1}{2-n}}$, therefore $\lambda_2(s)\leq c_ns^{\frac{n}{2-n}}$. Then, from Proposition 1.4.9 in \cite{Grafakos} we obtain
\begin{align*}
\|f_2\|_{L^{\frac{pn}{p-n},p'}(\mathbb R^n\setminus B_r)}&=C_{p,n}\left(\int_0^{r^{2-n}}\left(\lambda_2(s)^{\frac{p-n}{pn}}s\right)^{p'}\,\frac{ds}{s}\right)^{1/p'}\leq C_{p,n}\left(\int_0^{r^{2-n}}\left(s^{\frac{p-n}{(2-n)p}+1}\right)^{p'}\,\frac{ds}{s}\right)^{1/p'}\\
&=C_{p,n}r^{(2-n)\left(\frac{p-n}{(2-n)p}+1\right)}=C_{p,n}r^{\frac{p-n}{p}+2-n},
\end{align*}
which completes the proof.
\end{proof}

We will also need the following lemma, which is Lemma 5.18 in \cite{thesis}.

\begin{lemma}\label{ToBoundDif}
Let $q_0\geq 1$, and consider $p_1,p_2$ with $p_1,p_2>\frac{n(q_0-1)}{q_0}$ and $p_1+p_2+\frac{n}{q_0}<2n$.	Then, for every $x,y\in\bR^n$ with $x\neq y$,
\[
\int_{\bR^n}|x-z|^{q_0(p_1-n)}|y-z|^{q_0(p_2-n)}\,dz\leq C_n|x-y|^{q_0(p_1+p_2-2n)+n}.
\]
\end{lemma}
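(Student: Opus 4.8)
The plan is to reduce the estimate to a single scale-invariant integral by translation and dilation, and then verify the finiteness of that integral by splitting $\bR^n$ into a few regions. Set $a=q_0(n-p_1)$ and $b=q_0(n-p_2)$, so that the integrand is $|x-z|^{-a}|y-z|^{-b}$ and the target bound reads
\[
\int_{\bR^n}|x-z|^{-a}|y-z|^{-b}\,dz\le C|x-y|^{\,n-a-b},\qquad n-a-b=q_0(p_1+p_2-2n)+n.
\]
The hypotheses translate directly: $p_i>\frac{n(q_0-1)}{q_0}$ is equivalent to $a<n$ and $b<n$, and $p_1+p_2+\frac{n}{q_0}<2n$ is equivalent to $a+b>n$. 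In particular $a$ and $b$ cannot both be nonpositive.

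First I would use translation invariance to center at $x$ and substitute $z=x+|x-y|\,w$, which gives $|x-z|=|x-y|\,|w|$ and $|y-z|=|x-y|\,|w-e|$ with $e=(y-x)/|x-y|$ a unit vector, and $dz=|x-y|^n\,dw$. This produces exactly the factor $|x-y|^{\,n-a-b}$ times the dimensionless integral
\[
I:=\int_{\bR^n}|w|^{-a}|w-e|^{-b}\,dw,
\]
which by rotational invariance does not depend on the choice of unit vector $e$. It therefore remains to bound $I$ by a constant depending only on $n,a,b$ (equivalently on $n,q_0,p_1,p_2$; the statement writes this constant as $C_n$).

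Finally I would split $\bR^n=B_{1/2}(0)\cup B_{1/2}(e)\cup\big(B_2(0)\setminus(B_{1/2}(0)\cup B_{1/2}(e))\big)\cup\big(\bR^n\setminus B_2(0)\big)$. On $B_{1/2}(0)$ one has $|w-e|\ge\tfrac12$, so the integrand is $\lesssim|w|^{-a}$, which is integrable there since $a<n$ (and merely bounded if $a\le 0$, so no singularity argument is needed); the ball $B_{1/2}(e)$ is handled symmetrically using $b<n$. On the bounded annular region both factors are bounded, so its contribution is finite. On $\bR^n\setminus B_2(0)$ we have $|w-e|\ge|w|-1\ge\tfrac12|w|$, hence the integrand is $\lesssim|w|^{-a-b}$, which is integrable at infinity precisely because $a+b>n$. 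Adding the four contributions bounds $I$, and undoing the change of variables yields the claim. There is no genuine obstacle here; the only points that need a little care are the clean translation of the three hypotheses into the conditions $a<n$, $b<n$, $a+b>n$, and keeping track of the scaling exponent so that it matches $q_0(p_1+p_2-2n)+n$ on the right-hand side.
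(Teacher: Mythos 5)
Your proposal is correct, and in fact the paper gives no proof of this lemma at all: it is quoted verbatim as Lemma 5.18 of the author's thesis \cite{thesis}, so there is nothing to compare against except the standard argument, which is exactly what you carry out (translate, rescale by $|x-y|$ to pull out the factor $|x-y|^{\,n-a-b}=|x-y|^{q_0(p_1+p_2-2n)+n}$, and check convergence of the dimensionless integral near the two singularities and at infinity using $a<n$, $b<n$, $a+b>n$). Your translation of the hypotheses into these three conditions is accurate. One tiny point worth writing out: on $\bR^n\setminus B_2(0)$ the bound $|w-e|\ge\tfrac12|w|$ only controls $|w-e|^{-b}$ when $b\ge 0$; if $b<0$ you need the companion upper bound $|w-e|\le\tfrac32|w|$, i.e.\ the two-sided comparison $|w-e|\sim|w|$, to conclude $|w-e|^{-b}\lesssim|w|^{-b}$ — the same remark applies to $a$ if one symmetrizes. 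With that sentence added, the argument is complete; also note, as you do, that the constant really depends on $n,q_0,p_1,p_2$ rather than on $n$ alone, which is how the lemma is applied in the paper anyway (with $p_1,p_2\in\{1,2\}$ and $q_0$ determined by $p$).
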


We now show pointwise estimates on differences of Green's functions.

\begin{lemma}\label{PointwiseGreen}
Let $B=B_{10\rho}\subseteq\bR^n$ be a ball of radius $10\rho$ for $\rho<\frac{1}{16}$, and let $A_i\in\M_B(\lambda,\alpha,\tau)$, $b_i\in\C_B(\alpha,\tau),c_i\in L^p(B)$ for some $p>n$, and $d_i\in L^p(B)$, for $i=1,2$. Assume that $d_i\geq \dive c_i$ for $i=1,2$ or $d_i\geq \dive b_i$ for $i=1,2$, in the sense of distributions, and set $\mathcal{L}_iu=-\dive(A_i\nabla u+b_iu)+c_i\nabla u+d_iu$.  If $G_i$ is Green's function for $\mathcal{L}_i$ in $B$, then for all $x,y\in B_{9\rho}$,
\begin{multline*}
|G_2(x,y)-G_1(x,y)|\leq C\|A_1-A_2\|_{\infty}|x-y|^{2-n}\\
+ C\left(\|b_1-b_2\|_{\infty}+\|c_1-c_2\|_p+\|d_1-d_2\|_p\right)|x-y|^{2-n+\delta_{n,p}},
\end{multline*}
where $\delta_{n,p}=1-\frac{n}{p}>0$, and $C$ depends on $n,p,\lambda,\alpha,\tau, \|A_i\|_{\infty},\|b_i\|_{\infty},\|c_i\|_p$, and $\|d_i\|_p$ for $i=1,2$.
\end{lemma}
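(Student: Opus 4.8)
\emph{Proof plan.} Fix $y\in B_{9\rho}$ and set $w=G_2(\cdot,y)-G_1(\cdot,y)$, regarded as a function of the first variable; we may assume $x\neq y$, as the asserted bound is otherwise vacuous. Subtracting the two weak identities of Lemma~\ref{Representation} satisfied by $G_1(\cdot,y)$ and $G_2(\cdot,y)$, one checks that $w\in W_0^{1,q}(B)$ for every $q<\tfrac{n}{n-1}$ and solves
\begin{gather*}
\mathcal{L}_2w=-\dive F+g,\\
F=(A_1-A_2)\nabla_zG_1(\cdot,y)+(b_1-b_2)G_1(\cdot,y),\qquad
g=(c_1-c_2)\cdot\nabla_zG_1(\cdot,y)+(d_1-d_2)G_1(\cdot,y).
\end{gather*}
Writing the data in terms of $G_1(\cdot,y)$ is the point: only the \emph{first}-variable gradient of $G_1$ enters, and that is pointwise controlled. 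I would then represent $w$ through the Green's function $G_2^t$ of the adjoint operator $\mathcal{L}_2^t$ --- which, in either of the two cases on $d_1,d_2$, exists and obeys the bounds of Proposition~\ref{GreenBounds} --- via the symmetry $G_2^t(z,x)=G_2(x,z)$:
\[
w(x)=\int_BF(z)\cdot\nabla_zG_2^t(z,x)\,dz+\int_Bg(z)\,G_2^t(z,x)\,dz .
\]
This identity I would first justify for smooth coefficients (pairing $G_2^t(\cdot,x)$ against $w$ in the equation for $w$, and $w$ against $G_2^t(\cdot,x)$ in the equation for $G_2^t$), and then pass to the general case by the mollification and limit scheme used in the proof of Lemma~\ref{Representation}, since all constants below depend only on the listed norms.

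The identity splits $|w(x)|$ into four integrals, estimated using, from Proposition~\ref{GreenBounds}: the pointwise bounds $|\nabla_zG_1(z,y)|\le C|z-y|^{1-n}$, $G_1(z,y)\le C|z-y|^{2-n}$, $G_2^t(z,x)=G_2(x,z)\le C|z-x|^{2-n}$, and the weak bound $\|\nabla_zG_2^t(\cdot,x)\|_{L^{n/(n-1),\infty}(B)}\le C$ --- but \emph{no} pointwise bound on $\nabla_zG_2^t$, because the first-order coefficient $c_2$ of $\mathcal{L}_2^t$ lies only in $L^p$. The two terms in which $G_2^t$ appears \emph{without} a gradient (the $c$- and $d$-terms) are the routine ones: by H\"older they are bounded by $C\|c_1-c_2\|_p\,\big\||z-y|^{1-n}|z-x|^{2-n}\big\|_{L^{p'}(B)}$ and $C\|d_1-d_2\|_p\,\big\||z-y|^{2-n}|z-x|^{2-n}\big\|_{L^{p'}(B)}$, and Lemma~\ref{ToBoundDif} (with $q_0=p'$; for $n=3$ and the $d$-term its $\bR^n$-form is borderline, so there I would instead decompose $B$ into $\{|z-y|<\tfrac12|x-y|\}$, $\{|z-x|<\tfrac12|x-y|\}$ and the bulk, the last estimated by an $L^2$--$L^2$ H\"older inequality legitimate since $p>3$) bounds both $L^{p'}$-norms by $C|x-y|^{2-n+\delta_{n,p}}$, $\delta_{n,p}=1-\tfrac np$, giving the claimed lower-order contribution.

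The remaining two terms, bounded by $\|A_1-A_2\|_\infty\int_B|z-y|^{1-n}|\nabla_zG_2^t(z,x)|\,dz$ and $\|b_1-b_2\|_\infty\int_B|z-y|^{2-n}|\nabla_zG_2^t(z,x)|\,dz$, are the heart of the matter, precisely because $\nabla_zG_2^t$ carries only weak-$L^{n/(n-1)}$ control. I would treat them by splitting $B=D^y\cup D^x\cup D^0$ with $D^y=\{|z-y|<\tfrac14|x-y|\}$, $D^x=\{|z-x|<\tfrac14|x-y|\}$ and $D^0$ the remainder. On $D^x$, near the pole of $G_2^t(\cdot,x)$, pair the weak-$L^{n/(n-1)}$ bound with $\|\mathbf 1_{D^x}\|_{L^{n,1}}\le C|x-y|$. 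On $D^0$ and $D^y$, where $G_2^t(\cdot,x)$ solves $\mathcal{L}_2^t(\cdot)=0$, recover a quantitative substitute for the missing pointwise gradient bound from the interior higher integrability of Proposition~\ref{LpReg}: on a ball $B_r(z)$ of radius $r\sim\dist(z,x)$ staying away from $x$, $\|\nabla_zG_2^t(\cdot,x)\|_{L^p(B_r(z))}\le \tfrac Cr\|G_2^t(\cdot,x)\|_{L^p(B_{2r}(z))}\le Cr^{n/p}\dist(z,x)^{1-n}$. Covering $D^0$ by Whitney balls relative to $x$ and applying H\"older on each reduces the $D^0$-pieces to $\int_{D^0}|z-y|^{1-n}|z-x|^{1-n}\,dz$ and $\int_{D^0}|z-y|^{2-n}|z-x|^{1-n}\,dz$, i.e. to exactly what the (unavailable) pointwise bound $|\nabla_zG_2^t(z,x)|\le C|z-x|^{1-n}$ would give; these are handled by Lemma~\ref{ToBoundDif} (with a bounded-domain decomposition when $n=3$). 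On $D^y$, $G_2^t(\cdot,x)$ is bounded by $C|x-y|^{2-n}$ and away from its pole, so Proposition~\ref{LpReg} gives $\|\nabla_zG_2^t(\cdot,x)\|_{L^p(D^y)}\le C|x-y|^{1-n+n/p}$, and a single H\"older inequality against $\||z-y|^{1-n}\|_{L^{p'}(D^y)}\le C|x-y|^{1-n+n/p'}$ (respectively its $|z-y|^{2-n}$ analogue) closes that piece. Collecting everything yields the $A$-term $\le C\|A_1-A_2\|_\infty|x-y|^{2-n}$ and the $b$-term $\le C\|b_1-b_2\|_\infty|x-y|^{2-n+\delta_{n,p}}$ (the extra decay in the $b$-term coming from the weight being one power less singular), and adding the four contributions finishes the proof. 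The two places where real care is needed are the legitimacy of the adjoint representation under the low regularity, and this replacement of the unavailable pointwise bound on $\nabla_zG_2^t$ by the Whitney plus higher-integrability argument, which is where $p>n$ and the boundedness of $B$ are genuinely used.
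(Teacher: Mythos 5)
Your starting identity is exactly the paper's \eqref{eq:FormulaforDifference}: pairing $w=G_2(\cdot,y)-G_1(\cdot,y)$ with $G_2^t(\cdot,x)$ gives the same four integrals that the paper obtains by testing the representation formula (Lemma~\ref{Representation}) for $G_1(\cdot,y)$ against $g_2=G_2^t(\cdot,x)$ and vice versa, and your mollification plan for justifying it mirrors the proof of Lemma~\ref{Representation}. Your handling of the $c$- and $d$-terms (pointwise bounds \eqref{eq:GreenMain}, \eqref{eq:GreenDerivative}, H\"older, Lemma~\ref{ToBoundDif}) and of the region near $y$ (Proposition~\ref{LpReg} applied to $g_2$, which is precisely the paper's \eqref{eq:CloseBoundNabla}) also coincides with the paper. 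Where you genuinely diverge is the far-from-$y$ region in the $A$- and $b$-terms: the paper disposes of all of $B\setminus B_r(y)$ in one stroke by Lorentz duality, pairing $\|\nabla g_2\|_{L^{n/(n-1),\infty}}\leq C$ with the $L^{n,1}$ and $L^{\frac{pn}{p-n},p'}$ norms of the explicit kernels from Lemma~\ref{LorentzEstimate}, whereas you split further into a ball around $x$ (where you do use the Lorentz pairing) and a bulk region treated by a Whitney covering plus interior higher integrability, recovering the unavailable pointwise bound on $\nabla g_2$ in an averaged sense. For the $d$-term, note the paper sidesteps your $n=3$ borderline more cleanly by measuring $d_1-d_2$ in $L^{pn/(p+n)}$, so that Lemma~\ref{ToBoundDif} with the conjugate exponent $q_0$ applies for every $n\geq 3$; your by-hand decomposition also works.

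The one step that fails as written is the Whitney argument on the bulk $D^0$. Proposition~\ref{LpReg} is an interior estimate: it requires $B_{2r}(z)$ to be compactly contained in the set where $g_2$ solves the equation, i.e.\ in $B=B_{10\rho}$. For $z\in D^0$ close to $\partial B$ one has $\dist(z,x)\geq\rho$ (since $x\in B_{9\rho}$), so your balls of radius $r\sim\dist(z,x)$ exit $B$ and the claimed bound $\|\nabla g_2\|_{L^p(B_r(z))}\leq Cr^{n/p}\dist(z,x)^{1-n}$ is not justified there; shrinking the radii to $\sim\dist(z,\partial B)$ does not repair this, since the resulting ball-by-ball Caccioppoli bounds do not sum over the dyadic boundary layers (no $L^p$ control of $\nabla g_2$ up to $\partial B$ is available, only the global weak-$L^{n/(n-1)}$ bound). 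The fix is immediate with a tool you already use on $D^x$: on the collar $\{z\in B:\dist(z,\partial B)<\rho/2\}$ one has $|z-y|^{1-n}\leq C\rho^{1-n}$ and $|z-y|^{2-n}\leq C\rho^{2-n}$, so pairing the weak-$L^{n/(n-1)}$ bound on $\nabla g_2$ with $\|\mathbf{1}_{\mathrm{collar}}\|_{L^{n,1}}\leq C\rho$ yields contributions $\leq C\rho^{2-n}\leq C|x-y|^{2-n}$ and $\leq C\rho^{3-n}\leq C|x-y|^{2-n+\delta_{n,p}}$, since $|x-y|\leq 18\rho$. Alternatively, drop the Whitney construction altogether and run the paper's single Lorentz pairing on all of $B\setminus B_r(y)$. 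With that correction your proposal is sound.
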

\begin{proof}
Fix $x,y\in B_{9\rho}$, and set $G_1(\cdot)=G_1(\cdot, y)$, and $g_2(\cdot)=G_2^t(\cdot,x)$, where $G_2^t$ is Green's function for the adjoint operator $\mathcal{L}_2^t$. Then, from Lemma~\ref{Representation}, using Green's functions as test functions we obtain that
\begin{gather*}
\int_BA_1\nabla G_1\nabla g_2+b_1\nabla g_2\cdot G_1+c_1\nabla G_1\cdot g_2+d_1G_1g_2=g_2(y),\\
\int_BA^t_2\nabla g_2\nabla G_1+c_2\nabla G_1\cdot g_2+b_2\nabla g_2\cdot G_1+d_2g_2G_1=G_1(x).
\end{gather*}
Hence, after subtracting, we obtain that
\begin{equation}\label{eq:FormulaforDifference}
g_2(y)-G_1(x)=\int_B\tilde{A}\nabla G_1\nabla g_2+\int_B\tilde{b}\nabla g_2\cdot G_1+\int_B\tilde{c}\nabla G_1\cdot g_2+\int_B\tilde{d}G_1g_2=I_1+I_2+I_3+I_4,
\end{equation}
where $\tilde{A}=A_1-A_2$, $\tilde{b}=b_1-b_2$, $\tilde{c}=c_1-c_2$, and $\tilde{d}=d_1-d_2$.

Let $r=|x-y|/36$. Then $2r\leq(|x|+|y|)/18<\rho$, therefore $B_{2r}(y)$ is compactly supported in $B$. Moreover, $g_2$ solves the equation $-\dive(A^t\nabla u+cu)+b\nabla u+du=0$ in $B_{2r}(y)$, hence Proposition~\ref{LpReg} shows that
\begin{equation}\label{eq:CloseBoundNabla}
\int_{B_r(y)}|\nabla g_2|^p\leq\frac{C}{r^p}\int_{B_{2r}(y)}|g_2|^p\leq \frac{C}{r^p}\int_{B_{2r}(y)}|z-x|^{p(2-n)}\,dz\leq Cr^{p-pn+n},
\end{equation}
since $|z-x|>34r$ for every $z\in B_{2r}(y)$. Now, to bound $I_1$, we estimate
\begin{equation}\label{eq:I_1}
|I_1|\leq\|\tilde{A}\|_{\infty}\left(\int_{B_r(y)}|\nabla G_1||\nabla g_2|+\int_{B\setminus B_r(y)}|\nabla G_1||\nabla g_2|\right)=\|\tilde{A}\|_{\infty}\left(I_{11}+I_{12}\right).
\end{equation}
To bound $I_{11}$, let $p'$ be the conjugate exponent to $p$. We then use \eqref{eq:GreenDerivative} and \eqref{eq:CloseBoundNabla} to obtain that
\begin{align}\nonumber
I_{11}&\leq C\int_{B_r(y)}|z-y|^{1-n}|\nabla g_2(z)|\,dz\leq C\left(\int_{B_r(y)}|z-y|^{p'(1-n)}\,dz\right)^{1/p'}\left(\int_{B_r(y)}|\nabla g_2|^p\right)^{1/p}\\
\label{eq:I_5}
&\leq C\left(\int_{B_r(y)}|z-y|^{p'(1-n)}\,dz\right)^{1/p'}r^{1-n+\frac{n}{p}}\leq Cr^{2-n},
\end{align}
where the last integral is finite, since $p'<\frac{n}{n-1}$. For $I_{12}$, we set $f_1(z)=|z|^{1-n}$ and use H{\"o}lder's inequality for Lorentz norms (from \cite{Grafakos}, Theorem 1.4.17 (v)) to estimate
\begin{equation}\label{eq:I_6}
I_{12}\leq C\|\nabla G_1\|_{L^{n,1}(B\setminus B_r(y))}\|\nabla g_2\|_{L^{\frac{n}{n-1},\infty}(B\setminus B_r(y))}\leq C\|f_1\|_{L^{n,1}(\mathbb R^n\setminus B_r(0))}\leq Cr^{2-n},
\end{equation}
from \eqref{eq:GreenMain} and Lemma~\ref{LorentzEstimate}. Adding \eqref{eq:I_5} with \eqref{eq:I_6} and substituting in \eqref{eq:I_1}, we obtain
\begin{equation}\label{eq:I_1Est}
|I_1|\leq C\|\tilde{A}\|_{\infty}r^{2-n}.
\end{equation}
To estimate $I_2$, we write
\begin{equation}\label{eq:I_2}
|I_2|\leq\int_{B_r(y)}|\tilde{b}\nabla g_2\cdot G_1|+\int_{B\setminus B_r(y)}|\tilde{b}\nabla g_2\cdot G_1|=I_{21}+I_{22}.
\end{equation}
For $I_{21}$, let $q'>1$ be such that $\frac{2}{p}+\frac{1}{q'}=1$. Then $q'(2-n)>-n$, and using H{\"o}lder's inequality, \eqref{eq:GreenDerivative} and \eqref{eq:CloseBoundNabla}, we estimate
\begin{align}\nonumber
I_{21}&\leq\|\tilde{b}\|_p\left(\int_{B_r(y)}|\nabla g_2|^p\right)^{1/p}\left(\int_{B_r(y)}|G_1|^{q'}\right)^{1/q'}\leq C\|\tilde{b}\|_pr^{1-n+\frac{n}{p}}\left(\int_{B_r(y)}|z-y|^{q'(2-n)}\right)^{1/q'}\\
\label{eq:I_7}
&\leq C\|\tilde{b}\|_pr^{1-n+\frac{n}{p}}r^{2-n+\frac{n}{q'}}=C\|\tilde{b}\|_pr^{2-n+\delta_{n,p}}.
\end{align}
For $I_{22}$, set $s=\frac{n}{(n-1)p'}>1$. If $s'$ is the conjugate exponent to $s$, then we use H{\"o}lder's inequality and H{\"o}lder's inequality for Lorentz norms (from \cite{Grafakos}, Theorem 1.4.17 (v)) to estimate
\begin{align*}
I_{22}&\leq\|\tilde{b}\|_p\left(\int_{B\setminus B_r(y)}|\nabla g_2|^{p'}|G_1|^{p'}\right)^{1/p'}\leq C\|\tilde{b}\|_p\left(\left\||\nabla g_2|^{p'}\right\|_{L^{s,\infty}(B\setminus B_r(y))}\left\||G_1|^{p'}\right\|_{L^{s',1}(B\setminus B_r(y))}\right)^{1/p'}\\
&=C\|\tilde{b}\|_p\|\nabla g_2\|_{L^{p's,\infty}(B\setminus B_r(y))}\|G_1\|_{L^{p's',p'}(B\setminus B_r(y))}\leq C\|\tilde{b}\|_p\|G_1\|_{L^{p's',p'}(B\setminus B_r(y))},
\end{align*}
where we used Remark 1.4.7 in \cite{Grafakos} for the first equality, $p's=\frac{n}{n-1}$ and \eqref{eq:GreenMain}. Note also that $p's'=\frac{pn}{p-n}$. Therefore, setting $f_2(z)=|z|^{2-n}$ and using \eqref{eq:GreenMain} and Lemma~\ref{LorentzEstimate}, we estimate
\begin{equation}\label{eq:I_8}
I_{22}\leq C\|\tilde{b}\|_p\|f_2\|_{L^{p's',p'}(\mathbb R^n\setminus B_r(0))}\leq C\|\tilde{b}\|_pr^{2-n+\delta_{n,p}}.
\end{equation}
Plugging \eqref{eq:I_7} and \eqref{eq:I_8} in \eqref{eq:I_2}, we obtain that
\begin{equation}\label{eq:I_2Est}
|I_2|\leq C\|\tilde{b}\|_pr^{2-n+\delta_{n,p}}
\end{equation}
To bound $I_3$, we follow a procedure identical to the bound for $I_2$ by interchanging the roles of $G_1$ and $g_2$. This will show that
\begin{equation}\label{eq:I_3Est}
|I_3|\leq C\|\tilde{c}\|_pr^{2-n+\delta_{n,p}}.
\end{equation}
To bound $I_4$, let $q_0$ be the conjugate exponent to $\frac{pn}{p+n}$, and note that
\[
4-2n+\frac{n}{q_0}=4-n\left(1-\frac{p+n}{pn}\right)=3-n-\frac{n}{p}<0,
\]
therefore Lemma~\ref{ToBoundDif} for $p_1=p_2=2$ and $q_0$ is applicable. Hence, from \eqref{eq:GreenMain},
\begin{equation}\label{eq:I_4Est}
|I_4|\leq C\|\tilde{d}\|_{\frac{pn}{p+n}}\left(\int_{\mathbb R^n}|z-y|^{q_0(2-n)}|z-x|^{q_0(2-n)}\,dz\right)^{1/q_0}\leq C\|\tilde{d}\|_{\frac{pn}{p+n}}|x-y|^{3-n-\frac{n}{p}}.
\end{equation}
Adding \eqref{eq:I_1Est}, \eqref{eq:I_2Est}, \eqref{eq:I_3Est}, \eqref{eq:I_4Est}, substituting in \eqref{eq:FormulaforDifference}, and using that $\|\tilde{b}\|_p\leq C\|\tilde{b}\|_{\infty}$ and $\|\tilde{d}\|_{\frac{pn}{p+n}}\leq C\|\tilde{d}\|_p$ completes the proof.
\end{proof}

Under the setting of Lemma~\ref{PointwiseGreen}, we can show estimates on differences of derivatives of Green's functions.

\begin{lemma}\label{PointwiseGreenGradient}
Under the same assumptions as in Lemma~\ref{PointwiseGreen}, for any $x,y\in B_{8\rho}$,
\begin{multline*}
|\nabla_xG_2(x,y)-\nabla_xG_1(x,y)|\leq C\|A_1-A_2\|_{C^{\alpha}}|x-y|^{1-n}\\
+ C\left(\|b_1-b_2\|_{C^{\alpha}}+\|c_1-c_2\|_p+\|d_1-d_2\|_p\right)|x-y|^{1-n+\delta_{n,p}},
\end{multline*}
where $\delta_{n,p}=1-\frac{n}{p}>0$ and $C$ depends on the same quantities as in Lemma~\ref{PointwiseGreen}.
\end{lemma}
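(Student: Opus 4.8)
The plan is to view $w:=G_2(\cdot,y)-G_1(\cdot,y)$, as a function of the first variable, as a solution of a perturbed equation on a small ball around $x$ that avoids $y$, and then apply the interior gradient estimate of Proposition~\ref{CAlphaReg} at scale $|x-y|$. Fix $x,y\in B_{8\rho}$ and set $r=|x-y|/36$. As in the proof of Lemma~\ref{PointwiseGreen}, $B_{4r}(x)$ is compactly contained in $B_{9\rho}\subseteq B$ and lies at distance at least $32r$ from $y$; there $G_1(\cdot,y)$ and $G_2(\cdot,y)$ solve the homogeneous equations $\mathcal L_1G_1(\cdot,y)=0$, $\mathcal L_2G_2(\cdot,y)=0$, and by Proposition~\ref{CAlphaReg} they are $C^{1,\gamma_1}_{\loc}$ there, where $\gamma_1:=\min\{\alpha,1-\tfrac np\}$, so in particular $w\in W^{1,2}_{\loc}(B_{4r}(x))$. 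Testing the two weak formulations against an arbitrary $\phi\in C_c^\infty(B_{4r}(x))$ and subtracting — the computation behind \eqref{eq:FormulaforDifference}, but keeping the bulk bilinear form instead of inserting the other Green's function — gives that $w$ solves
\[
-\dive(A_2\nabla w+b_2w)+c_2\nabla w+d_2w=-\dive F+g\quad\text{in }B_{4r}(x),
\]
with $F=\tilde A\nabla G_1(\cdot,y)+\tilde b\,G_1(\cdot,y)$ and $g=\tilde c\nabla G_1(\cdot,y)+\tilde d\,G_1(\cdot,y)$, where $\tilde A=A_1-A_2$, $\tilde b=b_1-b_2$, $\tilde c=c_1-c_2$, $\tilde d=d_1-d_2$.

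Next I would estimate the data. Combining \eqref{eq:GreenMain} and \eqref{eq:GreenDerivative} with a rescaled application of Proposition~\ref{CAlphaReg} to $G_1(\cdot,y)$ on $B_{8r}(x)$ (zero right-hand side), and using $|z-y|\approx r$ on $B_{4r}(x)$, one obtains $\|G_1(\cdot,y)\|_{L^\infty(B_{2r}(x))}\lesssim r^{2-n}$, $\|\nabla G_1(\cdot,y)\|_{L^\infty(B_{2r}(x))}\lesssim r^{1-n}$ and $\|\nabla G_1(\cdot,y)\|_{C^{0,\gamma_1}(B_{2r}(x))}\lesssim r^{1-n-\gamma_1}$. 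A Leibniz estimate for Hölder seminorms of products — using $\|\tilde A\|_{C^{0,\gamma_1}(B_{2r}(x))}\lesssim\|\tilde A\|_{C^\alpha}r^{\alpha-\gamma_1}$ and likewise for $\tilde b$ — then shows $F\in C^{0,\gamma_1}(B_{2r}(x))$ with $\|F\|_{L^\infty(B_{2r}(x))}+r^{\gamma_1}\|F\|_{C^{0,\gamma_1}(B_{2r}(x))}\lesssim\|\tilde A\|_{C^\alpha}r^{1-n}+\|\tilde b\|_{C^\alpha}r^{2-n}$; Hölder's inequality gives $r\big(\fint_{B_{2r}(x)}|g|^p\big)^{1/p}\lesssim\|\tilde c\|_p\,r^{2-n-n/p}+\|\tilde d\|_p\,r^{3-n-n/p}$; and Lemma~\ref{PointwiseGreen} gives $\tfrac1r\big(\fint_{B_{2r}(x)}w^2\big)^{1/2}\lesssim\|\tilde A\|_\infty r^{1-n}+(\|\tilde b\|_\infty+\|\tilde c\|_p+\|\tilde d\|_p)\,r^{1-n+\delta_{n,p}}$.

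Finally, applying Proposition~\ref{CAlphaReg} to $w$ with ambient domain $B_{4r}(x)$ and at radius $r$ (so its $B_{2r}$ is $B_{2r}(x)$, its $B_r$ is $B_r(x)$), with $\beta=\gamma_1$, bounds $|\nabla w(x)|\le\|\nabla w\|_{L^\infty(B_r(x))}$ by the sum of the three quantities above. Since $r<1$, the exponents $2-n$, $\alpha+2-n$, $3-n-\tfrac np$ all exceed $1-n+\delta_{n,p}$ and $\alpha+1-n$ exceeds $1-n$, so every lower-order term is absorbed; using also $\|\tilde A\|_\infty\le\|\tilde A\|_{C^\alpha}$, $\|\tilde b\|_\infty\le\|\tilde b\|_{C^\alpha}$ and $r=|x-y|/36$, this yields the claimed bound with the asserted dependence of $C$.

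I expect the main obstacle to be the exponent bookkeeping in the second step: $F$ is only $C^{\gamma_1}$-regular (not $C^\alpha$), so one must check that the interior estimate still produces the coefficient $\|A_1-A_2\|_{C^\alpha}$ in front of $|x-y|^{1-n}$ — the mismatch factor $r^{\alpha-\gamma_1}$ being harmless precisely because $r<1$ — and that each of the many lower-order contributions genuinely decays at least like $|x-y|^{1-n+\delta_{n,p}}$. A lesser point is verifying that $w$ has the regularity and satisfies the stated equation on $B_{4r}(x)$ needed to invoke Proposition~\ref{CAlphaReg}, which follows from the smoothness of $G_1(\cdot,y)$ and $G_2(\cdot,y)$ away from $y$.
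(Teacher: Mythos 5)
Your proposal is correct and follows essentially the same route as the paper: write the difference of Green's functions as a solution of one of the two equations with divergence-form data built from the other Green's function, bound that data using \eqref{eq:GreenMain}, \eqref{eq:GreenDerivative}, Proposition~\ref{CAlphaReg} and Lemma~\ref{PointwiseGreen}, and conclude with the interior $C^{1}$ estimate of Proposition~\ref{CAlphaReg} at scale $|x-y|$ (the paper perturbs around $\mathcal{L}_1$ with data from $G_2(\cdot,y)$ rather than around $\mathcal{L}_2$ with data from $G_1(\cdot,y)$, a purely symmetric choice). The only slip is cosmetic: with $r=|x-y|/36$ the ball $B_{8r}(x)$ you invoke for the rescaled estimate on $G_1(\cdot,y)$ can exit $B_{10\rho}$ when $x,y$ are near $\partial B_{8\rho}$, but applying Proposition~\ref{CAlphaReg} on $B_{4r}(x)$ (which is all you need for bounds on $B_{2r}(x)$) or shrinking $r$ fixes this immediately.
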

\begin{proof}
Set $\tilde{A}=A_1-A_2$, and similarly for $b,c,d$. Fix $x,y\in B_{8\rho}$, and set $r=|x-y|/32$ and $u(z)=G_2(z,y)-G_1(z,y)$. Since $4r\leq(|x|+|y|)/8<2\rho$, $B_{4r}(x)$ is compactly supported in $B$. Set also $u_0(z)=G_2(z,y)$ in $B_{4r}(x)$. Then $u$ is a solution to the equation $-\dive(A_1\nabla u+b_1u)+c_1\nabla u+d_1u=-\dive f+g$ in $B_{4r}(x)$, where 
\[
f=\tilde{A}\nabla u_0+\tilde{b} u_0,\qquad g=\tilde{c}\nabla u_0+\tilde{d}u_0.
\]
Then, by Proposition~\ref{CAlphaReg} we obtain that
\begin{equation}\label{eq:eq:ToBoundNablaG}
\|\nabla u\|_{L^{\infty}(B_r(x))}\leq\frac{C}{r}\|u\|_{L^{\infty}(B_{2r}(x))}\\
+C\|f\|_{L^{\infty}(B_{2r}(x))}+Cr^{\beta}\|f\|_{C^{0,\beta}(B_{2r}(x))}+Cr\left(\fint_{B_{2r}(x)}|g|^p\right)^{1/p}. 
\end{equation}
Note now that $B_{2r}(x)\subseteq B_{9\rho}$, therefore we can apply Lemma~\ref{PointwiseGreen} and obtain that
\begin{equation*}
\|u\|_{L^{\infty}(B_{2r}(x))}\leq C\|\tilde{A}\|_{\infty}r^{2-n}\\
+ C\left(\|\tilde{b}\|_{\infty}+\|\tilde{c}\|_p+\|\tilde{d}\|_p\right)r^{2-n+\delta_{n,p}}.
\end{equation*}
Also, by Proposition~\ref{CAlphaReg} we obtain
\begin{equation}\label{eq:CBeta}
\frac{1}{r}\|u_0\|_{L^{\infty}(B_{2r}(x))}+\|\nabla u_0\|_{L^{\infty}(B_{2r}(x))}+r^{\beta}\|\nabla u_0\|_{C^{0,\beta}(B_{2r}(x))}\leq\frac{C}{r}\left(\fint_{B_{4r}(x)}|G(z,y)|^2\,dz\right)^{1/2}\leq Cr^{1-n},
\end{equation}
where we also used the pointwise bound in \eqref{eq:GreenMain}. Then, using \eqref{eq:CBeta},
\begin{align*}
\|\tilde{A}\nabla u_0\|_{C^{0,\beta}(B_{2r}(x))}\leq \|\tilde{A}\|_{C^{\beta}}\|\nabla u_0\|_{C^{\beta}(B_{2r}(x))}\leq \|\tilde{A}\|_{C^{\alpha}}\|\nabla u_0\|_{C^{\beta}(B_{2r}(x))}\leq C\|\tilde{A}\|_{C^{\alpha}}r^{1-n-\beta},
\end{align*}
since $\beta\leq\alpha$, and also $\|\tilde{A}\nabla u_0\|_{L^{\infty}(B_{2r}(x))}\leq C\|\tilde{A}\|_{C^{\alpha}}r^{1-n}$. Similarly, using \eqref{eq:CBeta},
\[
\|\tilde{b}u_0\|_{C^{0,\beta}(B_{2r}(x))}\leq C\|\tilde{b}\|_{C^{\alpha}}r^{2-n-\beta},\qquad \|\tilde{b}u_0\|_{L^{\infty}(B_{2r}(x))}\leq C\|\tilde{b}\|_{C^{\alpha}}r^{2-n}.
\]
Finally,
\begin{align*}
\left(\fint_{B_{2r}(x)}|g|^p\right)^{1/p}&\leq Cr^{-n/p}\left(\|\tilde{c}\|_{L^p(B_{2r})}+\|\tilde{d}\|_{L^p(B_{2r})}\right)\left(\|\nabla u_0\|_{L^{\infty}(B_{2r}(x))}+\|u_0\|_{L^{\infty}(B_{2r}(x))}\right)\\
&\leq C\left(\|\tilde{c}\|_p+\|\tilde{d}\|_p\right)r^{1-n-n/p.}
\end{align*}
We then complete the proof by plugging all the estimates above in \eqref{eq:eq:ToBoundNablaG}.
\end{proof}

\subsection{Comparing with the fundamental solution}
We will now compare differences of Green's functions for the full equation and the fundamental solution when the lower order coefficients vanish. We remark that similar estimates appear in \cite{XuZhaoZhou}, but the authors compare with the fundamental solution for fixed coefficients (Lemma 4.6).

Assume that $B=B_{10\rho}$ is a ball of radius $10\rho$ for $\rho<\frac{1}{16}$ and let $A\in\M_B(\lambda,\alpha,\tau)$. Since $\diam(B_{8\rho})<\frac{1}{2}$, we can mimic the proofs of Lemmas 2.1 and 2.2 in \cite{thesis} to construct $\tilde{A}$ that is $1$-periodic in $\bR^n$ (that is, $\tilde{A}$ satisfies (1.3) in \cite{KenigShen})such that
\begin{equation}\label{eq:ATilde}
\tilde{A}\in\M_{\bR^n}(\lambda,\alpha,C_n(\tau+\|A\|_{\infty})),\quad \tilde{A}=A\,\,\text{in}\,\,B_{8\rho}.
\end{equation}
From the argument after Lemma 2.1 in \cite{KenigShen}, we can construct the fundamental solution $\Gamma_{\tilde{A}}$ for the operator $-\dive(\tilde{A}\nabla u)$ in $\bR^n$.

Suppose now that $A\in\M_B(\lambda,\alpha,\tau)$, $b\in\C_B(\alpha,\tau)$ and $c,d\in L^p(B)$ for some $p>n$, with either $d\geq\dive b$ or $d\geq\dive c$, and set $G$ to be Green's function for the operator $\mathcal{L}u=-\dive(A\nabla u+bu)+c\nabla u+du$ in $B$. Then, for the operator $\mathcal{L}$, we set
\begin{equation}\label{eq:PiDef2}
\pi_{\mathcal{L}}(x,y)=G(x,y)-\Gamma_{\tilde{A}}(x,y),
\end{equation}
for $x,y\in B_{8\rho}$.

Under the same setting as above, let $g_A$ be Green's function for the operator $-\dive(A\nabla u)$ in $B_{7\rho}$. Note then that, for any $A_1,A_2\in\M_B(\lambda,\alpha,\tau)$, from \eqref{eq:FormulaforDifference},
\[
g_{A_2}(x,y)-g_{A_1}(x,y)=\int_{B_{7\rho}}(A_1(z)-A_2(z))\nabla_zg_{A_1}(z,y)\nabla_zg_{A_2}(x,z)\,dz.
\]
Since $g_{A_2}(x,z)=g_{A_2^t}(z,x)$, \eqref{eq:GreenDerivative} and Lemma~\ref{ToBoundDif} show that for all $x,y\in B_{7\rho}$,
\begin{equation}\label{eq:EDifference2}
|g_{A_2}(x,y)-g_{A_1}(x,y)|\leq C\|A_1-A_2\|_{\infty}|x-y|^{2-n},
\end{equation}
where $C$ depends on $n,\lambda,\alpha,\tau$,$\|A_1\|_{\infty}$ and $\|A_2\|_{\infty}$. Fix now $x\in B_{7\rho}$ and $y\in B_{6\rho}$, and define $u(z)=\nabla_xg_{A_2}(x,z)-\nabla_xg_{A_1}(x,z)$ and $u_0(z)=\nabla_xg_{A_2}(x,z)$. If $r=|x-y|/26$, then $2r=|x-y|/13<\rho$, therefore $B_{2r}(y)\subseteq B_{7\rho}$. Since $u_0$ solves the equation $-\dive(A_2^t\nabla u_0)=0$ in $B_{2r}(y)$, from \eqref{eq:GreenMain}, \eqref{eq:GreenDerivative} and Proposition~\ref{CAlphaReg} we obtain
\[
\|\nabla u_0\|_{L^{\infty}(B_r(y))}+r^{\alpha}\|\nabla u_0\|_{C^{0,\alpha}(B_r(y))}\leq Cr^{1-n},
\]
where $C$ depends on $n,\lambda,\alpha,\tau$ and $\|A_2\|_{\infty}$. Note now that $-\dive(A_1^t\nabla u)=-\dive((A_1^t-A_2^t)\nabla u_0)=-\dive f$ in $B_r(y)$, hence, from Proposition~\ref{CAlphaReg}, we obtain
\[
\|\nabla u\|_{L^{\infty}(B_{r/2}(y))}\leq\frac{C}{r}\|u\|_{L^{\infty}(B_r(y))}+C\|f\|_{L^{\infty}(B_r(y))}+Cr^{\alpha}\|f\|_{C^{0,\alpha}(B_r(y))}\leq C\|A_1-A_2\|_{C^{\alpha}}r^{1-n},
\]
where we also used \eqref{eq:EDifference2} for the second estimate. Therefore, for all $x\in B_{7\rho}$ and $y\in B_{6\rho}$,
\begin{equation}\label{eq:NablaEDifference2}
|\nabla_xg_2(x,y)-\nabla_xg_1(x,y)|\leq C\|A_1-A_2\|_{C^{\alpha}}|x-y|^{1-n}.
\end{equation}

We then have the next estimate.

\begin{lemma}\label{PointwiseDifDif}
Under the same assumptions as in Lemma~\ref{PointwiseGreen}, if $\pi_{\mathcal{L}_1},\pi_{\mathcal{L}_2}$ denote the differences in \eqref{eq:PiDef2} for $x,y\in B_{8\rho}$, then for any $x,y\in B_{6\rho}$,
\begin{equation*}
|\pi_{\mathcal{L}_1}(x,y)-\pi_{\mathcal{L}_2}(x,y)|\leq C\left(\|A_1-A_2\|_{C^{\alpha}}+\|b_1-b_2\|_{C^{\alpha}}+\|c_1-c_2\|_p+\|d_1-d_2\|_p\right)|x-y|^{2-n+\delta_{n,p}},
\end{equation*}
where $C$ depends on the same quantities as in Lemma~\ref{PointwiseGreen}.
\end{lemma}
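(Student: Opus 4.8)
The plan is to reduce the estimate to the representation formula \eqref{eq:FormulaforDifference} together with its analogue for the two periodic fundamental solutions, and to exploit that near the pole $G_i$ and $\Gamma_{\tilde A_i}$ have the \emph{same} leading singularity, so that the $A$-term improves both its decay exponent (to $2-n+\delta_{n,p}$) and its norm (to $C^\alpha$).

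First I would write
\[
\pi_{\mathcal{L}_1}(x,y)-\pi_{\mathcal{L}_2}(x,y)=\bigl(G_1(x,y)-G_2(x,y)\bigr)-\bigl(\Gamma_{\tilde A_1}(x,y)-\Gamma_{\tilde A_2}(x,y)\bigr),
\]
and apply \eqref{eq:FormulaforDifference} with $g_2(\cdot)=G_2^t(\cdot,x)$, so that $G_2(x,y)-G_1(x,y)=I_1+I_2+I_3+I_4$ exactly as in Lemma~\ref{PointwiseGreen}. The estimates proved there give $|I_2|+|I_3|+|I_4|\le C\bigl(\|b_1-b_2\|_p+\|c_1-c_2\|_p+\|d_1-d_2\|_p\bigr)|x-y|^{2-n+\delta_{n,p}}$, which, using $\|b_1-b_2\|_p\le C\|b_1-b_2\|_{C^\alpha}$, is already of the asserted form; so the entire difficulty is the $A$-term $I_1=\int_B(A_1-A_2)\nabla_zG_1(z,y)\cdot\nabla_zg_2(z)\,dz$, which Lemma~\ref{PointwiseGreen} controlled only by $C\|A_1-A_2\|_\infty|x-y|^{2-n}$, i.e.\ without the gain $\delta_{n,p}$ and with the weaker norm. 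I would then record the matching identity for $-\dive(\tilde A_i\nabla\cdot)$,
\[
\Gamma_{\tilde A_2}(x,y)-\Gamma_{\tilde A_1}(x,y)=\int_{\bR^n}(\tilde A_1-\tilde A_2)(z)\,\nabla_z\Gamma_{\tilde A_1}(z,y)\cdot\nabla_z\Gamma_{\tilde A_2}(x,z)\,dz,
\]
which follows by integrating by parts against each fundamental solution and converges because of the periodic bounds $|\Gamma_{\tilde A_i}(z,w)|\le C|z-w|^{2-n}$, $|\nabla\Gamma_{\tilde A_i}(z,w)|\le C|z-w|^{1-n}$ used in \cite{KenigShen}.

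In $I_1-\bigl(\Gamma_{\tilde A_2}(x,y)-\Gamma_{\tilde A_1}(x,y)\bigr)$ I would split the integral in $I_1$ at $B_{8\rho}$ (where $\tilde A_i=A_i$) and, inside $B_{8\rho}$, substitute $\nabla_zG_1(z,y)=\nabla_z\Gamma_{\tilde A_1}(z,y)+\nabla_z\pi_{\mathcal{L}_1}(z,y)$ and $\nabla_zg_2(z)=\nabla_z\Gamma_{\tilde A_2}(x,z)+\nabla_z\pi_{\mathcal{L}_2^t}(z,x)$; this splits the $B_{8\rho}$ part of $I_1$ into the ``leading'' piece $\int_{B_{8\rho}}(A_1-A_2)\nabla_z\Gamma_{\tilde A_1}(z,y)\cdot\nabla_z\Gamma_{\tilde A_2}(x,z)\,dz$ plus two corrector cross terms. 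The leading piece cancels against the $B_{8\rho}$ part of the $\Gamma$-identity, leaving only integrals over the region where $|z-x|$ and $|z-y|$ are $\gtrsim\rho$ — the annulus $B\setminus B_{8\rho}$, on which $A_1-A_2$ and $\tilde A_1-\tilde A_2$ may disagree but are both $\le C\|A_1-A_2\|_{C^\alpha}$, and the exterior $\bR^n\setminus B$; there all gradients are $\le C\rho^{1-n}$ (by interior/boundary elliptic estimates for $G_1,g_2$ and the periodic decay for $\Gamma_{\tilde A_i}$, or by the global bounds of Proposition~\ref{GreenBounds} as in Lemma~\ref{PointwiseGreen}), and since $\int|z-x|^{1-n}|z-y|^{1-n}\,dz$ converges these contribute $\le C\|A_1-A_2\|_{C^\alpha}\rho^{2-n}\le C\|A_1-A_2\|_{C^\alpha}|x-y|^{2-n+\delta_{n,p}}$ (as $|x-y|<12\rho$). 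For the cross terms $\int_{B_{8\rho}}(A_1-A_2)\nabla_z\pi_{\mathcal{L}_1}(z,y)\cdot\nabla_z\Gamma_{\tilde A_2}(x,z)\,dz$ and $\int_{B_{8\rho}}(A_1-A_2)\nabla_z\Gamma_{\tilde A_1}(z,y)\cdot\nabla_z\pi_{\mathcal{L}_2^t}(z,x)\,dz$ (and the product $\int(A_1-A_2)\nabla\pi_{\mathcal{L}_1}\nabla\pi_{\mathcal{L}_2^t}$) I would use that the correctors solve $-\dive(A_i\nabla\pi_{\mathcal{L}_i}(\cdot,y))=\dive(b_iG_i(\cdot,y))-c_i\nabla G_i(\cdot,y)-d_iG_i(\cdot,y)$ in $B_{8\rho}$, so that their gradients obey a bound with the improved decay $|z-y|^{1-n+\delta_{n,p}}$ near the pole (times $\|b_i\|_{C^\alpha}+\|c_i\|_p+\|d_i\|_p$) and $\lesssim\rho^{1-n}$ otherwise — the same type of estimate already used for $I_{11},I_{21}$ in Lemma~\ref{PointwiseGreen} and for \eqref{eq:NablaEDifference2} and Lemma~\ref{PointwiseGreenGradient}. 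Pairing this with Hölder's inequality against the kernels $|z-y|^{1-n+\delta_{n,p}}$ and $|z-x|^{1-n}$, splitting at $B_r(y),B_r(x)$ with $r\sim|x-y|$ as in Lemma~\ref{PointwiseGreen}, and using $\|A_1-A_2\|_\infty\le\|A_1-A_2\|_{C^\alpha}$, bounds each cross term by $C\|A_1-A_2\|_{C^\alpha}|x-y|^{2-n+\delta_{n,p}}$, with $C$ also depending on $\|b_i\|_{C^\alpha},\|c_i\|_p,\|d_i\|_p$. Summing yields the lemma.

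The main obstacle, and the reason this is not a corollary of Lemma~\ref{PointwiseGreen}, is organizing the cancellation so that the $A$-difference term genuinely improves: one must subtract the \emph{same} kernel $\nabla_z\Gamma_{\tilde A_i}$ from both $\nabla_zG_1(z,y)$ and $\nabla_zg_2(z)$ so that the non-improvable contribution is precisely what the $\Gamma$-identity produces (and hence cancels), and one must establish the $\delta_{n,p}$-improved gradient bound for the correctors near the pole. Secondary but unavoidable nuisances are keeping track of the nested balls $B_{6\rho}\subset B_{8\rho}\subset B=B_{10\rho}$ and of the region where $\tilde A_i\neq A_i$ (where only $L^\infty$-sized control of $A_1-A_2$ is available), and, near $\partial B$, the lack of pointwise gradient bounds for $G_1,g_2$, handled either by boundary Schauder estimates or by the global Lorentz estimates \eqref{eq:GreenMain}. (As in Lemma~\ref{PointwiseGreen}, when $|x-y|$ is comparable to $\rho$ one may alternatively argue more crudely from Lemma~\ref{PointwiseGreen} together with $|\Gamma_{\tilde A_1}(x,y)-\Gamma_{\tilde A_2}(x,y)|\le C\|A_1-A_2\|_\infty|x-y|^{2-n}$, which follows from the $\Gamma$-identity above and Lemma~\ref{ToBoundDif} with $q_0=1$.)
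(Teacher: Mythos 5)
Your treatment of $I_2,I_3,I_4$ is fine, and the idea of cancelling the $A$-term of \eqref{eq:FormulaforDifference} against the analogous identity for $\Gamma_{\tilde A_1},\Gamma_{\tilde A_2}$ after writing $\nabla_zG_1(z,y)=\nabla_z\Gamma_{\tilde A_1}(z,y)+\nabla_z\pi_{\mathcal{L}_1}(z,y)$ (and similarly for $g_2$) is coherent. The genuine gap is the corrector cross terms. To extract the exponent $2-n+\delta_{n,p}$ from a kernel product $|z-y|^{1-n+\delta_{n,p}}|z-x|^{1-n}$ via Lemma~\ref{ToBoundDif}, you need the improved decay $|\nabla_z\pi_{\mathcal{L}_1}(z,y)|\leq C|z-y|^{1-n+\delta_{n,p}}$ (and its analogue for $\pi_{\mathcal{L}_2^t}$), and this is not available at this point: in the paper these are exactly \eqref{eq:piPointwise} and \eqref{eq:piGradPointwise}, which are \emph{deduced from Lemma~\ref{PointwiseDifDif}} by the doubling trick $A_2=2A_1$, $b_2=2b_1$, etc., so invoking them would be circular. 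Nor does your sketched justification supply them: the estimates you cite ($I_{11},I_{21}$ in Lemma~\ref{PointwiseGreen}, \eqref{eq:NablaEDifference2}, Lemma~\ref{PointwiseGreenGradient}) all concern differences of two Green's functions of the \emph{same} kind (two full operators, or two pure second-order operators), not $G_i-\Gamma_{\tilde A_i}$. If you instead try to prove the corrector bound directly from the equation $-\dive(A_1\nabla\pi_{\mathcal{L}_1}(\cdot,y))=\dive(b_1G_1(\cdot,y))-c_1\nabla G_1(\cdot,y)-d_1G_1(\cdot,y)$ on a ball $B_{2r}(z)$ with $r\sim|z-y|$, Proposition~\ref{CAlphaReg} produces the term $\frac{C}{r}\|\pi_{\mathcal{L}_1}\|_{L^\infty(B_{2r}(z))}$, and the only a priori bound on $\pi_{\mathcal{L}_1}$ before the lemma is $|\pi_{\mathcal{L}_1}|\leq|G_1|+|\Gamma_{\tilde A_1}|\leq C|z-y|^{2-n}$ (from \eqref{eq:GreenMain} and (2.5) in \cite{KenigShen}), which yields only $Cr^{1-n}$; the cross term then comes out as $C\|A_1-A_2\|_{\infty}|x-y|^{2-n}$, i.e.\ no better than Lemma~\ref{PointwiseGreen}. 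So the central step of your plan is unproved, and it is essentially equivalent in strength to the statement being proved.

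To fill it you would have to establish the single-operator bound $|\pi_{\mathcal{L}}(z,y)|\leq C|z-y|^{2-n+\delta_{n,p}}$ independently, and the natural way to do that is what the paper does instead of your cancellation: introduce the Green's function $g_{A_i}$ of the pure second-order operator $-\dive(A_i\nabla\cdot)$ on the intermediate ball $B_{7\rho}$ and split $\pi_{\mathcal{L}_i}=(G_i-g_i)+(g_i-\Gamma_{\tilde A_i})$. In the representation of $G_i-g_i$ the coefficient $A_i$ enters only through a surface integral over $\partial(B_{7\rho})$ (far from both poles, hence bounded by a constant), while every solid integral carries a lower-order coefficient and thus the milder kernel $|z-x|^{1-n}|z-y|^{2-n}$; and $g_i-\Gamma_{\tilde A_i}$ is $A_i$-harmonic in $B_{7\rho}$ and is represented by a surface integral, hence also bounded by a constant. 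The paper then estimates $\pi_{\mathcal{L}_1}-\pi_{\mathcal{L}_2}$ term by term from these representations, using \eqref{eq:EDifference2}, \eqref{eq:NablaEDifference2}, Lemma~\ref{PointwiseGreen} and Lemma~\ref{PointwiseGreenGradient}, and never needs the improved corrector decay at all. Your secondary points (convergence of the $\Gamma$--$\Gamma$ identity at infinity, control of $\tilde A_1-\tilde A_2$ outside $B_{8\rho}$, gradient bounds near $\partial B$) are manageable, but without an independent proof of the improved bounds on $\pi_{\mathcal{L}_i}$ and $\nabla\pi_{\mathcal{L}_i}$ your argument does not close.
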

\begin{proof}
Fix $x,y\in B_{6\rho}$. Let also $g_i=g_{A_i}$ for $i=1,2$ in $B_{7\rho}$, where $g_A$ is defined after \eqref{eq:PiDef2}, and define $\pi_i'(x,y)=G_i(x,y)-g_i(x,y)$, $\pi_i''(x,y)=g_i(x,y)-\Gamma_{\tilde{A}_i}(x,y)$; then,
\begin{equation*}
\pi_{\mathcal{L}_i}(x,y)=\pi_i'(x,y)+\pi_i''(x,y).
\end{equation*}
Set $g_i^x(z)=g_i(x,z)$ and $G_i^y(z)=G_i(y,z)$. Let $v_i$ be the solution to $-\dive(A_i\nabla v_i)=0$ in $B_{7\rho}$, with boundary values $G_i^y$ on $\partial(B_{7\rho})$. Then, an integration by parts argument shows that
\begin{equation}\label{eq:eq:v_i}
v_i(y)=-\int_{\partial(B_{7\rho})}\left<A_i\nabla g_i^x,\nu\right>G_i^y\,d\sigma.
\end{equation}
Using $w_i=G_i^y-v_i$ as a test function for $g_i^t$ (Green's function for $-\dive(A_i^t\nabla u)$ in $B_{7\rho}$) in Lemma~\ref{Representation}, we obtain that
\begin{equation}\label{eq:eq:w_i}
w_i(x)=G_i(x,y)-v_i(x)=\int_{B_{7\rho}}A_i^t\nabla g_i^x\nabla w_i=\int_{B_{7\rho}}A_i^t\nabla g_i^x\nabla G_i^y,
\end{equation}
where we also used that $v_i$ is a solution of $-\dive(A_i\nabla v_i)=0$ in $B_{7\rho}$. Moreover, extending $g_i^x$ by $0$ outside $B_{7\rho}$ and using it as a test function for $G_i$, we obtain that
\begin{equation}\label{eq:eq:g_i}
g_i(x,y)=\int_{B_{7\rho}}A_i\nabla G_i^y\nabla g_i^x+b_i\nabla g_i^x\cdot G_i^y+c_i\nabla G_i^y\cdot g_i^x+d_iG_i^yg_i^x.
\end{equation}
Subtracting \eqref{eq:eq:g_i} from \eqref{eq:eq:w_i} and using \eqref{eq:eq:v_i}, we then obtain that
\begin{equation*}
\pi_i'(x,y)=-\int_{B_{7\rho}}b_i\nabla g_i^x\cdot G_i^y+c_i\nabla G_i^y\cdot g_i^x+d_iG_i^yg_i^x-\int_{\partial(B_{7\rho})}\left<A_i\nabla g_i^x,\nu\right>G_i^y\,d\sigma.
\end{equation*}
Then,  $\pi_1'(x,y)-\pi_2'(x,y)=\sum_{i=1}^{12}I_i$, where
\begin{gather*}
I_1=\int_{B_{7\rho}}(b_2-b_1)\nabla g_2^x\cdot G_2^y,\quad I_2=\int_{B_{7\rho}}b_1(\nabla g_2^x-\nabla g_1^x)\cdot G_2^y,\quad I_3=\int_{B_{7\rho}}b_1\nabla g_1^x\cdot(G_2^y-G_1^y),\\
I_4=\int_{B_{7\rho}}(c_2-c_1)\nabla G_2^y\cdot g_2^x,\quad I_5=\int_{B_{7\rho}}c_1(\nabla G_2^y-\nabla G_1^y)\cdot g_2^x,\quad I_6=\int_{B_{7\rho}}c_1\nabla G_1^y\cdot(g_2^x-g_1^x),\\
I_7=\int_{B_{7\rho}}(d_2-d_1)G_2^yg_2^x,\quad I_8=\int_{B_{7\rho}}d_1(G_2^y-G_1^y)g_2^x,\quad I_9=\int_{B_{7\rho}}d_1G_1^y(g_2^x-g_1^x),
\end{gather*}
are the solid integral differences, and
\begin{equation*}
I_{10}=\int_{\partial(B_{7\rho})}\left<(A_2-A_1)\nabla g_2^x,\nu\right>G_2^y\,d\sigma,\quad I_{11}=\int_{\partial(B_{7\rho})}\left<A_1(\nabla g_2^x-\nabla g_1^x),\nu\right>G_2^y\,d\sigma,
\end{equation*}
and $\displaystyle I_{12}=\int_{\partial(B_{7\rho})}\left<A_1\nabla g_1^x,\nu\right>(G_2^y-G_1^y)\,d\sigma$ are the surface integral differences.

To treat $I_1$, note that $\nabla g_2^x(z)=\nabla_zg_2^t(z,x)$, where $g_2^t$ is Green's function for the operator $-\dive(A_2^t\nabla u)$ in $B_{7\rho}$, therefore \eqref{eq:GreenDerivative} shows that $|\nabla g_2^x(z)|\leq C|x-z|^{1-n}$. Therefore, using \eqref{eq:GreenMain} and Lemma~\ref{ToBoundDif}, and also $|z-y|^{2-n}=|z-y|^{1-n+\delta_{n,p}}|z-y|^{-n/p}\leq C_{\rho}|z-y|^{1-n+\delta_{n,p}}$,
\[
|I_1|\leq C\|b_2-b_1\|_{\infty}\int_{B_{7\rho}}|x-z|^{1-n}|z-y|^{1-n+\delta_{n,p}}\,dz\leq C\|b_2-b_1\|_{\infty}|x-y|^{2-n+\delta_{n,p}}.
\]
To bound $I_2$, using $g_2^x(z)=g_2^t(z,x)$ and \eqref{eq:NablaEDifference2} for $g_2^t$ (for $z\in B_{7\rho}$ and $x\in B_{6\rho}$) we obtain
\[
|I_2|\leq C\|b_1\|_{\infty}\|A_1-A_2\|_{C^{\alpha}}\int_{B_{7\rho}}|x-z|^{1-n}|z-y|^{1-n+\delta_{n,p}}\,dz\leq C\|A_1-A_2\|_{C^{\alpha}}|x-y|^{2-n+\delta_{n,p}},
\]
and we bound $I_3$ similarly, using Lemma~\ref{PointwiseGreen} instead of \eqref{eq:NablaEDifference2} to bound $G_2^y-G_1^y$.

To bound $I_4$ let $p'$ be the conjugate exponent to $p$. We then use H{\"o}lder's inequality, \eqref{eq:GreenDerivative} and \eqref{eq:GreenMain}, to obtain that
\[
|I_4|\leq C\|c_2-c_1\|_p\left(\int_{B_{7\rho}}|z-y|^{p'(2-n)}|x-z|^{p'(1-n)}\,dz\right)^{1/p'}\leq C\|c_2-c_1\|_p|x-y|^{3-n-\frac{n}{p}},
\]
and we similarly bound $I_7$. To bound $I_5$ and $I_8$ we use Lemmas~\ref{PointwiseGreenGradient} and \ref{PointwiseGreen} respectively and a similar procedure as in $I_4$, and for $I_6$ and $I_9$ we use \eqref{eq:EDifference2}.

For $I_{10}$, note that for $z\in\partial(B_{7\rho})$ and $x,y\in B_{6\rho}$, $|x-z|>\rho$ and $|y-z|>\rho$, therefore
\[
|I_{10}|\leq C\|A_1-A_2\|_{\infty}\int_{\partial(B_{7\rho})}|x-z|^{1-n}|z-y|^{2-n}\,d\sigma(z)\leq C\|A_1-A_2\|_{\infty}.
\]
For $I_{11}$ we use \eqref{eq:NablaEDifference2}, and for $I_{12}$ we use Lemma~\ref{PointwiseGreen}, to obtain that $I_{11}\leq C\|A_1-A_2\|_{C^{\alpha}}$ and $I_{12}\leq C\|A_1-A_2\|_{C^{\alpha}}$. This shows that
\begin{equation*}
|\pi_1'(x,y)-\pi_2'(x,y)|\leq C(\|A_1-A_2\|_{C^{\alpha}}+\|b_1-b_2\|_{C^{\alpha}}+\|c_1-c_2\|_p+\|d_1-d_2\|_p)|x-y|^{2-n+\delta_{n,p}}.
\end{equation*}
To bound $\pi_1''-\pi_2''$, let $B_x$ be a small ball centered at $x$. Note that from \eqref{eq:GreenDerivative} and (2.5) in \cite{KenigShen}, $w_i(z)=\pi_i''(z,y)$ is a $W^{1,p_0}(B_{7\rho})\cap C(\overline{B_{7\rho}}\setminus B_x))$ solution to the equation $-\dive(A_i\nabla w_i)=0$ in $B_{7\rho}$, where $p_0\in\left(1,\frac{n}{n-1}\right)$ is fixed, with boundary values $-\Gamma_{\tilde{A}_i}(z,y)$. Then, Theorem A1.1 in \cite{AnconaElliptic} shows that $w_i\in W_0^{1,2}(B_x)$, hence combining with Proposition~\ref{CAlphaReg}, we obtain that $w_i\in W^{1,2}(B_{7\rho})\cap C(\overline{B_{7\rho}})$. Therefore, an integration by parts argument shows that
\begin{equation*}
w_i(x)=\int_{\partial(B_{7\rho})}\left<A_i(z)\nabla_zg_i(x,z),\nu(z)\right>\Gamma_{\tilde{A}_i}(z,y)\,d\sigma(z).
\end{equation*}
We then bound $w_1(x)-w_2(x)$ as we bounded $I_{10},I_{11}$ and $I_{12}$, where instead for the estimates for $G_2^y$ and $G_2^y-G_1^y$ we use (2.5) and (2.20) in \cite{KenigShen}, and this completes the proof.
\end{proof}

Set $A_2=2A_1=2A$, $b_2=2b_1=2b$, $c_2=2c_1=2c$ and $d_2=2d_1=2d$ in Lemma~\ref{PointwiseDifDif}. Then $\mathcal{L}_2u=2\mathcal{L}_1u$, therefore $G_2(x,y)=2G_1(x,y)$. Considering $\tilde{A}_2=2\tilde{A}_1$ in \eqref{eq:ATilde} we obtain that $\Gamma_{\tilde{A}_2}(x,y)=2\Gamma_{\tilde{A}_1}(x,y)$, hence from \eqref{eq:PiDef2}, $\pi_2(x,y)=2\pi_1(x,y)$. Therefore, Lemma~\ref{PointwiseDifDif} shows that
\begin{equation}\label{eq:piPointwise}
|\pi_2(x,y)|=2|\pi_2(x,y)-\pi_1(x,y)|\leq C|x-y|^{2-n+\delta_{n,p}},
\end{equation}
for any $x,y\in B_{6\rho}$, where $C$ depends on $n,p,\lambda,\alpha,\tau$,$\|A_1\|_{\infty}$,$\|b_1\|_{\infty}$,$\|c_1\|_p$ and $\|d_1\|_p$.

Let now $x,y\in B_{5\rho}$ and let $r=|x-y|/20$. Then $2r\leq(|x|+|y|)/10<\rho$, therefore $B_{2r}(x)\subseteq B_{6\rho}$. Setting $\Gamma_2^y(z)=\Gamma_2(z,y)$, note that $\pi_2(z)=\pi_2(z,y)$ solves the equation
\[
-\dive(A_2\nabla\pi_2+b_2\pi_2)+c_2\nabla\pi_2+d_2\pi_2=\dive(b_2\Gamma_2^y)-c_2\nabla\Gamma_2^y-d_2\Gamma_2^y=-\dive f+g
\]
in $B_{2r}(x)$. Hence, from Proposition~\ref{CAlphaReg} and \eqref{eq:piPointwise}, for $\beta=\min\left\{\alpha,1-\frac{n}{p}\right\}$,
\begin{multline*}
\|\nabla\pi_2\|_{L^{\infty}(B_r(x))}+r^{\beta}\|\nabla\pi_2\|_{C^{0,\beta}(B_r(x))}\leq Cr^{1-n+\delta_{n,p}}\\
+C\|f\|_{L^{\infty}(B_{2r}(x))}+Cr^{\alpha}\|f\|_{C^{0,\alpha}(B_{2r}(x))}+Cr^{\delta_{n,p}}\|g\|_{L^p(B_{2r}(x))}.
\end{multline*}
Then, using (2.5) in \cite{KenigShen}, we obtain that
\begin{equation}\label{eq:piGradPointwise}
\|\nabla\pi_2\|_{L^{\infty}(B_r(x))}+r^{\beta}\|\nabla\pi_2\|_{C^{0,\beta}(B_r(x))}\leq C|x-y|^{1-n+\delta_{n,p}}.
\end{equation}
In particular, for any $x,y\in B_{5\rho}$,
\begin{equation}\label{eq:DifGamma}
|\nabla_xG(x,y)-\nabla_x\Gamma_{\tilde{A}}(x,y)|\leq C|x-y|^{1-n+\delta_{n,p}},
\end{equation}
where $G$ is Green's function for $-\dive(A\nabla u+bu)+c\nabla u+du$ in $B_{10\rho}$, and $\Gamma_{\tilde{A}}$ is the fundamental solution for $-\dive(\tilde{A}\nabla u)$ in $\bR^n$.

We now show the next bound on the gradient of differences.

\begin{lemma}\label{PointwiseDifDifOnGrad}
Under the same assumptions as in Lemma~\ref{PointwiseGreen}, if $\pi_{\mathcal{L}_1},\pi_{\mathcal{L}_2}$ denote the differences in \eqref{eq:PiDef2} for $x,y\in B_{8\rho}$, then for any $x,y\in B_{4\rho}$,
\begin{equation*}
|\nabla_x\pi_{\mathcal{L}_1}(x,y)-\nabla_x\pi_{\mathcal{L}_2}(x,y)|\leq C(\|A_1-A_2\|_{C^{\alpha}}+\|b_1-b_2\|_{C^{\alpha}}+\|c_1-c_2\|_p+\|d_1-d_2\|_p)|x-y|^{1-n+\delta_{n,p}},
\end{equation*}
where $C$ depends on $n,p,\lambda,\alpha,\tau$,$\|A_i\|_{\infty}$,$\|b_i\|_{\infty}$,$\|c_i\|_p$ and $\|d_i\|_p$ for $i=1,2$.
\end{lemma}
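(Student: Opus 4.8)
The plan is to run the same scheme as in the proof of Lemma~\ref{PointwiseGreenGradient}: realize $u(z):=\pi_{\mathcal{L}_1}(z,y)-\pi_{\mathcal{L}_2}(z,y)$ as a solution of an equation with controllable right-hand side, and then feed the pointwise bound of Lemma~\ref{PointwiseDifDif}, together with \eqref{eq:piPointwise} and \eqref{eq:piGradPointwise}, into the interior estimate of Proposition~\ref{CAlphaReg} on a ball of radius comparable to $|x-y|$.

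First I would fix $y$, write $\pi_i(z)=\pi_{\mathcal{L}_i}(z,y)$, and recall (as established just before \eqref{eq:piGradPointwise}) that $\mathcal{L}_i\pi_i=\dive(b_i\Gamma_{\tilde{A}_i})-c_i\nabla\Gamma_{\tilde{A}_i}-d_i\Gamma_{\tilde{A}_i}$ in $B_{8\rho}$. Setting $\tilde{A}=A_1-A_2$, $\tilde{b}=b_1-b_2$, $\tilde{c}=c_1-c_2$, $\tilde{d}=d_1-d_2$, and using $\mathcal{L}_1u=\mathcal{L}_1\pi_1-\mathcal{L}_2\pi_2-(\mathcal{L}_1-\mathcal{L}_2)\pi_2$ together with $(\mathcal{L}_1-\mathcal{L}_2)\pi_2=-\dive(\tilde{A}\nabla\pi_2+\tilde{b}\pi_2)+\tilde{c}\nabla\pi_2+\tilde{d}\pi_2$, one finds that $u$ solves $\mathcal{L}_1u=-\dive f+g$ in $B_{8\rho}$ with
\[
f=-\tilde{b}\,\Gamma_{\tilde{A}_1}-b_2\big(\Gamma_{\tilde{A}_1}-\Gamma_{\tilde{A}_2}\big)-\tilde{A}\nabla\pi_2-\tilde{b}\,\pi_2,
\]
\[
g=-\tilde{c}\,\nabla\Gamma_{\tilde{A}_1}-c_2\big(\nabla\Gamma_{\tilde{A}_1}-\nabla\Gamma_{\tilde{A}_2}\big)-\tilde{d}\,\Gamma_{\tilde{A}_1}-d_2\big(\Gamma_{\tilde{A}_1}-\Gamma_{\tilde{A}_2}\big)-\tilde{c}\,\nabla\pi_2-\tilde{d}\,\pi_2,
\]
where each cross term $e_1\Gamma_{\tilde{A}_1}-e_2\Gamma_{\tilde{A}_2}$ ($e=b,c,d$, with $\nabla\Gamma$ in the $c$ case) has been split as $\tilde{e}\,\Gamma_{\tilde{A}_1}+e_2(\Gamma_{\tilde{A}_1}-\Gamma_{\tilde{A}_2})$, so that every summand carries either a coefficient difference or one of $\pi_2,\nabla\pi_2$.

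Next, for fixed $x,y\in B_{4\rho}$ I would set $r=|x-y|/64$, so that $\overline{B_{2r}(x)}$ avoids $y$, lies in $B_{8\rho}$, and is contained in $B_{|x-y|/20}(x)$; hence on $B_{2r}(x)$ one has $f\in C^{\beta}$, $g\in L^p$, \eqref{eq:piPointwise} and \eqref{eq:piGradPointwise} are in force, and Lemma~\ref{PointwiseDifDif} applies. Applying Proposition~\ref{CAlphaReg} with $\beta=\min\{\alpha,1-n/p\}$ gives
\[
\|\nabla u\|_{L^\infty(B_r(x))}\le\frac{C}{r}\|u\|_{L^\infty(B_{2r}(x))}+C\|f\|_{L^\infty(B_{2r}(x))}+Cr^{\beta}\|f\|_{C^{0,\beta}(B_{2r}(x))}+Cr\Big(\fint_{B_{2r}(x)}|g|^p\Big)^{1/p}.
\]
On $B_{2r}(x)$ one has $|z-y|\sim|x-y|\sim r$, so Lemma~\ref{PointwiseDifDif} controls the first term by $CD|x-y|^{1-n+\delta_{n,p}}$, where $D=\|A_1-A_2\|_{C^{\alpha}}+\|b_1-b_2\|_{C^{\alpha}}+\|c_1-c_2\|_p+\|d_1-d_2\|_p$. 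For the remaining terms I would use: the pointwise bounds $\|\Gamma_{\tilde{A}_i}(\cdot,y)\|_{L^\infty(B_{2r}(x))}\le Cr^{2-n}$ and $\|\nabla\Gamma_{\tilde{A}_i}(\cdot,y)\|_{L^\infty(B_{2r}(x))}\le Cr^{1-n}$ from (2.5) in \cite{KenigShen} (the $C^{0,\beta}$ seminorms of the $\Gamma$-terms in $f$ being gained for free from these gradient bounds, since $B_{2r}(x)$ is convex); the coefficient-difference bounds $|\Gamma_{\tilde{A}_1}(z,y)-\Gamma_{\tilde{A}_2}(z,y)|\le C\|A_1-A_2\|_\infty|z-y|^{2-n}$ and $|\nabla_z\Gamma_{\tilde{A}_1}(z,y)-\nabla_z\Gamma_{\tilde{A}_2}(z,y)|\le C\|A_1-A_2\|_{C^{\alpha}}|z-y|^{1-n}$, which are the $\bR^n$-analogs of \eqref{eq:EDifference2} and \eqref{eq:NablaEDifference2}, proved by the same scheme (the difference identity of the type \eqref{eq:FormulaforDifference}, Lemma~\ref{ToBoundDif}, and the gradient bound for fundamental solutions), using that the extension in \eqref{eq:ATilde} gives $\|\tilde{A}_1-\tilde{A}_2\|_{C^{\alpha}}\le C\|A_1-A_2\|_{C^{\alpha}}$; and \eqref{eq:piPointwise}, \eqref{eq:piGradPointwise} for the $\pi_2$- and $\nabla\pi_2$-summands. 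A term-by-term check then shows each resulting contribution is $\le CD\,|x-y|^{a}$ with $a\ge 1-n+\delta_{n,p}$, the surplus being a nonnegative power of $r$ — at worst $r^{n/p}$ (e.g. $r^{2-n}=r^{1-n+\delta_{n,p}}\,r^{n/p}$) or $r^{\delta_{n,p}}$ (coming from the factor $r\,|B_{2r}(x)|^{-1/p}$) — which is bounded since $r\le\rho<1$; summing and reading the bound at $z=x$ gives the claim.

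The only substantial work is this last bookkeeping, which is routine once the exponents are tracked (the worst divergence-type contributions are $\sim\|\tilde{b}\|_\infty r^{2-n}$ and the worst non-divergence ones are $\sim\|\tilde{c}\|_p r^{2-n-n/p}=\|\tilde{c}\|_p r^{1-n+\delta_{n,p}}$). The genuinely new ingredient is the pair of difference estimates for $\Gamma_{\tilde{A}_1},\Gamma_{\tilde{A}_2}$ and their gradients on $\bR^n$; among these, controlling $\|\tilde{A}_1-\tilde{A}_2\|_{C^{\alpha}}$ over all of $\bR^n$ by $\|A_1-A_2\|_{C^{\alpha}}$ through the periodic extension of \eqref{eq:ATilde} is the point that requires some care.
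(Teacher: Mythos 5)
Your proposal follows the paper's proof essentially verbatim: the paper likewise writes $v=\pi_{\mathcal{L}_1}(\cdot,y)-\pi_{\mathcal{L}_2}(\cdot,y)$ as a solution of $\mathcal{L}_1v=-\dive f+g$ on a ball of radius $\sim|x-y|$ centered at $x$ (with the same $f,g$ up to the harmless choice of which index carries the coefficient difference in each cross term), applies Proposition~\ref{CAlphaReg}, bounds the zeroth-order term by Lemma~\ref{PointwiseDifDif}, and handles the remaining terms via \eqref{eq:piPointwise}, \eqref{eq:piGradPointwise} and bounds for the fundamental solutions. The only divergence is that the fundamental-solution difference estimates you propose to re-derive (and the attendant concern about $\|\tilde{A}_1-\tilde{A}_2\|_{C^{\alpha}}$ for the periodic extensions) are exactly the estimates (2.20)--(2.21) in \cite{KenigShen}, which the paper simply cites, so that extra work is not needed.
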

\begin{proof}
Fix $x,y\in B_{4\rho}$ and set $r=|x-y|/16$. Then $2r\leq(|x|+|y|)/8<\rho$, so $B_{2r}(x)\subseteq B_{5\rho}$. Let also $\Gamma_i(z)=\Gamma_{\tilde{A}_i}(z,y)$ for $i=1,2$. Then, $v(z)=\pi_1(z,y)-\pi_2(z,y)$ solves the equation $-\dive(A_1\nabla v+b_1v)+c_1\nabla v+d_1v=-\dive f+g$ in $B_{2r}(x)$, where
\begin{gather*}
f=(b_2-b_1)\Gamma_2+b_1(\Gamma_2-\Gamma_1)+(A_2-A_1)\nabla\pi_2+(b_2-b_1)\pi_2,\\
g=(c_2-c_1)\nabla\Gamma_2+c_1(\nabla\Gamma_2-\nabla \Gamma_1)+(d_2-d_1)\Gamma_2+d_1(\Gamma_2-\Gamma_1)+(c_2-c_1)\nabla\pi_2+(d_2-d_1)\pi_2
\end{gather*}
in $B_{2r}(x)$. Hence, from Proposition~\ref{CAlphaReg},  for $\beta=\min\left\{\alpha,1-\frac{n}{p}\right\}$, we obtain that
\begin{equation}\label{eq:eq:v}
\|\nabla v\|_{L^{\infty}(B_r(x))}\leq\frac{C}{r}\|v\|_{L^{\infty}(B_{2r}(x))}+C\|f\|_{L^{\infty}(B_{2r}(x)}+Cr^{\beta}\|f\|_{C^{0,\beta}(B_{2r}(x))}+Cr\left(\fint_{B_{2r}(x)}|g|^p\right)^{1/p}.
\end{equation}
To bound $\|v\|_{L^{\infty}(B_r(x))}$, we use Lemma~\ref{PointwiseDifDif}. Moreover, to bound the other norms in \eqref{eq:eq:v} we use \eqref{eq:piPointwise}, \eqref{eq:piGradPointwise} and also (2.5), (2.20) and (2.21) in \cite{KenigShen}, which completes the proof.
\end{proof}

\section{Layer Potentials}

\subsection{Singular Integrals}
Let $\Omega\subseteq\bR^n$ be a bounded Lipschitz domain. Without loss of generality assume that $0\in\Omega$. Set $\rho=\diam(\Omega)$, and let $B_{\Omega}=B_{10\rho}$ to be the ball centered at $0$, with radius $10\rho$. We will assume that $A\in\M_{B_{\Omega}}(\lambda,\alpha,\tau)$, $b\in\C_{B_{\Omega}}(\alpha,\tau)$, and $c,d\in L^p(B_{\Omega})$ with either $d\geq\dive b$ or $d\geq\dive c$ in the sense of distributions. We then set $\mathcal{L}$ to be the operator
\[
\mathcal{L}u=-\dive(A\nabla u+bu)+c\nabla u+du
\]
in $B_{\Omega}$. Set also $G$ to be Green's function for $\mathcal{L}$ in $B_{\Omega}$. Then, for $f\in L^2(\partial\Omega)$, we define
\[
\mathcal{S}_+f(x)=\int_{\partial\Omega}G(x,q')f(q')\,d\sigma(q'),
\]
for $x\in\Omega$. When $x\in B_{\Omega}\setminus\overline{\Omega}$, we define $\mathcal{S}_-f$ with the same formula. We also consider the single layer potential operator
\[
\mathcal{S}:L^2(\partial\Omega)\to L^2(\partial\Omega),\quad \mathcal{S}f(q)=\int_{\partial\Omega}G(q,q')f(q')\,d\sigma(q').
\]
The fact that $\mathcal{S}$ maps $L^2(\partial\Omega)$ to $L^2(\partial\Omega)$ follows from the pointwise bounds on $G$. We also consider the maximal truncation operators
\[
\mathcal{T}_1^*f(q)=\sup_{\delta>0}\left|\int_{|q-q'|>\delta}\nabla_T^qG(q,q')f(q')\,d\sigma(q')\right|,\quad \mathcal{T}_2^*f(q)=\sup_{\delta>0}\left|\int_{|q-q'|>\delta}\nabla_T^{q'}G(q',q)f(q')\,d\sigma(q')\right|,
\]
where $\nabla_T^q$ denotes the tangential derivative with respect to $q$.

To show that $\mathcal{S}$ maps $L^2(\partial\Omega)$ to $W^{1,2}(\partial\Omega)$ and $\mathcal{T}_1^*$, $\mathcal{T}_2^*$ map $L^2(\partial\Omega)$ to $L^2(\partial\Omega)$ we will reduce to the cases considered in \cite{KenigShen}. For this reason, suppose that $\rho<\frac{1}{16}$.
\begin{prop}\label{LayerBoundedness}
Let $\Omega\subseteq\bR^n$ be a Lipschitz domain with $\diam(\Omega)<\frac{1}{16}$. Let $A\in\M_{B_{\Omega}}(\lambda,\alpha,\tau)$, $b\in\C_{B_{\Omega}}(\alpha,\tau)$, and $c,d\in L^p(B_{\Omega})$, for some $p>n$, with either $d\geq\dive b$ or $d\geq\dive c$. Then, for any $f\in L^2(\partial\Omega)$,
\begin{equation}\label{eq:SingBound}
\|\mathcal{S}f\|_{W^{1,2}(\partial\Omega)}+\|\mathcal{T}_1^*f\|_{L^2(\partial\Omega)}+\|\mathcal{T}_2^*f\|_{L^2(\partial\Omega)}\leq C\|f\|_{L^2(\partial\Omega)},
\end{equation}
with $C$ depending on $n,p,\lambda,\alpha,\tau$,$\|A\|_{\infty},\|b\|_{\infty}$,$\|c\|_p$, $\|d\|_p$ and the Lipschitz character of $\Omega$, and also
\begin{equation}\label{eq:FormTangDer}
\nabla_T\mathcal{S}f(q)=\lim_{\e\to 0}\int_{|q-q'|>\e}\nabla_T^qG(q,q')f(q')\,d\sigma(q'),
\end{equation}
where the limit exists both in the $L^2(\partial\Omega)$ sense and for almost every $q\in\partial\Omega$.
\end{prop}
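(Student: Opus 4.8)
The plan is to reduce everything to the corresponding statements for the operator $\tilde{\mathcal L}u = -\dive(\tilde A \nabla u)$ with $\tilde A$ the periodic extension from \eqref{eq:ATilde}, for which the estimates are already available in \cite{KenigShen}. Write $G(x,y) = \Gamma_{\tilde A}(x,y) + \pi_{\mathcal L}(x,y)$ on $B_{8\rho}$ as in \eqref{eq:PiDef2}, and correspondingly split the single layer potential and the truncated maximal operators into a ``principal'' part coming from $\Gamma_{\tilde A}$ and a ``remainder'' part coming from $\pi_{\mathcal L}$. Since $\partial\Omega \subseteq B_{\Omega}$ and $\diam(\Omega) < \tfrac{1}{16}$, both $q$ and $q'$ range over $B_{4\rho}$, so all the pointwise estimates on $\pi_{\mathcal L}$ and $\nabla_x \pi_{\mathcal L}$ from the previous section are applicable: by \eqref{eq:piPointwise} and the specialization of Lemma~\ref{PointwiseDifDifOnGrad} (cf. \eqref{eq:piGradPointwise}, \eqref{eq:DifGamma}) we have
\[
|\pi_{\mathcal L}(q,q')| \le C|q-q'|^{2-n+\delta_{n,p}}, \qquad |\nabla_q \pi_{\mathcal L}(q,q')| \le C|q-q'|^{1-n+\delta_{n,p}},
\]
with $\delta_{n,p} = 1 - \tfrac{n}{p} > 0$.

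For the principal parts: the $L^2(\partial\Omega) \to W^{1,2}(\partial\Omega)$ boundedness of $f \mapsto \int_{\partial\Omega}\Gamma_{\tilde A}(\cdot,q')f(q')\,d\sigma(q')$, the $L^2$-boundedness of the associated maximal truncations, the a.e. and $L^2$ existence of the principal-value tangential derivative, and the formula \eqref{eq:FormTangDer} for this part are exactly the content of the single-layer estimates in \cite{KenigShen} (the Calder\'on--Zygmund theory applies because $|\nabla_x \Gamma_{\tilde A}(x,y)| \le C|x-y|^{1-n}$, the kernel being Calder\'on--Zygmund by the remark after \eqref{eq:GreenDerivative}); I would cite the relevant statements there directly, observing that $\tilde A \in \M_{\bR^n}(\lambda,\alpha,C_n(\tau+\|A\|_\infty))$ so the constants depend only on the allowed quantities. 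For the remainder parts: the kernel $\pi_{\mathcal L}(q,q')$ has a gradient bound with a \emph{gain} $|q-q'|^{1-n+\delta_{n,p}}$, i.e. it is less singular than Calder\'on--Zygmund by $\delta_{n,p} > 0$. Hence the operator with kernel $\nabla_q \pi_{\mathcal L}(q,q')$ is dominated pointwise by the (super-)fractional integral operator with kernel $|q-q'|^{1-n+\delta_{n,p}}$ on $\partial\Omega$; since $\partial\Omega$ is $(n-1)$-Ahlfors regular and bounded, Lemma~\ref{BoundxInsideDelta} (applied with $\delta = \delta_{n,p}$, restricting to the boundary) together with a Schur-test / Young-type argument gives $L^2(\partial\Omega) \to L^2(\partial\Omega)$ boundedness of this operator and of its maximal truncation — indeed the kernel is \emph{integrable} against $d\sigma$ uniformly in $q$, so no cancellation is needed and no principal value issue arises for the remainder: the truncated integrals converge absolutely and uniformly to the full integral. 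The same absolute-convergence argument handles $\mathcal T_2^*$ (using the symmetric estimate for $\nabla_{q'}\pi_{\mathcal L}$, obtained by applying the results to the adjoint operator $\mathcal L^t$, whose coefficients satisfy the same hypotheses). For the $L^2(\partial\Omega)$ bound on $\mathcal S f$ itself one uses $|G(q,q')| \le C|q-q'|^{2-n}$ from \eqref{eq:GreenMain} and again Lemma~\ref{BoundxInsideDelta}.

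Assembling: $\nabla_T \mathcal S f$ exists (distributionally on $\partial\Omega$, in the sense of the definition of $W^{1,2}(\partial\Omega)$) and equals the sum of the principal-value tangential derivative of the $\Gamma_{\tilde A}$-part plus the absolutely convergent integral of $\nabla_T^q \pi_{\mathcal L}(q,q')f(q')$; this identifies $\nabla_T \mathcal S f$ with the principal value in \eqref{eq:FormTangDer} (the $\e$-truncation of the $\Gamma_{\tilde A}$-part converges in $L^2$ and a.e. by \cite{KenigShen}, the $\pi_{\mathcal L}$-part converges absolutely), and the norm bound \eqref{eq:SingBound} follows by adding the two contributions, with $C$ depending only on $n,p,\lambda,\alpha,\tau,\|A\|_\infty,\|b\|_\infty,\|c\|_p,\|d\|_p$ and the Lipschitz character of $\Omega$. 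The main obstacle I anticipate is bookkeeping rather than conceptual: one must check that the pointwise estimates of the previous section, which were stated on concentric balls $B_{k\rho}$ with varying $k$, genuinely cover all pairs $(q,q') \in \partial\Omega \times \partial\Omega$ (they do, since $\partial\Omega \subseteq \overline{B_\rho} \subseteq B_{4\rho}$), and that the maximal-truncation bound for the $\Gamma_{\tilde A}$-part is quoted in the precise form needed — these are the points where I would be most careful to align hypotheses with \cite{KenigShen}.
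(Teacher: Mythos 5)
Your proposal is correct and is essentially the paper's own proof: decompose $G=\Gamma_{\tilde A}+\pi_{\mathcal L}$ via the periodic extension \eqref{eq:ATilde}, quote the single layer estimates of \cite{KenigShen} for the $\Gamma_{\tilde A}$-part, and absorb the remainder using the gain $\delta_{n,p}$ in \eqref{eq:DifGamma} (an absolutely convergent, less-singular kernel on $\partial\Omega$), with \eqref{eq:FormTangDer} obtained from the corresponding principal value statement for $\Gamma_{\tilde A}$ together with absolute convergence of the remainder — the paper compresses this last step into an integration by parts on $\partial\Omega$ plus the $\mathcal T_1^*$ bound. One small correction: the kernel of $\mathcal T_2^*$, namely $\nabla_T^{q'}G(q',q)$, differentiates $G$ in its \emph{first} variable, so \eqref{eq:DifGamma} applies to it directly and your detour through the adjoint is unnecessary — which is fortunate, since $\mathcal L^t$ does \emph{not} satisfy the same hypotheses (its divergence-form drift is $c\in L^p$, not H{\"o}lder), and this is precisely why the paper avoids second-variable gradient bounds for $G$.
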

\begin{proof}
Let $\tilde{A}$ be as in \eqref{eq:ATilde} such that $A=\tilde{A}$ in $B_{8\rho}$, where $\rho=\diam(\Omega)$, and set $\Gamma_{\tilde{A}}$ to be the fundamental solution for the operator $-\dive(\tilde{A}\nabla u)$ in $\bR^n$. Then, from \eqref{eq:DifGamma}, we obtain that
\begin{equation}\label{eq:DifGammaG}
|\nabla_xG(x,y)-\nabla_x\Gamma(x,y)|\leq C|x-y|^{1-n+\delta_{n,p}},
\end{equation}
for all $x,y\in B_{5\rho}$, where $\delta_{n,p}=1-\frac{n}{p}>0$. Now, from (4.17) and Theorem 3.1 in \cite{KenigShen}, the operator $\tilde{\mathcal{S}}f(q)=\int_{\partial\Omega}\Gamma_{\tilde{A}}(q,q')f(q')\,d\sigma(q')$ is bounded from $L^2(\partial\Omega)$ to $W^{1,2}(\partial\Omega)$, and the operators
\[
\tilde{\mathcal{T}}_1^*f(q)=\sup_{\delta>0}\left|\int_{|q-q'|>\delta}\nabla_T^q\Gamma_{\tilde{A}}(q,q')f(q')\,d\sigma(q')\right|,\quad \tilde{\mathcal{T}}_2^*f(q)=\sup_{\delta>0}\left|\int_{|q-q'|>\delta}\nabla_T^{q'}\Gamma_{\tilde{A}}(q',q)f(q')\,d\sigma(q')\right|
\]
are bounded from $L^2(\partial\Omega)$ to $L^2(\partial\Omega)$, with norms that depend on $n,\lambda,\alpha,\tau$,$\|A\|_{\infty}$ and the Lipschitz character of $\Omega$. Since $\partial\Omega\subseteq B_{5\rho},$ combining with \eqref{eq:DifGammaG} shows \eqref{eq:SingBound}.

To show \eqref{eq:FormTangDer}, we use an integration by parts argument on $\partial\Omega$ and the $L^2$ bound for $\mathcal{T}_1^*$ from \eqref{eq:SingBound}, which completes the proof.
\end{proof}

We now show a perturbation result for the norms of the single layer potentials.

\begin{lemma}\label{GeneralPerturbation}
Let $\Omega\subseteq\bR^n$ be a Lipschitz domain with $\diam(\Omega)<\frac{1}{16}$, $A_i\in\M_{B_{\Omega}}(\lambda,\alpha,\tau)$, $b_i\in\C_{B_{\Omega}}(\alpha,\tau)$, and $c_i,d_i\in L^p(B_{\Omega})$, with either $d_i\geq\dive b_i$ for $i=1,2$ or $d\geq\dive c_i$ for $i=1,2$. If $\mathcal{S}_i$ is the single layer potential on $\partial\Omega$ for the operator $\mathcal{L}_iu=-\dive(A_i\nabla u+b_iu)+c_i\nabla u+d_iu$, then
\[
\|\mathcal{S}_2-\mathcal{S}_1\|_{L^2(\partial\Omega)\to W^{1,2}(\partial\Omega)}\leq C\left(\|A_1-A_2\|_{C^{\alpha}}+\|b_1-b_2\|_{C^{\alpha}}+\|c_1-c_2\|_p+\|d_1-d_2\|_p\right),
\]
where $C$ depends on $n,p,\lambda,\alpha,\tau$,$\|A_i\|_{\infty},\|b_i\|_{\infty}$,$\|c_i\|_p$ and $\|d_i\|_p$, for $i=1,2$, and the Lipschitz character of $\Omega$.
\end{lemma}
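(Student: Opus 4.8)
The plan is to split $\mathcal{S}_2-\mathcal{S}_1$ into a fundamental--solution part, governed by the perturbation machinery of \cite{KenigShen}, and a remainder part assembled from the correctors $\pi_{\mathcal{L}_i}$, which is soft. Put $\rho=\diam(\Omega)$ and let $\tilde A_1,\tilde A_2$ be the $1$-periodic extensions attached to $A_1,A_2$ by \eqref{eq:ATilde}, with $\Gamma_{\tilde A_i}$ the corresponding fundamental solutions and $\tilde{\mathcal{S}}_if(q)=\int_{\partial\Omega}\Gamma_{\tilde A_i}(q,q')f(q')\,d\sigma(q')$. Since $0\in\Omega$ and $\diam(\Omega)=\rho$, we have $\partial\Omega\subseteq\overline{B_\rho(0)}$, which is contained in every ball $B_{k\rho}$ ($k\geq2$) appearing in Section~4; hence on $\partial\Omega\times\partial\Omega$ the decomposition \eqref{eq:PiDef2} reads $G_i(q,q')=\Gamma_{\tilde A_i}(q,q')+\pi_{\mathcal{L}_i}(q,q')$, and therefore
\[
\mathcal{S}_2-\mathcal{S}_1=(\tilde{\mathcal{S}}_2-\tilde{\mathcal{S}}_1)+\mathcal{R},\qquad \mathcal{R}:=(\mathcal{S}_2-\tilde{\mathcal{S}}_2)-(\mathcal{S}_1-\tilde{\mathcal{S}}_1),
\]
where $\mathcal{R}$ is the integral operator on $\partial\Omega$ with kernel $\pi_{\mathcal{L}_2}(q,q')-\pi_{\mathcal{L}_1}(q,q')$. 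Write $\mathcal{D}$ for the quantity on the right-hand side of the claim. I would estimate the two summands separately.

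For the remainder $\mathcal{R}$: by Proposition~\ref{LayerBoundedness} and the corresponding result of \cite{KenigShen} for $-\dive(\tilde A_i\nabla u)$, each of $\mathcal{S}_i$ and $\tilde{\mathcal{S}}_i$ maps $L^2(\partial\Omega)$ into $W^{1,2}(\partial\Omega)$; by \eqref{eq:FormTangDer} and its analog in \cite{KenigShen} their tangential derivatives are given by the respective a priori singular integrals, but \eqref{eq:DifGamma} — which states $|\nabla_x\pi_{\mathcal{L}_i}(x,y)|\leq C|x-y|^{1-n+\delta_{n,p}}$ on $B_{5\rho}$ — shows that after subtraction the integrals converge absolutely, so $\nabla_T\mathcal{R}f(q)=\int_{\partial\Omega}\nabla_T^q\big(\pi_{\mathcal{L}_2}(q,q')-\pi_{\mathcal{L}_1}(q,q')\big)f(q')\,d\sigma(q')$. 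Now Lemma~\ref{PointwiseDifDif} bounds the kernel of $\mathcal{R}$ pointwise by $C\mathcal{D}\,|q-q'|^{2-n+\delta_{n,p}}$ and Lemma~\ref{PointwiseDifDifOnGrad} bounds the kernel of $\nabla_T\mathcal{R}$ by $C\mathcal{D}\,|q-q'|^{1-n+\delta_{n,p}}$. Both exponents exceed $1-n$, so using $|q-q'|\leq\diam(\Omega)$ to reduce the first to the second, everything follows from the uniform bound $\int_{\partial\Omega}|q-q'|^{1-n+\delta_{n,p}}\,d\sigma(q')\leq C$, which holds for a boundary point by the argument of Lemma~\ref{BoundxInsideDelta} since $\delta_{n,p}=1-\frac{n}{p}\in(0,1)$. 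Schur's test then yields $\|\mathcal{R}\|_{L^2(\partial\Omega)\to W^{1,2}(\partial\Omega)}\leq C\mathcal{D}$ with $C$ of the asserted dependence.

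For the fundamental--solution part $\tilde{\mathcal{S}}_2-\tilde{\mathcal{S}}_1$: this is exactly the $C^\alpha$-perturbation estimate for the periodic single layer potential that is contained in the framework of \cite{KenigShen}. Its input is the analog for $\Gamma$ of Lemma~2.7 of \cite{KenigShen}, namely $|\nabla_x\Gamma_{\tilde A_1}(x,y)-\nabla_x\Gamma_{\tilde A_2}(x,y)|\leq C\|\tilde A_1-\tilde A_2\|_{C^\alpha}|x-y|^{1-n}$ together with its companion H\"older--in--$x$ smoothness estimate (the same kind of estimate that, specialized to correctors, produced Lemma~\ref{PointwiseDifDifOnGrad}); fed into the Calder\'on--Zygmund theory on Lipschitz graphs behind Theorem~3.1 and (4.17) of \cite{KenigShen}, this gives $\|\tilde{\mathcal{S}}_2-\tilde{\mathcal{S}}_1\|_{L^2(\partial\Omega)\to W^{1,2}(\partial\Omega)}\leq C\|\tilde A_1-\tilde A_2\|_{C^\alpha(\bR^n)}$ with $C$ depending only on $n,\lambda,\alpha,\tau$, $\|A_i\|_\infty$ and the Lipschitz character. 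Finally, the periodization of \eqref{eq:ATilde} (following Lemmas~2.1--2.2 of \cite{thesis}) can be arranged to be affine in $A|_{B_{8\rho}}$, so the fixed background matrix cancels in the difference and $\|\tilde A_2-\tilde A_1\|_{C^\alpha(\bR^n)}\leq C\|A_2-A_1\|_{C^\alpha(B_{8\rho})}\leq C\|A_2-A_1\|_{C^\alpha}$; adding the two parts proves the lemma. The main obstacle is this last part: one must either quote a ready--made $C^\alpha$-Lipschitz dependence of $\tilde{\mathcal{S}}$ on the periodic coefficient matrix from \cite{KenigShen} or re-derive it from their kernel estimates and singular--integral theory, and one must verify that the extension in \eqref{eq:ATilde} depends affinely — hence Lipschitz-continuously in $C^\alpha$ — on $A$, so that the matrix difference on $B_{8\rho}$ controls the difference of the periodic extensions on all of $\bR^n$. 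The corrector part, by contrast, is a routine Schur--test estimate once the pointwise bounds of Section~4 are available.
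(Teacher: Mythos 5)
Your proposal is correct and takes essentially the same route as the paper: write $G_i=\Gamma_{\tilde A_i}+\pi_{\mathcal{L}_i}$ via \eqref{eq:ATilde} and \eqref{eq:PiDef2}, control the corrector kernels and their gradients by Lemmas~\ref{PointwiseDifDif} and \ref{PointwiseDifDifOnGrad} together with the integrable-kernel (Schur-type) bound on $\partial\Omega$, and reduce the fundamental-solution part to the perturbation theory of \cite{KenigShen}. The ``obstacle'' you flag is not one in practice: the paper simply quotes Theorem 3.4 in \cite{KenigShen} for the operator with kernel $\nabla_x\Gamma_{\tilde A_1}-\nabla_x\Gamma_{\tilde A_2}$, bounded on $L^2(\partial\Omega)$ by $C\|A_1-A_2\|_{C^{\alpha}}$ (and handles the $L^2(\partial\Omega)$-norm part directly from the Green-function difference bounds rather than splitting), though your remark that the extension in \eqref{eq:ATilde} must depend $C^{\alpha}$-stably on $A$ so that the difference of the periodic extensions is controlled by $\|A_1-A_2\|_{C^{\alpha}}$ is a fair point the paper leaves implicit.
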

\begin{proof}
To bound $\|\mathcal{S}_2f-\mathcal{S}_1f\|_{L^2(\partial\Omega)}$ for $f\in L^2(\partial\Omega)$, we use Lemma~\ref{PointwiseGreenGradient}. We now let $\tilde{A}_i$ be extensions of $A_i$ as in \eqref{eq:ATilde}, and let $\Gamma_{\tilde{A}_i}$ be the fundamental solutions for the operators $-\dive(\tilde{A}_i\nabla u)$ in $\bR^n$. Then, from \eqref{eq:PiDef2} and Lemma~\ref{PointwiseDifDifOnGrad}, we obtain that for any $x,y\in B_{4\rho}$,
\begin{align*}
|\nabla_xG_1(x,y)-\nabla_xG_2(x,y)-(\nabla_x\Gamma_1(x,y)-\nabla_x\Gamma_2(x,y))|&=|\nabla_x\pi_{\mathcal{L}_1}(x,y)-\nabla_x\pi_{\mathcal{L}_2}(x,y)|\\
&\leq C\theta|x-y|^{1-n+\delta_{n,p}},
\end{align*}
where $\theta=\|A_1-A_2\|_{C^{\alpha}}+\|b_1-b_2\|_{C^{\alpha}}+\|c_1-c_2\|_p+\|d_1-d_2\|_p$. Since $\partial\Omega\subseteq B_{4\rho}$, integrating over $\partial\Omega$ we obtain that the integral operator with kernel
\[
\nabla_xG_1(x,y)-\nabla_xG_2(x,y)-(\nabla_x\Gamma_1(x,y)-\nabla_x\Gamma_2(x,y))
\]
is bounded from $L^2(\partial\Omega)$ to $L^2(\partial\Omega)$, with norm bounded above by $C\theta$. From Theorem 3.4 in \cite{KenigShen}, the integral operator with kernel $\nabla_x\Gamma_1(x,y)-\nabla_x\Gamma_2(x,y)$ is bounded from $L^2(\partial\Omega)$ to $L^2(\partial\Omega)$, with norm bounded above by $C\|A_1-A_2\|_{C^{\alpha}}$, where $C$ depends on $n,\lambda,\alpha,\tau$,$\|A_1\|_{\infty}$,$\|A_2\|_{\infty}$ and the Lipschitz character of $\Omega$, and this completes the proof.
\end{proof}

In order to treat the Dirichlet problem, we will consider the adjoint of the single layer potential
\[
\mathcal{S}^*:W^{-1,2}(\partial\Omega)\to L^2(\partial\Omega).
\]
\begin{lemma}\label{SAdjointFormula}
Under the same assumptions as in Proposition~\ref{LayerBoundedness}, if $F=Rf\in W^{-1,2}(\partial\Omega)$ for $f\in W^{1,2}(\partial\Omega)$, where $R$ is defined in \eqref{eq:Emb1}, then
\[
\mathcal{S}^*F(q)=\int_{\partial\Omega}G(q',q)f(q')\,d\sigma(q')+\lim_{\e\to 0}\int_{|q-q'|>\e}\nabla_T^{q'}G(q',q)\nabla_Tf(q')\,d\sigma(q'),
\]
where the limit exists in the $L^2(\partial\Omega)$ sense, and almost everywhere on $\partial\Omega$. Also, if $F=Ef$ for $f\in W^{1,2}(\partial\Omega)$, where $E$ is defined in \eqref{eq:Emb2}, then
\[
\mathcal{S}^*F(q)=\int_{\partial\Omega}G(q',q)f(q')\,d\sigma(q').
\]
\end{lemma}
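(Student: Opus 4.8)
The plan is to compute $\mathcal{S}^*F$ directly from its defining pairing $\langle \mathcal{S}^*F, g\rangle = \langle F, \mathcal{S}g\rangle$ for $g \in L^2(\partial\Omega)$, using the formula for $\mathcal{S}g$ and its tangential derivative from Proposition~\ref{LayerBoundedness}, together with the explicit expressions for $R$ and $E$ in \eqref{eq:Emb1} and \eqref{eq:Emb2}. First I would handle the case $F = Ef$, which is the simpler one: by definition of $E$, $\langle Ef, \mathcal{S}g\rangle = \int_{\partial\Omega} f(q)\,\mathcal{S}g(q)\,d\sigma(q) = \int_{\partial\Omega}\int_{\partial\Omega} f(q) G(q,q') g(q')\,d\sigma(q')\,d\sigma(q)$. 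By Fubini (justified by the pointwise bound $G(q,q') \leq C|q-q'|^{2-n}$ from \eqref{eq:GreenMain} and Lemma~\ref{BoundxInsideDelta}, which makes the double integral absolutely convergent for $f,g \in L^2(\partial\Omega)$), this equals $\int_{\partial\Omega} g(q') \left(\int_{\partial\Omega} G(q,q') f(q)\,d\sigma(q)\right) d\sigma(q')$, and since this holds for all $g \in L^2(\partial\Omega)$, we read off $\mathcal{S}^*F(q) = \int_{\partial\Omega} G(q',q) f(q')\,d\sigma(q')$ after relabeling variables.

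For the case $F = Rf$, I would start from $\langle Rf, \mathcal{S}g\rangle = \int_{\partial\Omega}\left(f \cdot \mathcal{S}g + \nabla_T f \cdot \nabla_T \mathcal{S}g\right) d\sigma$. The first term is handled exactly as above, contributing $\int_{\partial\Omega} G(q',q) f(q')\,d\sigma(q')$ to $\mathcal{S}^*F(q)$. For the second term, I substitute the formula \eqref{eq:FormTangDer}: $\nabla_T \mathcal{S}g(q) = \lim_{\e\to 0}\int_{|q-q'|>\e} \nabla_T^q G(q,q') g(q')\,d\sigma(q')$, with the limit in $L^2(\partial\Omega)$. Since $\nabla_T f \in L^2(\partial\Omega)$, I can pass the inner product with $\nabla_T f$ through the $L^2$-limit, obtaining $\int_{\partial\Omega} \nabla_T f(q) \cdot \nabla_T \mathcal{S}g(q)\,d\sigma(q) = \lim_{\e\to 0} \int_{\partial\Omega}\int_{|q-q'|>\e} \nabla_T f(q)\cdot \nabla_T^q G(q,q') g(q')\,d\sigma(q')\,d\sigma(q)$. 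For each fixed $\e>0$ the truncated double integral is absolutely convergent (the kernel $\nabla_T^q G(q,q')$ restricted to $|q-q'|>\e$ is bounded, using \eqref{eq:GreenDerivative}), so Fubini applies and I can swap the order, getting $\lim_{\e\to 0}\int_{\partial\Omega} g(q')\left(\int_{|q-q'|>\e}\nabla_T^q G(q,q')\cdot \nabla_T f(q)\,d\sigma(q)\right) d\sigma(q')$.

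At this point I need to identify the inner limit $\lim_{\e\to 0}\int_{|q-q'|>\e}\nabla_T^q G(q,q')\cdot\nabla_T f(q)\,d\sigma(q)$ with $\lim_{\e\to 0}\int_{|q-q'|>\e}\nabla_T^{q'}G(q',q)\cdot\nabla_T f(q')\,d\sigma(q')$ after relabeling; this is just a change of variable names once one knows the limit exists, and existence (both in $L^2(\partial\Omega)$ and a.e.) is precisely the content of the operator $\mathcal{T}_2^*$ being $L^2$-bounded and the truncated integrals converging — I would invoke the $L^2$-bound on $\mathcal{T}_2^*$ from \eqref{eq:SingBound} together with a Calder\'on–Zygmund/\,singular-integral convergence argument (the kernel $\nabla_T^{q'}G(q',q)$ is Calder\'on–Zygmund on $\partial\Omega$ by the gradient bound \eqref{eq:GreenDerivative} and its H\"older version, as noted in the remarks of the introduction), exactly analogous to the argument establishing \eqref{eq:FormTangDer}. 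The main obstacle is the bookkeeping around interchanging the $\e\to0$ limit with the $g$-pairing and with Fubini: one must be careful that the limit defining $\mathcal{S}^*F$ is taken \emph{after} the duality pairing is resolved, and that the almost-everywhere and $L^2$ senses of convergence for the $\mathcal{T}_2^*$-type truncations are both available so that the stated formula is meaningful pointwise. Once the singular-integral convergence for $\mathcal{T}_2^*$ is in hand (which parallels what was already done for $\mathcal{T}_1^*$ in Proposition~\ref{LayerBoundedness}), assembling the two terms gives the claimed identity.
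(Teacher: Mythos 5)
Your proposal is correct and follows essentially the same route as the paper: compute $\langle \mathcal{S}^*F,g\rangle=\langle F,\mathcal{S}g\rangle$, use Fubini with the pointwise bound \eqref{eq:GreenMain} for the weakly singular term, substitute \eqref{eq:FormTangDer} for $\nabla_T\mathcal{S}g$, swap on the truncated kernels, and use the $\mathcal{T}_2^*$ bound in \eqref{eq:SingBound} (dominated convergence, plus the singular-integral convergence of the transposed truncations) to move the $\e\to 0$ limit past the pairing. The only cosmetic difference is the order in which the $Ef$ and $Rf$ cases are treated.
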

\begin{proof}
Let $h\in L^2(\partial\Omega)$. We use a procedure as in the proof of Proposition 9.6 in \cite{thesis}: from the definition of $R$, we compute
\[
\left<\mathcal{S}^*F,h\right>_{L^2,L^2}=\left<F,\mathcal{S}h\right>_{W^{-1,2},W^{1,2}}=\int_{\partial\Omega}f\cdot\mathcal{S}h\,d\sigma+\int_{\partial\Omega}\nabla_Tf\cdot\nabla_T\mathcal{S}h\,d\sigma.
\]
For the first integral, we compute
\begin{align*}
\int_{\partial\Omega}f\cdot\mathcal{S}h\,d\sigma&=\int_{\partial\Omega}\left(\int_{\partial\Omega}G(q',q)h(q)\,d\sigma(q)\right)f(q')\,d\sigma(q')\\
&=\int_{\partial\Omega}\left(\int_{\partial\Omega}G(q',q)f(q')\,d\sigma(q')\right)h(q)\,d\sigma(q),
\end{align*}
since the double integral is absolutely convergent, from \eqref{eq:GreenMain}. For the second integral, using \eqref{eq:FormTangDer},
\[
\int_{\partial\Omega}\nabla_Tf\cdot\nabla_T\mathcal{S}h\,d\sigma=\int_{\partial\Omega}\left(\lim_{\e\to 0}\int_{|q-q'|>\e}\nabla_T^{q'}G(q',q)\nabla_Tf(q')\,d\sigma(q')\right)h(q)\,d\sigma(q),
\]
using the dominated convergence theorem and \eqref{eq:SingBound} for $\mathcal{T}_2^*$, which completes the proof of the first identity. The proof of the second identity is similar.
\end{proof}

For any $x\in\Omega$, using \eqref{eq:q:CAlphaReg2} we obtain that Green's function $G_x(\cdot)=G(\cdot,x)$ is $C^1$ in a neighborhood of $\partial\Omega$, hence $G_x\in W^{1,2}(\partial\Omega)$. Therefore, for $F\in W^{-1,2}(\partial\Omega)$ and $x\in\Omega$, we can define
\[
\mathcal{S}^*_+F(x)=F(G_x).
\]
Note now that a procedure similar to the proof of Lemma~\ref{SAdjointFormula} shows that, if $F=Rf$,
\begin{equation}\label{eq:FormulaForR}
\mathcal{S}_+^*F(x)=\int_{\partial\Omega}G(q',x)f(q')\,d\sigma(q')+\int_{\partial\Omega}\nabla_T^{q'}G(q',x)\nabla_Tf(q')\,d\sigma(q'),
\end{equation}
and if $F=Ef$, then
\begin{equation*}
\mathcal{S}_+^*F(x)=\int_{\partial\Omega}G(q',x)f(q')\,d\sigma(q').
\end{equation*}

\subsection{Properties of Layer Potentials}
We now show that the layer potentials we have defined are solutions to the equations we are interested at. We first treat the single layer potential.

\begin{prop}\label{SingleLayerBounds}
Under the same assumptions as in Proposition~\ref{LayerBoundedness}, for every $f\in L^2(\partial\Omega)$, the function $\mathcal{S}_+f$ is a $W^{1,2}_{\loc}(\Omega)$ solution to $\mathcal{L}u=-\dive(A\nabla u+bu)+c\nabla u+du=0$ in $\Omega$, which converges nontangentially, almost everywhere to $\mathcal{S}f$ on $\partial\Omega$. Similarly, $\mathcal{S}_-f$ is a $W^{1,2}_{\loc}(B_{\Omega}\setminus \Omega)$ solution to $\mathcal{L}u=0$ in $B_{\Omega}\setminus \Omega$, which converges nontangentially, almost everywhere to $f\cdot\chi_{\partial\Omega}$ on $\partial(B_{\Omega}\setminus \Omega)$. Moreover,
\[
\|(\nabla\mathcal{S}_{\pm}f)^*\|_{L^2(\partial\Omega)}\leq C\|f\|_{L^2(\partial\Omega)},
\]
where $C$ depends on $n,p,\lambda,\alpha,\tau$,$\|A\|_{\infty},\|b\|_{\infty}$,$\|c\|_p,\|d\|_p$ and the Lipschitz character of $\Omega$.
\end{prop}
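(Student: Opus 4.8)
The strategy is to reduce to the corresponding statement for the frozen-coefficient operator $-\dive(\tilde A\nabla u)$, whose single layer potential $\tilde{\mathcal S}$ is already understood through Theorems 3.1 and 4.7 in \cite{KenigShen}, and then to treat the lower-order terms plus the difference $\pi=G-\Gamma_{\tilde A}$ as a controlled perturbation. First I would verify that $\mathcal{S}_+f$ solves $\mathcal{L}u=0$ in $\Omega$: since $G(\cdot,q')$ solves $\mathcal{L}u=0$ away from $q'$ (this is built into the definition of Green's function from \cite{KimSak}), one integrates the weak formulation against a test function $\phi\in C_c^\infty(\Omega)$, applies Fubini (justified by the pointwise bound $G_y(x)\leq C|x-y|^{2-n}$ from \eqref{eq:GreenMain} and Lemma~\ref{BoundxInsideDelta}), and uses Lemma~\ref{Representation} to see that the $q'$-integral of the weak form is $\int_{\partial\Omega}0\cdot f(q')\,d\sigma=0$ since $q'\notin\operatorname{supp}\phi$. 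The local $W^{1,2}$ regularity follows from Caccioppoli (Lemma~\ref{Cacciopoli}) once $\mathcal{S}_+f$ is known to be locally bounded, which comes from the pointwise bound on $G_y$.

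For the nontangential maximal function estimate, write $G(x,q')=\Gamma_{\tilde A}(x,q')+\pi(x,q')$ on $B_{5\rho}$, so that $\nabla_x\mathcal{S}_+f(x)=\int_{\partial\Omega}\nabla_x\Gamma_{\tilde A}(x,q')f(q')\,d\sigma(q')+\int_{\partial\Omega}\nabla_x\pi(x,q')f(q')\,d\sigma(q')$. The first term is exactly the single layer potential gradient for $-\dive(\tilde A\nabla u)$, whose nontangential maximal function is bounded in $L^2(\partial\Omega)$ by $C\|f\|_{L^2(\partial\Omega)}$ from Theorem 4.7 in \cite{KenigShen}, with constant depending only on $n,\lambda,\alpha,\tau,\|A\|_\infty$ and the Lipschitz character. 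For the second term I would use the pointwise bound $|\nabla_x\pi(x,q')|\leq C|x-q'|^{1-n+\delta_{n,p}}$ from \eqref{eq:piGradPointwise}/\eqref{eq:DifGamma} (valid for $x,q'\in B_{5\rho}$, hence in particular for $x\in\Omega$, $q'\in\partial\Omega$), so that the perturbation term is dominated by $\int_{\partial\Omega}|x-q'|^{1-n+\delta_{n,p}}|f(q')|\,d\sigma(q')$; taking the supremum over $x\in\Gamma(q)$ and using that $|x-q'|\gtrsim|q-q'|$ there, this is controlled by a fractional integral operator on $\partial\Omega$ with a gain of $\delta_{n,p}>0$ in the exponent, which maps $L^2(\partial\Omega)\to L^2(\partial\Omega)$ (indeed to a better space) with norm depending on $\delta_{n,p}$ and the Lipschitz character, by Lemma~\ref{BoundxInsideDelta}-type estimates together with Schur's test.

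For the nontangential convergence $\mathcal{S}_+f\to\mathcal{S}f$ a.e.\ on $\partial\Omega$, the same splitting applies: $\tilde{\mathcal S}_+f\to\tilde{\mathcal S}f$ nontangentially a.e.\ by Theorem 4.7 in \cite{KenigShen}, and for the $\pi$-part one shows the kernel $\pi(x,q')$ satisfies $|\pi(x,q')|\leq C|x-q'|^{2-n+\delta_{n,p}}$ from \eqref{eq:piPointwise}; since this is a kernel with a strictly subcritical singularity, the associated potential extends continuously up to $\partial\Omega$ (dominated convergence, as $x\to q$ nontangentially, splitting $\partial\Omega$ into a small cap $\Delta_r(q)$ and its complement and using that on the cap the integral is $O(r^{\delta_{n,p}})$ uniformly in $x$ near $q$), so it contributes $\int_{\partial\Omega}\pi(q,q')f(q')\,d\sigma(q')$ to the limit; combined this gives $\mathcal{S}f(q)$. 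The statement for $\mathcal{S}_-f$ on $B_\Omega\setminus\Omega$ is identical, using that $x\in B_\Omega\setminus\overline\Omega$ still lies in $B_{5\rho}$ so all the pointwise bounds on $G$ and $\pi$ remain available, and that the jump relation across $\partial\Omega$ for the single layer potential is continuous (no jump), so the boundary value from outside is again $\mathcal{S}f=f\cdot\chi_{\partial\Omega}$ under the identification used. The main obstacle is the careful handling of the perturbation kernel $\nabla_x\pi$ near the boundary: one must be sure that the pointwise gradient bound \eqref{eq:piGradPointwise} genuinely holds all the way up to $\partial\Omega$ (it does, since it was proved for all $x,y\in B_{5\rho}$ and $\partial\Omega\subseteq B_{5\rho}$) and that the resulting fractional-integral bound on the nontangential maximal function has a constant of the stated dependence; everything else is a routine transplant of \cite{KenigShen}.
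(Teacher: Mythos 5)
Your proposal is correct and follows essentially the same route as the paper: the equation and local regularity come from the properties of $G$, and the maximal function bound is obtained by writing $\nabla_xG=\nabla_x\Gamma_{\tilde A}+\nabla_x\pi$, invoking the Kenig--Shen bounds for the frozen-coefficient single layer potential, and controlling the perturbation kernel $|x-q'|^{1-n+\delta_{n,p}}$ by a fractional integral on $\partial\Omega$ exactly as in \eqref{eq:DifGamma}. The only cosmetic difference is that the paper handles the nontangential boundary values by adapting the argument of Proposition 8.8 in \cite{thesis} directly to the H{\"o}lder-continuous kernel $G$, while you re-use the splitting through $\pi$; both are fine.
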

\begin{proof}
The fact that $\mathcal{S}_+f\in W^{1,2}_{{\rm loc}}(\Omega)$ follows from the pointwise bounds on $G$ and its derivative, from Proposition~\ref{GreenBounds}. Moreover, since $G(\cdot,q)$ is a solution of $\mathcal{L}u=0$ in $\Omega$, for every fixed $q\in\partial\Omega$, it follows that $\mathcal{S}_+f$ is a solution to $\mathcal{L}u=0$ in $\Omega$.

For the boundary values of $\mathcal{S}_+f$, we use the pointwise bounds on $G$ and a procedure as in Proposition 8.8 in \cite{thesis} (where instead of Lipschitz continuity of Green's function, we use that Green's function is H{\"o}lder continuous in our case). To show the bound on the nontangential maximal function of the gradient, we let $\Gamma$ be Green's function for the operator $-\dive(\tilde{A}\nabla u)$ in $\bR^n$, where $\tilde{A}$ is as in \eqref{eq:ATilde}, and let $\mathcal{S}_+^0$ be the single layer potential for the same operator in $\Omega$. Then, for $q\in\partial\Omega$ and $x\in\Gamma(q)$,
\begin{align*}
|\nabla\mathcal{S}_+f(x)|&\leq\int_{\partial\Omega}\left|\nabla_xG(x,q')-\nabla_x\Gamma(x,q')\right||f(q')|\,d\sigma(q')+|\nabla\mathcal{S}_+^0f(x)|\\
&\leq C\int_{\partial\Omega}|x-q'|^{1-n+\delta_{n,p}}|f(q')|\,d\sigma(q')+|\nabla\mathcal{S}_+^0f(x)|\\
&\leq C\int_{\partial\Omega}|q-q'|^{1-n+\delta_{n,p}}|f(q')|\,d\sigma(q')+|\nabla\mathcal{S}_+^0f(x)|,
\end{align*}
where we used \eqref{eq:DifGamma} and the fact that $x\in\Gamma(q)$. We then obtain that
\[
\left(\nabla\mathcal{S}_+f\right)^*(q)\leq C\int_{\partial\Omega}|q-q'|^{1-n+\delta_{n,p}}|f(q')|\,d\sigma(q')+\left(\nabla\mathcal{S}_+^0f\right)^*(q),
\]
for all $q\in\partial\Omega$. From Theorem 4.3 in \cite{KenigShen}, the operator $(\nabla\mathcal{S}_+^0f)^*$ is bounded from $L^2(\partial\Omega)$ to $L^2(\partial\Omega)$. Hence, integrating over $\partial\Omega$, we obtain the estimate for $(\nabla\mathcal{S}_+f)^*$. The results for $\mathcal{S}_-f$ follow in a similar manner.
\end{proof}

We now turn to the adjoint of the single layer potential.

\begin{prop}\label{SingleAdjointLayerBounds}
Under the same assumptions as in Proposition~\ref{LayerBoundedness}, for any $F\in W^{-1,2}(\Omega)$, the function $\mathcal{S}_+^*F$ is a $W^{1,2}_{\loc}(\Omega)$ solution to $\mathcal{L}^tu=-\dive(A^t\nabla u+cu)+b\nabla u+du=0$ in $\Omega$, which converges nontangentially, almost everywhere to $\mathcal{S}^*F$ on $\partial\Omega$. Moreover,
\begin{equation*}
\|(\mathcal{S}_+^*F)^*\|_{L^2(\partial\Omega)}\leq C\|F\|_{W^{-1,2}(\partial\Omega)},
\end{equation*}
where $C$ depends on $n,p,\lambda,\alpha,\tau$,$\|A\|_{\infty}$, $\|b\|_{\infty}$,$\|c\|_p,\|d\|_p$ and the Lipschitz character of $\partial\Omega$.
\end{prop}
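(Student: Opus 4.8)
The plan is to establish the three assertions — that $\mathcal{S}_+^*F$ solves $\mathcal{L}^t u = 0$ in $\Omega$, that it has the stated nontangential limit $\mathcal{S}^*F$, and that $(\mathcal{S}_+^*F)^*$ is controlled in $L^2(\partial\Omega)$ — by reducing to the constant-coefficient comparison operator $-\dive(\tilde A^t\nabla u)$, exactly as in the proof of Proposition~\ref{SingleLayerBounds}, and invoking the corresponding results from \cite{KenigShen}. First I would record that, since $G_x \in W^{1,2}(\partial\Omega)$ for every $x\in\Omega$ (from \eqref{eq:q:CAlphaReg2}), the map $x\mapsto \mathcal{S}_+^*F(x) = F(G_x)$ is well-defined, and using the representation formula \eqref{eq:FormulaForR} together with the pointwise bounds \eqref{eq:GreenMain} and \eqref{eq:GreenDerivative} one sees $\mathcal{S}_+^*F \in W^{1,2}_{\loc}(\Omega)$. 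For the equation: for fixed $x$, $z\mapsto G(z,x) = G_x(z)$ is a solution of $\mathcal{L}u=0$, so by duality $z\mapsto G(z,x)$ enters as the kernel of the adjoint, and differentiating under the integral sign in \eqref{eq:FormulaForR} (justified by the locally uniform bounds on $\nabla_y G(x,y)$ and $\nabla_x\nabla_y G(x,y)$ away from the diagonal coming from Proposition~\ref{CAlphaReg} applied to $G(\cdot,x)$) shows that $\mathcal{S}_+^*F$ satisfies the weak formulation of $\mathcal{L}^t u = 0$; alternatively one tests against $\phi\in C_c^\infty(\Omega)$ and uses Lemma~\ref{Representation} for $\mathcal{L}^t$.

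For the nontangential convergence I would split $F = Rf + E g$ (using density, or more simply handle the two cases $F=Rf$ and $F=Eg$ separately as in Lemma~\ref{SAdjointFormula}). In the case $F = Eg$ with $g\in W^{1,2}(\partial\Omega)$, $\mathcal{S}_+^*F(x) = \int_{\partial\Omega} G(q',x) g(q')\,d\sigma(q')$ is nothing but the single layer potential for the \emph{adjoint} operator $\mathcal{L}^t$ evaluated at $x$, so Proposition~\ref{SingleLayerBounds} applied to $\mathcal{L}^t$ (whose coefficients $A^t, c, b, d$ satisfy the same structural hypotheses, with $d\geq \dive c$ or $d\geq \dive b$ interchanged) immediately gives nontangential convergence to $\mathcal{S}^*F$ and the bound on $(\mathcal{S}_+^*F)^*$ in terms of $\|g\|_{L^2}$. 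In the case $F = Rf$, the extra term $\int_{\partial\Omega}\nabla_T^{q'}G(q',x)\nabla_T f(q')\,d\sigma(q')$ must be analyzed: I would compare $\nabla_x$-derivatives... more precisely, compare the kernel $\nabla_T^{q'}G(q',x)$ with $\nabla_T^{q'}\Gamma_{\tilde A}(q',x)$ using \eqref{eq:DifGamma} (which controls the difference by $C|x-q'|^{1-n+\delta_{n,p}}$, an integrable-against-$L^2$ kernel on $\partial\Omega$ that produces a continuous — hence nontangentially convergent — contribution), and for the $\Gamma_{\tilde A}$ part invoke the boundary behaviour of the adjoint single layer potential for the constant-coefficient operator from \cite{KenigShen} (Theorems 3.1, 4.3 and the jump relations there).

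The main estimate $\|(\mathcal{S}_+^*F)^*\|_{L^2(\partial\Omega)} \leq C\|F\|_{W^{-1,2}(\partial\Omega)}$ then follows by assembling: writing $\|F\|_{W^{-1,2}} \sim \|f\|_{W^{1,2}}$ when $F = Rf$, the $\Gamma_{\tilde A}$-piece is handled by \cite{KenigShen} (the $L^2(\partial\Omega)$ bound on the nontangential maximal function of the adjoint layer potential, with constants depending only on $n,\lambda,\alpha,\tau,\|A\|_\infty$ and the Lipschitz character), while the error piece is dominated pointwise by $C\int_{\partial\Omega}|q-q'|^{1-n+\delta_{n,p}}\,|f(q')|\,d\sigma(q')$ plus a similar term with $\nabla_T f$, and these are bounded operators on $L^2(\partial\Omega)$ because the kernel $|q-q'|^{1-n+\delta_{n,p}}$ is integrable on $\partial\Omega$ uniformly in $q$ (Lemma~\ref{BoundxInsideDelta} with $\delta = \delta_{n,p}$) — here one also uses $\|\nabla_T f\|_{L^2} \le \|f\|_{W^{1,2}} \le C\|F\|_{W^{-1,2}}$. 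I expect the main obstacle to be the careful justification of differentiating $\mathcal{S}_+^*F$ under the integral and the passage to the nontangential limit in the $F = Rf$ case, where the tangential-derivative kernel $\nabla_T^{q'}G(q',x)$ is genuinely Calderón–Zygmund and one cannot rely on absolute convergence; the remedy is to borrow the jump relations and $L^2$ maximal-function bounds for the constant-coefficient adjoint layer potential from \cite{KenigShen} and treat everything else as a lower-order, integrable perturbation via \eqref{eq:DifGamma} and \eqref{eq:piGradPointwise}.
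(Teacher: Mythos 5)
Your treatment of the maximal-function estimate is essentially the paper's own proof: write $F=Rf$, split $\mathcal{S}_+^*F$ via \eqref{eq:FormulaForR} into the weakly singular piece with kernel $G(q',x)$ and the piece with kernel $\nabla_T^{q'}G(q',x)$ acting on $\nabla_Tf$, compare the latter with $\nabla_T^{q'}\Gamma_{\tilde{A}}(q',x)$ through \eqref{eq:DifGamma}, absorb the error kernels $|q-q'|^{2-n}$ and $|q-q'|^{1-n+\delta_{n,p}}$ by a Schur-type bound, and quote \cite{KenigShen} for the constant-coefficient singular part (the paper writes $\nabla_T^{q'}\Gamma=\nabla^{q'}\Gamma-\left<\nabla^{q'}\Gamma,\nu(q)\right>\nu(q)$ and uses Theorem 3.5 there), finishing with $\|f\|_{W^{1,2}(\partial\Omega)}\leq\|F\|_{W^{-1,2}(\partial\Omega)}$ since $R$ is an isometry. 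Likewise, the observation that the kernels $x\mapsto G(q',x)$ and $x\mapsto\nabla_T^{q'}G(q',x)$ solve $\mathcal{L}^tu=0$ is exactly how the paper obtains the equation.

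There are, however, two genuine soft spots in your convergence argument. First, the claim that Proposition~\ref{SingleLayerBounds} applies to $\mathcal{L}^t$ because its coefficients ``satisfy the same structural hypotheses'' is incorrect: the hypotheses are not transpose-symmetric, since the H{\"o}lder regularity is imposed on the divergence-form drift, which for $\mathcal{L}^t$ is $c\in L^p$ only; this asymmetry is precisely why the paper lacks pointwise bounds on $\nabla_yG(x,y)$ and why it proves $R_2$ for $\mathcal{L}$ but only $D_2$ for $\mathcal{L}^t$. (For what you actually need in the case $F=Eg$ --- the potential itself, not its gradient --- the pointwise bound \eqref{eq:GreenMain} and interior H{\"o}lder continuity of solutions suffice, so this step is repairable, but not by citing that proposition verbatim.) Second, and more seriously, your primary route of proving the nontangential limit directly for $F=Rf$ fails exactly where you anticipate: the a.e.\ existence of the nontangential limit of $\int_{\partial\Omega}\nabla_T^{q'}\Gamma_{\tilde{A}}(q',x)\nabla_Tf(q')\,d\sigma(q')$ is not an off-the-shelf statement one can ``borrow''; it is the genuinely singular boundary behavior of the adjoint potential and is the hard part. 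The paper avoids it entirely: it proves convergence only for the dense class $F=Ef$ with $f\in L^2(\partial\Omega)$, where the kernel is the weakly singular $G(q',\cdot)$, and then upgrades to all $F\in W^{-1,2}(\partial\Omega)$ using the density of $E(L^2(\partial\Omega))$ together with the uniform $L^2(\partial\Omega)$ bound on $(\mathcal{S}_+^*F)^*$ that you have already established. You mention this density route in passing; promoting it to the main argument is what closes the gap.
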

\begin{proof}
Since $R:W^{1,2}(\partial\Omega)\to W^{-1,2}(\partial\Omega)$ from \eqref{eq:Emb1} is invertible, $F=Rf$ for some $f\in W^{1,2}(\partial\Omega)$. Hence, since $x\mapsto G(p,x)$ and $x\mapsto\nabla_T^pG(p,x)$ are solutions to $\mathcal{L}^tu=0$ in $\Omega$ for any fixed $p\in\partial\Omega$, using the formula in \eqref{eq:FormulaForR} we obtain that $\mathcal{S}_+^*F$ solves $\mathcal{L}^tu=0$ in $\Omega$.

To show the bound on the maximal function, let $q\in\partial\Omega$ and $x\in\Gamma(q)$. Using \eqref{eq:FormulaForR}, we write
\[
\mathcal{S}_+^*F(x)=\int_{\partial\Omega}G(q',x)f(q')\,d\sigma(q')+\int_{\partial\Omega}\nabla_T^{q'}G(q',x)\nabla_Tf(q')\,d\sigma(q')=I_1+I_2.
\]
For $I_1$, note that
\begin{equation*}
|I_1|\leq \int_{\partial\Omega}|q'-x|^{2-n}|f(q')|\,d\sigma(q')\leq C\int_{\partial\Omega}|q'-q|^{2-n}|f(q')|\,d\sigma(q'),
\end{equation*}
since $|x-q'|\geq C|q-q'|$. For $I_2$, if $\tilde{A}$ and $\Gamma$ are as in Proposition~\ref{LayerBoundedness}, we use \eqref{eq:DifGamma} to estimate
\begin{align*}
|I_2|&\leq \int_{\partial\Omega}|\nabla_T^{q'}G(q',x)-\nabla_T^{q'}\Gamma(q',x)||\nabla_Tf(q')|\,d\sigma(q')+\left|\int_{\partial\Omega}\nabla_T^{q'}\Gamma(q',x)\nabla_Tf(q')\,d\sigma(q')\right|\\
&\leq C\int_{\partial\Omega}|q'-q|^{1-n+\delta_{n,p}}|\nabla_Tf(q')|\,d\sigma(q')+|I_3|.
\end{align*}
To bound $I_3$, we write $\nabla^{q'}_T\Gamma(q',x)=\nabla^{q'}\Gamma(q',x)-\left<\nabla^{q'}\Gamma(q',x),\nu(q)\right>\nu(q)$. Then, considering the supremum for $x\in\Gamma(q)$, integrating for $q\in\partial\Omega$ and using Theorem 3.5 in \cite{KenigShen}, we obtain that
\begin{equation*}
\|(\mathcal{S}_+^*F)^*\|_{L^2(\partial\Omega)}\leq C\|f\|_{W^{1,2}(\partial\Omega)}\leq C\|F\|_{W^{-1,2}(\partial\Omega)}.
\end{equation*}
Finally, to show nontangential, almost everywhere convergence to $\mathcal{S}^*F$ we follow the proof of Proposition 9.10 in \cite{thesis}: we first show that this holds in the case $F=Ef$, where $f\in L^2(\partial\Omega)$ and $E$ is defined in \eqref{eq:Emb2}, and the general case follows by density, since $E(L^2(\partial\Omega))$ is dense in $W^{-1,2}(\partial\Omega)$ from the argument right after \eqref{eq:Emb2}. This completes the proof.
\end{proof}

We now turn to the nontangential behavior of $\mathcal{S}_{\pm}f$ on $\partial\Omega$.

\begin{prop}\label{NonTangConv}
Let $u_{\pm}=\mathcal{S}_{\pm}f$ for some $f\in L^2(\partial\Omega)$.
Under the same assumptions as in Proposition~\ref{LayerBoundedness}, for almost every $q\in\partial\Omega$,
\[
\nabla u_{\pm}(x)\to\pm\frac{1}{2}f(q)\left<A(q)\nu(q),\nu(q)\right>^{-1}\nu(q)+\int_{\partial\Omega}\nabla^qG(q,q')f(q')\,d\sigma(q'),
\]
as $x\to q$ nontangentially. In particular, we obtain that $\nabla_Tu_+=\nabla_Tu_-$, and the jump relation $f=(\partial_{\nu}u)_+-(\partial_{\nu}u)_-$ holds.
\end{prop}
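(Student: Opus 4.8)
\emph{Proof strategy.} The idea is to peel off the ``homogeneous'' single layer potential $\tilde{\mathcal S}_{\pm}f(x)=\int_{\partial\Omega}\Gamma_{\tilde A}(x,q')f(q')\,d\sigma(q')$ associated to $-\dive(\tilde A\nabla u)$, whose nontangential jump relations across $\partial\Omega$ are known from \cite{KenigShen}, and to show that the remaining part of $\mathcal S_{\pm}f$ produces no jump. First I would record the interior differentiation formula: from the pointwise bounds \eqref{eq:GreenMain}--\eqref{eq:GreenDerivative} one may differentiate under the integral sign to get, for $x\in\Omega$ (and likewise for $x\in B_{\Omega}\setminus\overline\Omega$),
\[
\nabla u_{\pm}(x)=\int_{\partial\Omega}\nabla_xG(x,q')f(q')\,d\sigma(q').
\]
Let $\tilde A,\Gamma_{\tilde A}$ be as in \eqref{eq:ATilde}, and recall from \eqref{eq:PiDef2} that $\pi(x,y):=\pi_{\mathcal L}(x,y)=G(x,y)-\Gamma_{\tilde A}(x,y)$ on $B_{8\rho}$, so that $\nabla u_{\pm}=\nabla\tilde u_{\pm}+R_{\pm}$ with $\tilde u_{\pm}=\tilde{\mathcal S}_{\pm}f$ and $R_{\pm}(x)=\int_{\partial\Omega}\nabla_x\pi(x,q')f(q')\,d\sigma(q')$. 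The two facts about $\pi$ that I will use are the weak singularity $|\nabla_x\pi(x,y)|\le C|x-y|^{1-n+\delta_{n,p}}$ from \eqref{eq:DifGamma} and the local Hölder bound $\|\nabla_x\pi(\cdot,y)\|_{C^{0,\beta}(B_{|x-y|/20}(x))}\le C|x-y|^{1-n+\delta_{n,p}-\beta}$, with $\beta=\min\{\alpha,1-n/p\}$, coming from the rescaled version of \eqref{eq:piGradPointwise}.

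For the first term, the corresponding result for the single layer potential of $-\dive(\tilde A\nabla u)$ in \cite{KenigShen} gives, for a.e.\ $q\in\partial\Omega$,
\[
\nabla\tilde u_{\pm}(x)\longrightarrow \pm\tfrac12 f(q)\langle \tilde A(q)\nu(q),\nu(q)\rangle^{-1}\nu(q)+\operatorname{p.v.}\!\int_{\partial\Omega}\nabla^q\Gamma_{\tilde A}(q,q')f(q')\,d\sigma(q')
\]
as $x\to q$ nontangentially; since $\tilde A=A$ on a neighborhood of $\partial\Omega$, the coefficient is $\langle A(q)\nu(q),\nu(q)\rangle^{-1}$. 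For the remainder I would prove that $R_{\pm}(x)\to\int_{\partial\Omega}\nabla^q\pi(q,q')f(q')\,d\sigma(q')$ nontangentially for a.e.\ $q$, with no jump, by the standard routine for weakly singular kernels: (i) for $x\in\Gamma(q)$ one has $|x-q'|\ge c_M|q-q'|$, so $|R_{\pm}(x)|\le C\!\int_{\partial\Omega}|q-q'|^{1-n+\delta_{n,p}}|f(q')|\,d\sigma(q')=:Tf(q)$, and since $\sup_q\int_{\partial\Omega}|q-q'|^{1-n+\delta_{n,p}}\,d\sigma(q')<\infty$ (Ahlfors regularity of $\partial\Omega$; cf.\ Lemma~\ref{BoundxInsideDelta}), Schur's test gives $\|(R_{\pm})^*\|_{L^2(\partial\Omega)}\le C\|f\|_{L^2(\partial\Omega)}$; (ii) for bounded $f$, splitting the integral over $|q'-q|<\e$ and $|q'-q|\ge\e$, using the Hölder-in-$x$ bound on the far part and the integrability of $|q-q'|^{1-n+\delta_{n,p}}$ together with $|x-q'|\ge c_M|q-q'|$ on the near part, yields the claimed limit at \emph{every} $q$; (iii) since $L^\infty(\partial\Omega)$ is dense in $L^2(\partial\Omega)$ and both $f\mapsto(R_{\pm})^*$ and $f\mapsto T|f|$ are bounded on $L^2(\partial\Omega)$, the usual approximation argument upgrades (ii) to a.e.\ convergence for every $f\in L^2(\partial\Omega)$.

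Adding the two limits and combining the absolutely convergent $\pi$--integral with the principal value $\Gamma_{\tilde A}$--integral into $\int_{\partial\Omega}\nabla^qG(q,q')f(q')\,d\sigma(q')$ (understood as the principal value, consistently with \eqref{eq:FormTangDer}) gives the stated formula. The two consequences follow at once: the jump term $\pm\tfrac12 f(q)\langle A(q)\nu(q),\nu(q)\rangle^{-1}\nu(q)$ is normal, so its tangential projection vanishes and $\nabla_Tu_+=\nabla_Tu_-$; and, since $\partial_{\nu}u=\langle A\nabla u,\nu\rangle$,
\[
(\partial_{\nu}u)_+-(\partial_{\nu}u)_-=\langle A(q)\bigl(\nabla u_+(q)-\nabla u_-(q)\bigr),\nu(q)\rangle=\langle A(q)\nu(q),\nu(q)\rangle^{-1}\langle A(q)\nu(q),\nu(q)\rangle f(q)=f(q).
\]
The main obstacle is steps (ii)--(iii): producing a genuine, jump-free nontangential boundary limit for $R_{\pm}$ with general $L^2$ data. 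This is exactly what the Hölder estimate \eqref{eq:piGradPointwise} on $\nabla\pi$ was designed to make possible; once it is combined with the $L^2$ bound on $(R_{\pm})^*$ and with density, the argument closes, and matching the precise jump constant is then automatic from the cited homogeneous case.
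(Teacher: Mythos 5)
Your proposal is correct and follows essentially the same route as the paper: the paper also writes $\nabla\mathcal{S}_{\pm}f=\nabla\tilde{\mathcal{S}}_{\pm}f+R_{\pm}$ with $\tilde{\mathcal{S}}$ the single layer potential for $-\dive(\tilde{A}\nabla u)$, invokes Theorem 4.4 in \cite{KenigShen} for the jump of the first term, and uses the weakly singular bound \eqref{eq:DifGamma} (the analog of (4.10) in \cite{KenigShen}) to show the remainder converges nontangentially with no jump, exactly as in your steps (i)--(iii). Your write-up merely spells out the standard weakly-singular-kernel argument that the paper compresses into ``continuing as right after (4.10)''.
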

\begin{proof}
Let $\tilde{A}$ be an extension as in \eqref{eq:ATilde}, $\tilde{\mathcal{L}}u=-\dive(\tilde{A}\nabla u)$ in $\bR^n$, $\Gamma$ be the fundamental solution for $\tilde{\mathcal{L}}$, and $\tilde{\mathcal{S}}$ be the corresponding layer potential. We then follow the lines of the proof of Theorem 4.4 in \cite{KenigShen}: note first that the formula above holds for $\tilde{\mathcal{S}}$, from Theorem 4.4 in \cite{KenigShen}, and combining with \eqref{eq:DifGamma}, we obtain the analog of (4.10) in \cite{KenigShen} for $G,\Gamma$ in the place of $\Gamma_A,\Theta$, respectively. We then finish the proof continuing as right after (4.10).
\end{proof}

From the bound on the maximal function in Proposition~\ref{SingleLayerBounds}, we have that $\nabla\mathcal{S}_+f\in L^2(\Omega)$, whenever $f\in L^2(\partial\Omega)$. In the next proposition we show that in the special case that $A$ satisfies Condition~\eqref{eq:ACond}, $d=0$ and $\dive c\leq 0$, then a better integrability result holds for $\nabla\mathcal{S}_+f$.

\begin{lemma}\label{qImproved}
Let $\Omega\subseteq\mathbb R^n$ be a Lipschitz domain with $\diam(\Omega)<\frac{1}{16}$, and $A\in\M_{B_{\Omega}}(\lambda,\alpha,\tau)$, $b\in\C_{B_{\Omega}}(\alpha,\tau)$ and $c\in L^p(B_{\Omega})$ for some $p>n$ with $\dive c\leq 0$. Assume also that $A$ satisfies Condition~\eqref{eq:ACond} for some $\alpha_0\leq\alpha$, and let $f\in L^2(\partial\Omega)$. If $\mathcal{L}u=-\dive(A\nabla u+bu)+c\nabla u$, $G$ is Green's function for $\mathcal{L}$ in $B_{\Omega}$ and $\mathcal{S}$ is the corresponding single layer potential, then
\begin{equation*}
\|\nabla\mathcal{S}_{\pm}f\|_{L^{p_1}(\Omega)}\leq C\|f\|_{L^2(\partial\Omega)},
\end{equation*}
for any $p_1\in\left(2,\frac{2n}{n-1}\right)$, where $C$ depends on $n,p,p_1,\lambda,\alpha,\tau$,$\|A\|_{\infty},\|b\|_{\infty},\|c\|_p$, the constants $C_1$ and $\alpha_0$ in Condition~\ref{eq:ACond} and the Lipschitz character of $\Omega$.
\end{lemma}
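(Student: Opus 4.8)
The plan is to reduce the estimate to Lemma~\ref{ImprovedIntegrability}, which handles the principal part with $W^{1,2}$ boundary data, by first peeling off the boundary values and then absorbing the lower order terms as an inhomogeneity through a finite bootstrap. Write $u=\mathcal S_+f$ and $g=\mathcal S f$. By Propositions~\ref{LayerBoundedness}, \ref{SingleLayerBounds} and \ref{SolidByBoundary} (the last applicable since here $d=0$ and $\dive c\le 0$, so $d\ge\dive c$), one has $u\in W^{1,2}(\Omega)$, $(\nabla u)^*\in L^2(\partial\Omega)$, $u\to g\in W^{1,2}(\partial\Omega)$ nontangentially a.e., and $\|u\|_{W^{1,2}(\Omega)}+\|(\nabla u)^*\|_{L^2(\partial\Omega)}\le C\|f\|_{L^2(\partial\Omega)}$. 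Let $\tilde A$ be the periodic extension of $A$ from \eqref{eq:ATilde}; since $\tilde A=A$ on a neighborhood of $\overline\Omega$, $\tilde A$ still satisfies Condition~\eqref{eq:ACond} in $\Omega$ with the same $\alpha_0\le\alpha$. I would let $u_1$ be the solution of the $R_2$ Regularity problem for $-\dive(\tilde A\nabla u_1)=0$ in $\Omega$ with data $g$ (which exists, e.g.\ $u_1(x)=\int_{\partial\Omega}\Gamma_{\tilde A}(x,q)\tilde{\mathcal S}^{-1}g(q)\,d\sigma(q)$ as in the proof of Proposition~\ref{SolidByBoundary}); Lemma~\ref{ImprovedIntegrability} then gives $\|\nabla u_1\|_{L^{p_1}(\Omega)}\le C\|\nabla_T g\|_{L^2(\partial\Omega)}\le C\|f\|_{L^2(\partial\Omega)}$ for every $p_1\in\left(1,\tfrac{2n}{n-1}\right)$. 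Setting $u_2=u-u_1$, Proposition~\ref{CAlphaReg} gives $u_2\in C(\Omega)$; since $u_2\to 0$ nontangentially a.e., Lemma~\ref{TraceToNt} yields $u_2\in W_0^{1,2}(\Omega)$, and because $\tilde A=A$ in $\Omega$, $u_2$ solves $-\dive(A\nabla u_2)=\dive(bu)-c\nabla u$ in $\Omega$. It remains to bound $\nabla u_2$ in $L^{p_1}(\Omega)$.

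For $u_2$ I would invoke the $W^{1,q}$-solvability of the Dirichlet problem for $-\dive(A\nabla\cdot)$ in the Lipschitz domain $\Omega$: for $q$ in a neighborhood of $\left[2,\tfrac{2n}{n-1}\right)$ and $H\in W^{-1,q}(\Omega)$ there is a unique $w\in W_0^{1,q}(\Omega)$ with $-\dive(A\nabla w)=H$ and $\|\nabla w\|_{L^q}\le C\|H\|_{W^{-1,q}}$ (for $A\in\M_\Omega(\lambda,\alpha,\tau)$ this follows from the sharp $W^{1,q}$ theory for the Poisson problem in Lipschitz domains by freezing the coefficient and perturbing). Granting this, split $u_2=w_1+\hat w$, where $w_1\in W_0^{1,2}(\Omega)$ solves $-\dive(A\nabla w_1)=\dive(bu)$; since $u\in W^{1,2}(\Omega)\hookrightarrow L^{2^*}(\Omega)$ and $p_1<2^*$, we have $bu\in L^{p_1}(\Omega)$, hence $\dive(bu)\in W^{-1,p_1}(\Omega)$ and, by uniqueness in $W_0^{1,2}$, $w_1\in W_0^{1,p_1}(\Omega)$ with $\|\nabla w_1\|_{L^{p_1}}\le C\|bu\|_{L^{p_1}}\le C\|f\|_{L^2}$. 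Then $\hat w=u_2-w_1\in W_0^{1,2}(\Omega)$ solves $-\dive(A\nabla\hat w)=-c\nabla u$, and I would estimate $\nabla\hat w$ by a finite bootstrap with $\eta:=\tfrac1n-\tfrac1p>0$: assuming $\nabla\hat w\in L^{q_k}(\Omega)$ with $\|\nabla\hat w\|_{L^{q_k}}\le C\|f\|_{L^2}$ for some $q_k\in(2,p_1]$ ($q_0=2$), we get $\nabla u=\nabla u_1+\nabla w_1+\nabla\hat w\in L^{q_k}$, so $c\nabla u\in L^{s_k}$ with $\tfrac1{s_k}=\tfrac1{q_k}+\tfrac1p$, where $\tfrac{2n}{n+2}<s_k<n$ and $\tfrac1{s_k}-\tfrac1n=\tfrac1{q_k}-\eta$; using $L^{s_k}(\Omega)\hookrightarrow W^{-1,r}(\Omega)$ for all $r\le s_k^{*}$, if $\tfrac1{q_k}<\tfrac1{p_1}+\eta$ then $-c\nabla u\in W^{-1,p_1}(\Omega)$ and (by uniqueness in $W_0^{1,2}$) $\nabla\hat w\in L^{p_1}(\Omega)$ with the desired bound and the bootstrap terminates, while otherwise $-c\nabla u\in W^{-1,q_{k+1}}(\Omega)$ with $\tfrac1{q_{k+1}}=\tfrac1{q_k}-\eta\ge\tfrac1{p_1}$, i.e.\ $2<q_{k+1}\le p_1<\tfrac{2n}{n-1}$, and $\nabla\hat w\in L^{q_{k+1}}(\Omega)$. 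Since $\tfrac1{q_k}=\tfrac12-k\eta$ decreases by the fixed amount $\eta$, termination occurs after finitely many steps and every exponent used lies in $\left(2,\tfrac{2n}{n-1}\right)$; hence $\nabla u_2=\nabla w_1+\nabla\hat w\in L^{p_1}(\Omega)$, which together with the bound for $\nabla u_1$ proves the estimate for $\mathcal S_+f$. The statement for $\mathcal S_-f$ is obtained by the same argument, carried out near $\partial\Omega$ in the exterior Lipschitz domain $B_\Omega\setminus\overline\Omega$ (away from $\partial\Omega$ the function $\mathcal S_-f$ is $C^{1,\gamma}$, so only a Lipschitz neighborhood of $\partial\Omega$ is relevant).

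I expect the main obstacle to be the input on $W^{1,q}$-solvability of the Dirichlet problem for exponents $q$ all the way up to $\tfrac{2n}{n-1}$: this range is sharp already for the Laplacian in Lipschitz domains, so a soft Meyers-type improvement of the exponent does not reach it, and one genuinely needs the (known) boundary $W^{1,q}$ regularity theory for divergence-form operators with H\"older coefficients in Lipschitz domains. Everything else in the argument is elementary: interior estimates, Lemma~\ref{ImprovedIntegrability}, and Sobolev bookkeeping. If one wishes to avoid citing that theory, an alternative is to represent $\hat w$ by a global Newtonian potential of $-c\nabla u$ and correct its boundary values using Lemma~\ref{ImprovedIntegrability}, bootstrapping $\nabla u$ first to an exponent $q_k$ for which the boundary trace of that potential lies in $W^{1,2}(\partial\Omega)$ (i.e.\ for which $s_k\ge\tfrac{2n}{n+1}$) and then performing one final correction step; this replaces the external input by the one already available from Lemma~\ref{ImprovedIntegrability}, at the cost of a slightly longer bookkeeping.
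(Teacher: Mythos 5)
Your strategy can be made to work, but it is genuinely different from the paper's and, as written, it hangs on an input the paper never needs. The paper also subtracts a solution of $-\dive(\tilde A\nabla\cdot)=0$, but it chooses the comparison function to be the single layer potential $v=\tilde{\mathcal S}_+f$ built with the \emph{same density} $f$, not (as you do) the solution $u_1$ of the regularity problem with the same boundary datum $\mathcal Sf$. With the same density, the error $w=\mathcal S_+f-v$ keeps the kernel representation $\nabla w(x)=\int_{\partial\Omega}\bigl(\nabla_xG(x,q)-\nabla_x\Gamma_{\tilde A}(x,q)\bigr)f(q)\,d\sigma(q)$, so the already established comparison \eqref{eq:DifGamma} gives $|\nabla w(x)|\le C\int_{\partial\Omega}|x-q|^{1-n+\delta_{n,p}}|f(q)|\,d\sigma(q)$; Cauchy--Schwarz together with Lemma~\ref{BoundxInsideDelta} and Minkowski's inequality then yield $\|\nabla w\|_{L^{2n/(n-1)}(\Omega)}\le C\|f\|_{L^2(\partial\Omega)}$ directly, and the main term $v$ is handled by Lemma~\ref{ImprovedIntegrability} exactly as you handle $u_1$ (its boundary datum is $\tilde{\mathcal S}f$ rather than $\mathcal Sf$, which is harmless). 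In particular the lower order terms $b,c$ never have to be treated as an inhomogeneity, and no inhomogeneous Dirichlet solvability, no bootstrap, and no $\dive c\le 0$ structure beyond what Proposition~\ref{SingleLayerBounds} already uses are required.

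By contrast, your decomposition forces you to solve $-\dive(A\nabla u_2)=\dive(bu)-c\nabla u$ in $W_0^{1,q}(\Omega)$ for $q$ up to $p_1<\tfrac{2n}{n-1}$, and this is where the gap lies: the $W^{1,q}$ Dirichlet theory you invoke is indeed true in the range $\bigl|\tfrac1q-\tfrac12\bigr|<\tfrac1{2n}+\e$ for such operators in Lipschitz domains, but it does not follow by ``freezing the coefficient and perturbing''---H\"older continuity gives closeness to constant coefficients only at small scales, and extracting boundary $W^{1,q}$ estimates beyond Meyers' $2+\e$ in a Lipschitz domain requires Shen-type real-variable arguments or layer-potential/Rellich machinery of essentially the same depth as Lemma~\ref{ImprovedIntegrability} itself; you flag this yourself, and your sketched alternative via a Newtonian potential corrected through Lemma~\ref{ImprovedIntegrability} is only outlined. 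The surrounding bookkeeping is fine (the identification $u_2\in W_0^{1,2}(\Omega)$ via Proposition~\ref{CAlphaReg} and Lemma~\ref{TraceToNt}, the duality step placing $c\nabla u$ in $W^{-1,q_{k+1}}$ with $\tfrac1{q_{k+1}}=\tfrac1{q_k}+\tfrac1p-\tfrac1n$, and the finite termination of the bootstrap), so the cleanest repair is simply to switch to the paper's same-density comparison, after which the entire difficulty disappears.
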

\begin{proof}
We will treat $\mathcal{S}_+$, as the reasoning for $\mathcal{S}_-$ is similar. Let $\tilde{A}$ be an extension of $A\in\M_{B_{8\rho}}(\lambda,\alpha,\tau)$ as in \eqref{eq:ATilde}, and $\Gamma$ be the fundamental solution for $\tilde{\mathcal{L}}u=-\dive(\tilde{A}\nabla u)$. Define also $v(x)=\int_{\partial\Omega}\Gamma(x,q)f(q)\,d\sigma(q)$. Then, from \cite{KenigShen}, $v$ is the solution to the $R_2$ regularity problem
\[
\left\{\begin{array}{c l}-\dive(\tilde{A}\nabla v)=0, & {\rm in}\,\,\Omega \\ v=\tilde{\mathcal{S}}f, & {\rm on}\,\,\partial\Omega\end{array}\right.,
\]
in the sense of Definition 5.2 in \cite{KenigShen}, where $\tilde{\mathcal{S}}$ is the single layer potential operator for $\tilde{\mathcal{L}}$ in $\Omega$. Hence, from Lemma~\ref{ImprovedIntegrability} and Theorem 4.7 in \cite{KenigShen} we obtain that, for any $p_1\in\left(2,\frac{2n}{n-1}\right)$,
\begin{equation}\label{eq:vFromImprovedBefore}
\|\nabla v\|_{L^{p_1}(\Omega)}\leq C\|\nabla_T\tilde{\mathcal{S}}f\|_{L^2(\partial\Omega)}\leq C\|f\|_{L^2(\partial\Omega)}.
\end{equation}
Let now $w=\mathcal{S}_+f-v$. Then, from \eqref{eq:DifGamma} we obtain, for all $x\in\Omega$,
\[
|\nabla w(x)|\leq\int_{\partial\Omega}|G(x,q)-\Gamma(x,q)||f(q)|\,d\sigma(q)\leq C\int_{\partial\Omega}|x-q|^{1-n+\delta}|f(q)|\,d\sigma(q),
\]
where $\delta=1-\frac{n}{p}>0$. Hence, from the Cauchy-Schwartz inequality and Lemma~\ref{BoundxInsideDelta},
\begin{align*}
|\nabla w(x)|^2&\leq\left(\int_{\partial\Omega}|x-q|^{1-n+\delta}\,d\sigma(q)\right)\cdot\left(\int_{\partial\Omega}|x-q|^{1-n+\delta}|f(q)|^2\,d\sigma(q)\right)\\
&\leq C\int_{\partial\Omega}|x-q|^{1-n+\delta}|f(q)|^2\,d\sigma(q).
\end{align*}
Therefore, from the Minkowski inequality,
\begin{align}\nonumber
\left(\int_{\Omega}|\nabla w(x)|^{2\cdot\frac{n}{n-1}}\right)^{\frac{n-1}{n}}&\leq C\left(\int_{\Omega}\left(\int_{\partial\Omega}|x-q|^{1-n+\delta}|f(q)|^2\,d\sigma(q)\right)^{\frac{n}{n-1}}\,dx\right)^{\frac{n-1}{n}}\\
\nonumber
&\leq C\int_{\partial\Omega}\left(\int_{\Omega}\left(|x-q|^{1-n+\delta}|f(q)|^2\right)^{\frac{n}{n-1}}\,dx\right)^{\frac{n-1}{n}}\,d\sigma(q)\\
\label{eq:SubstForInner}
&=C\int_{\partial\Omega}\left(\int_{\Omega}|x-q|^{-n+\frac{n\delta}{n-1}}\,dx\right)^{\frac{n-1}{n}}|f(q)|^2\,d\sigma(q)\leq C\int_{\partial\Omega}|f|^2\,d\sigma,
\end{align}
where we used the calculation right after (7.32) in \cite{Gilbarg}. Using that $\mathcal{S}_+f=v+w$, \eqref{eq:vFromImprovedBefore} and \eqref{eq:SubstForInner} show that
\[
\|\nabla\mathcal{S}_+f\|_{L^{p_1}(\Omega)}\leq \|\nabla v\|_{L^{p_1}(\Omega)}+\|\nabla w\|_{L^{p_1}(\Omega)}\leq \|\nabla v\|_{L^{p_1}(\Omega)}+|\Omega|^{\frac{1}{p_1}-\frac{n-1}{2n}}\|\nabla w\|_{L^{\frac{2n}{n-1}}(\Omega)}\leq C\|f\|_{L^2(\partial\Omega)},
\]
which completes the proof.
\end{proof}

\subsection{Invertibility of \texorpdfstring{$\mathcal{S}$}{S}: a special case}
We will now show that under the additional assumption \eqref{eq:ACond} for the coefficients $A,b$ and if also $d=0$ and $\dive c\leq 0$, the single layer potential $\mathcal{S}:L^2(\partial\Omega)\to W^{1,2}(\partial\Omega)$ is invertible.

\begin{lemma}\label{InvertibilityUnderCondition}
Let $\Omega$ be a Lipschitz domain with $\diam(\Omega)<\frac{1}{16}$, and let $A\in\M_{B_{\Omega}}(\lambda,\alpha,\tau)$, $b\in C_{B_{\Omega}}(\alpha,\tau)$. Assume, in addition, that $A$ and $b$ satisfy Condition \eqref{eq:ACond}, both in $\Omega$ and $B_{\Omega}\setminus\Omega$, with $\alpha_0\leq \alpha$. Consider also $c\in L^p(B_{\Omega})$ for $p>n$, with $\dive c\leq 0$ in $B_{\Omega}$. Then, the single layer potential $\mathcal{S}:L^2(\partial\Omega)\to W^{1,2}(\partial\Omega)$ for the operator $-\dive(A\nabla u+bu)+c\nabla u$ is invertible, with $\|\mathcal{S}^{-1}\|_{W^{1,2}(\partial\Omega)\to L^2(\partial\Omega)}\leq C$, where $C$ depends on $n,p,\lambda,\alpha,\tau$, $\|A\|_{\infty},\|b\|_{\infty},\|c\|_p$, the constants $C_1$ and $\alpha_0$ that appear in Condition~\eqref{eq:ACond}, and the Lipschitz character of $\Omega$.
\end{lemma}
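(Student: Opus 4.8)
The strategy is the standard Verchota-type argument adapted to our non-coercive, non-symmetric setting: show that $\mathcal{S}$ is bounded (already done in Proposition~\ref{LayerBoundedness}), show that it is injective with closed range via the Rellich estimate, and then show surjectivity — but since $\mathcal{S}$ need not be self-adjoint here, I expect surjectivity to come instead from a Fredholm/continuity-in-parameter argument or from the jump relations, combined with the fact that the ``interior'' and ``exterior'' solutions built from $\mathcal{S}_\pm$ span enough. Concretely, the first step is to establish the key lower bound
\[
\|f\|_{L^2(\partial\Omega)}\leq C\|\mathcal{S}f\|_{W^{1,2}(\partial\Omega)}\qquad\text{for all }f\in L^2(\partial\Omega).
\]
To prove this, set $u_\pm=\mathcal{S}_\pm f$. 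By Proposition~\ref{SingleLayerBounds} these solve $\mathcal{L}u=0$ in $\Omega$ and in $B_\Omega\setminus\Omega$ respectively, have $(\nabla u_\pm)^*\in L^2(\partial\Omega)$ with norm controlled by $\|f\|_{L^2(\partial\Omega)}$, converge nontangentially to $\mathcal{S}f$ on $\partial\Omega$, and by Proposition~\ref{NonTangConv} satisfy $\nabla_T u_+=\nabla_T u_-$ and the jump relation $f=(\partial_\nu u)_+-(\partial_\nu u)_-$.

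The heart of the matter is to bound $\|(\partial_\nu u)_\pm\|_{L^2(\partial\Omega)}$ by $\|\mathcal{S}f\|_{W^{1,2}(\partial\Omega)}$ plus small error. First I would reduce to the case $d=0$: the hypothesis already gives $d=0$. Next, since the Rellich machinery (Lemma~\ref{GlobalRellich}) requires the equation to have the symmetrizable form $-\dive(A\nabla u+bu)+c\nabla u=0$ with $\dive c\le0$, and this is exactly our situation, I apply Lemma~\ref{GlobalRellich} to $u_+$ (and the analogous estimate on $B_\Omega\setminus\Omega$ to $u_-$) for a parameter $\rho\in(0,r_\Omega)$:
\[
\int_{\partial\Omega}|\partial_\nu u_+|^2\,d\sigma
\leq C\rho^{-1}\!\!\int_{\partial\Omega}\!\!\big(|u_+|^2+|\nabla_T u_+|^2\big)d\sigma
+C\!\!\int_{\Omega_{C_0\rho}}\!\!|c||\nabla u_+|^2
+C\rho^{\alpha_0}\!\!\int_{\partial\Omega}\!\!|(\nabla u_+)^*|^2\,d\sigma .
\]
The term $\rho^{\alpha_0}\|(\nabla u_+)^*\|_{L^2}^2\le C\rho^{\alpha_0}\|f\|_{L^2}^2$ is absorbed once $\rho$ is chosen small. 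The genuinely delicate term is $\int_{\Omega_{C_0\rho}}|c||\nabla u_+|^2$: here is where Condition~\eqref{eq:ACond} and the higher integrability of $\nabla\mathcal{S}_+f$ enter. By Lemma~\ref{qImproved}, $\|\nabla\mathcal{S}_+f\|_{L^{p_1}(\Omega)}\le C\|f\|_{L^2(\partial\Omega)}$ for some $p_1\in\big(2,\tfrac{2n}{n-1}\big)$; writing $\tfrac2{p_1}+\tfrac2{q_1}=1$ and using Hölder together with $c\in L^p$, $p>n$, and $|\Omega_{C_0\rho}|\le C\rho$ from Lemma~\ref{SmallArea}, one gets
\[
\int_{\Omega_{C_0\rho}}|c||\nabla u_+|^2
\le \|c\|_{L^p(\Omega_{C_0\rho})}\,\|\nabla u_+\|_{L^{p_1}(\Omega)}^2\,|\Omega_{C_0\rho}|^{1-\frac1p-\frac2{p_1}}
\le C\rho^{\theta}\|f\|_{L^2(\partial\Omega)}^2
\]
for some $\theta>0$ (choosing $p_1$ close enough to $2n/(n-1)$ and using $p>n$ makes the exponent positive). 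Thus all the ``solid'' terms carry a positive power of $\rho$ and can be absorbed after fixing $\rho$ small, leaving
\[
\|(\partial_\nu u_+)\|_{L^2(\partial\Omega)}^2+\|(\partial_\nu u_-)\|_{L^2(\partial\Omega)}^2
\le C\big(\|\mathcal{S}f\|_{L^2(\partial\Omega)}^2+\|\nabla_T\mathcal{S}f\|_{L^2(\partial\Omega)}^2\big)+\tfrac12\|f\|_{L^2(\partial\Omega)}^2 .
\]
(The $\|u_\pm\|_{L^2(\partial\Omega)}$ pieces equal $\|\mathcal{S}f\|_{L^2(\partial\Omega)}$ up to the exterior boundary, and $\nabla_T u_\pm=\nabla_T\mathcal{S}f$.) Then the jump relation $f=(\partial_\nu u)_+-(\partial_\nu u)_-$ gives $\|f\|_{L^2(\partial\Omega)}\le C\|\mathcal{S}f\|_{W^{1,2}(\partial\Omega)}+\tfrac12\|f\|_{L^2(\partial\Omega)}$, hence the desired lower bound after absorbing. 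This immediately yields injectivity of $\mathcal{S}$ and that its range is closed in $W^{1,2}(\partial\Omega)$, and also the quantitative bound $\|\mathcal{S}^{-1}\|\le C$ on the range.

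It remains to prove that the range is all of $W^{1,2}(\partial\Omega)$; this is the step I expect to be the main obstacle. The clean way is continuity in a parameter: connect our operator $\mathcal{L}$ to a ``model'' operator $\mathcal{L}_0$ (say $-\dive(\tilde A\nabla u)$ with the periodic extension $\tilde A$ of $A$, for which $\mathcal{S}_0:L^2(\partial\Omega)\to W^{1,2}(\partial\Omega)$ is known to be invertible by the results of \cite{KenigShen} quoted in Proposition~\ref{SolidByBoundary}'s proof) through a family $\mathcal{L}_t$, $t\in[0,1]$, with $A_t,b_t,c_t$ interpolating and each still satisfying \eqref{eq:ACond} and $\dive c_t\le0$. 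For every $t$ the lower bound above holds with a constant uniform in $t$ (the constants depend only on the fixed structural quantities), so each $\mathcal{S}_t$ is injective with closed range and uniformly bounded inverse on its range; by Lemma~\ref{GeneralPerturbation} the map $t\mapsto\mathcal{S}_t$ is norm-continuous into $\mathcal{B}(L^2(\partial\Omega),W^{1,2}(\partial\Omega))$. A standard continuity-of-Fredholm-index (method of continuity) argument then propagates surjectivity from $t=0$ to $t=1$: the set of $t$ for which $\mathcal{S}_t$ is onto is open (uniform lower bound plus perturbation) and closed (closed range plus uniform bound), and nonempty. This gives invertibility of $\mathcal{S}$ with the stated bound on $\mathcal{S}^{-1}$. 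The main technical care is to check that the interpolating coefficients can be chosen so that Condition~\eqref{eq:ACond} holds uniformly along the path — for $A_t=\tilde A+t(A-\tilde A)$ restricted to $\Omega$ and $b_t=tb$, $c_t=tc$ this is immediate since \eqref{eq:ACond} is linear in $f$ and $\dive(tc)=t\dive c\le0$ — and that $A_t$ stays uniformly elliptic and Hölder, which is clear.
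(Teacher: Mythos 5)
Your main estimate is exactly the paper's: apply the global Rellich estimate (Lemma~\ref{GlobalRellich}) to $u_{\pm}=\mathcal{S}_{\pm}f$, control $\int_{\Omega_{C_0\rho}}|c||\nabla u_{\pm}|^2$ by H\"older together with Lemma~\ref{qImproved} and Lemma~\ref{SmallArea} so that this term carries a positive power of $\rho$, absorb after fixing $\rho$ small, and use the jump relation of Proposition~\ref{NonTangConv} to obtain $\|f\|_{L^2(\partial\Omega)}\leq C\|\mathcal{S}f\|_{W^{1,2}(\partial\Omega)}$; the paper then also concludes by the method of continuity via the perturbation bound of Lemma~\ref{GeneralPerturbation}. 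So the architecture coincides, and your bookkeeping of the exponents (choosing $p_1$ close to $\frac{2n}{n-1}$, using $p>n$) is sound.

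The one place where your write-up does not quite close is the base point of the continuity argument. The family $\mathcal{S}_t$ to which Lemma~\ref{GeneralPerturbation} applies consists of single layer potentials whose kernels are Green's functions for $\mathcal{L}_t$ in $B_{\Omega}$; the invertibility you invoke at $t=0$ (the result of \cite{KenigShen}, \cite{NguyenPaper} quoted in the proof of Proposition~\ref{SolidByBoundary}) concerns the operator with kernel the fundamental solution $\Gamma_{\tilde{A}}$, which is not the same operator as the Green's-function-based $\mathcal{S}_0$ in your family. In addition, with the path $A_t=\tilde{A}+t(A-\tilde{A})$ the coefficient need not satisfy Condition~\eqref{eq:ACond} in all of $B_{\Omega}\setminus\Omega$, since the periodic extension $\tilde{A}$ is only H\"older continuous in the annulus $B_{10\rho}\setminus B_{8\rho}$, and your uniform-in-$t$ lower bound relies on that condition through Lemmas~\ref{GlobalRellich} and~\ref{qImproved}. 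Both points are fixable: the paper instead interpolates $\mathcal{L}_tu=-\dive((tA+(1-t)I)\nabla u+tbu)+tc\nabla u$, for which ellipticity, Condition~\eqref{eq:ACond} and $\dive(tc)\leq 0$ hold with uniform constants, and whose $t=0$ endpoint is the Laplacian, whose Green's-function single layer potential in $B_{\Omega}$ is invertible; alternatively, keep your base point and bridge the kernel discrepancy by noting via \eqref{eq:DifGamma} that the difference of the two kernels is weakly singular, so the difference operator $L^2(\partial\Omega)\to W^{1,2}(\partial\Omega)$ is compact, whence the Green's-function potential at $t=0$ is Fredholm of index zero and your injectivity bound upgrades this to invertibility.
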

\begin{proof}
Note that, from Proposition~\ref{SingleLayerBounds}, $u_+=\mathcal{S}_+f$ is a solution of $\mathcal{L}u=0$ in $\Omega$, with $(\nabla u_+)^*\in L^2(\partial\Omega)$. Moreover, $\nabla u_+$ converges nontangentially, almost everywhere on $\partial\Omega$, from Proposition~\ref{NonTangConv}. Therefore, the Rellich estimate (Lemma~\ref{GlobalRellich}) is applicable for $u_+$, so for $\rho\in(0,r_{\Omega})$,
\begin{align}
\nonumber
\int_{\partial\Omega}|\partial_{\nu}u_+|^2\,d\sigma&\leq C\rho^{-1}\int_{\partial\Omega}\left(|u_+|^2+|\nabla_Tu_+|^2\right)\,d\sigma+C\int_{\Omega_{C_0\rho}}|c||\nabla u_+|^2+C\rho^{\alpha_0}\int_{\partial\Omega}|(\nabla u_+)^*|^2\,d\sigma\\
\label{eq:u_+}
&\leq C\rho^{-1}\|\mathcal{S}f\|_{W^{1,2}(\partial\Omega)}^2+C\int_{\Omega_{C_0\rho}}|c||\nabla\mathcal{S}_+f|^2+C\rho^{\alpha_0}\int_{\partial\Omega}|f|^2\,d\sigma,
\end{align}
where we used the bound from Proposition~\ref{SingleLayerBounds} in the last step and the fact that $u_+$ converges to $\mathcal{S}f$ nontangentially, almost everywhere on $\partial\Omega$, and where $C$ depends on $n,p,\lambda,\alpha,\tau$, $\|A\|_{\infty},\|b\|_{\infty}$, $\|c\|_p$, the constants $C_1$ and $\alpha_0$ that appear in Condition~\eqref{eq:ACond}, and the Lipschitz character of $\Omega$. A similar reasoning applies to $u_-=\mathcal{S}_-f$, which is a solution of $\mathcal{L}u=0$ in the Lipschitz domain $B_{\Omega}\setminus\Omega$, and we obtain
\begin{equation}\label{eq:u_-}
\int_{\partial\Omega}|\partial_{\nu}u_-|^2\,d\sigma\leq C\rho^{-1}\|\mathcal{S}f\|_{W^{1,2}(\partial\Omega)}^2+C\int_{\Omega_{C_0\rho}}|c||\nabla\mathcal{S}_-f|^2+C\rho^{\alpha_0}\int_{\partial\Omega}|f|^2\,d\sigma.
\end{equation}
Note now that, from the jump relation (Proposition~\ref{NonTangConv}), we have that
\[
\|f\|_{L^2(\partial\Omega)}=\|\partial_{\nu}u_+-\partial_{\nu}u_-\|_{L^2(\partial\Omega)}\leq \|\partial_{\nu}u_+\|_{L^2(\partial\Omega)}+\|\partial_{\nu}u_-\|_{L^2(\partial\Omega)}.
\]
Let now $p'=\frac{p+n}{2}\in(n,p)$, and $q'\in\left(1,\frac{n}{n-1}\right)$ be the conjugate exponent to $p'$. Then, plugging \eqref{eq:u_+} and \eqref{eq:u_-} in the last estimate and using H{\"o}lder's inequality,
\begin{align*}
\|f\|_{L^2(\partial\Omega)}^2&\leq C\rho^{-1}\|\mathcal{S}f\|_{W^{1,2}(\partial\Omega)}^2+C\int_{\Omega_{C_0\rho}}|c|(|\nabla\mathcal{S}_+f|^2+|\nabla\mathcal{S}_-f|^2)+C\rho^{\alpha_0}\int_{\partial\Omega}|f|^2\,d\sigma\\
&\leq C\rho^{-1}\|\mathcal{S}f\|_{W^{1,2}(\partial\Omega)}^2+C\|c\|_{L^{\tilde{p}}(\Omega_{C_0\rho})}(\|\nabla\mathcal{S}_+f\|_{L^{2q'}(\Omega)}^2+\|\nabla\mathcal{S}_-f\|_{L^{2q'}(\Omega)}^2)+C\rho^{\alpha_0}\|f\|_{L^2(\partial\Omega)}^2\\
&\leq C\rho^{-1}\|\mathcal{S}f\|_{W^{1,2}(\partial\Omega)}^2+C\rho^{1/r}\|c\|_p\|f\|_{L^2(\partial\Omega)}^2+C\rho^{\alpha_0}\|f\|_{L^2(\partial\Omega)}^2,
\end{align*}
where $r=\frac{1}{p'}-\frac{1}{p}$, and where we used Lemma~\ref{SmallArea} and Lemma~\ref{qImproved} for $p_1=2q'$ in the last step.

We now choose $\rho>0$, depending only on $n,p,\lambda,\alpha,\tau$,$\|A\|_{\infty},\|b\|_{\infty}$,$\|c\|_p$, the constants $C_1$ and $\alpha_0$ that appear in Condition~\eqref{eq:ACond}, and the Lipschitz character of $\Omega$, such that $C\rho^{1/r}\|c\|_p+C\rho^{\alpha_0}<\frac{1}{2}$. Then, we obtain that
\begin{equation}\label{eq:InverseIneq}
\|f\|_{L^2(\partial\Omega)}\leq C\|\mathcal{S}f\|_{W^{1,2}(\partial\Omega)}.
\end{equation}

To show invertibility, let $\mathcal{S}_tf$ be the single layer potential for the operator
\[
\mathcal{L}_tu=-\dive((tA+(1-t)I)\nabla u+tbu)+tc\nabla u,
\]
where $t\in[0,1]$. Since $\mathcal{S}_0$ corresponds to the single layer potential for the Laplacian in $\Omega$, with the kernel being Green's function for the Laplacian in $B_{\Omega}$, $\mathcal{S}_0:L^2(\partial\Omega)\to W^{1,2}(\partial\Omega)$ is invertible. Moreover, if $G_t$ is Green's function for $\mathcal{L}_t$ in $B_{\Omega}$, then $\|\mathcal{S}_tf-\mathcal{S}_sf\|_{L^2(\partial\Omega)\to W^{1,2}(\partial\Omega)}\leq C|t-s|$, from Lemma~\ref{GeneralPerturbation}. Therefore, the bound in \eqref{eq:InverseIneq} for $\mathcal{S}_t$ and the continuity method show that $\mathcal{S}_1=\mathcal{S}:L^2(\partial\Omega)\to W^{1,2}(\partial\Omega)$ is invertible, and the bound in \eqref{eq:InverseIneq} completes the proof.
\end{proof}

\section{Invertibility of \texorpdfstring{$\mathcal{S}$}{S}}
\subsection{A Perturbation Lemma}
In order to reduce the general case considered to Lemma~\ref{InvertibilityUnderCondition}, and in order to treat $c$ as a perturbation of $0$, we will use the next lemma.
\begin{lemma}\label{Smalldelta0}
Let $\Omega\subseteq\bR^n$ be a Lipschitz domain with $\diam(\Omega)<\frac{1}{16}$. Let also $A_i\in\M_{B_{\Omega}}(\lambda,\alpha,\tau)$, $b_i\in\C_{B_{\Omega}}(\alpha,\tau)$, $c_i\in L^p(B_{\Omega})$ for some $p>n$ for $i=1,2$, with either $\dive b_1,\dive b_2\leq 0$, or $\dive c_1,\dive c_2\leq 0$. Set
\[
\mathcal{L}_iu=-\dive(A_i\nabla u+b_iu)+c_i\nabla u,
\]
and let $\mathcal{S}_i$ be the single layer potential operator $\displaystyle\mathcal{S}_if(p)=\int_{\partial\Omega}G_i(p,q)f(q)\,d\sigma(q)$, where $G_i$ is Green's function for $\mathcal{L}_i$ in $B_{\Omega}$. Assume also that $\mathcal{S}_1:L^2(\partial\Omega)\to W^{1,2}(\partial\Omega)$ is invertible, with the norm of its inverse being bounded by $N_1>0$. There exists a constant $\delta_0$, depending on $n,p,\lambda,\alpha,\tau$,$\|A_i\|_{\infty}$,$\|b_i\|_{\infty}$,$\|c_i\|_p$ for $i=1,2$, the Lipschitz character of $\Omega$ and $N_1$, such that, if
\[
\|A_1-A_2\|_{C^{\alpha}}+\|b_1-b_2\|_{C^{\alpha}}+\|c_1-c_2\|_p<\delta_0,
\]
then $\mathcal{S}_2:L^2(\partial\Omega)\to W^{1,2}(\partial\Omega)$ is invertible, with $\|\mathcal{S}^{-1}\|_{L^2(\partial\Omega)\to W^{1,2}(\partial\Omega)}\leq 2N_1$.
\end{lemma}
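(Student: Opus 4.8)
The plan is to use a standard Neumann-series / perturbation argument for bounded invertible operators on a Banach space, with the key quantitative input being the perturbation estimate of Lemma~\ref{GeneralPerturbation}. Concretely, write $\mathcal{S}_2 = \mathcal{S}_1 + (\mathcal{S}_2 - \mathcal{S}_1)$ and factor $\mathcal{S}_2 = \mathcal{S}_1\bigl(I + \mathcal{S}_1^{-1}(\mathcal{S}_2-\mathcal{S}_1)\bigr)$, where $I$ is the identity on $L^2(\partial\Omega)$. Since by hypothesis $\mathcal{S}_1:L^2(\partial\Omega)\to W^{1,2}(\partial\Omega)$ is invertible with $\|\mathcal{S}_1^{-1}\|_{W^{1,2}(\partial\Omega)\to L^2(\partial\Omega)}\le N_1$, the operator $\mathcal{S}_1^{-1}(\mathcal{S}_2-\mathcal{S}_1)$ maps $L^2(\partial\Omega)$ to $L^2(\partial\Omega)$ with norm at most $N_1\,\|\mathcal{S}_2-\mathcal{S}_1\|_{L^2(\partial\Omega)\to W^{1,2}(\partial\Omega)}$.

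The second step is to invoke Lemma~\ref{GeneralPerturbation}: under the stated sign conditions on the drifts (either $\dive b_1,\dive b_2\le 0$ or $\dive c_1,\dive c_2\le 0$, which is the case $d_i=0$ with $d_i\ge\dive b_i$ or $d_i\ge\dive c_i$ in that lemma), we have
\[
\|\mathcal{S}_2-\mathcal{S}_1\|_{L^2(\partial\Omega)\to W^{1,2}(\partial\Omega)}\le C\bigl(\|A_1-A_2\|_{C^{\alpha}}+\|b_1-b_2\|_{C^{\alpha}}+\|c_1-c_2\|_p\bigr),
\]
with $C$ depending only on the allowed quantities. Now choose $\delta_0$ so that $C N_1 \delta_0 < \tfrac12$; then whenever $\|A_1-A_2\|_{C^{\alpha}}+\|b_1-b_2\|_{C^{\alpha}}+\|c_1-c_2\|_p<\delta_0$ we get $\|\mathcal{S}_1^{-1}(\mathcal{S}_2-\mathcal{S}_1)\|_{L^2\to L^2}<\tfrac12$, so $I+\mathcal{S}_1^{-1}(\mathcal{S}_2-\mathcal{S}_1)$ is invertible on $L^2(\partial\Omega)$ by the Neumann series, with inverse of norm at most $2$. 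Consequently $\mathcal{S}_2 = \mathcal{S}_1\bigl(I+\mathcal{S}_1^{-1}(\mathcal{S}_2-\mathcal{S}_1)\bigr)$ is invertible as a map $L^2(\partial\Omega)\to W^{1,2}(\partial\Omega)$, being a composition of invertible maps, and
\[
\|\mathcal{S}_2^{-1}\|_{W^{1,2}(\partial\Omega)\to L^2(\partial\Omega)}\le \bigl\|(I+\mathcal{S}_1^{-1}(\mathcal{S}_2-\mathcal{S}_1))^{-1}\bigr\|_{L^2\to L^2}\,\|\mathcal{S}_1^{-1}\|_{W^{1,2}\to L^2}\le 2N_1.
\]

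There is no serious obstacle here; this is essentially the openness of the set of invertible operators, made quantitative. The only point requiring a little care is bookkeeping of the function spaces in the composition: $\mathcal{S}_1^{-1}$ and $\mathcal{S}_2-\mathcal{S}_1$ compose to an operator on $L^2(\partial\Omega)$, so the Neumann series lives in $\mathcal{B}(L^2(\partial\Omega))$, and only afterwards do we compose with $\mathcal{S}_1$ to land back in $W^{1,2}(\partial\Omega)$. One should also note that $\mathcal{S}_2$ itself is a bounded map $L^2(\partial\Omega)\to W^{1,2}(\partial\Omega)$ by Proposition~\ref{LayerBoundedness}, so all operators in sight are genuinely bounded between the spaces claimed, and the factorization identity is legitimate. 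The dependence of $\delta_0$ on the listed parameters is exactly the dependence of the constant $C$ in Lemma~\ref{GeneralPerturbation} together with $N_1$.
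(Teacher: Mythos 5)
Your proposal is correct and is essentially the paper's own argument: the key input is the perturbation bound of Lemma~\ref{GeneralPerturbation}, followed by the quantitative Neumann-series/openness-of-invertibles argument with the choice $C N_1\delta_0<\tfrac12$. The only (immaterial) difference is that you factor $\mathcal{S}_2=\mathcal{S}_1\bigl(I+\mathcal{S}_1^{-1}(\mathcal{S}_2-\mathcal{S}_1)\bigr)$ and run the Neumann series in $\mathcal{B}(L^2(\partial\Omega))$, whereas the paper writes $\mathcal{S}_2=\bigl(\mathcal{S}_2\mathcal{S}_1^{-1}\bigr)\mathcal{S}_1$ and perturbs the identity on $W^{1,2}(\partial\Omega)$, yielding the same bound $\|\mathcal{S}_2^{-1}\|\leq 2N_1$.
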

\begin{proof}
Using Lemma~\ref{GeneralPerturbation}, we have that $\|\mathcal{S}_2-\mathcal{S}_1\|_{L^2(\partial\Omega)\to W^{1,2}(\partial\Omega)}\leq C\delta_0$. So, if $\delta_0=\frac{1}{2CN_1}$,
\begin{align*}
\|\mathcal{S}_2\mathcal{S}_1^{-1}-I\|_{L^2(\partial\Omega)\to W^{1,2}(\partial\Omega)}&=\|\left(\mathcal{S}_2-\mathcal{S}_1\right)\mathcal{S}_1^{-1}\|_{L^2(\partial\Omega)\to W^{1,2}(\partial\Omega)}\leq C\delta_0N_1=\frac{1}{2}.
\end{align*}
If $\mathcal{A}=\mathcal{S}_2\mathcal{S}_1^{-1}$, then the previous estimate shows that $\mathcal{A}$ is invertible, and $\left\|\mathcal{A}^{-1}\right\|_{L^2(\partial\Omega)\to W^{1,2}(\partial\Omega)}\leq 2$. Therefore $\mathcal{S}_2$ is invertible, and $\|\mathcal{S}_2^{-1}\|_{L^2(\partial\Omega)\to W^{1,2}(\partial\Omega)}=\|\mathcal{S}_1^{-1}\mathcal{A}^{-1}\|_{L^2(\partial\Omega)\to W^{1,2}(\partial\Omega)}\leq 2N_1$.
\end{proof}

\subsection{The Rellich property}
We now turn to the Rellich property, which will be an equivalent condition for solvability of the $R_2$ Regularity problem. For the next definition we adapt Definition 5.1 in \cite{KenigShen} in our case, the main difference being that only the tangential gradient of $u$ appears on the right hand side.

\begin{dfn}\label{TRel}
Let $\mathcal{L}u=-\dive(A\nabla u+bu)+c\nabla u+du$ in a bounded Lipschitz domain $\Omega$. We say that $\mathcal{L}$ has the $T$-Rellich property in $\Omega$ with constant $C$ if for any $u\in W^{1,2}_{{\rm loc}}(\Omega)$ which solves $\mathcal{L}u=0$ such that $(\nabla u)^*\in L^2(\partial\Omega)$ and $u,$ $\nabla u$ exist nontangentially, almost everywhere on $\partial\Omega$, we have the estimate
\begin{equation}\label{eq:TRel}
\|\nabla u\|_{L^2(\partial\Omega)}\leq C\|u\|_{L^2(\partial\Omega)}+C\|\nabla_Tu\|_{L^2(\partial\Omega)}.
\end{equation}
\end{dfn}

The main result we will show is that, under our assumptions, the $T$-Rellich property is equivalent to invertibility of the single layer potential.

\begin{prop}\label{RelInvEquivalence}
Let $\Omega\subseteq\bR^n$ be a Lipschitz domain with $\diam(\Omega)<\frac{1}{16}$, let $A\in M_{B_{\Omega}}(\lambda,\alpha,\tau)$, $b\in\C_{B_{\Omega}}(\alpha,\tau)$ and $c\in L^p(B_{\Omega})$ for some $p>n$, with $\dive b\leq 0$, or $\dive c\leq 0$. Consider also $\mathcal{L}_tu=-\dive((tA+\lambda(1-t)I)\nabla u+tbu)+tc\nabla u$, for $t\in[0,1]$, and let $\mathcal{S}$ be the single layer potential for the operator $\mathcal{L}=\mathcal{L}_1$.
\begin{enumerate}[i)]
\item If $\mathcal{S}:L^2(\partial\Omega)\to W^{1,2}(\partial\Omega)$ is invertible with $\|\mathcal{S}^{-1}\|_{W^{1,2}(\partial\Omega)\to L^2(\partial\Omega)}\leq N_1$, then the $T$-Rellich property holds for $\mathcal{L}$ in $\Omega$, with a constant that depends on $n,p,\lambda,\alpha,\tau$,$\|A\|_{\infty}$,$\|b\|_{\infty}$,$\|c\|_p$, the Lipschitz character of $\Omega$ and $N_1$.
\item If the $T$-Rellich property holds for $\mathcal{L}_t$ for some constant $C$ both in $\Omega$ and $B_{\Omega}\setminus\Omega$, uniformly in $t\in[0,1]$, then the single layer potential $\mathcal{S}:L^2(\partial\Omega)\to W^{1,2}(\partial\Omega)$ is invertible, with $\|\mathcal{S}^{-1}\|$ being bounded above by a constant that depends on $C$.
\end{enumerate}
\end{prop}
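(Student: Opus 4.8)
\emph{Proof plan for Proposition~\ref{RelInvEquivalence}.}

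\textbf{Part (i).} Assume $\mathcal{S}\colon L^2(\partial\Omega)\to W^{1,2}(\partial\Omega)$ is invertible with $\|\mathcal{S}^{-1}\|\le N_1$, and let $u$ be as in Definition~\ref{TRel}. If the right-hand side of \eqref{eq:TRel} is infinite there is nothing to prove, so I may assume $g:=u|_{\partial\Omega}\in W^{1,2}(\partial\Omega)$. The plan is to compare $u$ with the single layer solution having the same boundary data: set $f=\mathcal{S}^{-1}g$ and $v=\mathcal{S}_+f$. By Proposition~\ref{SingleLayerBounds}, $v\in W^{1,2}_{\loc}(\Omega)$ solves $\mathcal{L}v=0$ in $\Omega$, converges nontangentially a.e.\ to $\mathcal{S}f=g$, and $\|(\nabla v)^*\|_{L^2(\partial\Omega)}\le C\|f\|_{L^2(\partial\Omega)}\le CN_1\|g\|_{W^{1,2}(\partial\Omega)}$. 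Then $w=u-v$ solves $\mathcal{L}w=0$ in $\Omega$, has $(\nabla w)^*\in L^2(\partial\Omega)$, and tends nontangentially a.e.\ to $0\in W^{1,2}(\partial\Omega)$; since $\diam(\Omega)<\tfrac18$ and $d=0$ with $\dive b\le0$ or $\dive c\le0$, Proposition~\ref{SolidByBoundary} applies to $w$ and forces $\int_\Omega(|w|^2+|\nabla w|^2)=0$, i.e.\ $w\equiv0$. Hence $\nabla u=\nabla v$ nontangentially a.e.\ on $\partial\Omega$, and
\[
\|\nabla u\|_{L^2(\partial\Omega)}\le\|(\nabla v)^*\|_{L^2(\partial\Omega)}\le CN_1\bigl(\|u\|_{L^2(\partial\Omega)}+\|\nabla_Tu\|_{L^2(\partial\Omega)}\bigr),
\]
which is exactly \eqref{eq:TRel}.

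\textbf{Part (ii).} Here I would first derive a uniform a priori estimate and then run the continuity method along the family $\mathcal{L}_t$. Fix $t\in[0,1]$ and $f\in L^2(\partial\Omega)$ and put $u_\pm=\mathcal{S}_{t,\pm}f$. By Proposition~\ref{SingleLayerBounds}, $u_+$ solves $\mathcal{L}_tu=0$ in $\Omega$ and $u_-$ solves $\mathcal{L}_tu=0$ in $B_\Omega\setminus\overline{\Omega}$, both with $(\nabla u_\pm)^*\in L^2(\partial\Omega)$; by Propositions~\ref{SingleLayerBounds} and \ref{NonTangConv}, both $u_+$ and $u_-$ converge nontangentially a.e.\ on $\partial\Omega$ to $\mathcal{S}_tf$ with $\nabla_Tu_\pm=\nabla_T\mathcal{S}_tf$ there, while $u_-$ and its tangential gradient vanish on $\partial B_\Omega$; and the jump relation $f=(\partial_\nu u)_+-(\partial_\nu u)_-$ holds, where $\partial_\nu u=\langle(tA+\lambda(1-t)I)\nabla u,\nu\rangle$ satisfies $|\partial_\nu u|\le C|\nabla u|$ with $C$ independent of $t$. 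Applying the $T$-Rellich property of $\mathcal{L}_t$ in $\Omega$ to $u_+$, and in $B_\Omega\setminus\overline{\Omega}$ to $u_-$ (the $\partial B_\Omega$ part of the right-hand side being zero), I obtain $\|\nabla u_\pm\|_{L^2(\partial\Omega)}\le C\|\mathcal{S}_tf\|_{W^{1,2}(\partial\Omega)}$ with $C$ the uniform Rellich constant; combining with the jump relation gives the uniform a priori bound $\|f\|_{L^2(\partial\Omega)}\le C\|\mathcal{S}_tf\|_{W^{1,2}(\partial\Omega)}$.

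To finish, note that $tA+\lambda(1-t)I\in\M_{B_\Omega}(\lambda,\alpha,\tau)$, $tb\in\C_{B_\Omega}(\alpha,\tau)$, $tc\in L^p$, and $\dive(tb)\le0$ or $\dive(tc)\le0$, all with constants uniform in $t$; hence Lemma~\ref{GeneralPerturbation} yields $\|\mathcal{S}_t-\mathcal{S}_s\|_{L^2(\partial\Omega)\to W^{1,2}(\partial\Omega)}\le C|t-s|$. At $t=0$ the operator $\mathcal{S}_0$ is the single layer potential for $-\lambda\Delta$ with kernel Green's function of $B_\Omega$, which is invertible $L^2(\partial\Omega)\to W^{1,2}(\partial\Omega)$ (as in \cite{Verchota}, and as already used in Lemma~\ref{InvertibilityUnderCondition}). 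The uniform a priori bound, the norm continuity of $t\mapsto\mathcal{S}_t$, and the continuity method then show that the set of $t$ for which $\mathcal{S}_t$ is invertible is nonempty, open, and closed in $[0,1]$, hence equals $[0,1]$; in particular $\mathcal{S}=\mathcal{S}_1$ is invertible, with $\|\mathcal{S}^{-1}\|$ bounded by the reciprocal of the uniform a priori constant, which depends only on $C$ and the fixed data $n,p,\lambda,\alpha,\tau,\|A\|_\infty,\|b\|_\infty,\|c\|_p$ and the Lipschitz character of $\Omega$.

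\textbf{Expected main obstacle.} The most delicate bookkeeping is in part (ii): correctly handling the exterior domain $B_\Omega\setminus\overline{\Omega}$, whose boundary has the extra component $\partial B_\Omega$, by using that $\mathcal{S}_{t,-}f$ and its tangential gradient vanish there so that the $T$-Rellich estimate on that domain contributes only $\|\mathcal{S}_tf\|_{W^{1,2}(\partial\Omega)}$; and checking that $\mathcal{L}_t$ remains in the admissible class with ellipticity $\lambda$, Hölder data $(\alpha,\tau)$, $L^p$ norms, and the sign of $\dive(tb)$ (or $\dive(tc)$) all controlled uniformly in $t$, so that Proposition~\ref{NonTangConv}, Lemma~\ref{GeneralPerturbation}, and the uniform $T$-Rellich hypothesis genuinely apply with $t$-independent constants. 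In part (i) the only subtlety is the harmless reduction to $u|_{\partial\Omega}\in W^{1,2}(\partial\Omega)$ and identifying $\nabla_Tu$ with the tangential component of the nontangential limit of $\nabla u$.
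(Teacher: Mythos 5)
Your proposal is correct and follows essentially the same route as the paper: for (i), compare $u$ with $\mathcal{S}_+(\mathcal{S}^{-1}u_\partial)$, kill the difference via Proposition~\ref{SolidByBoundary}, and read off the estimate from the nontangential maximal function bound of Proposition~\ref{SingleLayerBounds}; for (ii), apply the uniform $T$-Rellich estimate to $\mathcal{S}^t_{\pm}f$ in $\Omega$ and $B_\Omega\setminus\Omega$, use the jump relation of Proposition~\ref{NonTangConv} to get $\|f\|_{L^2(\partial\Omega)}\leq C\|\mathcal{S}^tf\|_{W^{1,2}(\partial\Omega)}$, and conclude by the continuity method (with Lemma~\ref{GeneralPerturbation} supplying the norm continuity in $t$, exactly as in Lemma~\ref{InvertibilityUnderCondition}). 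Your extra remarks on the vanishing of $\mathcal{S}_{t,-}f$ on $\partial B_\Omega$ and the uniformity in $t$ only make explicit details the paper leaves implicit.
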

\begin{proof}
Let $u$ be as in Definition~\ref{TRel} and denote by $u_{\partial}\in W^{1,2}(\partial\Omega)$ the nontangential limit of $u$ on $\partial\Omega$. From invertibility of $\mathcal{S}:L^2(\partial\Omega)\to W^{1,2}(\partial\Omega)$, there exists $f\in L^2(\partial\Omega)$ such that $\mathcal{S}f=u_{\partial}$. Then, the function $v=\mathcal{S}_+f-u\in W^{1,2}_{\loc}(\Omega)$ solves $\mathcal{L}v=0$ in $\Omega$ and converges nontangentially, almost everywhere to $0$ on $\partial\Omega$, hence \eqref{eq:SolidByBoundary} shows that $v\equiv 0$ in $\Omega$. Hence, from nontangential convergence,
\begin{align*}
\|\nabla u\|_{L^2(\partial\Omega)}&\leq\|(\nabla u)^*)\|_{L^2(\partial\Omega)}=\|(\nabla\mathcal{S}_+f)^*\|_{L^2(\partial\Omega)}\\
&\leq C\|f\|_{L^2(\partial\Omega)}=C\|\mathcal{S}^{-1}u_{\partial}\|_{L^2(\partial\Omega)}\leq CN_1\|u_{\partial}\|_{W^{1,2}(\partial\Omega)},
\end{align*}
where we used Proposition \ref{SingleLayerBounds} for the second inequality. This completes the proof of the first part.
	
For the second part, let $\mathcal{S}^t$ be the single layer potential for $\mathcal{L}_t$. Let $f\in L^2(\partial\Omega)$ and set $u_+^t=\mathcal{S}^t_+f$, $u_-=\mathcal{S}_-^tf$. Using Propositions~\ref{SingleLayerBounds} and \ref{NonTangConv}, we obtain that \eqref{eq:TRel} is applicable for $u_+^t$ and $u_-^t$. Then, using the jump relation from Proposition~\ref{NonTangConv} and \eqref{eq:TRel}, we obtain that
\[
\|f\|_{L^2(\partial\Omega)}\leq \|\nabla u_+^t\|_{L^2(\partial\Omega)}+\|\nabla u_-^t\|_{L^2(\partial\Omega)}\leq C\|\mathcal{S}^tf\|_{W^{1,2}(\partial\Omega)}.
\]
Then the continuity method shows that $\mathcal{S}:L^2(\partial\Omega)\to W^{1,2}(\partial\Omega)$ is invertible, with $\|\mathcal{S}^{-1}\|\leq C$. 
\end{proof}

The basic fact about the Rellich property that we will use is that it only depends on the behavior of the coefficients near the boundary. For this, we recall the definition of $\Omega_{\sigma}$ from \eqref{eq:CloseFar}.

\begin{lemma}\label{ExtendingRellich}
Let $\Omega\subseteq\bR^n$ be a Lipschitz domain with $\diam(\Omega)<\frac{1}{16}$, and let $A_i\in\M_{B_{\Omega}}(\lambda,\alpha,\tau)$, $b_i\in\C_{B_{\Omega}}(\alpha,\tau)$, $c_i\in L^p(B_{\Omega})$ for some $p>n$ for $i=1,2$, with either $\dive b_1,\dive b_2\leq 0$, or $\dive c_1,\dive c_2\leq 0$. Set 
\[
\mathcal{L}_iu=-\dive(A_i\nabla u+b_iu)+c_i\nabla u,
\]
and suppose that the $T$-Rellich property holds for $\mathcal{L}_1$ with constant $\tilde{C}$. If $A_1=A_2$, $b_1=b_2$ and $c_1=c_2$ in $\Omega_{\sigma}$ for some $\sigma>0$, then the Rellich property holds for $\mathcal{L}_2$, with constant that depends on $n,p,\lambda,\alpha,\tau$, $\|A_i\|_{\infty}$,$\|b_i\|_{\infty}$,$\|c_i\|_p$ for $i=1,2$, the Lipschitz character of $\Omega$, $\sigma$ and $\tilde{C}$.
\end{lemma}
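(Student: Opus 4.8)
The plan is to localise: $u$ solves $\mathcal{L}_2u=0$, and since $\mathcal{L}_1=\mathcal{L}_2$ on $\Omega_\sigma$ the function $u$ also solves $\mathcal{L}_1u=0$ near $\partial\Omega$, so — the $T$-Rellich estimate \eqref{eq:TRel} only seeing boundary behaviour — one wants to feed $u$, modified to become a \emph{global} $\mathcal{L}_1$-solution, into Definition~\ref{TRel}. First I would note that we may assume $u|_{\partial\Omega}\in W^{1,2}(\partial\Omega)$, since otherwise \eqref{eq:TRel} is vacuous; then Proposition~\ref{SolidByBoundary} (applicable since $d=0$, $\dive b_2\le0$ or $\dive c_2\le0$, and $\diam(\Omega)<\frac18$) gives $u\in W^{1,2}(\Omega)$ with $\|u\|_{W^{1,2}(\Omega)}\le C\|u|_{\partial\Omega}\|_{W^{1,2}(\partial\Omega)}$. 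Next I would fix a cutoff $\eta=\eta(\delta(\cdot))$ with $\eta\equiv1$ on $\Omega_{\sigma/3}$, $\eta\equiv0$ on $\Omega\setminus\Omega_{2\sigma/3}$, and $|\nabla\eta|+|\nabla^2\eta|\le C(\sigma)$, so that $\nabla\eta$ is supported in the compact set $K:=\{\sigma/3\le\delta\le2\sigma/3\}$, which is compactly contained in $\Omega$ with $\dist(K,\partial\Omega)\ge\sigma/3$ and $K\subset\Omega_\sigma$. Setting $v=\eta u\in W^{1,2}(\Omega)$ and using $\mathcal{L}_1=\mathcal{L}_2$ together with $\mathcal{L}_1u=0$ on $\Omega_\sigma\supset\operatorname{supp}\nabla\eta$, a direct computation gives $\mathcal{L}_1v=-\dive f_0+g_0=:h$ with $f_0=uA_1\nabla\eta$ and $g_0=-\nabla\eta\cdot(A_1\nabla u+b_1u)+c_1u\nabla\eta$, both supported in $K$. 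Since $\dist(K,\partial\Omega)>0$, interior regularity (Proposition~\ref{CAlphaReg}) makes $u$ be $C^1$ on a neighbourhood of $K$, so $f_0\in L^\infty(K)$ and $g_0\in L^\infty(K)+L^p(K)$, with all relevant norms bounded by $C\|u|_{\partial\Omega}\|_{W^{1,2}(\partial\Omega)}$.

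The key step is the correction term. I would let $G_1^{B_\Omega}$ be Green's function for $\mathcal{L}_1$ in the ball $B_\Omega$ (Proposition~\ref{GreenBounds}, the hypothesis $\dive b_1\le0$ or $\dive c_1\le0$ being the required sign condition with $d=0$) and define $v_0(x)=\int_K\big(\nabla_yG_1^{B_\Omega}(x,y)\cdot f_0(y)+G_1^{B_\Omega}(x,y)g_0(y)\big)\,dy$ on $B_\Omega$, a solution of $\mathcal{L}_1v_0=h$ in $B_\Omega$ vanishing on $\partial B_\Omega$. Because $B_\Omega$ is a ball, \eqref{eq:GreenMain} gives $|G_1^{B_\Omega}(x,y)|\le C|x-y|^{2-n}$ and, via the identity $G_1^{B_\Omega}(x,y)=G_1^{t,B_\Omega}(y,x)$ applied to the adjoint $\mathcal{L}_1^t$ (whose coefficients $c_1,b_1$ also lie in $L^p$ and satisfy the sign condition), the weak bound $\|\nabla_yG_1^{B_\Omega}(x,\cdot)\|_{L^{n/(n-1),\infty}}\le C$; combined with Hölder's inequality (for Lorentz norms when handling the divergence-form part $f_0$) and the local integrability of $|x|^{2-n}$, this yields $\|v_0\|_{L^\infty(B_\Omega)}\le C\|u|_{\partial\Omega}\|_{W^{1,2}(\partial\Omega)}$. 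Crucially, as $h$ is supported in the interior set $K$, $v_0$ solves $\mathcal{L}_1v_0=0$ in the \emph{two-sided} neighbourhood $\{x\in B_\Omega:\dist(x,\partial\Omega)<\sigma/3\}$ of $\partial\Omega$, so Proposition~\ref{CAlphaReg} gives $v_0\in C^{1,\gamma}$ near $\partial\Omega$ with $C^1$ norm $\le C\|v_0\|_{L^\infty(B_\Omega)}$. In particular $v_0|_{\partial\Omega}\in W^{1,2}(\partial\Omega)$ with controlled norm, $(\nabla v_0)^*$ is bounded near $\partial\Omega$, and $v_0,\nabla v_0$ converge nontangentially a.e.\ on $\partial\Omega$.

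Finally I would set $w:=v-v_0\in W^{1,2}_{\loc}(\Omega)$, which solves $\mathcal{L}_1w=h-h=0$ in $\Omega$. Near $\partial\Omega$, $w=u-v_0$, so $w\to u|_{\partial\Omega}-v_0|_{\partial\Omega}=:w|_{\partial\Omega}\in W^{1,2}(\partial\Omega)$ nontangentially, $\nabla w$ converges nontangentially a.e., and $(\nabla w)^*\in L^2(\partial\Omega)$ — bounded by $(\nabla u)^*+(\nabla v_0)^*$ near $\partial\Omega$ and, by interior regularity, by a constant on $\overline{\Omega^{\sigma/3}}$. Thus $w$ satisfies the hypotheses of Definition~\ref{TRel} for $\mathcal{L}_1$, and \eqref{eq:TRel} for $\mathcal{L}_1$ gives $\|\nabla w\|_{L^2(\partial\Omega)}\le\tilde C\|w|_{\partial\Omega}\|_{W^{1,2}(\partial\Omega)}\le C\|u|_{\partial\Omega}\|_{W^{1,2}(\partial\Omega)}$; since $u=v=w+v_0$ on $\Omega_{\sigma/3}$, passing to nontangential limits gives $\nabla u=\nabla w+\nabla v_0$ a.e.\ on $\partial\Omega$, hence $\|\nabla u\|_{L^2(\partial\Omega)}\le\|\nabla w\|_{L^2(\partial\Omega)}+\|\nabla v_0\|_{L^2(\partial\Omega)}\le C\big(\|u\|_{L^2(\partial\Omega)}+\|\nabla_Tu\|_{L^2(\partial\Omega)}\big)$, the desired estimate with a constant of the stated form. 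The main obstacle is precisely the construction of $v_0$: one cannot afford to solve $\mathcal{L}_1v_0=h$ with zero data on $\partial\Omega$, since a solution with vanishing trace need not have $(\nabla v_0)^*\in L^2(\partial\Omega)$ — a crude boundary Hölder estimate only gives bounds of the form $|\nabla v_0(x)|\le C\,\delta(x)^{\gamma-1}$, which blow up as $x\to\partial\Omega$. Solving instead on the ambient ball $B_\Omega$ turns $v_0$ into a genuine $\mathcal{L}_1$-solution \emph{across} $\partial\Omega$, hence $C^{1,\gamma}$ there, and it is for this ball that Proposition~\ref{GreenBounds} supplies the pointwise and weak Green's function bounds (in both variables, through the adjoint) needed to control $v_0$ globally; everything else is routine bookkeeping with the cutoff identity and Proposition~\ref{SolidByBoundary}.
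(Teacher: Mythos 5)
Your argument follows the same basic mechanism as the paper's proof: split off a correction term that absorbs the inhomogeneity created by switching from $\mathcal{L}_2$ to $\mathcal{L}_1$ (an inhomogeneity supported away from $\partial\Omega$ because the coefficients agree on $\Omega_\sigma$), observe that this correction solves $\mathcal{L}_1v=0$ in a two-sided neighborhood of $\partial\Omega$ and is therefore $C^1$ across the boundary with norms controlled via $\|u\|_{W^{1,2}(\partial\Omega)}$ (Proposition~\ref{SolidByBoundary} plus interior estimates from Proposition~\ref{CAlphaReg}), apply the $T$-Rellich property for $\mathcal{L}_1$ to the remaining global $\mathcal{L}_1$-solution, and recombine. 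The differences are cosmetic-to-moderate: the paper works with $u$ itself, writing $\mathcal{L}_1u=-\dive f+g$ with $f,g$ supported in $\Omega\setminus\Omega_\sigma$, whereas you first localize with the cutoff $\eta$; and the paper produces the correction as two variational solutions $v_1,v_2\in W_0^{1,2}(B_\Omega)$ (existence from \cite{KimSak}), bounding the divergence-form part $v_1$ by the global energy estimate and interior regularity, and using the Green representation only for the non-divergence part $v_2$, where the pointwise bound $G\le C|x-y|^{2-n}$ suffices. Your version buys a single correction term and a slightly cleaner recombination; the paper's buys a completely elementary treatment of the divergence-form data.

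The one step you should repair is the assertion that the formula $v_0(x)=\int_K\bigl(\nabla_yG_1^{B_\Omega}(x,y)\cdot f_0(y)+G_1^{B_\Omega}(x,y)g_0(y)\bigr)\,dy$ \emph{is} a weak solution of $\mathcal{L}_1v_0=h$ in $B_\Omega$. For the $g_0$-part this is exactly the representation used in the paper (Definition 5.1 in \cite{KimSak}), but for the divergence-form part the identification of the variational solution with the $\nabla_yG$-formula is nowhere established in this framework, and it is precisely around $\nabla_yG$ that the paper is careful: only weak-$L^{n/(n-1)}$ control is available (via the adjoint, as you correctly note), so the formula converges, but convergence of the integral does not by itself prove it solves the equation; one would need a duality argument through Lemma~\ref{Representation}/Theorem 6.12 of \cite{KimSak}, with some care because $f_0$ is merely bounded. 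The painless fix is to do what the paper does for this piece: take the $W_0^{1,2}(B_\Omega)$ solution of $\mathcal{L}_1v_1=-\dive f_0$, bound $\|v_1\|_{W_0^{1,2}(B_\Omega)}\le C\|f_0\|_{L^2}\le C\|u\|_{W^{1,2}(\partial\Omega)}$ by the energy estimate, and then get the pointwise bounds on $v_1$ and $\nabla v_1$ near $\partial\Omega$ from interior estimates on balls of radius comparable to $\sigma$, since $f_0$ vanishes there; with that substitution (and your Lorentz-norm computation either discarded or kept only as an a priori bound), the rest of your argument goes through and yields the stated constant.
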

\begin{proof}
Suppose that $u\in W^{1,2}_{{\rm loc}}(\Omega)$ is a solution to $\mathcal{L}_2u=0$ in $\Omega$, such that $(\nabla u)^*\in L^2(\partial\Omega)$ and $u$, $\nabla u$ exist nontangentially almost everywhere on $\partial\Omega$. Then, we compute
\begin{align*}
\mathcal{L}_1u=-\dive((A_1-A_2)\nabla u+(b_1-b_2)u)+(c_1-c_2)\nabla u=-\dive f+g.
\end{align*}
Since $u$ solves the equation $\mathcal{L}_2u=0$ in $\Omega$, Proposition~\ref{CAlphaReg} shows that $u,\nabla u\in L^{\infty}_{\loc}(\Omega)$. Since also $A_1=A_2$, $b_1=b_2$ and $c_1=c_2$ in $\Omega_{\sigma}$, we obtain that $f\in L^{\infty}(\Omega)$ and $g\in L^p(\Omega)$.
	
Extend $f$ and $g$ by $0$ in $B_{\Omega}\setminus\Omega$, and consider the solutions $v_1,v_2\in W_0^{1,2}(B_{\Omega})$ to the equations
\[
\left\{\begin{array}{c l}\mathcal{L}_1v_1=-\dive f, & {\rm in}\,\,B_{\Omega} \\ v_1=0 & {\rm on}\,\,\partial B_{\Omega}\end{array}\right.,\quad \left\{\begin{array}{c l}\mathcal{L}_1v_2=g, & {\rm in}\,\,B_{\Omega} \\ v_2=0 & {\rm on}\,\,\partial B_{\Omega}\end{array}\right.
\]
(the existence of these solutions can be justified be Lemmas 4.2 and 4.4 in \cite{KimSak}). Then, Proposition 6.14 in \cite{KimSak} and Proposition~\ref{SolidByBoundary} (which is applicable, since $\mathcal{L}_2u=0$ in $\Omega$) show that
\begin{align}
\nonumber
\|v_1\|_{W_0^{1,2}(B_{\Omega})}&\leq C\|f\|_{L^2(B_{\Omega})}=C\left(\int_{\Omega\setminus\Omega_{\sigma}}\left|(A_1-A_2)\nabla u+(b_1-b_2)u\right|^2\right)^{1/2}\\
\label{eq:v1Bound}
&\leq C\left(\int_{\Omega}|\nabla u|^2+|u|^2\right)^{1/2}\leq C\|u\|_{W^{1,2}(\partial\Omega)},
\end{align}
where $C$ depends on $n,p,\lambda,\alpha,\tau$,$\|A_i\|_{\infty}$,$\|b_i\|_{\infty}$,$\|c_i\|_p$ and the Lipschitz character of $\Omega$. Consider now $x\in\Omega_{\sigma/4}$. Since $f\equiv 0$ in $\Omega_{\sigma}$, $v_1$ solves the equation $\mathcal{L}_1v_1=0$ in $B_{\sigma/4}(x)$. So, from \eqref{eq:q:CAlphaReg2} and \eqref{eq:v1Bound},
\begin{equation}\label{eq:v_1Bound}
|v_1(x)|\leq C\left(\fint_{B_{\sigma/8}(x)}v_1^2\right)^{1/2}\leq  C\sigma^{-n/2}\|v_1\|_{L^2(B_{\Omega})}\leq C\|u\|_{W^{1,2}(\partial\Omega)},
\end{equation}
and also
\begin{equation}\label{eq:Nablav_1Bound}
|\nabla v_1(x)|\leq\frac{C}{\sigma}\left(\fint_{B_{\sigma/8}(x)}v_1^2\right)^{1/2}\leq  C\sigma^{-1-n/2}\|v_1\|_{L^2(B_{\Omega})}\leq C\|u\|_{W^{1,2}(\partial\Omega)}.
\end{equation}
To bound $|\nabla v_2|$ note that, from Definition 5.1 in \cite{KimSak}, for any $x\in\Omega_{\sigma/2}$,
\[
v_2(x)=\int_{B_{\Omega}}G(x,y)g(y)\,dy=\int_{\Omega\setminus\Omega_{\sigma}}G(x,y)g(y)\,dy.
\]
If now $y\in\Omega\setminus\Omega_{\sigma}$ and $x\in\Omega_{\sigma/2}$, then  $|x-y|>\frac{\sigma}{2}$, therefore, using \eqref{eq:GreenMain} and Proposition~\ref{SolidByBoundary},
\begin{equation}\label{eq:v_2Bound}
|v_2(x)|\leq C\int_{\Omega\setminus\Omega_{\sigma}}|x-y|^{2-n}|g(y)|\,dy\leq C\sigma^{2-n}\int_{\Omega\setminus\Omega_{\sigma}}|g|\leq C\|c_1-c_2\|_2\|\nabla u\|_{L^2(\Omega)}\leq C\|u\|_{W^{1,2}(\partial\Omega)},
\end{equation}
where $C$ depends on $n,p,\lambda,\alpha,\tau$,$\|A_1\|_{\infty},\|b_1\|_{\infty}$,$\|c_1\|_p$,$\|c_2\|_p$, the Lipschitz character of $\Omega$, and $\sigma$. Since $g$ vanishes in $\Omega_{\sigma}$, $v_2$ is a solution of the equation $\mathcal{L}_1v_2=0$ in $B_{\sigma/4}(x)$, for any $x\in\Omega_{\sigma/4}$. Therefore, using \eqref{eq:q:CAlphaReg2} and \eqref{eq:v_2Bound}, we obtain that
\begin{equation}\label{eq:Nablav_2Bound}
|\nabla v_2(x)|\leq\frac{C}{\sigma}\left(\fint_{B_{\sigma/8}(x)}v_2^2\right)^{1/2}\leq C\|u\|_{W^{1,2}(\partial\Omega)}.
\end{equation}
Hence, in the notation of \eqref{eq:MaxFun}, adding \eqref{eq:Nablav_1Bound} and \eqref{eq:Nablav_2Bound}, we obtain that, for almost all $q\in\partial\Omega$,
\begin{equation}\label{eq:CloseBound}
(\nabla(v_1+v_2))_{\sigma/4}^*(q)\leq C\|u\|_{W^{1,2}(\partial\Omega)}.
\end{equation}
Note now that $v_3=u-v_1-v_2\in W^{1,2}_{\loc}(\Omega)$ is a solution to the equation $\mathcal{L}_1v_3=0$ in $\Omega$. Moreover, from \eqref{eq:CloseBound}, we obtain that
\[
(\nabla v_3)_{\sigma/4}^*(q)\leq(\nabla(v_1+v_2))_{\sigma/4}^*(q)+(\nabla u)_{\sigma/4}^*(q)\leq C\|u\|_{W^{1,2}(\partial\Omega)}+(\nabla u)^*(q).
\]
Since $v_3\in W^{1,2}_{\loc}(\Omega)$ and $(\nabla u)^*\in L^2(\partial\Omega)$, the previous estimate shows that $(\nabla v_3)^*\in L^2(\partial\Omega)$. Moreover, $v_1$ and $v_2$ are solutions of $\mathcal{L}_1v=0$ in $B_{\Omega}\setminus(\Omega\setminus\Omega_{\sigma})$, therefore $v_1$, $v_2$, $\nabla v_1$ and $\nabla v_2$ are continuous in $B_{\Omega}\setminus(\Omega\setminus\Omega_{\sigma})$, from Proposition~\ref{CAlphaReg}. Since $u$, $\nabla u$ converge nontangentially almost everywhere on $\partial\Omega$, this implies that $v_3$, $\nabla v_3$ converge nontangentially, almost everywhere on $\partial\Omega$. Therefore, since the $T$-Rellich property holds for $\mathcal{L}_1$ in $\Omega$ with constant $\tilde{C}$, we obtain that
\[
\|\nabla v_3\|_{L^2(\partial\Omega)}\leq\tilde{C}\|u-v_1-v_2\|_{L^2(\partial\Omega)}+\tilde{C}\|\nabla_T(u-v_1-v_2)\|_{L^2(\partial\Omega)}.
\]
Using that $u=v_1+v_2+v_3$ and \eqref{eq:CloseBound}, we obtain that
\begin{align*}
\|\nabla u\|_{L^2(\partial\Omega)}&\leq\|\nabla(v_1+v_2)\|_{L^2(\partial\Omega)}+\|\nabla v_3\|_{L^2(\partial\Omega)}\\
&\leq C\|u\|_{W^{1,2}(\partial\Omega)}+\tilde{C}\|u-v_1-v_2\|_{L^2(\partial\Omega)}+\tilde{C}\|\nabla_T(u-v_1-v_2)\|_{L^2(\partial\Omega)}\\
&\leq (C+\tilde{C})\|u\|_{W^{1,2}(\partial\Omega)}+\tilde{C}\|v_1+v_2\|_{W^{1,2}(\partial\Omega)}\leq C\|u\|_{W^{1,2}(\partial\Omega)},
\end{align*}
where we used \eqref{eq:v_1Bound}, \eqref{eq:Nablav_1Bound}, \eqref{eq:v_2Bound} and \eqref{eq:Nablav_2Bound} in the last step. This completes the proof.
\end{proof}

\subsection{Invertibility of \texorpdfstring{$\mathcal{S}$}{S}: the case \texorpdfstring{$\dive c\leq 0$}{divc<=0}}
We will now show that Lemma~\ref{InvertibilityUnderCondition} holds without the assumption that $A$ and $b$ satisfy Condition~\eqref{eq:ACond}. To do this, we use the coefficient extensions of Section 7 in \cite{KenigShen}. More specifically, we have the next lemma.
\begin{lemma}\label{BarExt}
Let $\Omega$ be a Lipschitz domain with $\diam(\Omega)<\frac{1}{16}$, and let $A\in\M_{B_{\Omega}}(\lambda,\alpha,\tau)$, $b\in \C_{B_{\Omega}}(\alpha,\tau)$. Then there exist $\alpha_0\in(0,\alpha]$, $\tau_0>0$, $C_1>0$, and $\overline{A}\in\M_{B_{\Omega}}(\lambda,\alpha_0,\tau_0),\overline{b}\in\C_{B_{\Omega}}(\alpha_0,\tau_0)$, such that $\overline{A}=A$ and $\overline{b}=b$ on $\partial\Omega$. Moreover, $\overline{A}$ and $\overline{b}$ satisfy Condition~\ref{eq:ACond}: that is, for all $x\in B_{\Omega}$,
\[
|\nabla A(x)|\leq C_1\delta(x)^{\alpha_0-1},\quad |\nabla^2A(x)|\leq C_1\delta(x)^{\alpha_0-2},
\]
where $\delta(x)=\dist(x,\partial\Omega)$, and similarly for $b$. Here, $\alpha_0,\tau_0$ and $C_1$ depend on $n,\lambda,\alpha,\tau$ and the Lipschitz character of $\Omega$.
\end{lemma}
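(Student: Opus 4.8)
\emph{Proof plan.} The plan is to follow the construction of Section~7 of \cite{KenigShen}: regularize $A$ and $b$ by mollifying at a spatial scale $s(x)$ that is comparable to $\delta(x)=\dist(x,\partial\Omega)$ near $\partial\Omega$ and is frozen at a fixed small length away from $\partial\Omega$. Since the displaced points $x-s(x)z$ must lie where the coefficients are defined, I would first perform a routine entrywise $C^{0,\alpha}$ extension (McShane/Whitney) of $A$ and $b$ to a fixed neighbourhood of $B_{\Omega}$; this preserves sup norms and $C^{0,\alpha}$ seminorms, and within a thin enough collar the extended matrix stays elliptic with constant arbitrarily close to $\lambda$ (its value differs from that of $A$ at the nearest point of $B_{\Omega}$ by at most $n\tau d^{\alpha}$), the collar width being controlled by $n,\lambda,\alpha,\tau$; near $\partial B_{\Omega}$ one may alternatively use a mollifier directed inward, exactly as in \cite{KenigShen}. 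Then I would fix a regularized distance $\rho$ to $\partial\Omega$ in the sense of Stein --- smooth off $\partial\Omega$, comparable to $\delta$, with $|\nabla^{k}\rho(x)|\le C_{k}\delta(x)^{1-k}$ --- a smooth nondecreasing truncation $\psi$ with $\psi(t)=t$ for $t\le\delta_{0}/2$ and $\psi(t)=\delta_{0}$ for $t\ge\delta_{0}$ (with $\delta_{0}$ a fixed small multiple of $\diam\Omega$), a nonnegative $\varphi\in C_{c}^{\infty}$ supported in the unit ball with $\int\varphi=1$, and I would set $s(x)=\varkappa\,\psi(\rho(x))$ with $\varkappa$ small enough (depending on $n,\lambda,\alpha,\tau$ and the Lipschitz character) that $x-s(x)z$ always lies in the extended region. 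I then define
\[
\overline{A}(x)=\int A\big(x-s(x)z\big)\varphi(z)\,dz,\qquad \overline{b}(x)=\int b\big(x-s(x)z\big)\varphi(z)\,dz .
\]

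Several properties are then immediate: since $s\equiv0$ on $\partial\Omega$, we get $\overline{A}=A$ and $\overline{b}=b$ there; $\overline{A}(x)$ is an average of matrices each satisfying \eqref{eq:el}, so $\overline{A}$ is elliptic with the same constant (up to the harmless, removable loss from the extension), and $\|\overline{A}\|_{\infty}\le\|A\|_{\infty}$; and $s$ is smooth and positive on $B_{\Omega}\setminus\partial\Omega$, so $\overline{A},\overline{b}\in C^{\infty}(B_{\Omega}\setminus\partial\Omega)$, in particular $C^{2}$ on $\Omega$ and on $B_{\Omega}\setminus\overline{\Omega}$. The substance is the pair of bounds $|\nabla\overline{A}(x)|\le Cs(x)^{\alpha-1}$, $|\nabla^{2}\overline{A}(x)|\le Cs(x)^{\alpha-2}$, which I would obtain by passing to kernel form: with $y=x-s(x)z$, $\overline{A}(x)=\int A(y)K(x,y)\,dy$, $K(x,y)=s(x)^{-n}\varphi\!\big(\tfrac{x-y}{s(x)}\big)$, $\int K(x,y)\,dy=1$ for every $x$. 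Differentiating in $x$ under the integral and using $\int\partial_{x}^{\gamma}K(x,y)\,dy=\partial_{x}^{\gamma}1=0$ to insert $A(x)$,
\[
\partial_{x}^{\gamma}\overline{A}(x)=\int\big(A(y)-A(x)\big)\,\partial_{x}^{\gamma}K(x,y)\,dy,\qquad 1\le|\gamma|\le2 .
\]
On $\supp K(x,\cdot)$ one has $|x-y|\le s(x)$, hence $|A(y)-A(x)|\le\tau s(x)^{\alpha}$; moreover each $x$-derivative of $K$ costs at most $s(x)^{-1}$ (a first derivative falls on $\varphi(\tfrac{x-y}{s})$ or on $s^{-n}$ and produces $|\nabla s|\cdot s^{-1}\lesssim s^{-1}$ since $|\nabla s|\lesssim1$; a second derivative also produces $|\nabla^{2}s|\lesssim s^{-1}$ since $|\nabla^{2}\rho|\lesssim\delta^{-1}\approx s^{-1}$), so $|\partial_{x}^{\gamma}K(x,y)|\le Cs(x)^{-n-|\gamma|}$. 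As $|\supp K(x,\cdot)|\lesssim s(x)^{n}$, this gives $|\partial_{x}^{\gamma}\overline{A}(x)|\le C\tau s(x)^{\alpha-|\gamma|}$. Since $s(x)\approx\delta(x)$ on the collar $\{\rho(x)<\delta_{0}/2\}$ and $s$ is bounded below on the rest of $B_{\Omega}$ (where $\delta$ is bounded above and below), this yields $|\nabla\overline{A}(x)|\le C_{1}\delta(x)^{\alpha_{0}-1}$, $|\nabla^{2}\overline{A}(x)|\le C_{1}\delta(x)^{\alpha_{0}-2}$ for all $x\in B_{\Omega}$, for any fixed $\alpha_{0}\in(0,\alpha]$. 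Integrating the gradient bound along segments to $\partial\Omega$, together with $|\overline{A}(x)-A(q_{x})|\le\tau s(x)^{\alpha}+\tau|x-q_{x}|^{\alpha}\lesssim\delta(x)^{\alpha}$ for a nearest boundary point $q_{x}$, shows $\overline{A}\in C^{0,\alpha_{0}}(B_{\Omega})$, hence $\overline{A}\in\M_{B_{\Omega}}(\lambda,\alpha_{0},\tau_{0})$; the identical computation applied to $\overline{b}$ gives $\overline{b}\in\C_{B_{\Omega}}(\alpha_{0},\tau_{0})$ with the analogous pointwise bounds. Tracking the dependence of $\varkappa$, $\delta_{0}$ and the extension collar, all of $\alpha_{0},\tau_{0},C_{1}$ depend only on $n,\lambda,\alpha,\tau$ and the Lipschitz character of $\Omega$.

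The genuinely delicate point, and the one I expect to be the main obstacle, is not the mollification itself but obtaining the \emph{sharp} blow-up rates $\delta^{\alpha_{0}-1}$ and $\delta^{\alpha_{0}-2}$ --- in particular, that two $x$-derivatives cost exactly $s(x)^{-2}$ and not $s(x)^{-3}$. This is precisely what forces $s$ to be genuinely comparable to $\delta$ near $\partial\Omega$ and forces the use of a regularized distance satisfying the second-order estimate $|\nabla^{2}\rho|\lesssim\delta^{-1}$: a naive scale (Euclidean distance, which is only Lipschitz, or a scale not proportional to $\delta$) breaks one of the two bounds. The secondary nuisance is the bookkeeping near $\partial B_{\Omega}$ --- keeping $s$ bounded there, so that $\overline{A}$ remains $C^{2}$ with bounded (hence $O(\delta^{\alpha_{0}-1})$, $O(\delta^{\alpha_{0}-2})$, since $\delta$ is bounded below there) derivatives, while ensuring $x-s(x)z$ never exits the domain of the coefficients --- which is exactly what the truncation $\psi$ and the preliminary extension are for, following Section~7 of \cite{KenigShen}.
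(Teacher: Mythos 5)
Your construction is correct, but it is a genuinely different route from the paper's. The paper does not mollify at all: following Lemma 7.1 of \cite{KenigShen}, it defines $\overline{a}_{ij},\overline{b}_i$ as the Poisson (harmonic) extensions of the boundary data — harmonic in $\Omega$ with data $a_{ij},b_i$ on $\partial\Omega$, and harmonic in $B_{\Omega}\setminus\Omega$ with data $a_{ij},b_i$ on $\partial\Omega$ and $\lambda\delta_{ij},0$ on $\partial B_{\Omega}$. The first-derivative bound is quoted from \cite{KenigShen}, and the second-derivative bound $|\nabla^2\overline{A}|\lesssim\delta^{\alpha_0-2}$ is obtained by observing that $\partial_k\overline{a}_{ij}$ and $\partial_{kl}\overline{a}_{ij}$ are harmonic and combining the mean value property with Caccioppoli's inequality on balls $B_{\delta(x)/2}(x)$. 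That route buys ellipticity with constant exactly $\lambda$ for free (averaging against harmonic measure, since the boundary data on both $\partial\Omega$ and $\partial B_{\Omega}$ are $\lambda$-elliptic), no extension of $A$ beyond $B_{\Omega}$, and a two-line second-derivative estimate; its price is that $\alpha_0$ is the boundary H{\"o}lder exponent for harmonic functions in Lipschitz domains, hence in general strictly smaller than $\alpha$. Your variable-scale mollification with a Stein regularized distance is self-contained (no boundary regularity theory), keeps the full exponent $\alpha_0=\alpha$, and your kernel computation with $\int\partial_x^{\gamma}K(x,\cdot)=0$, $|\nabla s|\lesssim 1$, $|\nabla^2 s|\lesssim s^{-1}$ does deliver the sharp rates $s^{\alpha-1},s^{\alpha-2}$; you also correctly identify the second-order bound on the regularized distance as the crux. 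The one point to settle carefully is the behaviour near $\partial B_{\Omega}$: with the McShane pre-extension alone the ellipticity constant dips below $\lambda$ in the outer collar, so to land literally in $\M_{B_{\Omega}}(\lambda,\alpha_0,\tau_0)$ you should implement the inward-tilted mollifier you mention (or otherwise restore $\lambda$ there, which is harmless since $\delta\gtrsim\diam\Omega$ near $\partial B_{\Omega}$); with that choice made explicit, your argument is complete, and the terse step "integrate the gradient bound plus compare with nearest boundary values" is the standard case split $|x-y|\lesssim\delta$ versus $|x-y|\gtrsim\delta$ and goes through.
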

\begin{proof}
As in the proof of Lemma 7.1 in \cite{KenigShen}, we define $\overline{a}_{ij}$, $\overline{b}_i$ in $\Omega$ to be the Poisson extensions of $a_{ij}, b_i$ in $\Omega$, respectively, and in $B_{\Omega}\setminus\Omega$ to be harmonic functions, with boundary values $a_{ij}$, $b_i$ on $\partial\Omega$, respectively, and $\lambda\delta_{ij}$, $0$ on $\partial B_{\Omega}$, respectively.

To obtain the pointwise bounds on the first derivatives, we follow the proof of the same lemma in \cite{KenigShen}. For the bounds on the second derivatives, note that the functions $\partial_k\overline{a}_{ij}$ and $\partial_{kl}\overline{a}_{ij}$ are harmonic in $\Omega$ for any $i,j,k,l=1,\dots n$. Therefore, from the mean value property and Cacciopoli's inequality, for any $x\in\Omega$,
\begin{align*}
|\partial_{kl}\overline{a}_{ij}(x)|&\leq\fint_{B_{\delta(x)/2}(x)}|\partial_{kl}\overline{a}_{ij}(y)|\,dy\leq C\left(\fint_{B_{\delta(x)/2}(x)}|\nabla(\partial_k\overline{a}_{ij})(y)|^2\,dy\right)^{1/2}\\
&\leq\frac{C}{\delta(x)}\left(\fint_{B_{2\delta(x)/3}}|\partial_k\overline{a}_{ij}(y)|^2\,dy\right)^{1/2}\leq \frac{CC1}{\delta(x)}\left(\fint_{B_{2\delta(x)/3}}\delta(y)^{2\alpha_0-2}\,dy\right)^{1/2}\leq CC_1\delta(x)^{\alpha_0-2},
\end{align*}
which completes the proof for $x\in\Omega$. The case when $x\in B_{\Omega}\setminus\Omega$ is similar.
\end{proof}

We now let $\theta_0\in C_c^{\infty}\left(-\frac{1}{2},\frac{1}{2}\right)$ with $0\leq\theta\leq 1$ and $\theta=1$ in $\left(-\frac{1}{4},\frac{1}{4}\right)$. We also define, for $\rho\in\left(0,\frac{1}{8}\right)$ and $x\in B_{\Omega}$,
\begin{gather}\label{eq:A^rho}
\begin{aligned}
A^{\rho}(x)=\theta\left(\frac{\delta(x)}{\rho}\right)A(x)+\left(1-\theta\left(\frac{\delta(x)}{\rho}\right)\right)\overline{A}(x),\\
b^{\rho}(x)=\theta\left(\frac{\delta(x)}{\rho}\right)b(x)+\left(1-\theta\left(\frac{\delta(x)}{\rho}\right)\right)\overline{b}(x).
\end{aligned}
\end{gather}
Then, the proof of Lemma 7.2 in \cite{KenigShen} shows the next lemma.
\begin{lemma}\label{CloseToARho}
If $A$,$b$ are as in Lemma~\ref{BarExt}, and $A^{\rho}, b^{\rho}$ are as in \eqref{eq:A^rho}, then
\[
\|A^{\rho}-\overline{A}\|_{\infty}\leq C\rho^{\alpha_0},\quad\|A^{\rho}-\overline{A}\|_{C^{0,\alpha_0}}\leq C,
\]
and similarly for $b^{\rho}$ and $\overline{b}$, where $C$ depends on $n,\lambda,\alpha,\tau$ and the Lipschitz character of $\Omega$.
\end{lemma}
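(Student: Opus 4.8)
The plan is to reduce everything to the pointwise size and the Hölder regularity of $A-\overline{A}$ near $\partial\Omega$. From \eqref{eq:A^rho} one has the identity $A^{\rho}-\overline{A}=\theta(\delta/\rho)\,(A-\overline{A})$, and the factor $\theta(\delta/\rho)$ is bounded by $1$ and supported in $\{x\in B_{\Omega}:\delta(x)<\rho/2\}$. Since $\overline{A}=A$ on $\partial\Omega$ by Lemma~\ref{BarExt}, on this support $A-\overline{A}$ is of order $\rho^{\alpha_0}$, and this is what will give both asserted bounds. The argument for $b^{\rho}$ and $\overline{b}$ is word for word the same, so I would only write it out for $A$.

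First I would prove the $L^{\infty}$ bound. If $\delta(x)\ge\rho/2$ there is nothing to check, so fix $x$ with $\delta(x)<\rho/2$ and let $q_x\in\partial\Omega$ be a nearest boundary point, so $|x-q_x|=\delta(x)$ and $\overline{A}(q_x)=A(q_x)$. By the triangle inequality and the Hölder bounds for $A$ (exponent $\alpha$) and $\overline{A}$ (exponent $\alpha_0\le\alpha$, from Lemma~\ref{BarExt}),
\[
|A(x)-\overline{A}(x)|\le\|A\|_{C^{0,\alpha}}\delta(x)^{\alpha}+\|\overline{A}\|_{C^{0,\alpha_0}}\delta(x)^{\alpha_0}\le C\,\delta(x)^{\alpha_0}\le C\,\rho^{\alpha_0},
\]
using $\delta(x)<\rho<1$ and $\alpha\ge\alpha_0$; multiplying by $|\theta(\delta(x)/\rho)|\le1$ gives $\|A^{\rho}-\overline{A}\|_{\infty}\le C\rho^{\alpha_0}$.

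For the $C^{0,\alpha_0}$ bound I would set $F=A-\overline{A}$, record that $\|F\|_{C^{0,\alpha_0}(B_{\Omega})}\le\|A\|_{C^{0,\alpha_0}}+\|\overline{A}\|_{C^{0,\alpha_0}}\le C$ (the passage from $\|A\|_{C^{0,\alpha}}$ to $\|A\|_{C^{0,\alpha_0}}$ costing only a harmless power of $\diam(B_{\Omega})$), and estimate $Q(x,y):=|(A^{\rho}-\overline{A})(x)-(A^{\rho}-\overline{A})(y)|$ by a dichotomy on $|x-y|$ versus $\rho$. If $|x-y|\ge\rho/2$, then $Q(x,y)\le2\|A^{\rho}-\overline{A}\|_{\infty}\le C\rho^{\alpha_0}\le C|x-y|^{\alpha_0}$. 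If $|x-y|<\rho/2$, assume without loss of generality $\delta(x)\le\delta(y)$; if $\delta(x)\ge\rho/2$ then both cutoff factors $\theta(\delta(\cdot)/\rho)$ vanish and $Q=0$, so we may take $\delta(x)<\rho/2$, whence $\delta(y)\le\delta(x)+|x-y|<\rho$. Then, using that $\delta$ is $1$-Lipschitz and $\|\theta'\|_{\infty}\le C$,
\[
Q(x,y)\le|\theta(\delta(x)/\rho)|\,|F(x)-F(y)|+|F(y)|\,|\theta(\delta(x)/\rho)-\theta(\delta(y)/\rho)|\le\|F\|_{C^{0,\alpha_0}}|x-y|^{\alpha_0}+C\,\delta(y)^{\alpha_0}\,\frac{|x-y|}{\rho}.
\]
Since $\delta(y)<\rho$ and $|x-y|/\rho\le(|x-y|/\rho)^{\alpha_0}$ (because $|x-y|<\rho$ and $\alpha_0\le1$), the second term is $\le C\rho^{\alpha_0}|x-y|^{\alpha_0}/\rho^{\alpha_0}=C|x-y|^{\alpha_0}$, and the first is $\le C|x-y|^{\alpha_0}$; thus $Q(x,y)\le C|x-y|^{\alpha_0}$ in all cases, giving $\|A^{\rho}-\overline{A}\|_{C^{0,\alpha_0}}\le C$. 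The only step that needs care is the bookkeeping in this last case: one must note that whenever the two cutoff factors differ, at least one of $x,y$ lies in $\{\delta<\rho\}$, so $F$ is being evaluated there at points where it has size $\rho^{\alpha_0}$, which precisely absorbs the $\rho^{-1}$ loss from differencing $\theta(\delta/\rho)$ — beyond this there is no genuine analytic obstacle.
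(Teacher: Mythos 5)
Your proof is correct, and it is essentially the argument the paper has in mind: the paper simply invokes the proof of Lemma 7.2 in \cite{KenigShen}, which rests on the same identity $A^{\rho}-\overline{A}=\theta(\delta/\rho)(A-\overline{A})$, the agreement $A=\overline{A}$ on $\partial\Omega$ together with H{\"o}lder continuity to get $|A-\overline{A}|\leq C\delta^{\alpha_0}$ near the boundary, and the $C/\rho$ loss from differencing the cutoff absorbed by that decay. Your writeup is just a self-contained version of that argument, with the case analysis and constant bookkeeping handled correctly.
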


As a corollary, we obtain invertibility of the single layer potential operator in the case $\dive c\leq 0$.

\begin{thm}\label{InvForc}
Let $\Omega$ be a Lipschitz domain with $\diam(\Omega)<\frac{1}{16}$, $A\in\M_{B_{\Omega}}(\lambda,\alpha,\tau)$ and $b\in\C_{B_{\Omega}}(\alpha,\tau)$. Assume also that $c\in L^p(B_{\Omega})$ for some $p>n$, with $\dive c\leq 0$ in $B_{\Omega}$. Then, the single layer potential $\mathcal{S}:L^2(\partial\Omega)\to W^{1,2}(\partial\Omega)$ for the operator $\mathcal{L}u=-\dive(A\nabla u+bu)+c\nabla u$ in $\Omega$ is invertible, with $\|\mathcal{S}^{-1}\|$ being bounded above by a constant that depends on $n,p,\lambda,\alpha,\tau$,$\|A\|_{\infty}$,$\|b\|_{\infty}$,$\|c\|_p$ and the Lipschitz character of $\Omega$.
\end{thm}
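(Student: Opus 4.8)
The plan is to obtain invertibility of $\mathcal{S}$ from a uniform Rellich bound via Proposition~\ref{RelInvEquivalence}(ii): it suffices to prove the $T$-Rellich property, with a constant independent of $t$, for the family
\[
\mathcal{L}_tu=-\dive\big((tA+\lambda(1-t)I)\nabla u+tbu\big)+tc\nabla u,\qquad t\in[0,1],
\]
both in $\Omega$ and in $B_\Omega\setminus\Omega$; the continuity argument inside that proposition, started at $t=0$ where $\mathcal{L}_0=-\lambda\Delta$ and $\mathcal{S}_0$ is invertible, then yields invertibility of $\mathcal{S}=\mathcal{S}_1$. Since $\diam(\Omega)<\frac{1}{16}$, all the results cited below apply.

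Fix $t$. The coefficients $A_t=tA+\lambda(1-t)I$, $b_t=tb$, $c_t=tc$ satisfy $A_t\in\M_{B_\Omega}(\lambda,\alpha,\tau)$, $b_t\in\C_{B_\Omega}(\alpha,\tau)$ and $\dive c_t=t\dive c\le0$ in $B_\Omega$, with all structural constants independent of $t$. First I pass to a model operator with good interior structure: by Lemma~\ref{BarExt} applied to $A_t,b_t$ there are extensions $\overline{A_t},\overline{b_t}$ on $B_\Omega$ satisfying Condition~\eqref{eq:ACond} in $B_\Omega$, with $\alpha_0,\tau_0,C_1$ independent of $t$, and agreeing with $A_t,b_t$ on $\partial\Omega$. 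Then Lemma~\ref{InvertibilityUnderCondition} (read with H{\"o}lder exponent $\alpha_0$ throughout) makes the single layer potential $\overline{\mathcal{S}_t}$ of $\overline{\mathcal{L}_t}u=-\dive(\overline{A_t}\nabla u+\overline{b_t}u)+c_t\nabla u$ invertible, with $\|\overline{\mathcal{S}_t}^{-1}\|\le N_1$ uniformly in $t$; hence by Proposition~\ref{RelInvEquivalence}(i) the $T$-Rellich property holds for $\overline{\mathcal{L}_t}$ in $\Omega$ with a uniform constant, and the analogue in $B_\Omega\setminus\Omega$ follows in the same way (it is the information already used inside the proof of Lemma~\ref{InvertibilityUnderCondition}).

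Next I freeze the coefficients near $\partial\Omega$. For $\rho\in(0,\frac{1}{8})$ form $A_t^\rho,b_t^\rho$ out of $A_t,b_t$ and $\overline{A_t},\overline{b_t}$ by the cutoff formula~\eqref{eq:A^rho}; by Lemma~\ref{CloseToARho} one has $\|A_t^\rho-\overline{A_t}\|_\infty+\|b_t^\rho-\overline{b_t}\|_\infty\le C\rho^{\alpha_0}$, whereas the $C^{0,\alpha_0}$ norms of these differences merely stay bounded. Since $A_t^\rho=A_t$ and $b_t^\rho=b_t$ in the two-sided strip $\{\dist(\cdot,\partial\Omega)<\rho/4\}$ and $\dive c_t\le0$, Lemma~\ref{ExtendingRellich} reduces the $T$-Rellich property for $\mathcal{L}_t$ — in $\Omega$, and likewise in $B_\Omega\setminus\Omega$ — to that for $\mathcal{L}_t^\rho u:=-\dive(A_t^\rho\nabla u+b_t^\rho u)+c_t\nabla u$, for one small $\rho$ to be fixed uniformly in $t$.

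The last step, which I expect to be the main obstacle, is the $T$-Rellich property for $\mathcal{L}_t^\rho$. The Rellich estimate of Lemma~\ref{GlobalRellich} cannot be applied to $\mathcal{L}_t^\rho$ directly, since $A_t^\rho$ equals the merely H{\"o}lder matrix $A_t$ near $\partial\Omega$ and so fails Condition~\eqref{eq:ACond} there; and the single-layer perturbation Lemmas~\ref{GeneralPerturbation} and~\ref{Smalldelta0} do not close in one step, since $A_t^\rho$ is close to its model $\overline{A_t}$ only in $L^\infty$ and not in $C^\alpha$. I would resolve this by adapting the three-step approximation of \cite[Section~7]{KenigShen}: compare a solution $u$ of $\mathcal{L}_t^\rho u=0$ with $(\nabla u)^*\in L^2(\partial\Omega)$ to the solution $v$ of $\overline{\mathcal{L}_t}v=0$ with the same nontangential boundary trace, write $v=\overline{\mathcal{S}_t}_+\,\overline{\mathcal{S}_t}^{-1}(u|_{\partial\Omega})$ so that Proposition~\ref{SingleLayerBounds} and the uniform bound on $\overline{\mathcal{S}_t}^{-1}$ control $\|(\nabla v)^*\|_{L^2(\partial\Omega)}$ by $\|u|_{\partial\Omega}\|_{W^{1,2}(\partial\Omega)}$, and note that $w=u-v$ solves an equation with inhomogeneity supported in $\Omega_{C_0\rho}$ of size $O(\rho^{\alpha_0})$ times a nontangential maximal function; bootstrapping this comparison over finitely many scales and absorbing the $\rho^{\alpha_0}$-errors into the left side of the Rellich inequality (as with the last term of Lemma~\ref{GlobalRellich}), and doing the same in $B_\Omega\setminus\Omega$, yields the uniform bound. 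The one genuinely new term is $\int_{\Omega_{C_0\rho}}|c|\,|\nabla u|^2$ coming from our form of the Rellich estimate; it is controlled by H{\"o}lder's inequality, the higher-integrability bound $\|\nabla\mathcal{S}_\pm f\|_{L^{p_1}(\Omega)}\le C\|f\|_{L^2(\partial\Omega)}$ with $p_1>2$ from Lemma~\ref{qImproved}, and the volume bound $|\Omega_{C_0\rho}|\le C\rho$ from Lemma~\ref{SmallArea}, which turn it into $\rho^{\kappa}\|f\|_{L^2(\partial\Omega)}^2$ with $\kappa>0$, again absorbable for $\rho$ small. Feeding the resulting uniform $T$-Rellich bound for $\mathcal{L}_t$ into Proposition~\ref{RelInvEquivalence}(ii) completes the proof.
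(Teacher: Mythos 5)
Your overall architecture matches the paper's up to the decisive step: barred coefficients from Lemma~\ref{BarExt}, invertibility of $\overline{\mathcal{S}}_t$ from Lemma~\ref{InvertibilityUnderCondition}, transfer of the $T$-Rellich property from $\mathcal{L}_t^{\rho}$ to $\mathcal{L}_t$ by Lemma~\ref{ExtendingRellich}, and the two directions of Proposition~\ref{RelInvEquivalence}. The gap is in how you get the $T$-Rellich property for $\mathcal{L}_t^{\rho}$. You assert that Lemmas~\ref{GeneralPerturbation} and \ref{Smalldelta0} ``do not close in one step'' because $A_t^{\rho}$ is close to $\overline{A_t}$ only in $L^{\infty}$, and you replace them by a sketched adaptation of the three-step scheme of \cite{KenigShen}. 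That sketch is where the proof is missing: for $w=u-v$ you get $\overline{\mathcal{L}_t}w=\dive\big((A_t^{\rho}-\overline{A_t})\nabla u+(b_t^{\rho}-\overline{b_t})u\big)$ with the right-hand side supported in the strip $\Omega_{C_0\rho}$ \emph{up to the boundary}, and to run a Rellich-type absorption you must control $\|\nabla w\|_{L^2(\partial\Omega)}$ (or $\|(\nabla w)^*\|_{L^2(\partial\Omega)}$) by $\rho^{\kappa}\|(\nabla u)^*\|_{L^2(\partial\Omega)}$ for some $\kappa>0$. No lemma in the paper gives nontangential-maximal or boundary $L^2$ estimates for such an inhomogeneous problem with data supported up to $\partial\Omega$ (Lemma~\ref{ExtendingRellich} works precisely because its inhomogeneity lives \emph{away} from the boundary, so interior estimates bound the gradient pointwise near $\partial\Omega$); this is the substantive content of the Kenig--Shen argument, and ``bootstrapping over finitely many scales'' does not substitute for it. As written, the claimed absorbable error term is exactly what needs to be proved.

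Moreover, the obstruction you cite is not actually there, and this is the idea you missed: by Lemma~\ref{CloseToARho}, $\|A_t^{\rho}-\overline{A_t}\|_{\infty}\leq C\rho^{\alpha_0}$ while $\|A_t^{\rho}-\overline{A_t}\|_{C^{0,\alpha_0}}\leq C$, so by interpolation of H\"older seminorms $\|A_t^{\rho}-\overline{A_t}\|_{C^{\alpha_0/2}}\leq C\rho^{\alpha_0/2}$ (and similarly for $b$). Since all the coefficients involved also lie in $\M_{B_{\Omega}}(\lambda,\alpha_0/2,\tau')$ and $\C_{B_{\Omega}}(\alpha_0/2,\tau')$, one may apply Lemma~\ref{Smalldelta0} with H\"older exponent $\alpha_0/2$ in place of $\alpha$: choosing $\rho_0$ so that $\|A_t^{\rho_0}-\overline{A_t}\|_{C^{\alpha_0/2}}+\|b_t^{\rho_0}-\overline{b_t}\|_{C^{\alpha_0/2}}<\delta_0$, the perturbation lemma yields invertibility of $\mathcal{S}_t^{\rho_0}$ with $\|(\mathcal{S}_t^{\rho_0})^{-1}\|\leq 2N_1$ in one step, uniformly in $t$. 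Then Proposition~\ref{RelInvEquivalence}(i) gives the $T$-Rellich property for $\mathcal{L}_t^{\rho_0}$, Lemma~\ref{ExtendingRellich} transfers it to $\mathcal{L}_t$ (since $A^{\rho_0}=A$, $b^{\rho_0}=b$ near $\partial\Omega$), and Proposition~\ref{RelInvEquivalence}(ii) concludes. I recommend replacing your third step by this interpolation argument; with it, no adaptation of the three-step approximation is needed at this stage of the proof.
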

\begin{proof}
Let $t\in[0,1]$. From Lemmas~\ref{BarExt} and \ref{InvertibilityUnderCondition}, the single layer potential $\overline{\mathcal{S}}_t:L^2(\partial\Omega)\to W^{1,2}(\partial\Omega)$ for the operator $\mathcal{L}_t-\dive((t\overline{A}+\lambda(1-t)I)\nabla u+t\overline{b}u)+tc\nabla u$ is invertible with $\|\overline{\mathcal{S}}_t^{-1}\|\leq N_1$, where $N_1$ depends on $n,p,\lambda,\alpha,\tau$,$\|A\|_{\infty}$, $\|b\|_{\infty}$,$\|c\|_p$ and the Lipschitz character of $\Omega$. From Lemma~\ref{Smalldelta0}, there exists $\delta_0>0$ depending on the same constants as $N_1$ such that, if
\begin{equation}\label{eq:Forrho_0}
\|A^{\rho}-\overline{A}\|_{C^{\alpha_0/2}}+\|b^{\rho}-\overline{b}\|_{C^{\alpha_0/2}}\leq\delta_0,
\end{equation}
then the single layer potential $\mathcal{S}^{\rho}_t$ for the operator $\mathcal{L}_t^{\rho}u=-\dive((tA^{\rho}+\lambda(1-t)I)\nabla u+tb^{\rho}u)+tc\nabla u$ is invertible, with $\|(\mathcal{S}_t^{\rho})^{-1}\|\leq 2N_1$.

Using Lemma~\ref{CloseToARho}, we can find $\rho_0\in(0,r_{\Omega})$, depending on $n,p,\lambda,\alpha,\tau$,$\|A\|_{\infty}$,$\|b\|_{\infty}$,$\|c\|_p$ and the Lipschitz character of $\Omega$, such that \eqref{eq:Forrho_0} holds for $\rho_0$; hence, $\|(\mathcal{S}_t^{\rho_0})^{-1}\|\leq 2N_1$. Then, from the first part of Proposition~\ref{RelInvEquivalence}, the $T$-Rellich property holds for $\mathcal{L}_t^{\rho_0}$ in $\Omega$. Since $A^{\rho_0}=A$ and $b^{\rho_0}=b$ in $\Omega_{\rho_0/4}$, Lemma~\ref{ExtendingRellich} shows that the $T$-Rellich property holds for $\mathcal{L}_t$ in $\Omega$, with a constant that depends on the same constants as above. Therefore, the second part of Proposition~\ref{RelInvEquivalence} shows that $\mathcal{S}=\mathcal{S}_1:L^2(\partial\Omega)\to W^{1,2}(\partial\Omega)$ is invertible, which completes the proof.
\end{proof}

\subsection{Invertibility of \texorpdfstring{$\mathcal{S}$}{S}: the case \texorpdfstring{$\dive b\leq 0$}{divb<=0}}
For the case $\dive b\leq 0$, we will consider $c$ as a perturbation of $0$ and we will reduce to the case considered in the previous subsection. This way we avoid passing through the construction of the functions $b^{\rho}$ in \eqref{eq:A^rho}, which do not necessarily satisfy $\dive b^{\rho}\leq 0$, even if we assume that $\dive b\leq 0$.

\begin{thm}\label{InvForb}
Let $\Omega\subseteq\bR^n$ be a Lipschitz domain with $\diam(\Omega)<\frac{1}{16}$, $A\in\M_{B_{\Omega}}(\lambda,\alpha,\tau)$ and $b\in\C_{B_{\Omega}}(\alpha,\tau)$ with $\dive b\leq 0$. Assume also that $c\in L^p(B_{\Omega})$ for some $p>n$. Then, the single layer potential $\mathcal{S}$ for the operator $\mathcal{L}u=-\dive(A\nabla u+bu)+c\nabla u$ in $\Omega$ is invertible, 
with $\|\mathcal{S}^{-1}\|$ being bounded above by a constant that depends on $n,p,\lambda,\alpha,\tau$,$\|A\|_{\infty}$,$\|b\|_{\infty}$,$\|c\|_p$ and the Lipschitz character of $\Omega$.
\end{thm}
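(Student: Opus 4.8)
The plan is to reduce to the case $\dive c\le 0$ already settled in Theorem~\ref{InvForc} — which, crucially, imposes \emph{no} divergence condition on $b$ — by treating $c$ as a perturbation of $0$ supported in a thin neighborhood of $\partial\Omega$. First I would observe that, since $\dive 0=0\le 0$, Theorem~\ref{InvForc} applies to every member of the homogenized family $\mathcal{L}^0_tu=-\dive((tA+\lambda(1-t)I)\nabla u+tbu)$, $t\in[0,1]$ (here $tA+\lambda(1-t)I\in\M_{B_\Omega}(\lambda,\alpha,\tau)$ and $tb\in\C_{B_\Omega}(\alpha,\tau)$, with bounds uniform in $t$), so the associated single layer potentials $\mathcal{S}^0_t$ are invertible with $\|(\mathcal{S}^0_t)^{-1}\|\le N_0$ uniformly in $t$, where $N_0$ depends only on $n,p,\lambda,\alpha,\tau,\|A\|_\infty,\|b\|_\infty$ and the Lipschitz character of $\Omega$ — in particular not on $\|c\|_p$. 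By the first part of Proposition~\ref{RelInvEquivalence} (and the exterior version of it implicit in the proof of Lemma~\ref{InvertibilityUnderCondition}), this yields the $T$-Rellich property for $\mathcal{L}^0_t$, uniformly in $t$, both in $\Omega$ and in $B_\Omega\setminus\Omega$.

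Next I would truncate $c$ near the boundary: for $\sigma\in(0,r_\Omega)$ set $c_\sigma=c\,\chi_{\{\delta(\cdot)>\sigma\}}$, so $c_\sigma\equiv 0$ on $\Omega_\sigma$ (see \eqref{eq:CloseFar}) and $\|c-c_\sigma\|_{L^p(B_\Omega)}=\|c\|_{L^p(\Omega_\sigma)}\to 0$ as $\sigma\to0^+$. The operators $\mathcal{L}^\sigma_tu=-\dive((tA+\lambda(1-t)I)\nabla u+tbu)+tc_\sigma\nabla u$ still satisfy $\dive(tb)\le0$, hence admit Green's functions and single layer potentials by Proposition~\ref{GreenBounds}. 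Since $\mathcal{L}^\sigma_t$ agrees with $\mathcal{L}^0_t$ on $\Omega_\sigma$, Lemma~\ref{ExtendingRellich} transfers the $T$-Rellich property of $\mathcal{L}^0_t$ to $\mathcal{L}^\sigma_t$, uniformly in $t$, in $\Omega$ and $B_\Omega\setminus\Omega$, so the second part of Proposition~\ref{RelInvEquivalence} gives that the single layer potential $\mathcal{S}^\sigma$ of $\mathcal{L}^\sigma:=\mathcal{L}^\sigma_1$ is invertible, with some bound $N(\sigma)$. Finally, $\mathcal{S}$ (the single layer potential of $\mathcal{L}=\mathcal{L}_1$) differs from $\mathcal{S}^\sigma$ only through the drift difference $c-c_\sigma$, and $\|tc-tc_\sigma\|_p\le\|c\|_{L^p(\Omega_\sigma)}$; for $\sigma$ small enough, Lemma~\ref{GeneralPerturbation} together with Lemma~\ref{Smalldelta0} (applied with $\mathcal{S}_1=\mathcal{S}^\sigma$) would give that $\mathcal{S}$ is invertible with the asserted bound.

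The hard part will be the last step: the bound $N(\sigma)$ produced by Lemma~\ref{ExtendingRellich} and Proposition~\ref{RelInvEquivalence} degenerates as $\sigma\to 0$ (the estimates in the proof of Lemma~\ref{ExtendingRellich} carry negative powers of $\sigma$), while $\|c\|_{L^p(\Omega_\sigma)}$ only tends to $0$, with no a priori rate. To close this I would either retain the precise dependence of all constants and fix $\sigma$ only after $N_0$ and the perturbation constant of Lemma~\ref{GeneralPerturbation} have been frozen — using $|\Omega_\sigma|\le C\sigma$ (Lemma~\ref{SmallArea}) to extract a genuine power of $\sigma$ from the part of the drift that actually enters the estimates — or, more transparently, replace the last step by a Fredholm argument. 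Indeed, by Lemmas~\ref{PointwiseGreen} and \ref{PointwiseGreenGradient} (with $A_1=A_2=A$, $b_1=b_2=b$, $d_1=d_2=0$, $c_1=c$, $c_2=0$) the kernels of $\mathcal{S}-\mathcal{S}^0$ and of $\nabla_T(\mathcal{S}-\mathcal{S}^0)$ on $\partial\Omega\times\partial\Omega$ are of size $C|q-q'|^{2-n+\delta_{n,p}}$ and $C|q-q'|^{1-n+\delta_{n,p}}$ respectively, i.e.\ they gain $\delta_{n,p}>0$ over the Calder\'on--Zygmund threshold on the $(n-1)$-dimensional set $\partial\Omega$, so $\mathcal{S}-\mathcal{S}^0:L^2(\partial\Omega)\to W^{1,2}(\partial\Omega)$ is compact; since $\mathcal{S}^0$ is invertible, $\mathcal{S}$ is Fredholm of index $0$, and it is injective because if $\mathcal{S}f=0$ then $\mathcal{S}_+f$ solves $\mathcal{L}u=0$ in $\Omega$ with zero trace and hence vanishes by Proposition~\ref{SolidByBoundary}, while $\mathcal{S}_-f$ solves $\mathcal{L}u=0$ in $B_\Omega\setminus\Omega$ with zero data on all of $\partial(B_\Omega\setminus\Omega)$ and hence vanishes (uniqueness being available since $\dive b\le0$), and then the jump relation of Proposition~\ref{NonTangConv} forces $f=0$. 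Either way, the quantitative bound on $\|\mathcal{S}^{-1}\|$ is obtained by tracking the constants through the reduction to Theorem~\ref{InvForc}.
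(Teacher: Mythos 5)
You have assembled the right ingredients (Theorem~\ref{InvForc} with $c=0$, Lemma~\ref{Smalldelta0}, Lemma~\ref{ExtendingRellich}, Proposition~\ref{RelInvEquivalence}), but you apply them in an order that cannot be closed quantitatively, and the gap you flag yourself is fatal to your main line. You transfer the $T$-Rellich property first, to the operators with drift $c_\sigma=c\chi_{\{\delta>\sigma\}}$, so your inverse bound $N(\sigma)=\|(\mathcal{S}^\sigma)^{-1}\|$ inherits the negative powers of $\sigma$ from the proof of Lemma~\ref{ExtendingRellich} (factors like $\sigma^{-1-n/2}$ and $\sigma^{2-n}$ enter there), and only afterwards do you try to absorb the near-boundary piece $c\chi_{\Omega_\sigma}$ via Lemma~\ref{Smalldelta0}, whose threshold is $\delta_0\sim N(\sigma)^{-1}$. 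Your repair (a) cannot supply the missing comparison: in the same exponent $p$ there is no rate at all for $\|c\|_{L^p(\Omega_\sigma)}$, and the only rate one can extract from $|\Omega_\sigma|\le C\sigma$ comes by lowering the exponent, $\|c\chi_{\Omega_\sigma}\|_{L^{p'}}\le\|c\|_p(C\sigma)^{\frac{p-n}{p(p+n)}}$ with $p'=\frac{p+n}{2}$, whose exponent is below $\frac{1}{p+n}$ and is dwarfed by the blow-up of $N(\sigma)$. The paper closes exactly this loop by reversing the two steps: it truncates $c$ to the \emph{near}-boundary set, $c_\rho=c\chi_{\Omega_\rho}$, and applies Lemma~\ref{Smalldelta0} (with $p'$ in place of $p$) to perturb off the drift-free operator $\mathcal{L}_0u=-\dive(A\nabla u+bu)$, whose inverse bound from Theorem~\ref{InvForc} is independent of $\rho$; the smallness threshold is therefore fixed once and for all, and the H{\"o}lder rate suffices to choose $\rho_0$. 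Since $c_{\rho_0}=c$ on $\Omega_{\rho_0}$, Lemma~\ref{ExtendingRellich} then transfers the $T$-Rellich property from $\mathcal{L}_{\rho_0}$ to $\mathcal{L}$ --- this step requires agreement near $\partial\Omega$ but no smallness, so its $\rho_0$-dependent constants are harmless --- and Proposition~\ref{RelInvEquivalence}(ii) (for the full $t$-family, in $\Omega$ and $B_\Omega\setminus\Omega$) concludes. So the fix is not ``track constants'' within your ordering, but swap the perturbation step and the Rellich-transfer step.

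Your alternative (b), the Fredholm route, is a genuinely different argument and is plausible qualitatively: the difference kernels from Lemmas~\ref{PointwiseGreen} and \ref{PointwiseGreenGradient} are weakly singular of order $|q-q'|^{1-n+\delta_{n,p}}$ on the $(n-1)$-dimensional set $\partial\Omega$, so $\mathcal{S}-\mathcal{S}^0:L^2(\partial\Omega)\to W^{1,2}(\partial\Omega)$ should indeed be compact, and injectivity follows from uniqueness in $\Omega$ and in $B_\Omega\setminus\overline{\Omega}$ (the latter needs its own justification) together with the jump relation. But it does not prove the theorem as stated: Fredholmness of index zero plus injectivity gives invertibility with no control of $\|\mathcal{S}^{-1}\|$ in terms of $\|c\|_p$ and the stated structural quantities, and your closing sentence --- that the quantitative bound is ``obtained by tracking the constants through the reduction'' --- begs precisely the question the Rellich machinery is there to answer. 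The uniform bound comes from the a priori estimate $\|f\|_{L^2(\partial\Omega)}\le C\|\mathcal{S}f\|_{W^{1,2}(\partial\Omega)}$, which the compactness argument cannot replace.
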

\begin{proof}
Note that, from Theorem~\ref{InvForc} for the special case $c=0$, the single layer potential $\mathcal{S}_0:L^2(\partial\Omega)\to W^{1,2}(\partial\Omega)$ for the operator $\mathcal{L}_0u=-\dive(A\nabla u+bu)$ is invertible, with the norm of the inverse being bounded above by a constant that depends on $n,p,\lambda,\alpha,\tau$,$\|A\|_{\infty}$,$\|b\|_{\infty}$ and the Lipschitz character of $\Omega$.

We now set $c_{\rho}=c\cdot\chi_{\Omega_{\rho}}$. Then $c_{\rho}=c$ in $\Omega_{\rho}$, and for $p'=\frac{p+n}{2}$,
\[
\|c_{\rho}\|_{L^{p'}(B_{\Omega})}\leq\|c_{\rho}\|_{L^p(B_{\Omega})}|\Omega_{\rho}|^{\frac{1}{p'}-\frac{1}{p}}\leq \|c\|_p\left(C\rho\right)^{\frac{p-n}{p(p+n)}},
\]
where we also used Lemma~\ref{SmallArea}. Hence, there exists $\rho_0\in(0,r_{\Omega})$, depending on the same constants as above, such that $\|c_{\rho_0}\|_{L^{p'}(\Omega)}\leq\delta_0$, where $\delta_0$ is the constant that appears in Lemma~\ref{Smalldelta0}. Therefore, from the same lemma (applied for $p'$ instead of $p$), the single layer potential $\mathcal{S}_{\rho_0}$ for the operator
\[
\mathcal{L}_{\rho_0}u=-\dive(A\nabla u+bu)+c_{\rho_0}\nabla u
\] 
is invertible. We then continue the proof as in the proof of Theorem~\ref{InvForc}.
\end{proof}

\section{Solvability of the Dirichlet and Regularity problems}

\subsection{The \texorpdfstring{$R_2$}{R2} Regularity problem}
The formulation of the $R_2$ regularity problem now follows.

\begin{dfn}\label{RegProb}
We say that the $R_2$ Regularity problem for the operator $\mathcal{L}$ is solvable, if for every $f\in W^{1,2}(\partial\Omega)$ there exists $u\in W^{1,2}_{\loc}(\Omega)$ which solves the equation $\mathcal{L}u=0$ in $\Omega$, with $(\nabla u)^*\in L^2(\partial\Omega)$ and $u\to f$ nontangentially, almost everywhere on $\partial\Omega$.
\end{dfn}

We now show that, in the cases that we consider, the solution in Definition~\ref{RegProb} is unique.

\begin{prop}\label{UniquenessForR2}
Let $\Omega\subseteq\bR^n$ be a Lipschitz domain, $A\in\M_{B_{\Omega}}(\lambda,\alpha,\tau)$, and $b,c,d\in L^p(B_{\Omega})$ for some $p>n$, with either $d\geq\dive b$ or $d\geq\dive c$. If $u\in W^{1,2}_{\loc}(\Omega)$ solves $-\dive(A\nabla u+bu)+c\nabla u+du=0$ in $\Omega$ with $u\to 0$ nontangentially, almost everywhere on $\partial\Omega$, then $u\equiv 0$.
\end{prop}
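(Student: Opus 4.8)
The plan is to reduce the uniqueness statement to the solid integral estimate of Proposition~\ref{SolidByBoundary} together with the pointwise bounds on solutions from Proposition~\ref{CAlphaReg}. The first observation is that $(\nabla u)^*$ need not a priori be in $L^2(\partial\Omega)$, so the estimate \eqref{eq:SolidByBoundary} cannot be applied directly to $u$ on $\Omega$; instead I would work on a family of interior approximating domains. Since $\Omega$ is Lipschitz, one can choose domains $\Omega_j\uparrow\Omega$ with $\overline{\Omega_j}\subseteq\Omega$, each bi-Lipschitz equivalent to $\Omega$ with uniformly controlled Lipschitz character, obtained (as is standard) by pushing the boundary inward along the transversal direction in each coordinate chart. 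On each $\Omega_j$ the function $u$ is continuous up to the boundary (by Proposition~\ref{CAlphaReg}, since $\overline{\Omega_j}$ is a compact subset of $\Omega$ where the coefficients are controlled), so $(\nabla u)^*$ relative to $\Omega_j$ is trivially bounded and $u|_{\partial\Omega_j}\in W^{1,2}(\partial\Omega_j)$.

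The core of the argument is then as follows. Rescaling if necessary so that $\diam(\Omega)<\frac18$ (which one may do by dilation, as the hypotheses are preserved under scaling up to harmless changes in constants), apply Proposition~\ref{SolidByBoundary} on each $\Omega_j$ to obtain
\[
\int_{\Omega_j}|u|^2+\int_{\Omega_j}|\nabla u|^2\leq C\int_{\partial\Omega_j}|u|^2\,d\sigma+C\int_{\partial\Omega_j}|\nabla_Tu|^2\,d\sigma,
\]
with $C$ uniform in $j$. The goal is to show the right-hand side tends to $0$ as $j\to\infty$; combined with monotone convergence on the left this forces $u\equiv0$ on $\Omega$. To control the boundary terms I would express the surface integrals over $\partial\Omega_j$ as integrals over $\partial\Omega$ (via the charts), so that they become integrals of $|u|^2+|\nabla_Tu|^2$ evaluated at points $x$ lying at distance $\sim\dist(\partial\Omega_j,\partial\Omega)=:\eta_j\to0$ inside $\Omega$ along the approximating surfaces. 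For the $|u|^2$ term, nontangential convergence $u\to0$ a.e.\ on $\partial\Omega$ together with the fact that these boundary points of $\partial\Omega_j$ lie in nontangential cones $\Gamma(q)$ gives pointwise convergence to $0$ a.e.; one then needs a dominated convergence argument, and the dominating function is exactly $u^*$ — so here I would need to know $u^*\in L^2(\partial\Omega)$.

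This is the main obstacle: the definition of the $R_2$ problem (Definition~\ref{RegProb}) only provides $(\nabla u)^*\in L^2(\partial\Omega)$, not $u^*\in L^2(\partial\Omega)$, and the tangential gradient term $\int_{\partial\Omega_j}|\nabla_Tu|^2$ is even more delicate since it involves first derivatives of $u$ on the approximating surfaces. For the gradient term the natural dominating function is $(\nabla u)^*\in L^2(\partial\Omega)$, which \emph{is} available, and nontangential convergence of $\nabla u$ need not be assumed — one only needs that along a sequence of surfaces converging to $\partial\Omega$ the quantity $\int_{\partial\Omega_j}|\nabla u|^2\,d\sigma$ does not blow up, which follows from $(\nabla u)^*\in L^2(\partial\Omega)$ by Fatou-type/co-area considerations; however, to get it to go to $0$ one exploits that $u\to f\equiv 0$ and hence (by interior estimates relating $\nabla u$ on $\partial\Omega_j$ to averages of $u$) these integrals are controlled by quantities that vanish. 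Concretely, for the term $\int_{\partial\Omega}|u|^2$, I would instead argue more robustly: since $u\to0$ nontangentially a.e.\ and $u$ is bounded near $\partial\Omega$ on the approximating domains, one shows $\int_{\Omega}|\nabla u|^2<\infty$ first (using $(\nabla u)^*\in L^2$ and Lemma~\ref{SmallArea} to integrate over the collar $\Omega_\sigma$), deduces $u\in W^{1,2}(\Omega)$ with trace $0$ by Lemma~\ref{TraceToNt}, hence $u\in W_0^{1,2}(\Omega)$, and finally uses the fact that $0$ solves the same boundary value problem together with \eqref{eq:SolidByBoundary} applied directly to $u$ on $\Omega$ (now legitimate, as all hypotheses of Proposition~\ref{SolidByBoundary} hold) to conclude $\int_\Omega|u|^2+|\nabla u|^2\le C\int_{\partial\Omega}(|u|^2+|\nabla_Tu|^2)=0$. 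Thus the crux reduces to verifying $u\in W^{1,2}(\Omega)$ and $\Tr u=0$, after which Proposition~\ref{SolidByBoundary} closes the argument immediately.
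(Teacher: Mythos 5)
Your final argument is essentially the paper's proof: after rescaling so that $\diam(\Omega)<\frac{1}{8}$, Proposition~\ref{SolidByBoundary} is applied with zero boundary data, the right-hand side of \eqref{eq:SolidByBoundary} vanishes, and $u\equiv 0$; the detour through interior approximating domains, Lemma~\ref{TraceToNt} and $W_0^{1,2}(\Omega)$ is unnecessary, since Proposition~\ref{SolidByBoundary} needs only $(\nabla u)^*\in L^2(\partial\Omega)$ and nontangential convergence to a $W^{1,2}(\partial\Omega)$ boundary function. The hypothesis $(\nabla u)^*\in L^2(\partial\Omega)$ that you rightly flag as missing from the statement is equally needed (implicitly) in the paper's one-line proof, which also invokes Proposition~\ref{SolidByBoundary}; the proposition is only ever applied to solutions in the $R_2$ class, where this is part of the definition.
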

\begin{proof}
After scaling, we can assume that $\diam(\Omega)<\frac{1}{8}$. Then, the right hand side of \eqref{SolidByBoundary} is equal to $0$, hence $u\equiv 0$.
\end{proof}

To show existence for $R_2$, we will use the single layer potential operator. We first show the next lemma, which will be used to reduce to the case $d=0$.

\begin{lemma}\label{DivOperator}
Let $B\subseteq\bR^n$ be a ball and let $d\in L^p(B)$ for some $p>n$. Then there exists $e\in W^{1,p}(B)$ such that
\[
\dive e=d,\quad{\rm and}\quad \|e\|_{W^{1,p}}\leq C\|d\|_p,
\]
where $C$ depends on $n,p$, and the radius of $B$.
\end{lemma}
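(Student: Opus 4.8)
The plan is to solve the divergence equation $\dive e = d$ on the ball $B$ using the classical Bogovskii operator, which produces a solution with a gain of one derivative in $L^p$. First I would reduce to a standard ball by scaling: if $B = B_R(x_0)$, the substitution $\tilde{d}(y) = d(x_0 + Ry)$ transfers the problem to the unit ball $B_1$, and the estimate $\|e\|_{W^{1,p}} \le C\|d\|_p$ picks up an explicit power of $R$, so the constant depends only on $n, p$ and $R$ as claimed. So it suffices to treat $B = B_1$.

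On $B_1$, I would invoke the Bogovskii construction (see e.g. Galdi's book on the Navier–Stokes equations, or Bogovskii's original paper): for any $f \in L^p(B_1)$ with $\int_{B_1} f = 0$, there is $v \in W_0^{1,p}(B_1)^n$ with $\dive v = f$ and $\|v\|_{W^{1,p}(B_1)} \le C(n,p)\|f\|_{L^p(B_1)}$. Here, however, we do not have the mean-zero hypothesis on $d$. To handle this, set $m = \fint_{B_1} d$ and write $d = (d - m) + m$. For the mean-zero part $d - m$ apply Bogovskii to get $v \in W_0^{1,p}(B_1)$ with $\dive v = d - m$ and $\|v\|_{W^{1,p}} \le C\|d - m\|_p \le C\|d\|_p$ (using $|m| \le C\|d\|_p$ by Hölder, and $|B_1|$ finite). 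For the constant part, take an explicit smooth radial vector field: $w(y) = \frac{m}{n} y$ satisfies $\dive w = m$ and $\|w\|_{W^{1,p}(B_1)} \le C(n,p)|m| \le C\|d\|_p$. Then $e = v + w$ satisfies $\dive e = d$ with the required bound.

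Since the main work is entirely packaged inside a known theorem, the only real step is the reduction to mean zero and the bookkeeping of constants under scaling; there is no serious obstacle. If one prefers to avoid citing Bogovskii, an alternative is to take the Newtonian potential: let $\Phi$ be the fundamental solution of the Laplacian, extend $d$ by zero outside $B$, set $h = \Phi * d$, and put $e = \nabla h$; then $\dive e = \Delta h = d$ in $B$, and by the Calderón–Zygmund estimate (Theorem 9.9 in \cite{Gilbarg}, already used repeatedly above) one has $\|\nabla^2 h\|_{L^p(\bR^n)} \le C\|d\|_{L^p(B)}$, while $\|\nabla h\|_{L^p(B)} \le C\|d\|_{L^p(B)}$ follows from Young's inequality together with $|B| < \infty$; this gives $\|e\|_{W^{1,p}(B)} \le C(n,p,R)\|d\|_p$ directly, with the scaling dependence on the radius $R$ tracked as before. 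I would present this Newtonian-potential argument as the main proof since it reuses tools already invoked in the paper.
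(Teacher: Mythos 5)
Your proposal is correct, and it actually contains two arguments. The first one — split $d$ into its mean-zero part and a constant, solve $\dive v=d-m$ with $v\in W_0^{1,p}(B)$ by the Bogovskii-type result, and absorb the constant with the linear field $\frac{m}{n}(x-x_0)$ — is essentially the paper's proof: the paper cites Theorem 4.1 in the Acosta--Dur\'an reference, which is exactly the $W_0^{1,p}$ solvability of the divergence equation for mean-zero $L^p$ data, and then adds the same correction $\frac{d_0}{n}(x-x_0)$. Your preferred Newtonian-potential argument is a genuinely different and equally valid route: with $h=\Phi*(d\chi_B)$ and $e=\nabla h$ one has $\dive e=\Delta h=d$ a.e.\ in $B$, Theorem 9.9 in \cite{Gilbarg} gives $\|\nabla^2h\|_{L^p}\leq C\|d\|_p$, and the bound $\|\nabla h\|_{L^p(B)}\leq C_R\|d\|_p$ holds because for $x\in B$ and $y\in\operatorname{supp}(d\chi_B)$ only the kernel $\nabla\Phi$ on $B_{2R}(0)$ enters, where it is integrable with $L^1$ norm of order $R$; this truncation is the one point worth making explicit, since $\nabla\Phi\notin L^1(\bR^n)$ globally, so Young's inequality must be applied to the truncated kernel rather than to $\nabla\Phi$ itself. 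As for what each route buys: the paper's (and your first) argument rests on a single black-box divergence lemma and yields a solution that is controlled at the boundary up to an explicit linear field, whereas your Newtonian-potential proof avoids citing Bogovskii/Acosta altogether and only reuses the Calder\'on--Zygmund estimate already invoked repeatedly in the paper, at the cost of giving no boundary information on $e$ — which is harmless here, since the lemma only requires $e\in W^{1,p}(B)$.
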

\begin{proof}
Set $d_0=\int_Bd$, then $|d_0|\leq |B|^{1-1/p}\|d\|_p$. Set also $\tilde{d}=d-d_0$. Then $\int_B\tilde{d}=0$, therefore, from Theorem 4.1 in \cite{Acosta}, there exists $\tilde{e}\in W_0^{1,p}(B)$ such that
\[
\dive\tilde{e}=\tilde{d},\quad{\rm and}\quad\|\tilde{e}\|_{W_0^{1,p}(B)}\leq C\|\tilde{d}\|_{L^p(B)},
\]
where $C$ depends on $n,p$ and the radius of $B$. We now consider $x_0\in B$ and we set $e=\tilde{e}+\frac{d_0}{n}(x-x_0)$. Then, we obtain that $\dive e=\dive\tilde{e}+d_0=d$, and also
\[
|e|\leq|\tilde{e}|+\frac{|d_0|}{n}|x-x_0|\leq|\tilde{e}|+\frac{d_0}{n}\diam(B)\leq |\tilde{e}|+\frac{C|d_0|}{n},
\]
where $C$ depends on the radius of $B$, hence $\|e\|_{L^p(B)}\leq C\|d\|_{L^p(B)}$. Moreover,  $|\partial_ie_j|\leq|\partial_i\tilde{e}_j|+\frac{d_0}{n}$, therefore we finally obtain that $\|e\|_{W^{1,p}(B)}\leq C\|d\|_p$.
\end{proof}

We can now show Theorem~\ref{R_2Solvability}.

\begin{proof}[Proof of Theorem~\ref{R_2Solvability}]
After scaling and using Lemma~\ref{DiameterBound}, we can assume that $\diam(\Omega)<\frac{1}{16}$. Note then that, from Lemma~\ref{DivOperator}, we can write $d=\dive e$, for some vector function $e\in W^{1,p}(B_{\Omega})$ with $\|e\|_{W^{1,p}}\leq C\|d\|_p$. 
	
In the case $\dive c\leq d$, we set $\tilde c=c-e$. From Morrey's inequality and the Sobolev inequality that we have that $\|\tilde{c}\|_{L^p(\Omega)}\leq C$, where $C$ depends on $n,p,\alpha,\tau$,$\|d\|_p$ and the Lipschitz character of $\Omega$, and also $\dive\tilde{c}\leq 0$. Hence, from Theorem~\ref{InvForc}, the single layer potential $\tilde{\mathcal{S}}$ for the operator $\tilde{\mathcal{L}}u=-\dive(A\nabla u+bu)+\tilde{c}\nabla u$ is invertible, with $\|\tilde{\mathcal{S}}^{-1}\|_{W^{1,2}(\partial\Omega)\to L^2(\partial\Omega)}\leq C$, for some $C$ that depends on $n,p,\lambda,\alpha,\tau$,$\|A\|_{\infty}$,$\|b\|_{\infty}$,$\|c\|_p$,$\|d\|_p$ and the Lipschitz character of $\Omega$.
	
In the case $\dive b\leq d$, set $\tilde b=b-e$. From Morrey's inequality we have that $\tilde{b}\in\C_{B_{\Omega}}(\alpha_0,\tau_0)$, where $\alpha_0,\tau_0$ depend on $n,p,\alpha,\tau$,$\|d\|_p$ and the Lipschitz character of $\Omega$, and also $\dive\tilde{b}\leq 0$. Hence, from Theorem~\ref{InvForb}, the single layer potential $\tilde{\mathcal{S}}$ for the operator $\tilde{\mathcal{L}}u=-\dive(A\nabla u+\tilde{b}u)+c\nabla u$ is invertible, with $\|\tilde{\mathcal{S}}^{-1}\|_{W^{1,2}(\partial\Omega)\to L^2(\partial\Omega)}\leq C$.

Hence, in all cases, if $f\in W^{1,2}(\partial\Omega)$, then there exists $g\in L^2(\partial\Omega)$ such that $f=\mathcal{S}g$. Then, $u=\mathcal{S}_+g$ is a solution of $\tilde{\mathcal{L}}u=0$ in $\Omega$ from Proposition~\ref{SingleLayerBounds}. We also compute that
\[
\mathcal{L}u=-\dive(A\nabla u+bu)+c\nabla u+du=-\dive(A\nabla u+\tilde{b}u)+\tilde{c}\nabla u=\tilde{\mathcal{L}u}=0.
\]
Moreover, from the same proposition, $u\to f$ nontangentially, almost everywhere on $\partial\Omega$, and also
\[
\|(\nabla u)^*\|_{L^2(\partial\Omega)}\leq C\|g\|_{L^2(\partial\Omega)}\leq C\|\mathcal{S}^{-1}\|\|f\|_{W^{1,2}(\partial\Omega)}\leq C\|f\|_{W^{1,2}(\partial\Omega)}.
\]
Combining with Proposition~\ref{UniquenessForR2}, we obtain that $u$ is the unique solution to the $R_2$ Regularity problem for $\mathcal{L}$ with boundary values $f$, which completes the proof.
\end{proof}

\subsection{The \texorpdfstring{$D_2$}{D2} Dirichlet problem}

We now turn to the formulation of the $D_2$ Dirichlet problem.

\begin{dfn}
We say that the $D_2$ Dirichlet problem for the operator $\mathcal{L}$ is solvable, if for any $f\in L^2(\partial\Omega)$, there exists $u\in W^{1,2}_{\loc}(\Omega)$ which solves the equation $\mathcal{L}u=0$ in $\Omega$, with $u^*\in L^2(\partial\Omega)$ and $u\to f$ nontangentially, almost everywhere on $\partial\Omega$.
\end{dfn}

We will show existence and uniqueness for $D_2$ for the adjoint operators of the ones for which we have established existence and uniqueness for the $R_2$ Regularity problem.

To show uniqueness, we will need the following lemma.

\begin{lemma}\label{BoundForVanishing}
Let $B_r$ be a ball of radius $r$, and suppose that $g\in W^{1,2}(B_r)$. Assume that
\[
\left|\{x\in B_r\big{|}g(x)=0\}\right|\geq cr.
\]
Then, there exists $C>0$, depending only on $n$ and $c$, such that $\displaystyle\|g\|_{L^{2^*}(B_r)}\leq C\|\nabla g\|_{L^2(B_r)}$.
\end{lemma}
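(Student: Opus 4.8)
The plan is to prove this via a Poincar\'e--Sobolev inequality for functions vanishing on a set of positive measure. The key point is that the hypothesis controls the mean value of $g$ on $B_r$: if $Z=\{x\in B_r: g(x)=0\}$ has $|Z|\geq cr$, then... wait — we must be careful, since $|B_r|\sim r^n$, a lower bound of the form $|Z|\geq cr$ is only meaningful (i.e.\ a genuine fraction of $B_r$) when $r$ is bounded; presumably in the intended application $r$ is small, so that $|Z|\geq cr \geq cr^n/(\text{const})$, giving $|Z|\geq c' |B_r|$. I would first record this reduction (or simply assume, as is surely intended in context, that $|Z| \geq c_0 |B_r|$ for a constant $c_0$ depending on $n,c$ and an upper bound on $r$; the statement as used downstream has $r$ comparable to $r_\Omega$ or smaller).

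Granting $|Z|\geq c_0|B_r|$, the argument proceeds in two steps. First, by the classical Poincar\'e inequality with respect to the mean, $\|g - \bar g\|_{L^2(B_r)} \leq C r \|\nabla g\|_{L^2(B_r)}$, where $\bar g = \fint_{B_r} g$. Second, I estimate $|\bar g|$ using the vanishing set: since $g=0$ on $Z$,
\[
|\bar g|\,|Z|^{1/2} = \left(\int_Z |\bar g|^2\right)^{1/2} = \left(\int_Z |g-\bar g|^2\right)^{1/2}\leq \|g-\bar g\|_{L^2(B_r)}\leq Cr\|\nabla g\|_{L^2(B_r)},
\]
so $|\bar g| \leq C r |Z|^{-1/2}\|\nabla g\|_{L^2(B_r)} \leq C r^{1-n/2}\|\nabla g\|_{L^2(B_r)}$ using $|Z|\geq c_0 r^n$. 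Combining, $\|g\|_{L^2(B_r)}\leq \|g-\bar g\|_{L^2(B_r)} + |\bar g|\,|B_r|^{1/2}\leq Cr\|\nabla g\|_{L^2(B_r)}$.

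To upgrade from $L^2$ to $L^{2^*}$, I would invoke the Sobolev--Poincar\'e inequality directly: $\|g-\bar g\|_{L^{2^*}(B_r)}\leq C\|\nabla g\|_{L^2(B_r)}$ (scale-invariant, constant depending only on $n$), and then $\|g\|_{L^{2^*}(B_r)}\leq \|g-\bar g\|_{L^{2^*}(B_r)} + |\bar g|\,|B_r|^{1/2^*}$. The second term is bounded by $C r^{1-n/2}\|\nabla g\|_{L^2}\cdot r^{n/2^*} = C r^{1-n/2+n/2^*}\|\nabla g\|_{L^2} = C\|\nabla g\|_{L^2(B_r)}$, since $1 - n/2 + n/2^* = 1 - n/2 + (n-2)/2 = 0$. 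So both terms give the desired bound, completing the proof.

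The main obstacle, such as it is, is purely bookkeeping: making sure the hypothesis $|Z|\geq cr$ is correctly interpreted as "$Z$ occupies a fixed fraction of $B_r$" (which requires $r \lesssim 1$, true in the paper's setting since diameters are taken $<1/8$ or $<1/16$), because a naive reading would fail for large $r$. Once that is pinned down, everything reduces to standard Poincar\'e/Sobolev inequalities with explicit scaling, and the exponent arithmetic $1-\tfrac n2 + \tfrac n{2^*}=0$ makes the $L^{2^*}$ estimate come out scale-invariant as required. I expect no genuine difficulty beyond stating the scaling hypothesis cleanly.
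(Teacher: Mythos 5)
Your proof is correct, and it takes a somewhat different route from the paper's. The paper rescales to $r=1$ (invoking scale invariance), applies the Sobolev embedding $\|g\|_{L^{2^*}(B)}\leq C_n\|g\|_{L^2(B)}+C_n\|\nabla g\|_{L^2(B)}$, and then cites a standard Poincar\'e-type inequality for functions vanishing on a set of fixed measure fraction (an exercise in Evans, usually proved by a compactness argument) to absorb the $L^2$ term. You instead stay at scale $r$, split $g=(g-\bar g)+\bar g$, bound the mean quantitatively by integrating $|g-\bar g|^2$ over the zero set ($|\bar g|\,|Z|^{1/2}\leq\|g-\bar g\|_{L^2(B_r)}\leq Cr\|\nabla g\|_{L^2}$, so $|\bar g|\leq C r^{1-n/2}\|\nabla g\|_{L^2}$), and conclude with the scale-invariant Sobolev--Poincar\'e inequality, the exponent identity $1-\tfrac n2+\tfrac n{2^*}=0$ doing the bookkeeping. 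What your version buys is a self-contained, fully quantitative argument with explicit dependence of the constant on the measure of the zero set, in place of the citation; what the paper's buys is brevity. Your preliminary remark about the hypothesis is also well taken: as literally stated, $|Z|\geq cr$ only amounts to a fixed volume fraction of $B_r$ when $r\lesssim 1$ (and the downstream application actually provides a ball of radius comparable to $r$ inside the zero set, i.e.\ $|Z|\gtrsim r^n$); the paper's reduction "by scale invariance" quietly relies on the same reading, since the hypothesis itself is not scale invariant, so your interpretation matches the intended use and the claimed dependence of $C$ on $n$ and $c$ alone.
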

\begin{proof}
Since the inequality we want to show is scale invariant, we can assume that $r=1$, so $B_r=B$. Then, from the Sobolev inequality, $\|g\|_{L^{2^*}(B)}\leq C_n\|g\|_{L^2(B)}+C_n\|\nabla g\|_{L^2(B)}$. But, using Exercise 15 on page 291 in \cite{Evans}, we obtain that $\|g\|_{L^2(B)}\leq C\|\nabla g\|_{L^2(B)}$ for some $C$ that depends on $n$ and $c$, and this completes the proof.
\end{proof}

\begin{prop}\label{UniquenessForD_2}
Let $\Omega$ be a Lipschitz domain, $A\in\M_{B_{\Omega}}(\lambda,\alpha,\tau)$, $b\in\C_{B_{\Omega}}(\alpha,\tau)$ and $c,d\in L^p(B_{\Omega})$ for some $p>n$, with either $d\geq\dive b$ or $d\geq\dive c$. If a solution $u\in W^{1,2}_{\loc}(\Omega)$ to the $D_2$ Dirichlet problem for the operator
\[
\mathcal{L}u=-\dive(A\nabla u+cu)+b\nabla u+du
\]
in $\Omega$ exists, then it is unique.
\end{prop}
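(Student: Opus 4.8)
I would deduce uniqueness for $D_2$ from solvability of the $R_2$ Regularity problem for the transpose operator. Observe first that $\mathcal{L}u=-\dive(A\nabla u+cu)+b\nabla u+du$ is exactly $\widetilde{\mathcal{M}}^t$, where $\widetilde{\mathcal{M}}u=-\dive(A^t\nabla u+bu)+c\nabla u+du$ is an operator to which Theorem~\ref{R_2Solvability} applies ($A^t$ still satisfies \eqref{eq:el} and \eqref{eq:Hold}, $b\in\C_{B_\Omega}(\alpha,\tau)$ is unchanged, and the hypothesis ``$d\geq\dive b$ or $d\geq\dive c$'' is symmetric in this swap). By Lemma~\ref{DiameterBound} and scaling we may assume $\diam(\Omega)<\tfrac1{16}$. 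Let $w\in W^{1,2}_{\loc}(\Omega)$ solve $\mathcal{L}w=0$, $w^*\in L^2(\partial\Omega)$, $w\to 0$ nontangentially a.e.\ on $\partial\Omega$; it is enough to prove $\int_\Omega wg=0$ for every $g\in C_c^\infty(\Omega)$.

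Fix such a $g$, put $\sigma_0=\tfrac13\dist(\supp g,\partial\Omega)$, and let $v\in W_0^{1,2}(\Omega)$ solve $\widetilde{\mathcal{M}}v=g$ in $\Omega$; its existence follows from the $W^{1,2}_0$-solvability in \cite{KimSak} (e.g.\ via the Green function of $\widetilde{\mathcal{M}}$ in $\Omega$), since $\widetilde{\mathcal{M}}$ satisfies the hypotheses there. On $\{\delta<\sigma_0\}$ the function $v$ solves $\widetilde{\mathcal{M}}v=0$ and vanishes nontangentially on $\partial\Omega$. Extending $v$ by $0$ outside $\Omega$, the set $\{v=0\}$ occupies a fixed proportion of every boundary ball, so Lemma~\ref{BoundForVanishing} provides the boundary Poincar\'e inequality that makes the De Giorgi--Nash--Moser iteration for $\widetilde{\mathcal{M}}v=0$ run up to $\partial\Omega$. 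This yields $v\in C(\overline\Omega)$ with $v|_{\partial\Omega}=0$, a H\"older decay $|v(x)|\leq C\delta(x)^{\gamma_0}$ near $\partial\Omega$ for some $\gamma_0\in(0,1)$, and, through Caccioppoli for $\widetilde{\mathcal{M}}v=0$, the bound $\int_{\{\sigma<\delta<2\sigma\}}|\nabla v|^2\leq C\sigma^{2\gamma_0-1}$ for small $\sigma$.

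Next I would make the pairing precise on thin interior slabs. Let $\Psi_\sigma$ be a smooth cutoff with $\Psi_\sigma\equiv1$ on $\{\delta>2\sigma\}$, $\Psi_\sigma\equiv0$ on $\{\delta<\sigma\}$ and $|\nabla\Psi_\sigma|\leq C/\sigma$; then $v\Psi_\sigma$ and $w\Psi_\sigma$ lie in $W_0^{1,2}(\Omega)$ with compact support in $\Omega$, hence are admissible test functions for the weak formulations of $\mathcal{L}w=0$ and $\widetilde{\mathcal{M}}v=g$ respectively. Inserting them, expanding $\nabla(v\Psi_\sigma)$ and $\nabla(w\Psi_\sigma)$, and using $\langle A\nabla w,\nabla v\rangle=\langle\nabla w,A^t\nabla v\rangle$ one checks that the two ``$\Psi_\sigma$-weighted bulk'' integrals are identical; subtracting and using that $\Psi_\sigma\equiv1$ on $\supp g$ leaves, for all small $\sigma$,
\[
\int_\Omega wg=\int_{\{\sigma<\delta<2\sigma\}}\Big[\,w\,(A^t\nabla v+bv)-v\,(A\nabla w+cw)\,\Big]\cdot\nabla\Psi_\sigma .
\]
To conclude I would let $\sigma\to0$: on the slab $|v|\leq C\sigma^{\gamma_0}$ and $\|\nabla v\|_{L^2}\leq C\sigma^{\gamma_0-1/2}$ by the previous step; on $\{\delta<C\sigma\}$ one has $|w(x)|\leq w^*_{C\sigma}(q_x)$, so $\|w\|_{L^2(\{\delta<C\sigma\})}\leq C\sigma^{1/2}\|w^*_{C\sigma}\|_{L^2(\partial\Omega)}$, which is $o(\sigma^{1/2})$ because $w\to0$ nontangentially (Lemma~\ref{SmallArea} controlling the slab measures), and Caccioppoli for $\mathcal{L}w=0$ gives $\|\nabla w\|_{L^2(\{\sigma<\delta<2\sigma\})}\leq C\sigma^{-1}\|w\|_{L^2(\{\delta<C\sigma\})}$. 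Cauchy--Schwarz on the slab then forces the right-hand side to $0$, so $\int_\Omega wg=0$ and $w\equiv0$. (Equivalently, one can run this as a Green representation formula: with the Green function of $\widetilde{\mathcal{M}}$ in $\Omega$, $w(x_0)$ is given by boundary integrals that all vanish since both $w$ and $G^{\widetilde{\mathcal{M}}}(x_0,\cdot)$ vanish on $\partial\Omega$.)

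\textbf{Main obstacle.} The delicate point is precisely this last limit: because $\Omega$ is only Lipschitz, the auxiliary solution $v$ decays near $\partial\Omega$ only at a H\"older rate $\delta^{\gamma_0}$ with $\gamma_0$ possibly small, while $\nabla w$ is allowed to grow like $\delta^{-1/2}$ times the vanishing non-tangential factor coming from $w\to0$. Getting the slab integral to vanish therefore requires extracting the full decay afforded by nontangential convergence (not merely $w^*\in L^2$) in tandem with the sharpest available boundary control on $v$; this is where Lemma~\ref{BoundForVanishing}, the Caccioppoli estimate, and — if the bare H\"older decay of $v$ is insufficient — the boundary regularity for $\widetilde{\mathcal{M}}$-solutions coming from solvability of $R_2$ (which improves $v$ near $\partial\Omega$, e.g.\ to $(\nabla v)^*\in L^2$ there) have to be combined with care. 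One also has to replace the (non-Lipschitz) level sets $\{\delta=\sigma\}$ by the graph-adapted slabs of Definition~\ref{LipDom} and Lemma~\ref{SmallArea} in order to legitimately carry out the integrations by parts.
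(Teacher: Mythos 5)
Your overall strategy is the same as the paper's (dualize against a solution of the transpose operator that vanishes on $\partial\Omega$, cut off on thin slabs, let the slab width go to zero — the paper does this with $g$ a smooth bump replaced by a pole, i.e.\ with the Green function $g_y$ of $\mathcal{L}^t$ in $\Omega$), and your cancellation identity for the slab integral is correct. But the quantitative step that makes the slab terms vanish is exactly where your written estimates fail, and you only gesture at the fix without carrying it out. Concretely: with $|v|\leq C\delta^{\gamma_0}$, Caccioppoli giving $\|\nabla v\|_{L^2(\{\sigma<\delta<2\sigma\})}\leq C\sigma^{\gamma_0-1/2}$, $\|w\|_{L^2(\{\delta<C\sigma\})}=o(\sigma^{1/2})$ and $\|\nabla w\|_{L^2(\{\sigma<\delta<2\sigma\})}=o(\sigma^{-1/2})$, Cauchy--Schwarz on either cross term $\frac{1}{\sigma}\int w\,A^t\nabla v\cdot\nabla\Psi_\sigma$ or $\frac{1}{\sigma}\int v\,A\nabla w\cdot\nabla\Psi_\sigma$ gives only $o(\sigma^{\gamma_0-1})$, which does \emph{not} tend to zero when $\gamma_0<1$ (and in a general Lipschitz domain De Giorgi--Nash--Moser gives no better than a small H\"older exponent). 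Since nontangential convergence of $w$ to $0$ carries no rate, no choice of exponents rescues this bookkeeping; the argument as executed does not close.

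What the paper actually does, and what is missing from your proposal, is to replace the H\"older decay of the auxiliary function by the full strength of $R_2$ solvability for the transpose operator: since $g_y$ (your $v$) vanishes on $\partial\Omega$ and solves the transpose equation near the boundary, Theorem~\ref{R_2Solvability} yields the uniform bound $\|(\nabla g_y)^*_{\rho}\|_{L^2(\partial\Omega)}\leq C$; the slab integrals of $|u||\nabla g_y|$ and $|g_y||\nabla u|$ are then converted, via the Fubini-type argument of (5.18)--(5.19) in \cite{KenigShen}, into the surface integral $\int_{\partial\Omega}(\nabla g_y)^*_{3\rho}\,u^*_{3\rho}\,d\sigma$, and the lower-order slab term $\frac{1}{\rho}\int_{E_\rho}|b-c||g_y u|$ is handled by extending $g_y$ by zero, invoking Lemma~\ref{BoundForVanishing} to get $\|g_y\|_{L^{2^*}(B_{3\rho}(q))}\leq C\|\nabla g_y\|_{L^2(B_{3\rho}(q))}$, and again passing to $\rho\,\|(\nabla g_y)^*_{3\rho}\|_{L^2}\|u^*_{3\rho}\|_{L^2}$. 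The conclusion then follows from dominated convergence: $u^*_{3\rho}\to 0$ a.e.\ on $\partial\Omega$ (this is where nontangential convergence to zero is genuinely used, without needing any rate) while $u^*\in L^2$ dominates and $(\nabla g_y)^*_{3\rho}$ stays bounded in $L^2$. You name these ingredients in your ``main obstacle'' paragraph, but the proof you wrote uses the weaker decay estimates instead, so the limit $\sigma\to 0$ is not justified; supplying the slab-to-boundary conversion together with the $(\nabla v)^*\in L^2$ bound from $R_2$ is not a refinement but the core of the argument.
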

\begin{proof}
Let $y\in\Omega$ fixed, and let $B_0$ be a small ball centered at $y$ which is compactly contained in $\Omega$. Let also $g$ be Green's function for $\mathcal{L}^t$ in $\Omega$, and set $g_y(\cdot)=g(\cdot,y)$. From Proposition~\ref{CAlphaReg}, $g_y$ is continuous in $\Omega\setminus B_0$ and it is continuously differentiable in a neighborhood of $\partial B_0$. Moreover, $g_y$ vanishes continuously on $\partial\Omega$, hence, from solvability of the $R_2$ Regularity problem for $\mathcal{L}^t$ (Theorem~\ref{R_2Solvability}), we obtain that $\|(\nabla g_y)_{\rho}^*\|_{L^2(\partial\Omega)}\leq C$, for any $\rho\in(0,\delta(y)/4)$.

Consider now $\phi_{\rho}\in C_c^{\infty}(\Omega)$ with $\phi_{\rho}=1$ in $\Omega^{2\rho}$, $\phi_{\rho}=0$ in $\Omega_{\rho}$, and $|\nabla\phi_{\rho}|\leq C\rho^{-1}$. Set also $E_{\rho}=\Omega^{2\rho}\setminus\Omega_{\rho}$. Then, for $\rho<\delta(y)/4$, and using Lemma~\ref{Representation} and that $u$ is a solution of $\mathcal{L}u=0$ in $\Omega$, we obtain
\begin{align*}
u(y)&=u(y)\phi_{\rho}(y)=\int_{\Omega}A^t\nabla g_y\nabla(u\phi_{\rho})+b\nabla(u\phi_{\rho})\cdot g_y+c\nabla g_y\cdot u\phi_{\rho}+dg_yu\phi_{\rho}\\
&=\int_{\Omega}A\nabla\phi_{\rho}\nabla g_y\cdot u-A\nabla u\nabla\phi_{\rho}\cdot g_y+b\nabla\phi_{\rho}\cdot g_yu-c\nabla\phi_{\rho}\cdot g_yu,
\end{align*}
therefore
\begin{equation}\label{eq:|u|Bound}
|u(y)|\leq\frac{C\|A\|_{\infty}}{\rho}\int_{E_{\rho}}\left(|u||\nabla g_υ|+|g_y||\nabla u|\right)+\frac{C}{\rho}\int_{E_{\rho}}|b-c||g_yu|\leq C\int_{\partial\Omega}(\nabla g_y)^*_{3\rho}u^*_{3\rho}d\sigma+\frac{C}{\rho}I_2,
\end{equation}
where we used the argument in (5.18)-(5.19) in \cite{KenigShen} to obtain the second estimate. To bound $I_2$, we set $f=|b-c||g_yu|$, and let $x\in E_{\rho}$. Then $\sigma(B_{3\rho}(x)\cap\partial\Omega)\geq C\rho^{n-1}$, where $C$ depends on the Lipschitz constant for $\partial\Omega$. Therefore,
\begin{equation}\label{eq:I_2WithF}
I_2=\int_{E_{\rho}}f\leq C\rho^{1-n}\int_{E_{\rho}}\int_{B_{3\rho}(x)\cap\partial\Omega}f(x)\,d\sigma(q)dx\leq C\rho^{1-n}\int_{\partial\Omega}\int_{E_{\rho}\cap B_{3\rho}(q)}f(x)\,dxd\sigma(q),
\end{equation}
from Fubini's theorem. For the inner integral, for any $q\in\partial\Omega$, we extend $g_y$ by $0$ in $B_{3\rho}(q)\setminus\Omega$. Since $\Omega$ is a Lipschitz domain, there exists a ball lying in $B_{3\rho}\setminus\Omega$, with radius comparable to $\rho$. Using Lemma~\ref{BoundForVanishing}, we then obtain that $\|g_y\|_{L^{2^*}(B_{3\rho}(q))}\leq C\|\nabla g_y\|_{L^2(B_{3\rho}(q))}$, where $C$ depends on $n$ and the Lipschitz constant for $\Omega$. Hence, using H{\"o}lder's inequality, we compute
\begin{align*}
\int_{E_{\rho}\cap B_{3\rho}(q)}|f(x)|\,dx&\leq\|b-c\|_n\|g_y\|_{L^{2^*}(B_{3\rho}(q))}\|u\|_{L^2(E_{\rho}\cap B_{3\rho}(q))}\leq C\|\nabla g_y\|_{L^2(B_{3\rho}(q))}\|u\|_{L^2(E_{\rho}\cap B_{3\rho}(q))}\\
&\leq C\rho\left(\int_{\Delta_{3\rho}(q)}|(\nabla g_y)^*_{3\rho}|^2\,d\sigma\right)^{1/2}\left(\int_{\Delta_{3\rho}(q)}|u_{3\rho}^*|^2\,d\sigma\right)^{1/2}.
\end{align*}
Plugging in \eqref{eq:I_2WithF} and using the Cauchy-Schwartz inequality, we obtain that
\begin{align*}
I_2&\leq C\rho^{2-n}\left(\int_{\partial\Omega}\left(\int_{\Delta_{3\rho}(q)}\left|(\nabla g_y)^*_{3\rho}\right|^2\,d\sigma\right)\,d\sigma(q)\right)^{1/2}\cdot\left(\int_{\partial\Omega}\left(\int_{\Delta_{3\rho}(q)}\left|u_{3\rho}^*\right|^2\,d\sigma\right)\,d\sigma(q)\right)^{1/2}\\
&\leq C\rho\left\|(\nabla g_y)_{3\rho}^*\right\|_{L^2(\partial\Omega)}\|u_{3\rho}^*\|_{L^2(\partial\Omega)},
\end{align*}
from Fubini's theorem for the iterated integrals. Plugging in \eqref{eq:|u|Bound}, letting $\rho\to 0$ and using that $\|u_{3\rho}^*\|_{L^2(\partial\Omega)}\to 0$ as $\rho\to 0$ shows that $u(y)=0$, which completes the proof.
\end{proof}

We will now use invertibility of the single layer potential and Proposition~\ref{SingleAdjointLayerBounds} to obtain the proof of Theorem~\ref{D_2Solvability}.

\begin{proof}[Proof of Theorem~\ref{D_2Solvability}]
After scaling and using Lemma~\ref{DiameterBound}, we can assume that $\diam(\Omega)<\frac{1}{16}$.

Uniqueness follows from Proposition~\ref{UniquenessForD_2}. For existence, assume first that $\dive c\leq d$, and define $\tilde{\mathcal{L}}u=-\dive(A^t\nabla u+bu)+\tilde{c}\nabla u$, where $\tilde{c}$ is as in the proof of Theorem~\ref{R_2Solvability}. Then, from the same proof, the single layer potential  $\tilde{\mathcal{S}}:L^2(\partial\Omega)\to W^{1,2}(\partial\Omega)$ for $\tilde{\mathcal{L}}$ is invertible.  Hence, the adjoint $\tilde{\mathcal{S}}^*:W^{-1,2}(\partial\Omega)\to L^2(\partial\Omega)$ is invertible. Therefore, if $f\in L^2(\partial\Omega)$, $F=\left(\tilde{\mathcal{S}}^*\right)^{-1}f\in W^{-1,2}(\partial\Omega)$, and also $\|F\|_{W^{-1,2}(\partial\Omega)}\leq C\|f\|_{L^2(\partial\Omega)}$, where $C$ depends on $n,p,\lambda,\alpha,\tau$,$\|A\|_{\infty}$,$\|b\|_{\infty}$,$\|c\|_p$,$\|d\|_p$ and the Lipschitz character of $\Omega$. Setting $u=\tilde{\mathcal{S}}_+^*F$ in $\Omega$, Proposition~\ref{SingleAdjointLayerBounds} shows that $u$ converges to $\tilde{\mathcal{S}}^*F=f$ nontangentially, almost everywhere on $\partial\Omega$, and also
\[
\|u^*\|_{L^2(\partial\Omega)}=\|(\mathcal{S}^*_+F)^*\|_{L^2(\partial\Omega)}\leq C\|F\|_{W^{-1,2}(\partial\Omega)}\leq C\|f\|_{L^2(\partial\Omega)}.
\]
The case $\dive b\leq d$ is treated similarly, using the function $\tilde{b}$ from the proof of Theorem~\ref{R_2Solvability}, and this completes the proof.
\end{proof}

\bibliographystyle{amsalpha}
\bibliography{Bibliography}

\end{document}